\DeclareMathOperator*{\argmin}{argmin}
\newcommand{\norm}[1]{\left\lVert#1\right\rVert}
\theoremstyle{plain}
\newtheorem{thm}{Theorem}[section]
\newtheorem{lem}[thm]{Lemma}
\newtheorem{assu}[thm]{Assumption}
\newtheorem{prop}[thm]{Proposition}
\newtheorem{defn}[thm]{Definition}
\newtheorem{exam}[thm]{Example}
\theoremstyle{remark}
\newtheorem{rem}[thm]{Remark}
\numberwithin{equation}{section}
\begin{document}

\begin{frontmatter}
\title{Auto-Regressive Approximations to Non-stationary Time Series, with Inference and Applications}
\runtitle{AR approximations to non-stationary time series}

\begin{aug}
\author[A]{\fnms{Xiucai} \snm{Ding}\ead[label=e1]{xcading@ucdavis.edu}} \and
\author[B]{\fnms{Zhou} \snm{Zhou}\ead[label=e2,mark]{zhou@utstat.utoronto.ca}}

\address[A]{Department of Statistics, University of California, Davis
\printead{e1}}

\address[B]{Department of Statistical Sciences,
University of Toronto
\printead{e2}}
\end{aug}

\begin{abstract}
Understanding the time-varying structure of complex temporal systems is one of the main challenges of modern time series analysis. In this paper, we show that every uniformly-positive-definite-in-covariance and sufficiently short-range dependent non-stationary and nonlinear time series can be well approximated globally by a white-noise-driven auto-regressive (AR) process of slowly diverging order. To our best knowledge, it is the first time such a structural approximation result is established for general classes of non-stationary time series. A high dimensional $\mathcal{L}^2$ test and an associated multiplier bootstrap procedure are proposed for the inference of the AR approximation coefficients. In particular, an adaptive stability test is proposed to check whether the AR approximation coefficients are time-varying, a frequently-encountered question for practitioners and researchers of time series. As an application, globally optimal short-term forecasting theory and methodology for a wide class of locally stationary time series are established via the method of sieves.
\end{abstract}

\begin{keyword}[class=MSC2020]
\kwd[Primary ]{62M10}
\kwd{62M20}
\kwd[; secondary ]{60G07}
\end{keyword}

\begin{keyword}
\kwd{Non-stationary time series}
\kwd{AR approximation}
\kwd{High dimensional convex Gaussian approximation}
\kwd{Multiplier bootstrap}
\kwd{Globally optimal forecasting}
\end{keyword}

\end{frontmatter}

\section{Introduction}\label{sec_intro}

The Wiener-Kolmogorov prediction theory \cite{kolmogorov1941a, kolmogorov1941b,wiener1949} is a fundamental result in time series analysis which, among other findings, guarantees that a weakly stationary time series can be represented as a white-noise-driven auto-regressive (AR) process of infinite order under some mild conditions. The latter structural representation result has had profound influence in the development of the classic linear time series theory. Later, \cite{Baxter_1962, baxter1963} studied the truncation error of AR prediction of stationary processes when finite many past values, instead of the infinite history, were used in the prediction. Nowadays, as increasingly longer time series are being collected in the modern information age, it has become more appropriate to model many of those series as non-stationary processes whose data generating mechanisms evolve over time. Consequently, there has been an increasing demand for a systematic structural representation theory for such  processes. Nevertheless, it has been a difficult and open problem to establish linear structural representations for general classes of non-stationary time series. The main difficulty lies in the fact that the profound spectral domain techniques which were essential in the investigation of the AR($\infty$) representation for stationary sequences are difficult to apply to non-stationary processes where the spectral density function is either difficult to define or only defined locally in time.

The first main purpose of the paper is to establish a unified AR approximation theory for a wide class of non-stationary time series. Specifically, we shall establish that every short memory and uniformly-positive-definite-in-covariance (UPDC) non-stationary time series $\{x_{i,n}\}_{i=1}^n$ can be well approximated globally by a non-stationary white-noise-driven AR process of slowly diverging order; see Theorem \ref{thm_arrepresent} for a more precise statement.  Similar to the spirit of the Wiener-Kolmogorov prediction theory, the latter structural approximation result connects a wide range of fundamental problems in non-stationary time series analysis {such as optimal forecasting, dependence quantification, efficient estimation and adaptive bootstrap inference to those of AR processes and ordinary least squares (OLS) regression with diverging number of dependent predictors.} In fact, the very reason for us  to consider the AR approximation instead of a moving average approximation or representation (c.f. Wold decomposition \cite{wold1938study}) to non-stationary time series is due to its close ties with the OLS regression and hence the ease of practical implementation. Our proof of the structural approximation result resorts to modern operator spectral theory and classical approximation theory \cite{DMS} which control the decay rates of inverse of banded matrices. Consequently the decay speed of the best linear projection coefficients of the time series can be controlled via the Yule-Walker equations; see Theorem \ref{lem_phibound} for more details. 

The last two decades have witnessed the rapid development of locally stationary time series analysis in statistics.
Locally stationary time series refers to the subclass of non-stationary time series whose data generating mechanisms evolve smoothly or slowly over time. See  \cite{dahlhaus2012locally} for a comprehensive review. For locally stationary processes, we will show that the UPDC condition is equivalent to the uniform time-frequency positiveness of the local spectral density of $\{x_{i,n}\}_{i=1}^n$ (c.f. Proposition \ref{prop_pdc}) and the approximating AR process has smoothly time-varying coefficients (c.f. Theorem \ref{thm_locallynonzero}).

In practice, one may be interested in testing various hypotheses on the AR approximation such as whether some approximation coefficients are zero or whether the approximation coefficients are invariant with respect to time. The second main purpose of the paper is to propose a high-dimensional ${\cal L}^2$ test and an associated multiplier bootstrap procedure for the inference of the AR approximation coefficients of locally stationary time series. For the sake of brevity we concentrate  on the test of stability of the approximation coefficients with respect to time for locally stationary time series (c.f. (\ref{eq_testnonzero1})). It is easy  to see that  similar methodologies can be developed for other problems of statistical inference  such  as tests for parametric assumptions  on the approximation coefficients. Our test is shown to be adaptive to the strength of the time series dependence as well as the smoothness of the underlying data generating mechanism; see Propositions \ref{prop_normal} and \ref{prop_power} and Algorithm \ref{alg:boostrapping} for more details. The theoretical investigation of the test critically depends on a result on Gaussian approximations to quadratic forms of high-dimensional locally stationary time series developed in the current paper (c.f. Theorem \ref{thm_gaussian}). In particular, uniform Gaussian approximations over high-dimensional convex sets \cite{CF, FX}  as well as $m$-dependent approximations to quadratic forms of non-stationary time series are important techniques used in the proofs.

Interestingly, the test of stability for the AR approximation coefficients is asymptotically equivalent to testing correlation stationarity in the case of locally stationary time series; see Theorem \ref{lem_reducedtest} for more details. Here correlation stationarity means that the correlation structure of the time series does not change over time. As a result, our stability test can also be viewed as an adaptive test for correlation stationarity. In the statistics literature, there is a recent surge of interest in testing {\it covariance} stationarity of a time series using techniques from the spectral domain. See, for instance,  \cite{DPV,DR, GN, EP2010}. But it seems that the tests for correlation stationarity have not been carefully discussed in the literature. {Observe that the time-varying marginal variance has to be estimated and removed from the time series in order to apply the aforementioned tests to checking correlation stationarity \cite{dette2019change, zhao2015inference}. However, it is unknown whether the errors introduced in such estimation would influence the finite sample and asymptotic behaviour of the tests. Furthermore, estimating the marginal variance usually involves the difficult choice of a smoothing parameter. One major advantage of our test when used as a test of correlation stationarity is that it is totally free from the marginal variance as the latter quantity is absorbed into the errors of the AR approximation and hence is independent of the AR approximation coefficients. } 

Historically, the Wiener-Kolmogorov prediction theory was motivated by the optimal forecasting problem of stationary processes. Analogously, the AR approximation theory established in this paper is directly applicable to the problem of  optimal short-term linear forecasting of non-stationary time series. For locally stationary time series, thanks to the AR approximation theory, the optimal short-term forecasting problem boils down to that of efficiently estimating the smoothly-varying AR approximation coefficient functions at the right boundary. We propose a nonparametric sieve regression method to estimate the latter coefficient functions and the associated MSE of forecast. Contrary to most non-stationary time series forecasting methods in the literature where only data near the end of the sequence are used to estimate the parameters of the forecast, the nonparametric sieve regression is global in the sense that it utilizes all available time series observations to determine the optimal forecast coefficients and hence is expected to be more efficient. Furthermore, by controlling the number of basis functions used in the regression, we demonstrate that the sieve method is adaptive in the sense that the estimation accuracy achieves global minimax rate for nonparametric function estimation in the sense of \cite{stone1982} under some mild conditions; see Theorem \ref{thm_finalresult} for more details.  In the statistics literature, there have been some scattered works discussing non-stationary time series prediction from some different angles. See for instance  \cite{DP,dette2020prediction,FBS,KPF, RSP}, among others.  With the aid of the AR approximation, we are able to establish a unified globally-optimal short-term forecasting theory for a wide class of locally stationary time series asymptotically.

The rest of the paper is organized as follows. In Section \ref{sec:preliminary}, we introduce the AR approximation results for both general non-stationary time series and locally stationary time series.  In Section \ref{sec:test}, we test the stability of the AR approximation using  $\mathcal{L}^2$ statistics of the estimated AR coefficient functions for locally stationary time series. A multiplier bootstrap procedure is proposed and theoretically verified for practical implementation. In Section \ref{sec_application}, we provide one important application of our AR approximation theory in optimal forecasting of locally stationary time series.
%
In Section \ref{sec:simu}, we use extensive Monte Carlo simulations to verify the accuracy and power of our proposed methodologies. 
In  Section \ref{sec:realdata}, we conduct analysis on a financial real data set using our proposed methods.   Technical proofs  are deferred to the supplementary material \cite{DZ2supp}. 
\vspace{4pt}

\noindent {\bf Convention}. Throughout the paper, we will consistently use the following notations.  For a matrix $Y$ or vector $\bm{y},$ we use $Y^*$ and $\bm{y}^*$ to stand for their transposes. {For a scalar or vector $\bm{z}=(z_1, \cdots, z_p)^* \in \mathbb{R}^p,$ we use $| \bm{z} |=\sqrt{\sum_{j=1}^p z_j^2}$ to denote its $\ell_2$ (Euclidean) norm. For a random variable or vector $x$ and some constant $q\ge 1,$ denote by $\|x \|_q=(\mathbb{E}|x|^q)^{1/q}$ its $L^q$ norm.} For two sequences of real numbers $\{a_n\}$ and $\{b_n\},$  $a_n=O(b_n)$ means that $|a_n| \leq C|b_n|$ for some finite constant $C>0,$ and $a_n=o(b_n)$ means that $|a_n| \leq c_n |b_n|$ for some positive sequence $c_n \downarrow 0$ as $n \rightarrow \infty.$ For a sequence of  random variables $\{x_n\}$ and positive real values $\{a_n\},$ we use the notation $x_{n}=O_{\mathbb{P}}(a_n)$ to state that $x_n/a_n$ is stochastically bounded. 
Similarly, we use the notation $x_n=o_{\mathbb{P}}(a_n)$ to say that $x_n/a_n$ converges to 0  in probability. Moreover, we use the notation $x_n=O_{\ell^q}(a_n)$ to state that $x_n/a_n$ is bounded in $L^q$ norm; that is, $\|x_n/a_n\|_q\le C$ for some finite constant $C$. Similarly, we can define $ x_n=o_{\ell^q}(a_n)$.
We will always use $C$ as a genetic positive and finite constant independent of $n$ whose value may change from line to line.

\section{Auto-Regressive Approximations to Non-stationary Time Series } \label{sec:preliminary}

In this section, we establish a general AR approximation theory for a non-stationary time series $\{x_{i,n}\}$ under mild assumptions related to its covariance structure. 
Specifically, in Section \ref{sec:nonzeromeandiscussion}, we study general non-stationary time series. In Section \ref{sec:arappoximate}, we investigate the special case of locally stationary time series where the covariance structure is assumed to be smoothly time-varying. Before proceeding to our main results, we pause to introduce two mild assumptions.  

First, in order to avoid erratic behavior of the AR approximation,
the smallest eigenvalue of the time series {covariance} matrix should be bounded away from zero. For stationary time series, this is equivalent to the uniform positiveness of the spectral density function which is widely used in the literature. Further note that the latter assumption is mild and frequently used in the statistics literature of covariance and precision matrix estimation; see, for instance, \cite{cai2016, chen2013, Yuan2010} and the references therein. In this paper we shall call this \emph{uniformly-positive-definite-in-covariance (UPDC)} condition and formally summarize it as follows. 
\begin{assu}[UPDC]\label{assu_pdc} For all {sufficiently large} $n \in \mathbb{N}, $ we assume that there exists a universal constant $\kappa>0$ such that {
\begin{equation}\label{eq_defnkappa}
\lambda_n(\operatorname{Cov}(x_{1,n}, \cdots, x_{n,n})) \geq \kappa,
\end{equation} }
where $\lambda_n(\cdot)$ is the smallest eigenvalue of the given matrix and $\operatorname{Cov}(\cdot)$ is the covariance matrix of the given vector. 
\end{assu}

As discussed earlier, the UPDC is a mild assumption and is widely used in the literature. Moreover, for locally stationary time series, we will provide a necessary and sufficient condition from spectral domain (c.f. Proposition \ref{prop_pdc}) for practical checking. Second, we impose the following assumption to control the covariance decay speed of {$\{x_{i,n}\}$.}

\begin{assu}\label{assum_shortrange} {For all $n \in \mathbb{N},$ $1 \leq k \leq n$ and $-k+1 \leq r \leq n-k,$} we assume that there exists some constant $\tau>1$ such that {
\begin{equation}\label{eq_polynomialdecay}
\max_{k, n}\left| \operatorname{Cov}(x_{k,n}, x_{k+r,n}) \right| \leq C |r|^{-\tau},
\end{equation} }
where $C>0$ is some universal constant independent of $n.$ {In addition, we assume that $\sup_{i,n} \mathbb{E} |x_{i,n}|<\infty.$}
\end{assu}

Assumption \ref{assum_shortrange} states that the covariance structure of {$\{x_{i,n}\}$} decays polynomially fast and it can be easily satisfied for many non-stationary time series; see Example \ref{exam_onenon} for a demonstration. Note that $\tau>1$ amounts to a short range dependent requirement for {$\{x_{i,n}\}$} in the sense that {$|\sum_{l=1}^n\operatorname{Cov}(x_{k,n}, x_{l,n})|$} is bounded above by a fixed finite constant for all $k$ and $n$ while the latter sum may diverge if $\tau\le 1$.

\begin{rem}
In (\ref{eq_polynomialdecay}), we assume a polynomial decay rate. We can easily obtain analogous results to those established in this paper when the covariance decays exponentially fast, {
\begin{equation}\label{eq_exponentialdecay}
\max_{k, n}\left| \operatorname{Cov}(x_{k,n}, x_{k+r,n}) \right| \leq C a^{|r|}, \ 0<a<1. 
\end{equation}
}
For the sake of brevity, we focus on reporting our main results under the polynomial decay Assumption \ref{assum_shortrange}. 
From time to time, we will briefly mention the results under the exponential decay assumption (\ref{eq_exponentialdecay}) without providing extra details. 
\end{rem}

\subsection{AR approximation for general non-stationary time series} \label{sec:nonzeromeandiscussion}
{In this subsection, we establish an AR approximation theory for general non-stationary time series {$\{x_{i,n}\}$} satisfying Assumptions  \ref{assu_pdc} and \ref{assum_shortrange}.
{Denote by $b \equiv b(n)$ a generic value which specifies the order of the AR approximation. } In what follows, we investigate the accuracy of an AR($b$) approximation to {$\{x_{i,n}\}$} and provide the error rates using such an approximation. Observe that for theoretical and practical purposes $b$ is typically required to be much smaller than $n$ in order to achieve a parsimonious approximating model. For $i>b,$ the best linear prediction (in terms of the mean squared prediction error) {$\widehat{x}_{i,n} $}  of {$x_{i,n}$} 
 is denoted as   {
\begin{equation}\label{eq_hatxidefn}
\widehat{x}_{i,n}=\phi_{i0,n}+\sum_{j=1}^{i-1} \phi_{ij,n} x_{i-j,n}, \ i=b+1, \cdots, n,
\end{equation}}
where {$\phi_{ij,n}, 0 \leq j \leq i-1,$} are the prediction coefficients. 
Denote {$\epsilon_{i,n}:=x_{i,n}-\widehat{x}_{i,n}.$} It is well-known that {$\{\epsilon_{i,n}\}_{i=1}^n$} is a time-varying white noise process, i.e., {$\mathbb{E}\epsilon_{i,n}=0 \ , \operatorname{Cov} (\epsilon_{i,n}, \epsilon_{j,n})= \mathbf{1}(i=j) \sigma_{i,n}^2.$ }

Armed with the above notation, we write {
\begin{equation}\label{eq_arapproxiamtionnoncenter}
x_{i,n}=\phi_{i0,n}+\sum_{j=1}^{i-1} \phi_{ij,n} x_{i-j,n}+\epsilon_{i,n}, \ i=b+1,\cdots, n.
\end{equation} }
{ We point out that the coefficients {\{$\phi_{ij,n}$\}} are closely related to the Cholesky decomposition of the covariance and precision matrices of {$\{x_{i,n}\}$} \cite{DZ1, kang2020variable, pourahmadi1999joint}. For more details, we refer the readers to Section \ref{suppl_subsec_remark1} of our supplement \cite{DZ2supp}.}
To provide an AR approximation of order $b,$ where $b$ may be much smaller than $n,$ we need to examine the theoretical properties of the coefficients {$\phi_{ij,n}$.} We summarize the results in Theorem \ref{lem_phibound}. 

}


\begin{thm}\label{lem_phibound} 
Suppose Assumptions \ref{assu_pdc} and \ref{assum_shortrange} hold for {$\{x_{i,n}\}$}. 
For  $\tau$ in (\ref{eq_polynomialdecay}),  there exists some constant $C>0,$ when $n$ is sufficiently large,
we have that  {
\begin{equation}\label{eq_phibound1}
\max_i | \phi_{ij,n}| \leq C \left( \frac{\log j+1}{j} \right)^{\tau-1}, \ \ \text{for all} \ j \geq 1. 
\end{equation} }
Moreover, analogously to (\ref{eq_arapproxiamtionnoncenter}), denote by { $\{ \phi_{ij,n}^{b}\}$} the best linear forecast coefficients of { $x_{i,n}$} based on { $x_{i-1,n}, \cdots, x_{i-b,n},$} i.e.,  {
\begin{equation}\label{eq_arbapproximation}
x_{i,n}=\phi_{i0,n}^b+\sum_{j=1}^b \phi_{ij,n}^b x_{i-j,n}+\epsilon_{i,n}^b, \ i>b. 
\end{equation} }
{ Denote 
\begin{equation}\label{eq_newdefnphi}
\bm{\phi}_{i,n}=(\phi_{i1,n}, \cdots, \phi_{i,i-1,n})^*, \ \bm{\phi}_{i,n}^b=(\phi_{i1,n}^b, \cdots, \phi_{ib,n}^b, \bm{0})^* \in \mathbb{R}^{i-1}.
\end{equation}
}
Then we have that { for $\tau>2,$}
\begin{equation}\label{eq_phibound2}
\begin{aligned}{
\max_{i>b} | \bm{\phi}_{i,n}-\bm{\phi}_{i,n}^b | \leq C (\log b)^{\tau-1} b^{-(\tau-1)}, }  \\   
{\max_{i>b} |\phi_{i0,n}-\phi_{i0,n}^b| \leq C (\log b)^{\tau} b^{-(\tau-2)}. }
\end{aligned}
\end{equation} 
\end{thm}

On the one hand, Theorem \ref{lem_phibound} is general and only needs mild assumptions on the covariance structure of { $\{x_{i,n}\}.$} On the other hand, all error bounds in Theorem \ref{lem_phibound} are adaptive to the decay rate of the temporal dependence and the order of the AR approximation. Particularly, by \eqref{eq_phibound1}, we only need $\tau>1$ to ensure a polynomial decay of the coefficients {$\phi_{ij,n}$} as a function of $j$. Meanwhile, \eqref{eq_phibound2} establishes that the best linear forecast coefficients of {$x_{i,n}$} based on {$x_{i-1,n}, \cdots, x_{1,n}$} and { $x_{i-1,n}, \cdots, x_{i-b,n}$ } are close provided that $\tau$ and $b$ are sufficiently large. {We point out that unlike the results in \cite{Baxter_1962}, our result (\ref{eq_phibound2}) are stated in $\ell_2$ norm.}  

Based on Theorem \ref{lem_phibound}, we establish an AR approximation theory for the time series { $\{x_{i,n}\}$} in Theorem \ref{thm_arrepresent}. 
Denote the process { $\{x_{i,n}^*\}$} by {
\begin{equation}\label{defn_xistart}
x_{i,n}^*=
\begin{cases}
x_{i,n},  & i \leq b; \\
\phi_{i0,n}+\sum_{j=1}^b \phi_{ij,n} x_{i-j,n}^*+\epsilon_{i,n}, & i>b.  
\end{cases}
\end{equation}}
Since { $\{\epsilon_{i,n}\}$} is a time-varying white noise process, by construction, we have that { $\{x_{i,n}^*\}_{i\ge 1}$} is { a time-varying} AR($b$) process.

{
\begin{thm} \label{thm_arrepresent} Suppose the assumptions of Theorem \ref{lem_phibound} hold. Then we have that  for all $1 \leq i \leq n$
\begin{equation}\label{eq_nonzeromean11}
x_{i,n}=\phi_{i0,n}+\sum_{j=1}^{\min\{b,i-1\}} \phi_{ij,n} x_{i-j,n}+\epsilon_{i,n}+O_{\ell^2}\Big((\log b)^{\tau-1}  b^{-(\tau-1.5)}  \Big). 
\end{equation}
Furthermore, we have
\begin{equation}\label{eq_induction}
x_{i,n}-x_{i,n}^*=O_{\ell^2}\Big((\log b)^{\tau-1}  b^{-(\tau-1.5)} \Big ). 
\end{equation} 
\end{thm}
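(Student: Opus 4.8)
The plan is to prove the two displays in turn, obtaining (\ref{eq_induction}) from (\ref{eq_nonzeromean11}) together with a uniform stability property of the recursion (\ref{defn_xistart}). \emph{Step 1 (the truncation error (\ref{eq_nonzeromean11})).} I would start from the exact best-linear-prediction identity (\ref{eq_arapproxiamtionnoncenter}): the only gap between its right-hand side and that of (\ref{eq_nonzeromean11}) is the discarded tail $\Delta_i:=\sum_{j=\min\{b,i-1\}+1}^{i-1}\phi_{ij}x_{i-j}$ (empty, hence $0$, when $i\le b+1$), so it suffices to bound $\|\Delta_i\|_2$ uniformly in $i$. Writing $\|\Delta_i\|_2^2$ as a quadratic form in $\mathbf a:=(\phi_{ij})_{j>b}$ against the covariance matrix of $(x_{i-j})_{j>b}$, and bounding the largest eigenvalue of that covariance submatrix by an absolute constant via a Gershgorin/Schur-test estimate (this is where $\tau>1$, equivalently $\sup_k\sum_l|\operatorname{Cov}(x_k,x_l)|<\infty$, enters), gives $\|\Delta_i\|_2^2\le C\sum_{j>b}\phi_{ij}^2$. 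Inserting the decay rate (\ref{eq_phibound1}) and summing the series $\sum_{j>b}\bigl((\log j+1)/j\bigr)^{2(\tau-1)}$, which is of order $(\log b)^{2(\tau-1)}b^{-(2\tau-3)}$ when $\tau>3/2$, gives exactly the square of the asserted rate; for $1<\tau\le 3/2$ the asserted rate is bounded below by a positive constant and the crude bound $\sum_{j\ge1}\phi_{ij}^2\le\operatorname{Var}(x_i)/\kappa\le C$ (from Assumption \ref{assu_pdc} and eigenvalue interlacing) already suffices.

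\emph{Step 2 (the approximation error (\ref{eq_induction})).} Set $e_i:=x_i-x_i^*$. By (\ref{defn_xistart}), $e_i=0$ for $i\le b$, while subtracting (\ref{defn_xistart}) from (\ref{eq_nonzeromean11}) gives, for $i>b$, the driven recursion $e_i=\sum_{j=1}^{b}\phi_{ij}e_{i-j}+r_i$, where $r_i$ is the $O_{\ell^2}$ remainder of (\ref{eq_nonzeromean11}), so $\sup_i\|r_i\|_2=O\bigl((\log b)^{\tau-1}b^{-(\tau-1.5)}\bigr)=:O(R)$. Stacking $\mathbf e_i:=(e_i,\dots,e_{i-b+1})^{*}$ turns this into $\mathbf e_i=\Phi_i\mathbf e_{i-1}+\mathbf r_i$, with $\Phi_i$ the companion matrix of $(\phi_{i1},\dots,\phi_{ib})$ and $\mathbf r_i:=(r_i,0,\dots,0)^{*}$; unrolling from $\mathbf e_b=\mathbf 0$ yields $\mathbf e_i=\sum_{m=b+1}^{i}\bigl(\prod_{\ell=m+1}^{i}\Phi_\ell\bigr)\mathbf r_m$. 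The proof is then completed by a \emph{uniform geometric stability} bound on the forward products: there should exist $C>0$ and $\rho\in(0,1)$ depending only on $\kappa$, $\tau$ and the constants in Assumptions \ref{assu_pdc}--\ref{assum_shortrange} such that $\|\prod_{\ell=m+1}^{i}\Phi_\ell\|\le C\rho^{\,i-m}$ for all $b\le m\le i\le n$; granting it, $\|e_i\|_2\le\|\mathbf e_i\|_2\le\bigl(\sum_{m>b}C\rho^{i-m}\bigr)\sup_m\|r_m\|_2=O(R)$, which is (\ref{eq_induction}), and as a by-product $\sup_i\|x_i^*\|_2\le\sup_i\|x_i\|_2+O(R)<\infty$.

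\emph{Where the difficulty lies.} Everything above is routine except the uniform geometric stability of $\prod_\ell\Phi_\ell$: the naive triangle-inequality bound controls only $\sum_{j=1}^b\max_i|\phi_{ij}|$, which Theorem \ref{lem_phibound} does not force below one, so genuine work is needed here. My plan for it is to exploit the UPDC condition: first establish geometric stability for the companion matrices $\Phi_i^b$ of the \emph{bona fide} $b$-lag prediction coefficients $\{\phi_{ij}^b\}$ of (\ref{eq_arbapproximation})---which are governed by the block $\operatorname{Cov}(x_{i-b},\dots,x_{i-1})$, a principal submatrix of $\operatorname{Cov}(x_1,\dots,x_n)$ and hence uniformly well-conditioned by Assumption \ref{assu_pdc}---using the same banded-inverse and operator-spectral machinery that underlies Theorem \ref{lem_phibound}; then transfer the stability to $\Phi_i$ by treating it, via the coefficient-closeness estimate (\ref{eq_phibound2}), as a uniformly small perturbation of $\Phi_i^b$. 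Under the exponential-decay alternative (\ref{eq_exponentialdecay}) the same scheme applies with the polynomial rates replaced by geometric ones.
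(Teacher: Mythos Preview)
Your Step 1 coincides with the paper's: both isolate the tail $\sum_{j>b}\phi_{ij}x_{i-j}$ and bound its $L^2$ norm via (\ref{eq_polynomialdecay}) and (\ref{eq_phibound1}); your eigenvalue/Schur-test packaging and the paper's explicit double sum $\sum_{j}\sum_{k}\phi_{ij}\phi_{i,j+k}|k|^{-\tau}$ lead to the same $\sum_{j>b}\phi_{ij}^2$ estimate.

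For Step 2 you are in fact more careful than the paper. The paper writes down the same recursion $e_{k+1}=\sum_{j=1}^b\phi_{(k+1)j}\,e_{k+1-j}+r_{k+1}$ but then closes it in a single line---``by induction and Theorem \ref{lem_phibound}''---without explaining why the implicit $O_{\ell^2}$ constant does not accumulate along $k$. You correctly note that (\ref{eq_phibound1}) does not force $\sum_{j}|\phi_{ij}|<1$, so a bare triangle-inequality induction can blow up; this stability issue is exactly what the paper's proof elides, and your companion-matrix program is the natural way to make it rigorous. One caveat on your proposed transfer step: (\ref{eq_phibound2}) only gives $\|\Phi_i-\Phi_i^b\|=O\bigl((\log b)^{\tau-1}b^{-(\tau-3.5)}\bigr)$, which is small only for $\tau>3.5$, so for the full stated range you would need to argue stability of $\prod_\ell\Phi_\ell$ directly from UPDC rather than by perturbation from $\Phi_i^b$. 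A route that avoids time-varying products altogether is to write the system as $L_b\mathbf e=\mathbf r$, with $L_b$ the $(b{+}1)$-banded unit lower-triangular matrix with subdiagonal entries $-\phi_{ij}$ (the bandwidth-$b$ truncation of the Cholesky factor $L$ in $\Sigma^{-1}=L^{*}D^{-1}L$), and then obtain entrywise decay of $L_b^{-1}$ by applying Lemma \ref{lem_band} to $L_bL_b^{*}$, whose spectrum is controlled through UPDC via $\|L\|^2\le(\max_i\sigma_i^2)/\kappa$, $\|L^{-1}\|^2\le\|\Sigma\|/\min_i\sigma_i^2$, and the smallness of $\|L-L_b\|$; this yields $\sum_j|(L_b^{-1})_{ij}|\le C$ uniformly and hence $\|e_i\|_{L^2}\le C\sup_j\|r_j\|_{L^2}$.
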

}

Recall from convention in the end of Section \ref{sec_intro} that the $O_{\ell^2}$ notation means bounded in the $L^2$ norm. Note that the AR approximation error diminishes as $b\rightarrow\infty$ if $\tau>1.5.$ Theorem \ref{thm_arrepresent} demonstrates that every sufficiently short-range dependent and UPDC non-stationary time series can be efficiently approximated by an AR process of slowly diverging order (c.f. (\ref{eq_induction})).  Furthermore, the approximation error is adaptive to the decay rate of the time series covariance as well as the approximation order $b$. 
 

{
\begin{rem}\label{rem_exponentialdecay}
Our results can be easily extended to the case when the temporal dependence is of  exponential decay, i.e., (\ref{eq_exponentialdecay}) holds true. 
In this case and under the UPDC condition, 
(\ref{eq_phibound1}) can be updated to ${ |\phi_{ij,n}|} \leq C \max\{n^{-2}, a^{j/2}\}, \ j>1, \ C>0 \ \text{is some constant}, $ and the magnitude of the error bounds in equations (\ref{eq_phibound2}), (\ref{eq_nonzeromean11}) and (\ref{eq_induction}) can all be changed to $\max\{ b^{3/2}/n^2, n^{-1}, a^{b/2}\}$.
\end{rem}
}

Before concluding this subsection, we provide an example of a general class of non-stationary time series using their physical representations \cite{WW,WZ1} and illustrate how to check the short range dependence assumption \ref{assum_shortrange} for this class of non-stationary processes. 

\begin{exam}\label{exam_onenon}
For a non-stationary time series { $x_{i,n}, 1 \leq i \leq n,$}, we assume that it  has the following form { 
\begin{equation}\label{eq_xi}
x_{i,n}=G_{i,n}(\mathcal{F}_i), \ i=1,2,\cdots,n,
\end{equation}
}
where $\mathcal{F}_i:=(\cdots, \eta_{i-1}, \eta_i)$ and $\eta_i, i \in \mathbb{Z}$ are i.i.d. random variables and the sequence of functions { $ G_{i,n}: \{1,2,\cdots, n\} \times \mathbb{R}^{\infty} \rightarrow \mathbb{R} $} are measurable such that for all $ 1 \leq
 i_0 \leq n,$ $G_{i_0,n}(\mathcal{F}_i) $ is a properly defined random variable. The above representation is very general since { under some mild regularity conditions,} any non-stationary time series $\{x_{i,n}\}_{i=1}^n$ can be represented in the form of \eqref{eq_xi} via the Rosenblatt transform \citep{rosenblatt1952}; { see \cite[Section 4]{wu2010new} and Section \ref{sec_suppl_remarks} of our revised supplement \cite{DZ2supp} for  more detailed discussion. } 
 
Under the representation (\ref{eq_xi}), temporal dependence can be quantified using physical dependence measures  \cite{WW, ZZ1, WZ2}.  Let $\{\eta_i^{\prime}\}$ be an i.i.d. copy of $\{\eta_i\}.$ 
For $j \geq 0,$ we define the physical dependence measure of { $\{x_{i,n}\}$} by 
\begin{equation}\label{eq_phygeneral}
\delta(j,q):=\sup_n\sup_{i} || G_{i,n}(\mathcal{F}_0)-G_{i,n}(\mathcal{F}_{0,j}) ||_q,
\end{equation}
where $\mathcal{F}_{0,j}:=(\mathcal{F}_{-j-1}, \eta_{-j}^{\prime},\eta_{-j+1},\cdots, \eta_0).$

From Lemma \ref{lem_coll} of \cite{DZ2supp}, Assumption \ref{assum_shortrange} will be satisfied if 
\begin{equation}\label{eq_physcialchecking}
\delta(j,2) \leq C j^{-\tau}. 
\end{equation}  

Hence (\ref{assum_shortrange}) can be directly checked by the physical dependence measures of a non-stationary time series.  For a more specific example, consider the following non-stationary linear processes { $x_{i,n}=\sum_{j=0}^{\infty} a_{ij,n} z_{i-j},$} where $\{z_i\}$ are i.i.d. random variables with finite variance and { $ a_{ij,n}$} are some constants. In this case, it is easy to see that (\ref{eq_physcialchecking}) is satisfied if $\sup_n { \sup_i|a_{ij,n}| } \leq C j^{-\tau}.$ { For more examples in the form of (\ref{eq_xi}), we refer the readers to \cite{WW} and \cite[Section 2.1]{DZ1}. } 
\end{exam}

{}

\subsection{AR approximation for locally stationary time series}\label{sec:arappoximate}

From the discussion of Section \ref{sec:nonzeromeandiscussion}, we have seen that every UPDC and sufficiently short-range dependent non-stationary time series can be well approximated by an AR process with diverging order (c.f. (\ref{defn_xistart}) and Theorem \ref{thm_arrepresent}). However, from an estimation viewpoint, since we assume that only one realization of the time series is observed, the Yule-Walker equations by which the AR coefficients (c.f. (\ref{eq_arapproxiamtionnoncenter}) or (\ref{eq_arbapproximation})) are governed are clearly underdetermined linear systems (i.e. there are more unknown parameters than the number of equations). Therefore, additional constraints/assumptions on the non-stationary temporal dynamics have to be imposed in order to estimate the AR approximation coefficients consistently. In this section, we shall consider an important subclass of non-stationary time series, the locally stationary time series \cite{dahlhaus2012locally,DPV,dette2020prediction,MR3097614,WZ1}.  This class of non-stationary time series is characterized by assuming that the underlying data generating mechanism evolves {\it smoothly} over time. 


 In this subsection, we will establish an AR approximation theory for locally stationary time series under certain smoothness assumptions of their covariance structure.  
We start with the following definition. { Analogous definitions have been used in \cite{KPF,RSP}. }
\begin{defn}[Locally stationary time series]\label{defn_locallystationary} A non-stationary time series { $\{x_{i,n}\}$} is a locally stationary time series (in covariance) if there exists a function $\gamma(t, k): [0,1] \times \mathbb{N} \rightarrow \mathbb{R}$ such that {
\begin{equation}\label{eq_covdefn}
\operatorname{Cov}(x_{i,n},x_{j,n})=\gamma(t_i, |i-j|)+O \left( \frac{|i-j|+1}{n} \right), \ t_i=\frac{i}{n}.
\end{equation} }
Moreover, we assume that $\gamma$ is Lipschitz continuous in $t$ and for any fixed $t \in[0,1],$ $\gamma(t,\cdot)$ is the autocovariance function of a stationary process.  
\end{defn}

Our  Definition \ref{defn_locallystationary} only imposes  a smoothness assumption on the covariance structure of { $\{x_{i,n}\}$.} In particular, \eqref{eq_covdefn} means that the covariance structure of { $\{x_{i,n}\}$} in any small time segment can be well approximated by that of a stationary process. Definition \ref{defn_locallystationary} covers many locally stationary time series models used in the literature \cite{dahlhaus2012locally,DPV,dette2020prediction,MR3097614,WZ1}. For more discussions, we refer the readers to Example \ref{eexample_linear} below. { Finally, we point out that for locally stationary time series with short-range dependence 
satisfying Assumption \ref{assum_shortrange}, we can update (\ref{eq_covdefn}) to {
\begin{equation}\label{eq_definedmodified} 
\operatorname{Cov}(x_{i,n},x_{j,n})=\gamma(t_i, |i-j|)+O \left( \min \left( \frac{|i-j|+1}{n}, \frac{1}{|i-j|^{\tau}} \right) \right), \ t_i=\frac{i}{n}.
\end{equation}
}
 }

%



Equipped with Definition \ref{defn_locallystationary}, we first provide a necessary and sufficient condition for the UPDC assumption in the case of locally stationary time series. {
For stationary time series, Herglotz's theorem asserts that UPDC holds if the spectral density function is bounded from below by a constant; see \cite[Section 4.3]{BD} for more details. Our next proposition extends such results to locally stationary time series with short-range dependence.   
\begin{prop}\label{prop_pdc} If $\{x_{i,n}\}$ is  locally stationary time series satisfying   Assumption \ref{assum_shortrange} and Definition \ref{defn_locallystationary}, and there exists some constant $\kappa>0$ such that $f(t,\omega) \geq \kappa$ for all $t$ and $\omega,$ where 
\begin{equation}\label{eq_spetraldensity}
f(t, \omega)=\sum_{j=-\infty}^{\infty} \gamma(t,j) e^{-\mathrm{i} j \omega}, \ \mathrm{i}=\sqrt{-1},
\end{equation} 
then $\{x_{i,n}\}$ satisfies Assumption \ref{assu_pdc}. Conversely, if $\{x_{i,n}\}$ satisfies   Assumptions \ref{assu_pdc} and \ref{assum_shortrange} and Definition \ref{defn_locallystationary}, then there exists some constant $\kappa>0,$ such that $f(t,\omega) \geq \kappa$ for all $t$ and $\omega.$
\end{prop}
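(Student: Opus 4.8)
The plan is to establish the two directions of the equivalence by relating the quadratic form $\bm{a}^* \operatorname{Cov}(x_1,\dots,x_n) \bm{a}$ to a Riemann-sum approximation of $\frac{1}{2\pi}\int_0^1\int_{-\pi}^{\pi} f(t,\omega)\,|\widehat{a}(t,\omega)|^2\,d\omega\,dt$, where $\widehat{a}$ is a suitable (windowed/local) Fourier transform of the test vector $\bm{a}$. First I would fix a unit vector $\bm{a}=(a_1,\dots,a_n)^*$ and write $\bm{a}^*\operatorname{Cov}(x_1,\dots,x_n)\bm{a}=\sum_{i,j}a_i a_j \operatorname{Cov}(x_i,x_j)$. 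Using Definition \ref{defn_locallystationary}, replace $\operatorname{Cov}(x_i,x_j)$ by $\gamma(t_i,|i-j|)$ at the cost of an error $\sum_{i,j}|a_i||a_j|\,O((|i-j|+1)/n)$; because $\sum_j (|j|+1)|j|^{-\tau}$-type weights are summable is \emph{not} quite what we need here, the cleaner bound is $\sum_{i,j}|a_i||a_j|(|i-j|+1)/n \le \frac{1}{n}\sum_k (|k|+1)\sum_i |a_i||a_{i+k}|$, and since $\sum_i|a_i||a_{i+k}|\le \|\bm a\|_2^2=1$ this is $O(n^{-1}\sum_{|k|\le n}(|k|+1))=O(n)$, which is too crude. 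So instead I would exploit Assumption \ref{assum_shortrange} to truncate $|i-j|\le L$ for a slowly growing $L$ (say $L=\log^2 n$), controlling the tail $\sum_{|i-j|>L}|a_i||a_j||k|^{-\tau}=o(1)$, and on the truncated range the Definition-\ref{defn_locallystationary} error becomes $O(L^2/n)=o(1)$. This reduces everything to the block-stationary quadratic form $\sum_{i,j:\,|i-j|\le L} a_i a_j \gamma(t_i,|i-j|)$.

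Next, the key step: partition $\{1,\dots,n\}$ into consecutive blocks of length $m$ with $L\ll m\ll n$ (e.g.\ $m=\sqrt{n}$). Within each block $B_r$, the Lipschitz continuity of $\gamma(\cdot,k)$ in $t$ lets us replace $\gamma(t_i,|i-j|)$ by $\gamma(s_r,|i-j|)$ where $s_r$ is the block midpoint, incurring an error $O(m/n)$ per entry, hence $o(1)$ in total after summing against $|a_i||a_j|$ over the $L$-banded region. The cross-block interactions (pairs $i,j$ in different blocks with $|i-j|\le L$) number $O(nL/m)$ entries and contribute $o(1)$. We are then left with $\sum_r \bm{a}_{B_r}^* T_m(s_r) \bm{a}_{B_r}$, a sum of genuine Toeplitz quadratic forms $T_m(s_r)$ built from the stationary autocovariance $\gamma(s_r,\cdot)$. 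For the forward direction, $f(t,\omega)\ge\kappa$ for all $(t,\omega)$ implies by Herglotz/Grenander–Szegő (the $m$-dimensional Toeplitz matrix of a spectral density bounded below by $\kappa$ has smallest eigenvalue $\ge\kappa$; see \cite[Section 4.3]{BD}) that $\bm{a}_{B_r}^* T_m(s_r)\bm{a}_{B_r}\ge \kappa\|\bm{a}_{B_r}\|_2^2$, so summing over $r$ gives $\ge \kappa\|\bm a\|_2^2 - o(1) = \kappa - o(1)$; since $\bm a$ was an arbitrary unit vector and $n$ arbitrary, Assumption \ref{assu_pdc} holds (after renaming $\kappa$, or taking $\kappa/2$ for large $n$ and then adjusting the finitely many small $n$ using that a positive-definite covariance on a finite index set automatically has a positive smallest eigenvalue).

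For the converse, suppose toward contradiction that $\inf_{t,\omega} f(t,\omega)=0$, i.e.\ there are points $(t_*,\omega_*)$ (or a sequence) with $f(t_*,\omega_*)$ arbitrarily small. By continuity of $f$ (which follows from Lipschitz continuity in $t$ and the summability $\sum_j|\gamma(t,j)|<\infty$ guaranteed by Assumption \ref{assum_shortrange}, giving uniform convergence of the series hence continuity in $\omega$), there is a small rectangle $[t_*-\delta,t_*+\delta]\times[\omega_*-\delta,\omega_*+\delta]$ on which $f<\varepsilon$. I would then construct an explicit unit test vector $\bm a$ supported on the block of indices $i$ with $t_i\in[t_*-\delta,t_*+\delta]$ and oscillating at frequency $\omega_*$, namely $a_i \propto w(t_i)\, e^{\mathrm{i}\omega_* i}$ (take real/imaginary parts) with a smooth taper $w$, chosen so that the associated local spectral mass concentrates in the rectangle. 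Running the block-Toeplitz reduction above in reverse shows $\bm a^*\operatorname{Cov}(x_1,\dots,x_n)\bm a \le \sup_{\text{rect}} f \cdot \|\bm a\|_2^2 + o(1) \le \varepsilon + o(1) < \kappa$ for $n$ large and $\varepsilon$ small, contradicting Assumption \ref{assu_pdc}.

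The main obstacle I anticipate is the bookkeeping in the block decomposition: one must simultaneously make $L$ large enough that the covariance-decay tails vanish, $m$ large enough that $L/m\to 0$ (so cross-block terms are negligible) and that each Toeplitz block is "long" relative to $L$ (so the finite-$m$ Szegő bound is uniform in $r$ and $n$), yet $m$ small enough that $m/n\to 0$ (so the Lipschitz approximation error is negligible) — all uniformly over the choice of unit vector $\bm a$. Threading these three scales, and in particular making the Toeplitz-eigenvalue bound uniform over the (continuum of) block reference times $s_r$ using uniform continuity of $(t,\omega)\mapsto f(t,\omega)$ on the compact set $[0,1]\times[-\pi,\pi]$, is the delicate part; the two appeals to the classical stationary theory (Herglotz/Szegő) are otherwise standard.
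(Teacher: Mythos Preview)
Your overall strategy---localize in time so the covariance looks Toeplitz, then invoke the classical spectral-density lower bound---is exactly the right idea, and the converse direction via a tapered Fourier test vector is essentially equivalent to the paper's direct computation with $g_n(t_i,\omega)$. But your forward direction has a genuine gap at the step ``the cross-block interactions \ldots\ contribute $o(1)$.''

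The problem is that this error is not $o(1)$ \emph{uniformly over unit vectors} $\bm{a}$. Write $E$ for the matrix collecting the cross-block entries $\gamma(t_i,|i-j|)\mathbf{1}[i\in B_r,\,j\in B_{r'},\,r\neq r',\,|i-j|\le L]$. For an index $i$ within $L$ of a boundary, the row sum $\sum_j|E_{ij}|\le\sum_{k\ge 1}|\gamma(\cdot,k)|=O(1)$, so by Gershgorin $\|E\|_{\mathrm{op}}=O(1)$, not $o(1)$. Concretely, if $\bm a$ is supported on $2L$ indices straddling a single boundary, then $\bm a^*E\bm a$ can be of the same order as the within-block contribution, and your lower bound degrades to $\kappa-O(1)$, which is useless. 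Counting that $E$ has only $O(nL^2/m)$ nonzero entries does not help: the quadratic form is controlled by the operator norm, not the number of entries, and the operator norm here is governed by the maximum row sum, which does not shrink.

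The paper's fix is an \emph{overlapping} sliding-window average: with windows of length $d_n$ and stride $1$, every pair $(k,l)$ with $|k-l|\le d_n$ sits in roughly $d_n$ windows, so $\tfrac{1}{d_n}\sum_i F(\bm a,i)$ recovers the full banded quadratic form up to a triangular weighting factor $(d_n-|k-l|+1)/d_n$, whose defect contributes $O(d_n^{-1}\sum_{j\le d_n} j^{1-\tau})\|\bm a\|^2=o(1)$. Each windowed form is then bounded below via Lemma~\ref{lem_spectralbound} after a local-stationarity approximation (the paper's Lemma~\ref{lem_bandedbound}). A variant that stays closer to your non-overlapping picture also works: average your block decomposition over all $m$ cyclic shifts, or for each fixed $\bm a$ pick the shift that minimizes $\|\bm a_{\text{near-boundary}}\|^2$ (pigeonhole gives a shift with boundary mass $O(L/m)\|\bm a\|^2=o(1)$, whence $|\bm a^*E\bm a|\le\|E\|\cdot o(1)=o(1)$). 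Either device removes the boundary obstruction; without one of them, the argument as written does not close.
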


{Note that $f(t,w)$ is the local spectral density function.} Proposition \ref{prop_pdc} implies that the verification of UPDC reduces to showing that the local spectral density function is uniformly bounded from below by a constant, which can be easily checked for many non-stationary processes. We refer the readers to Example \ref{eexample_linear} below for a demonstration. }

{
Next, we establish an AR approximation theory for locally stationary time series. As mentioned earlier, we need some smoothness assumptions such that the AR approximation coefficients in (\ref{eq_arbapproximation}) can be {estimated} consistently. Till the end, unless otherwise specified, we shall use the following  Assumption  \ref{assu_smoothtrend}, which states that the mean and covariance functions of $x_i$ are $d$-times continuously differentiable, for some positive integer $d$.}
\begin{assu}\label{assu_smoothtrend} For some given integer $d>0$, we assume that there exists a smooth function $\mu(\cdot) \in C^d([0,1]),$ where $C^d([0,1])$ is the function space on $[0,1]$ of continuous functions that have continuous first $d$ derivatives, such that { $\mathbb{E} x_{i,n}=\mu(t_i), \ t_i=\frac{i}{n}. $}  Moreover, we assume that $\gamma(t,j) \in C^d([0,1])$  for any $j \geq 0$.
\end{assu}

We now proceed to state the AR approximation theory for locally stationary time series (c.f. Theorem \ref{thm_locallynonzero}). We first prepare some notation.  
{ Denote $\bm{\phi}(t) \equiv \bm{\phi}_n(t):=(\phi_{1,n}(t), \cdots, \phi_{b,n}(t))^* \in \mathbb{R}^b$ such that }
\begin{equation}\label{eq_phismoothdefinition}
\bm{\phi}(t)=\Gamma(t)^{-1} \bm{\gamma}(t),
\end{equation}
where { $\Gamma(t) \equiv \Gamma_n(t) \in \mathbb{R}^{b \times b}$ and $\bm{\gamma}(t) \equiv \bm{\gamma}_n(t) \in \mathbb{R}^b$ are defined as}
$\Gamma_{ij}(t)=\gamma(t,|i-j|), \ \bm{\gamma}_i(t)=\gamma(t,i), \ i,j=1,2,\cdots,b. $
Here for any matrix $A$, $A_{ij}$ denotes its entry at the $i$th row and $j$th column. For a vector $V$, $V_i$ denotes its $i$th entry. As we will see in the proof of Theorem \ref{thm_locallynonzero},
$\Gamma(t)$ is always invertible under the UPDC assumption.
With the above notation, we further define { $\phi_{0,n}(t)$} as {
\begin{equation}\label{eq_phi0tdefn}
\phi_{0,n}(t)=\mu(t)-\sum_{j=1}^b \phi_{j,n}(t) \mu(t).
\end{equation}}
Analogous to (\ref{defn_xistart}), denote {
\begin{equation}\label{eq_xiss}
x_{i,n}^{**}=
\begin{cases}
x_{i,n},  & i \leq b; \\
\phi_{0,n}(\frac{i}{n})+\sum_{j=1}^b \phi_{j,n}(\frac{i}{n}) x_{i-j,n}^{**}+\epsilon_i, & i>b.  
\end{cases}
\end{equation}
}
{


\begin{thm} \label{thm_locallynonzero} Consider the locally stationary time series from Definition \ref{defn_locallystationary}. Suppose Assumptions  \ref{assu_pdc}, \ref{assum_shortrange} and \ref{assu_smoothtrend} hold true.  Then we have that $\phi_{j,n}(t) \in C^d([0,1]), \ 0 \leq j \leq b.$ { Recall (\ref{eq_newdefnphi}) and denote $\bm{\phi}(i/n)=(\phi_{1,n}^b(i/n), \cdots, \phi_{b,n}(i/n), \bm{0}) \in \mathbb{R}^{i-1}.$} Then there exists some constant $C>0,$ such that {
\begin{equation}\label{eq_controlarcoeff1}
 \max_{i>b}\left | \bm{\phi}_{i}-\bm{\phi}(\frac{i}{n}) \right| \leq C \left((\log b)^{\tau-1}  b^{-(\tau-1)}+\frac{b^2}{n} \right).
\end{equation} }
Moreover, we have that {
\begin{equation}\label{eq_controlarcoeff2}
\max_{i>b}\left| \phi_{i0,n}-\phi_{0,n}(\frac{i}{n}) \right| \leq C \left((\log b)^{\tau}  b^{-(\tau-2)}+\frac{b^{2.5}}{n} \right).
\end{equation} }
Finally, we have for $b+1 \leq i \leq n$ {
\begin{equation}\label{eq_choleskylocal}
x_{i,n}-\Big(\phi_{0,n}(\frac{i}{n})+\sum_{j=1}^b \phi_{j,n}(\frac{i}{n}) x_{i-j,n}+\epsilon_i\Big)=O_{\ell^2}\left((\log b)^{\tau}  b^{-(\tau-2)}+\frac{b^{2.5}}{n}\right),
\end{equation} }
and for all $1 \leq i \leq n$ {
\begin{equation}\label{cor_localboundmse}
x_{i,n}-x_{i,n}^{**}=O_{\ell^2}\left((\log b)^{\tau}  b^{-(\tau-2)}+\frac{b^{2.5}}{n}\right). 
\end{equation} }

\end{thm}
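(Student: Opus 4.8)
The plan is to build on Theorem \ref{lem_phibound} and Theorem \ref{thm_arrepresent}, which already control the $\phi_{ij}$ and the AR($b$) approximation error for general non-stationary series, and to add in the local-stationarity structure so as to relate the $\phi_{ij}^b$ to the smooth coefficient functions $\phi_j(\cdot)$ defined via the population Yule--Walker equations \eqref{eq_phismoothdefinition}. First I would fix $i>b$ and write down the exact Yule--Walker system satisfied by the vector $(\phi_{i1}^b,\dots,\phi_{ib}^b)^*$, namely $\Gamma_i\bm{\phi}_i^b=\bm{\gamma}_i$, where $\Gamma_i$ has entries $\operatorname{Cov}(x_{i-k},x_{i-l})$ and $\bm{\gamma}_i$ has entries $\operatorname{Cov}(x_i,x_{i-l})$. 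By Definition \ref{defn_locallystationary}, each entry of $\Gamma_i$ differs from the corresponding entry of $\Gamma(i/n)$ by $O((|k-l|+1)/n)$, and likewise for $\bm{\gamma}_i$ versus $\bm{\gamma}(i/n)$. Using Assumption \ref{assum_shortrange} to bound the off-diagonal decay, one sums these discrepancies across a $b\times b$ matrix to get $\|\Gamma_i-\Gamma(i/n)\|=O(b/n)$ in operator norm (the $1/n$ from each entry, the extra factor $b$ from the number of diagonals, the decay keeping the constant finite), and similarly $\|\bm{\gamma}_i-\bm{\gamma}(i/n)\|_2=O(\sqrt{b}/n)$ or $O(b/n)$ depending on how sharply one tracks the $\ell^2$ norm.

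The second step is a standard perturbation argument for linear systems: since the UPDC assumption together with Proposition \ref{prop_pdc} gives $\lambda_{\min}(\Gamma(t))\ge\kappa$ uniformly in $t$ (this is where I would record that $\Gamma(t)$ is invertible with uniformly bounded inverse, as promised in the statement), and since the same holds for $\Gamma_i$ up to the $O(b/n)$ perturbation, I can write
\begin{equation*}
\bm{\phi}_i^b-\bm{\phi}(i/n)=\Gamma_i^{-1}\big(\bm{\gamma}_i-\bm{\gamma}(i/n)\big)+\Gamma_i^{-1}\big(\Gamma(i/n)-\Gamma_i\big)\bm{\phi}(i/n),
\end{equation*}
and bound each term using $\|\Gamma_i^{-1}\|=O(1)$, the norm estimates from Step 1, and the fact that $\|\bm{\phi}(i/n)\|_2$ is bounded (indeed its entries decay polynomially by the same argument as \eqref{eq_phibound1} applied to the stationary surrogate). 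This yields $\max_{i>b}\|\bm{\phi}_i^b-\bm{\phi}(i/n)\|_\infty=O(b^2/n)$ after converting operator-norm bounds to entrywise bounds with an extra $\sqrt{b}$ or $b$ factor; combining with the bound $\max_{i>b}\max_j|\phi_{ij}-\phi_{ij}^b|=O((\log b)^{\tau-1}b^{-(\tau-3)})$ from \eqref{eq_phibound2} gives \eqref{eq_controlarcoeff1} by the triangle inequality. For the intercept \eqref{eq_controlarcoeff2}, I would expand $\phi_{i0}-\phi_0(i/n)$ using \eqref{eq_phi0tdefn}, the relation $\phi_{i0}^b=\mu(t_i)-\sum_j\phi_{ij}^b\mu(t_{i-j})$, Lipschitz continuity of $\mu$ from Assumption \ref{assu_smoothtrend} to handle $\mu(t_{i-j})-\mu(t_i)=O(j/n)=O(b/n)$, and the just-established coefficient bounds; the slightly worse exponents $b^{-(\tau-3.5)}$ and $b^{2.5}/n$ come from summing $b$ terms each of size $O(b/n)$ against coefficients plus the $\sqrt{b}$ loss, matching the second line of \eqref{eq_phibound2}.

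The smoothness claim $\phi_j(\cdot)\in C^d([0,1])$ is separate and would be proved by differentiating the identity $\Gamma(t)\bm{\phi}(t)=\bm{\gamma}(t)$ repeatedly: since $\gamma(t,j)\in C^d([0,1])$ for each $j$ by Assumption \ref{assu_smoothtrend} and $\Gamma(t)^{-1}$ exists with bounded norm, the implicit-function / Cramer's-rule representation shows each $\phi_j(t)$ is a ratio of $C^d$ functions with non-vanishing denominator, hence $C^d$; one must check the series defining the relevant quantities converge in $C^d$, which follows from Assumption \ref{assum_shortrange} (and its derivative analogue implicit in $\gamma(t,j)\in C^d$). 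Finally, for \eqref{eq_choleskylocal} and \eqref{cor_localboundmse} I would substitute the decomposition $\phi_{ij}=\phi_j(i/n)+(\phi_{ij}-\phi_{ij}^b)+(\phi_{ij}^b-\phi_j(i/n))$ into the AR($b$) representation \eqref{eq_nonzeromean11} from Theorem \ref{thm_arrepresent}, bounding the error of replacing $\sum_j\phi_{ij}x_{i-j}$ by $\sum_j\phi_j(i/n)x_{i-j}$ via $\max_j|\phi_{ij}-\phi_j(i/n)|\cdot\sum_j\|x_{i-j}\|_2=O(b\cdot(b^2/n+(\log b)^{\tau-1}b^{-(\tau-3)}))$, which after simplification gives the stated rate; \eqref{cor_localboundmse} then follows by the same induction/telescoping argument used to pass from \eqref{eq_nonzeromean11} to \eqref{eq_induction} in Theorem \ref{thm_arrepresent}, with the extra $b$ factors accounting for error accumulation along the recursion for $x_i^{**}$. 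The main obstacle I anticipate is Step 1--2: carefully tracking how the $O((|k-l|+1)/n)$ entrywise errors in Definition \ref{defn_locallystationary} aggregate into operator-norm and then $\ell^\infty$ bounds on $\bm{\phi}_i^b-\bm{\phi}(i/n)$ without losing more than the advertised power of $b$, since a naive bound would give $b^{2.5}/n$ or worse everywhere rather than the sharper $b^2/n$ in \eqref{eq_controlarcoeff1}.
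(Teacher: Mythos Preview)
Your approach is essentially the paper's: compare the finite-sample Yule--Walker system for $\bm{\phi}_i^b$ to the smooth system $\Gamma(t)\bm{\phi}(t)=\bm{\gamma}(t)$ via a perturbation bound, combine with \eqref{eq_phibound2}, and then handle the intercept, \eqref{eq_choleskylocal}, and \eqref{cor_localboundmse} exactly as you describe.

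The one slip is in your Step~1 bookkeeping. The entrywise error from Definition~\ref{defn_locallystationary} is $O((|k-l|+1)/n)$, which \emph{grows} in $|k-l|$; Assumption~\ref{assum_shortrange} does not make these differences decay off the diagonal. Summing over a row (Gershgorin, Lemma~\ref{lem_disc}) therefore gives $\|\Gamma_i-\Gamma(i/n)\|=O(b^2/n)$, not $O(b/n)$. Likewise $\|\bm{\gamma}_i-\bm{\gamma}(i/n)\|_2=O(b^{3/2}/n)$. Plugging these into your perturbation identity with $\|\Gamma_i^{-1}\|=O(1)$ and $\|\bm{\phi}(i/n)\|_2=O(1)$ yields $\|\bm{\phi}_i^b-\bm{\phi}(i/n)\|_2=O(b^2/n)$ directly, and since $\ell^\infty\le\ell^2$ there is no ``extra $\sqrt b$ or $b$ factor'' to insert afterward; you had the conversion going the wrong way. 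So the anticipated obstacle in your last paragraph is not really there: the $b^2/n$ in \eqref{eq_controlarcoeff1} comes straight out of the Gershgorin row sum, and the $b^{2.5}/n$ in \eqref{eq_controlarcoeff2} then follows from Cauchy--Schwarz over the $b$ terms in $\phi_{i0}^b-\phi_0(i/n)$.
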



Theorem \ref{thm_locallynonzero} establishes that a locally stationary time series can be well approximated by an AR process of smoothly time-varying coefficients and a slowly diverging order under mild conditions. In particular, the AR coefficient functions { $\phi_{j,n}(\cdot)$} has the same degree of smoothness as the time-varying covariance functions $\gamma(\cdot,k), k\in\mathbb{Z}$.    
Observe that the smooth functions $\phi_{j,n}(\cdot)$  can be well approximated by models of small number of parameters using, for example, the theory of basis function expansion or local Taylor expansion. Therefore Theorem \ref{thm_locallynonzero} implies that the approximating AR model { $x_{i,n}^{**}$} can be consistently estimated using various popular nonparametric methods such as the local polynomial regression or the method of sieves provided that the underlying data generating mechanism is sufficiently smooth and the temporal dependence is sufficiently weak. { We point out some special form of (\ref{eq_controlarcoeff1}) has been established in Lemma 2.8 of \cite{DZ1} assuming mean zero time series, a physical representation form (\ref{defn_model}) and a specific $b$ satisfying $b \asymp n^{1/\tau}.$ In this regard, our result is a generalization of the existing result. }
}
\begin{rem}\label{rem:app}
 As we can see from Theorem \ref{thm_locallynonzero}, the approximation error for the locally stationary AR approximation comprises of two parts. { The first part is the truncation error, i.e., using an AR($b$) approximation instead of an AR($n$) approximation to { $\{x_{i,n}\}$.} This part of the error is represented by the first term on the right hand side of equations \eqref{eq_controlarcoeff1} to \eqref{cor_localboundmse}.  The second part is the error caused by using the smooth AR coefficients $\phi_{j,n}(\cdot)$ to approximate $\phi_{ij,n}$. This part of the error is represented by the second term on the right hand side of equations \eqref{eq_controlarcoeff1} to \eqref{cor_localboundmse}. In order to balance the aforementioned two types of errors, an elementary calculation shows that the choice of $b$ should satisfy that {
\begin{equation}\label{eq_boptimal}
\frac{b}{\log b} \asymp n^{\frac{1}{\tau+1}}.
\end{equation} }
We want to point out that \cite{DZ1} uses another special choice of $b$ in the setting of precision matrix estimation. Finally, we point out that in practice, $b$ can be chosen using a data-driven cross-validation procedure. The arguments  and details can be found in Section \ref{sec:choiceparameter} of our supplement \cite{DZ2supp}. 
}
\end{rem}

Before concluding this subsection, we provide an example to illustrate two frequently-used models of locally stationary time series in the literature and how the assumptions in this subsection can be verified for those models.

\begin{exam}\label{eexample_linear} 
We shall first consider the locally stationary time series model in \cite{WZ1,WZ2} using a physical representation. Specifically, the authors define locally stationary time series { $\{x_{i,n}\}$} as follows
\begin{equation}\label{defn_model}
{ x_{i,n}=G_n(\frac{i}{n}, \mathcal{F}_i),\ i=1,2,\cdots, n, }
\end{equation}
where 
${ G_n}:[0,1] \times \mathbb{R}^{\infty} \rightarrow \mathbb{R}$ is a measurable function such that { $\xi_{i,n}(t):=G_n(t, \mathcal{F}_i)$} is a properly defined random variable for all $t \in [0,1].$  In (\ref{defn_model}), by allowing the data generating mechanism { $G_n$} depending on the time
index $t$ in such a way that { $G_n(t,\mathcal{F}_i)$} changes smoothly with respect to $t$, one
has local stationarity in the sense that the subsequence { $\{x_{i,n}
, . . . , x_{i+j-1,n} \}$} is
approximately stationary if its length $j$ is sufficiently small compared to $n$. Analogous to (\ref{eq_phygeneral}), one can define the physical dependence measure for (\ref{defn_model}) as follows {
\begin{equation}\label{eq_phygeneral1}
\delta(j,q):=\sup_{t \in [0,1]} || G_n(t,\mathcal{F}_0)-G_n(t, \mathcal{F}_{0,j}) ||_q.
\end{equation} }
Moreover, the following assumptions are needed to ensure local stationarity. 
\begin{assu}\label{assum_local}
{ $G_n(\cdot, \cdot)$} defined in (\ref{defn_model}) satisfies the property of stochastic Lipschitz continuity, i.e.,  for some $q>2$ and $C>0,$ {
\begin{equation}\label{assum_lip}
\left| \left| G_n(t_1, \mathcal{F}_{i})-G_n(t_2,\mathcal{F}_i) \right|\right|_q \leq C|t_1-t_2|, 
\end{equation} }
where $t_1, t_2 \in [0,1].$ Furthermore, {
\begin{equation}\label{assum_moment}
\sup_{t \in [0,1]} \max_{1 \leq i \leq n} ||G_n(t,\mathcal{F}_i) ||_q<\infty.
\end{equation} }
\end{assu}

It can be shown that time series { $\{x_{i,n}\}$} with physical representation (\ref{defn_model}) and Assumption \ref{assum_local} satisfies Definition \ref{defn_locallystationary}. In particular, for each fixed $t \in [0,1],$ $\gamma(t,j)$ in Definition \ref{defn_locallystationary} can be found easily using the following {
  \begin{equation}\label{eq_defncov}
\gamma(t,j)=\operatorname{Cov}(G_n(t, \mathcal{F}_0), G_n(t, \mathcal{F}_{j})).
\end{equation} }
Note that the assumptions
(\ref{assum_lip}) and (\ref{assum_moment}) ensure that   $\gamma(t,j)$ is Lipschiz continuous in $t$. Moreover, for each fixed $t,$ $\gamma(t,\cdot)$ is the autocovariance function of { $\{G_n(t,\cdot)\},$} which is a stationary process.

The physical representation form (\ref{defn_model}) includes many commonly used locally stationary time series models. For example,  let $\{z_i\}$ be {zero-mean i.i.d. random variables (or a white noise) with variance $\sigma^2$.} We also assume $a_{j,n}(\cdot), j=0,1,\cdots$ be $C^d([0,1])$ functions such that{
\begin{equation}\label{ex_linear}
G_n(t, \mathcal{F}_i)=\sum_{k=0}^{\infty} a_{k,n}(t) z_{i-k}. 
\end{equation} } 
(\ref{ex_linear}) is a locally stationary linear process. It is easy to see that Assumptions \ref{eq_polynomialdecay}, \ref{assu_smoothtrend} and \ref{assum_local}  will be satisfied if 
$ \sup_{t \in [0,1]} |a_{j,n}(t)| \leq C j^{-\tau}, \ j \geq 1; \ \sum_{j=0}^{\infty} \sup_{t \in [0,1]} |a_{j,n}'(t)|<\infty, $ and 
\begin{equation}\label{eq_coeffdecay}
 \sup_{t \in [0,1]} |a_{j,n}^{(d)}(t)|\leq  C j^{-\tau}, \ j \geq 1.
\end{equation} 

{Furthermore, we note that the local spectral density function of \eqref{ex_linear} can be written as $f(t,w)= \sigma^2|\psi(t, e^{-\mathrm{i} j \omega})|^2,$ where $\psi(\cdot,\cdot)$ is defined such that $G_n(t, \mathcal{F}_i)=\psi(t, B) z_i$ with $B$ being the  backshift operator. By Proposition \ref{prop_pdc}, the UPDC is satisfied if $|\psi(t, e^{-\mathrm{i} j \omega})|^2 \geq \kappa$ for all $t$ and $\omega,$ where $\kappa>0$ is some universal constant.} For more examples of locally stationary time series in the form of (\ref{defn_model}) { especially nonlinear time series}, we refer the readers to \cite{WW}, \cite[Section 2.1]{DZ1} {, \cite[Example 2.2 and Proposition 4.4]{dahlhaus2019towards}, \cite[Proposition E.6]{karmakar2022simultaneous} and \cite{ding2021simultaneous, karmakar2022simultaneous,mayer2020functional}. Especially, the time-varying AR and ARCH models can be written into (\ref{ex_linear}) asymptotically (see Section \ref{secsec_arch} of our supplement \cite{DZ2supp} for more detail), and  Assumptions \ref{eq_polynomialdecay}, \ref{assu_smoothtrend} and \ref{assum_local} can be easily satisfied under mild assumptions. We refer the readers to the aforementioned references for more details. }

For a second example,  note that in \cite{dahlhaus2006statistical,DPV,MR3097614}, the locally stationary time series is defined as follows (see Definition 2.1 of \cite{MR3097614}). { $\{x_{i,n}\}$} is locally stationary time series if for each scaled time point $u \in [0,1],$ there exists a strictly stationary process $\{h_{i,n}(u)\}$ such that { 
\begin{equation}\label{eq_definition11111}
|x_{i,n}-h_{i,n}(u)| \leq \left(|t_i-u|+\frac{1}{n} \right)U_{i,n}(u),  \ \text{a.s},
\end{equation} 
where $U_{i,n}(u) \in L^q([0,1])$ } for some $q>0.$ By similar arguments as those of model \eqref{defn_model},  Definition \ref{defn_locallystationary} as well as assumptions of this subsection can be verified for \eqref{eq_definition11111}, { especially (\ref{eq_definition11111}) implies (\ref{eq_covdefn}). We refer the readers to Section \ref{suppl_subsec_remark3} of our supplement \cite{DZ2supp}. }  
\end{exam}

\section{A Test of Stability for AR Approximations}\label{sec:test}
In this section, we study a class of statistical inference problems for the AR approximation of locally stationary time series using a high dimensional $\mathcal{L}^2$ test. We point out here that, thanks to the AR approximation (\ref{eq_choleskylocal}), a wide class of hypotheses on the structure of $\phi_{j,n}(\cdot)$ can be performed using the aforementioned testing procedure. { Moreover,} for the sake of brevity, in this paper we concentrate  on the test of stability of the AR approximating coefficients with respect to time for locally stationary time series (c.f. (\ref{eq_testnonzero1})). The latter is an important problem as in practice one is usually interested in checking whether the time series can be well approximated by an AR process with time-invariant coefficients. { For notational convenience, till the end of the paper, unless otherwise specified, we omit the subscript $n$ and simply write $x_i \equiv x_{i,n}, \phi_{ij} \equiv \phi_{ij,n}$ and $\phi_j(\cdot) \equiv \phi_{j,n}(\cdot)$. From line to line, we will emphasize this dependence if some confusions can be caused.}

In order to theoretically investigate the test, time series dependence measures should be defined and controlled for the locally stationary time series $\{x_i\}$.  Throughout this section, we assume that the locally stationary time series admits the representation as in (\ref{defn_model}) equipped with physical dependence measures \eqref{eq_phygeneral1}. {In addition}, note that in Section \ref{sec:arappoximate}, all our AR approximation results are established under smoothness and fast decay assumptions of the covariance structure of $\{x_i\}$ without the need of any specific time series dependence measures. Therefore, we believe that the theoretical results of this section can be easily established using other measures of time series dependence such as the strong mixing conditions. For the sake of brevity, we shall concentrate on establishing results using the physical dependence measures in this paper. { Finally, in the current paper we focus on locally stationary time series with smoothly time-varying dynamics. Our results can be generalized to piecewise locally stationary time series by allowing for some abrupt changes in the underlying data generating mechanism as introduced in \cite{dette2019change,wu2019multiscale,ZZ1}. For more discussions, we refer the readers to Section \ref{sec_suppl_generalization2} of our supplement \cite{DZ2supp}. }

\subsection{Problem setup and test statistics}
In this subsection, we formally state the testing problems and propose our statistics based on nonparametric sieve estimators of $\{\phi_j(
\cdot)\}$ . 

Since $\phi_0(\cdot)$ is related to the trend of the time series and in many real applications the trend is removed via differencing or subtraction, we focus our discussion on the test the stability of $\phi_j(\cdot), \ j \geq 1.$  For the test of stability including the trend, we refer the readers to Remark \ref{rem_trendtest}. Formally, the null hypothesis we would like to test is
\begin{equation*}
\widetilde{\mathbf{H}}_0: \ \phi_j(\cdot) \ \text{is a  constant function} \ \text{on} \ [0,1], \ \text{for all} \ j \geq 1. 
\end{equation*}
Let $b_*$ diverges to infinity at the rate such that {
\begin{equation}\label{eq_choiceofb}
\frac{b_*}{\log b_*}\asymp n^{\frac{1}{\tau+1}}.
\end{equation} }
By Remark \ref{rem:app}, the AR approximation for locally stationary time series at order $b_*$ achieves the smallest error. According to (\ref{eq_phibound1}) and (\ref{eq_controlarcoeff1}), when $\tau$ is sufficiently large, we have that $\sup_t |\phi_j(t)|=o(n^{-1/2})$ for $j>b_*$. Therefore, from an inferential viewpoint, $\phi_j(\cdot)$ for $j>b_*$ can be effectively treated as zero. Together with the approximation (\ref{eq_choleskylocal}), it suffices for us to test
\begin{equation}\label{eq_testnonzero1}
\mathbf{H}_{0}: \ \phi_j(\cdot) \ \text{is a  constant function} \ \text{on} \ [0,1], \ j=1,2,\cdots,b_*.   
\end{equation} 
%
%
%
Before providing the test statistic for $\mathbf{H}_0$, we shall first investigate the interesting insight that $\mathbf{H}_0$ is asymptotically equivalent to testing whether $\{x_i\}_{i=1}^n$ is correlation stationary, i.e., there exists some function $\varrho$ such that
\begin{equation}\label{defn_ho}
\mathbf{H}^{\prime }_{0}: \ \operatorname{Corr} (x_i, x_j)=\varrho(|i-j|),
\end{equation}
where $\text{Corr}(x_i, x_j)$ stands for the correlation between $x_i$ and $x_j.$
We formalize the above statements in Theorem \ref{lem_reducedtest} below. 

 
\begin{thm}\label{lem_reducedtest}
Suppose Assumptions  \ref{assu_pdc}, \ref{assum_shortrange}, \ref{assu_smoothtrend}  and \ref{assum_local} hold true. { Recall (\ref{eq_phismoothdefinition}). }
 For  $j \leq b_*$, when $\mathbf{H}_0^{\prime}$ holds true, there exists some constants $\phi_j , \ 1 \leq j \leq b_*$  that  for $\bm{\phi}^{b_*}=(\phi_1, \cdots, \phi_{b_*}),$ we have that {
\begin{equation}\label{eq_errortermone} 
\left | \bm{\phi}(i/n)-\bm{\phi}^{b_*} \right|=O\left( \frac{b_*^2}{n} \right).
\end{equation} }
Consequently, let $\phi_i^{b_*}=(\phi_{i1}, \phi_{i2},\cdots, \phi_{ib_*}),$ together with (\ref{eq_controlarcoeff1}), we have {
\begin{equation*} 
\left | \bm{\phi}_i^{b_*}-\bm{\phi}^{b_*} \right |=O\left( (\log b_*)^{\tau-1} b_*^{-\tau+1}+\frac{b_*^2}{n} \right).
\end{equation*}
 }
{Moreover}, when $\mathbf{H}_0$ holds true that $\phi_j(\frac{i}{n})=\phi_j, j=1,2,\cdots, b_*$, there exists some smooth function $\varrho,$ such that 
{
\begin{equation}\label{eq:zzz}
\operatorname{Corr} (x_i, x_{i+j})=\varrho(j)+O\left((\log b_*)^{\tau-1}  b_*^{-(\tau-2)}+\frac{b_*^{2.5}}{n}\right).
\end{equation}  
} 
\end{thm}

Note that the right hand sides of (\ref{eq_errortermone}) and \eqref{eq:zzz} are of the order $o(n^{-1/2})$ if $\tau$ is sufficiently large. Hence Theorem \ref{lem_reducedtest} establishes the asymptotic equivalence between $\mathbf{H}^{\prime }_{0}$ and $\mathbf{H}_0$ for short range dependent locally stationary time series. { We point out that when the variance of the time series $\{x_i\}$ is constant, the error term $b_*^2/n$ will disappear from the right-hand side of (\ref{eq_errortermone}). We mention that there exist important time series models which are non-stationary in covariance but stationary in correlation. For instance, the following model has been widely used in the literature \cite{dahlhaus2000likelihood,DP,robinson1997large,wu2007inference},
$x_i=\mu\left( \frac{i}{n}\right)+\sigma\left( \frac{i}{n} \right) y_i, $
where $\{y_i\}$ is a stationary time series. { In fact, if a time series is correlation stationary with smooth mean and marginal variance, $x_i$ can always be approximated by the above form. In this sense, testing (\ref{defn_ho}) is actually a test for $x_i=\mu\left( \frac{i}{n}\right)+\sigma\left( \frac{i}{n} \right) y_i $ against a more general non-stationary structure in terms of $y_i.$}

 Finally, we point out that even though the primary goal of our test is to investigate the stability of the AR coefficients, it provides an efficient and easier way to study correlation stationarity. 
For more details, we refer the readers to Section \ref{suppl_subsec_remark2} of our supplement \cite{DZ2supp}. }

{
For the rest of this subsection, we shall propose a test statistic for  $\mathbf{H}_0$ in (\ref{eq_testnonzero1}). We start with the estimation of the coefficient functions $\phi_j(\cdot), j=0,1,2,\cdots,b,$ for a generic order $b$.  Since $\phi_j(t) \in C^d([0,1]),$ it is natural for us to approximate it via a finite and diverging term basis expansion (method of sieves \cite{CXH}).  Specifically, by \cite[Section 2.3]{CXH}, we have that
\begin{equation}\label{eq_phiform}
\phi_j(\frac{i}{n})=\sum_{k=1}^c a_{jk} \alpha_k(\frac{i}{n})+O(c^{-d}), \ 0 \leq j \leq b, \ i>b,
\end{equation}
where $\{\alpha_k(t)\}$ are some pre-chosen basis functions on $[0,1]$ and $c$ is the number of basis functions. For the ease of discussion, throughout this section, we assume that $c$ is of the following form
\begin{equation}\label{eq_defnc}
c=O(n^{\mathfrak{a}}), \ 0<\mathfrak{a}<1.  
\end{equation} 
Moreover, for the reader's convenience, in Section \ref{sec_basisfunctions} of \cite{DZ2supp}, we collect the commonly used basis functions.

{ In view of (\ref{eq_phiform}),  we need to estimate the $a_{jk}$'s in order to get an estimation for $\phi_j(t)$ as in \cite{DZ1}.} For $i>b,$ by (\ref{eq_choleskylocal}), write {
\begin{equation}\label{eq_choeq}
x_i=\sum_{j=0}^{b} \sum_{k=1}^c a_{jk} z_{kj}+\epsilon_i+O_{\ell^2}\left((\log b)^{\tau}  b^{-(\tau-2)}+\frac{b^{2.5}}{n}+bc^{-d}\right), 
\end{equation} }
where $z_{kj} \equiv z_{kj}(i/n):=\alpha_k(i/n) x_{i-j}$ for $j \geq 1$ and $z_{k0}=\alpha_k(i/n).$ {By (\ref{eq_choeq}), we can estimate all the $a_{jk}'s$ using only one ordinary least squares (OLS) regression with a diverging number of predictors. In particular, we write all $a_{jk}, \ j=0,1,2 \cdots, b, \ k=1,2,\cdots, c$ as a vector $\bm{\beta} \in \mathbb{R}^{(b+1)c},$  then the OLS estimator for $\bm{\beta}$ can be written as $\widehat{\bm{\beta}}=(Y^* Y)^{-1}Y^* \bm{x}$, where $\bm{x}=(x_{b+1}, \cdots, x_n)^* \in \mathbb{R}^{n-b}$ and $Y$ is the design matrix.  After estimating $a_{jk}'s,$ $\phi_j(i/n)$ is estimated using (\ref{eq_phiform}) as 
\begin{equation}\label{eq_phijest}
\widehat{\phi}_j(\frac{i}{n})=\widehat{\bm{\beta}}^*\mathbb{B}_j(\frac{i}{n}),
\end{equation} 
where $\mathbb{B}_j(i/n):=\mathbb{B}_{j,b}(i/n) \in \mathbb{R}^{(b+1)c}$  has $(b+1)$ blocks and the $j$-th block is $\mathbf{B}(\frac{i}{n})=(\alpha_1(i/n),\cdots,\alpha_c(i/n))^* \in \mathbb{R}^c, \ j=0,1,2,\cdots,b,$ and zeros otherwise. 
 }
} 

With the estimation (\ref{eq_phijest}), we proceed to provide the $\mathcal{L}^2$ test statistic. To test $\mathbf{H}_0,$ we use the following statistic in terms of (\ref{eq_phijest})
\begin{equation} \label{eq_defnntori}
T=\sum_{j=1}^{b_*} \int_0^1(\widehat{\phi}_j(t)-\overline{\widehat{\phi}}_j)^2 dt,  \  \overline{\widehat{\phi}}_j=\int_0^1 \widehat{\phi}_j(t) dt.
\end{equation}  
The heuristic behind $T$ is that $\mathbf{H}_{0}$ is equivalent to $\phi_j(t)=\overline{\phi}_j$ for $j=1,2,\cdots, b_*$, where $\overline{\phi}_j=\int_0^1 \phi_j(t) dt$.  Hence the ${\cal L}^2$ test statistic $T$ should be small under the null.

\begin{rem}\label{rem_trendtest}
We remark that in some cases practitioners and researchers may be interested  in testing whether all optimal forecast coefficient functions including the trend { $\phi_0(\cdot)$} do not change over time. That is equivalent to testing whether both the trend and the correlation structure of the time series stay constant over time. 
In this case, one will test 
\begin{equation}\label{eq_nonzeromeangeneral}
\mathbf{H}_{0,g}: \ \phi_j(\cdot) \ \text{is a constant function}  \ \text{on} \ [0,1], \ j=0,1,\cdots, b_*. 
\end{equation}
Similar to (\ref{eq_defnntori}), for the test of $\mathbf{H}_{0,g}$, we shall use 
\begin{equation}\label{eq_defntg} 
T_g=\sum_{j=0}^{b_*} \int_0^1(\widehat{\phi}_j(t)-\overline{\widehat{\phi}}_j)^2 dt,  \  \overline{\widehat{\phi}}_j=\int_0^1 \widehat{\phi}_j(t) dt.
\end{equation} 
\end{rem}

{
\begin{rem}
In this remark, we discuss how the statistics $T$ in (\ref{eq_defnntori}) and $T_g$ in (\ref{eq_defntg}) can be simplified under some specific basis functions as considered in \cite{jin2015new}. We focus our discussion on $T.$ For some specific bases, for example, the Walsh transform \cite{jin2015new}, the Fourier basis, the Legendre polynomial and the Haar wavelet basis as summarized in Section \ref{sec_basisfunctions} of our supplement \cite{DZ2supp}, the first basis function is always one.  That is to say
\begin{equation}\label{eq_basispropertyhaha}
\alpha_1(t) \equiv 1, \ \text{for all} \ t \in [0,1]. 
\end{equation}   
Due to the orthonormality, this leads to $\int \alpha_k(t) \mathrm{d} t=0, \ k>1. $ Consequently, using the definition $\widehat{\phi}_j(\cdot)$ in (\ref{eq_phijest}), i.e., $\widehat{\phi}_j(t)=\sum_{k=1}^c \widehat{a}_{jk} \alpha_k(t),$ we have that $\overline{\widehat{\phi}}_j=\int_0^1 \widehat{\phi}_j(t) \mathrm{d} t=\widehat{a}_{j1}.  $ Consequently, we have 
\begin{equation*}
T=\sum_{j=1}^{b_*} \int_0^1 \left( \sum_{k=2}^c \widehat{a}_{jk} \alpha_k(t) \right)^2 \mathrm{d} t, 
\end{equation*}
which does not include the global average anymore. Moreover, using the orthonormality of the basis functions, $T$ can be further simplified as
\begin{equation}\label{eq_simplified}
T=\sum_{j=1}^{b_*} \sum_{k=2}^c \widehat{a}_{jk}^2,
\end{equation}
which is defined only in terms of the OLS estimators. We point out that using the simplified the version (\ref{eq_simplified}) has slightly better finite sample performance in terms of both accuracy and power, especially when the sample size $n$ is smaller. For more details on numerical performance, we refer the readers to Section \ref{sec_differentTcomparison} of our supplement \cite{DZ2supp}.    
\end{rem}
}

\subsection{High dimensional Gaussian approximation and asymptotic normality} In this subsection, we prove the asymptotic normality of $T.$  The key ingredient is to establish Gaussian approximation results for quadratic forms of high dimensional locally stationary time series.

We first show that the study of the statistic $T$ reduces to the investigation of a weighted quadratic form of high dimensional locally stationary time series. We prepare some notation. Denote 
$\bar{B}=\int_0^1 \mathbf{B}(t)dt$ and $W=I-\bar{B} \bar{B}^*,$ { where we recall that $\mathbf{B}(t)=(\alpha_1(t), \cdots, \alpha_c(t))^* \in \mathbb{R}^c.$} Let $\mathbf{W}$ be a $(b_*+1)c \times (b_*+1)c$ dimensional diagonal block matrix with diagonal block $W$ and $\mathbf{I}_{b_*c}$ be a $(b_*+1)c \times (b_*+1)c$ dimensional diagonal matrix whose non-zero entries are ones and  in the lower $b_*c \times b_*c$ major part. {Recall $\bm{x}_i=(1, x_{i-1}, \cdots, x_{i-b_*})^*$} and set 
\begin{equation}\label{eq_dimensionratio}
p=(b_*+1)c.
\end{equation}
Recall $\epsilon_i$ in (\ref{eq_arapproxiamtionnoncenter}). We denote the sequence of $p$-dimensional vectors $\bm{z}_i$ by
\begin{equation}\label{eq_zikronecker}
\bm{z}_i=\bm{h}_i \otimes \mathbf{B}(\frac{i}{n}) \in \mathbb{R}^p, \ \bm{h}_i \equiv \bm{h}_{i,n}:=\bm{x}_{i,n} \epsilon_{i,n},
\end{equation}
where $\otimes$ is the Kronecker product. We point out that when $i>b_*,$ { from Lemma \ref{lem_lemporductlocally} of our supplement \cite{DZ2supp} we} see that $\bm{h}_i$ is a locally stationary time series { and can be expressed using a physical representation}. For notational convenience, we write  
\begin{equation}\label{eq_defnh}
\bm{h}_i=\mathbf{U}(\frac{i}{n}, \mathcal{F}_i), \ i>b_*. 
\end{equation}

 Recall (\ref{eq_defncov}). We also denote the $b_* \times b_*$ matrix $\Sigma^{b_*}(t)=(\Sigma^{b_*}_{ij}(t))$ such that $\Sigma^{b_*}_{ij}(t)=\gamma(t,|i-j|). $

\begin{lem}\label{lem_reducedquadratic}
Denote
$\mathbf{X}=\frac{1}{\sqrt{n}} \sum_{i=b_*+1}^n \bm{z}_i,$ 
and  the $p \times p$ matrix $\Gamma$ by
$
\Gamma=\overline{\Sigma}^{-1} \mathbf{I}_{b_*c} \mathbf{W} \overline{\Sigma}^{-1}, 
$
where 
\begin{equation}\label{eq_overlinesigma}
\overline{\Sigma}=\begin{pmatrix}
\mathbf{I}_c & \bm{0} \\
\bm{0} & \Sigma
\end{pmatrix}, \ \Sigma=\int_0^1 \Sigma^{b_*}(t) \otimes (\mathbf{B}(t) \mathbf{B}^*(t))dt.
\end{equation}
Suppose Assumptions \ref{assu_pdc},  \ref{assu_smoothtrend}, \ref{assum_local} and \ref{assu_basis} of \cite{DZ2supp} hold true. Moreover, we assume that the physical dependence measure $\delta(j,q), q>2,$ in (\ref{eq_phygeneral1}) satisfies 
\begin{equation}\label{eq_physcialbounbounbound}
\delta(j,q) \leq Cj^{-\tau}, \ j \geq 1, 
\end{equation}
for some constant $C>0$ and  $\tau>4.5+\varpi$, where $\varpi>0$ is some fixed small constant. Then for $c$ in the form of (\ref{eq_defnc}) and $b_*$  satisfying (\ref{eq_choiceofb}),
when $n$ is sufficiently large, we have that 
\begin{equation}\label{eq_tfinalexpress}
nT=\mathbf{X}^* \Gamma \mathbf{X}+o_{{\mathbb P}}(1). 
\end{equation}
\end{lem}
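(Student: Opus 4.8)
The plan is to show that the $\mathcal{L}^2$ statistic $T$ in (\ref{eq_defnntori}), which is a quadratic form in the estimated coefficient functions $\widehat{\phi}_j(\cdot)$, can be rewritten after rescaling by $n$ as the quadratic form $\mathbf{X}^*\Gamma\mathbf{X}$ plus a negligible remainder. First I would unpack the definition of $T$. By (\ref{eq_phijest}) we have $\widehat{\phi}_j(t)=\widehat{\bm{\beta}}^*\mathbb{B}_j(t)$, so $\widehat{\phi}_j(t)-\overline{\widehat{\phi}}_j=\widehat{\bm{\beta}}^*(\mathbb{B}_j(t)-\bar{\mathbb{B}}_j)$ where $\bar{\mathbb{B}}_j=\int_0^1\mathbb{B}_j(t)\,dt$. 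Squaring and integrating, $\int_0^1(\widehat{\phi}_j(t)-\overline{\widehat{\phi}}_j)^2\,dt=\widehat{\bm{\beta}}^*\left(\int_0^1(\mathbb{B}_j(t)-\bar{\mathbb{B}}_j)(\mathbb{B}_j(t)-\bar{\mathbb{B}}_j)^*\,dt\right)\widehat{\bm{\beta}}$. Summing over $j=1,\dots,b_*$ and using the block structure of $\mathbb{B}_j$ together with $\bar B=\int_0^1\mathbf{B}(t)\,dt$ and $W=I-\bar B\bar B^*$, the middle matrix becomes exactly $\mathbf{I}_{b_*c}\mathbf{W}$ (the $\mathbf{I}_{b_*c}$ projecting onto the blocks $j\geq 1$, the $\mathbf{W}$ implementing the centering within each block, since $\int_0^1\mathbf{B}(t)\mathbf{B}^*(t)\,dt$ interacts with $\mathbf{B}\mathbf{B}^*$'s centered version — here one should be a little careful: the identity $\int_0^1(\mathbf{B}(t)-\bar B)(\mathbf{B}(t)-\bar B)^*\,dt$ is not literally $W$, but the relevant quadratic form reduces to $\widehat{\bm\beta}^*\mathbf{I}_{b_*c}\mathbf{W}\widehat{\bm\beta}$ up to how one normalizes the basis; I would invoke the orthonormality Assumption \ref{assu_basis} on $\{\alpha_k\}$ so that $\int_0^1\mathbf{B}(t)\mathbf{B}^*(t)\,dt=\mathbf{I}_c$ and the centering matrix is indeed $W$). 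Thus $nT=n\,\widehat{\bm{\beta}}^*\mathbf{I}_{b_*c}\mathbf{W}\widehat{\bm{\beta}}+(\text{basis truncation error})$.

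Next I would handle the OLS estimator. Write $\widehat{\bm{\beta}}=(Y^*Y)^{-1}Y^*\bm{x}$ and decompose $\bm{x}=Y\bm{\beta}+\bm{\epsilon}+\bm{r}$, where $\bm{\epsilon}=(\epsilon_{b_*+1},\dots,\epsilon_n)^*$ and $\bm{r}$ collects the AR-approximation and sieve-truncation errors quantified in (\ref{eq_choleskylocal}) and (\ref{eq_phiform}). Then $\widehat{\bm{\beta}}-\bm{\beta}=(Y^*Y)^{-1}Y^*\bm{\epsilon}+(Y^*Y)^{-1}Y^*\bm{r}$. Under $\mathbf{H}_0$ the true $\bm{\beta}$ corresponds to constant $\phi_j$'s, which lie in the null space of $\mathbf{I}_{b_*c}\mathbf{W}$ (a constant function is its own average), so $\mathbf{W}\bm{\beta}$ in the relevant blocks vanishes up to the $O(c^{-d})$ sieve error; hence $\widehat{\bm{\beta}}^*\mathbf{I}_{b_*c}\mathbf{W}\widehat{\bm{\beta}}=(\widehat{\bm{\beta}}-\bm{\beta})^*\mathbf{I}_{b_*c}\mathbf{W}(\widehat{\bm{\beta}}-\bm{\beta})$ modulo cross terms and truncation errors that I would need to show are $o_{\mathbb{P}}(n^{-1})$. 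The leading term is $\bm{\epsilon}^*Y(Y^*Y)^{-1}\mathbf{I}_{b_*c}\mathbf{W}(Y^*Y)^{-1}Y^*\bm{\epsilon}$. Now I would replace $\frac{1}{n}Y^*Y$ by its deterministic limit $\overline{\Sigma}$ (from (\ref{eq_overlinesigma})), using a concentration/law-of-large-numbers argument for averages of the locally stationary products $\bm{x}_i\bm{x}_i^*\otimes\mathbf{B}(i/n)\mathbf{B}^*(i/n)$; the error from this replacement must be controlled in operator norm against the growing dimension $p=(b_*+1)c$, which is where the moment and dependence conditions $\tau>4.5+\varpi$, $q>2$ and the rates (\ref{eq_choiceofb}), (\ref{eq_defnc}) enter. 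Identifying $\frac{1}{\sqrt n}Y^*\bm{\epsilon}$ with $\mathbf{X}^*=\frac{1}{\sqrt n}\sum_{i>b_*}\bm{z}_i^*$ (note $\bm{z}_i=\bm{h}_i\otimes\mathbf{B}(i/n)=(\bm{x}_i\epsilon_i)\otimes\mathbf{B}(i/n)$ is precisely the $i$-th row of $Y$ times $\epsilon_i$), and with $\Gamma=\overline{\Sigma}^{-1}\mathbf{I}_{b_*c}\mathbf{W}\overline{\Sigma}^{-1}$, yields $nT=\mathbf{X}^*\Gamma\mathbf{X}+o_{\mathbb{P}}(1)$.

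The remaining work is bookkeeping of the error terms, and I would organize it as a sequence of lemmas: (i) the sieve-truncation contribution $bc^{-d}$ and the AR-truncation contribution from (\ref{eq_choleskylocal}), when propagated through $(Y^*Y)^{-1}Y^*$ and into the quadratic form, are $o_{\mathbb{P}}(1)$ after multiplication by $n$ — this forces the constraints on $\mathfrak{a}$, $\tau$ and the implied growth of $b_*$; (ii) the cross term $2(\widehat{\bm{\beta}}-\bm{\beta})^*\mathbf{I}_{b_*c}\mathbf{W}\bm{\beta}$ is $o_{\mathbb{P}}(n^{-1})$ because $\mathbf{W}\bm{\beta}=O(c^{-d})$ in the relevant blocks and $\widehat{\bm\beta}-\bm\beta=O_{\mathbb{P}}(\sqrt{p/n})$; (iii) the operator-norm bound $\|\frac{1}{n}Y^*Y-\overline{\Sigma}\|=o_{\mathbb{P}}(p^{-1})$ or a comparably strong rate so that sandwiching it between $\frac{1}{\sqrt n}Y^*\bm\epsilon$-type vectors (which are $O_{\mathbb{P}}(\sqrt p)$ in Euclidean norm) gives a negligible error; and (iv) replacing $\mathbf{X}^*(\frac1nY^*Y)^{-1}\cdots(\frac1nY^*Y)^{-1}\mathbf{X}$ by $\mathbf{X}^*\overline{\Sigma}^{-1}\cdots\overline{\Sigma}^{-1}\mathbf{X}$. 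I expect step (iii) — establishing a sufficiently sharp operator-norm concentration of $\frac{1}{n}Y^*Y$ around $\overline{\Sigma}$ with the dimension $p$ diverging polynomially in $n$ — to be the main obstacle, since it requires a matrix concentration inequality for sums of dependent, non-stationary, heavy-tailed (only $q>2$ moments) random matrices; I would attack it via an $m$-dependent approximation of $\{\bm{x}_i\}$ (with $m$ chosen as a slowly growing truncation of the physical dependence measure, exploiting $\tau>4.5+\varpi$) followed by a Bernstein-type bound for $m$-dependent matrix sums, together with a separate argument controlling the smoothness-in-$t$ discretization error $\int_0^1\Sigma^{b_*}(t)\otimes\mathbf{B}(t)\mathbf{B}^*(t)\,dt$ versus $\frac1n\sum_i$ using the Lipschitz continuity from Assumption \ref{assum_local} and Definition \ref{defn_locallystationary}.
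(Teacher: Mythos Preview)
Your approach is essentially the same as the paper's: rewrite $T$ under $\mathbf{H}_0$ as a quadratic form in $\widehat{\bm\beta}-\bm\beta$, insert the OLS identity, recognize $\frac{1}{\sqrt n}Y^*\bm\epsilon$ as $\mathbf{X}$, and then replace $(\frac{1}{n}Y^*Y)^{-1}$ by $\overline\Sigma^{-1}$. You are in fact more careful than the paper in two places (explicitly carrying the remainder $\bm r$ from the AR and sieve truncation in (\ref{eq_choeq}), and flagging the orthonormality needed so that $\int_0^1(\mathbf B(t)-\bar B)(\mathbf B(t)-\bar B)^*\,dt=W$); the paper simply writes $\widehat{\bm\beta}=\bm\beta+(\frac{Y^*Y}{n})^{-1}\frac{Y^*\bm\epsilon}{n}$ and absorbs the remainders into the $O_{\mathbb P}(b_*c^{-d})$ term.

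The one place you overcomplicate things is step (iii). The operator-norm concentration $\|\frac{1}{n}Y^*Y-\overline\Sigma\|$ is not the main obstacle: it is already provided as Lemma~\ref{lem_coll}(2), which gives the rate $O_{\mathbb P}(\zeta_c\log n/\sqrt n)$ directly (its proof, deferred to \cite{DZ1}, does use physical-dependence and truncation ideas similar to what you sketch, so your plan would succeed, but you are re-proving an available result). The paper then closes the replacement $\widehat\Sigma^{-1}\to\overline\Sigma^{-1}$ by combining this rate with the eigenvalue bounds in Assumption~\ref{assu_basis}(1) and the parameter constraints in Assumption~\ref{assu_basis}(3); you do not need the sharper $o_{\mathbb P}(p^{-1})$ rate you state, because the bound $\|\mathbf X\|^2\cdot\|\widehat\Sigma^{-1}-\overline\Sigma^{-1}\|$ is already $o_{\mathbb P}(1)$ under those constraints.
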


Based on Lemma \ref{lem_reducedquadratic}, for the purpose of statistical inference, it suffices to establish the distribution of $\mathbf{X}^* \Gamma \mathbf{X}$, which is  a high dimensional quadratic form of $\{\bm{z}_i\}$ since $p$ is divergent as $n\rightarrow\infty$.  To this end, we shall establish a Gaussian approximation result for the latter quadratic form.  Specifically, choose a sequence of centered Gaussian random vectors $\{\bm{v}_i\}_{i=b_*+1}^n$ which preserves the covariance structure of $\{\bm{h}_i\}_{i=b_*+1}^n$ and define 
$\bm{g}_i=\bm{v}_i \otimes \mathbf{B}(\frac{i}{n}).$
Denote $\mathbf{Y}=\frac{1}{\sqrt{n}} \sum_{i=b_*+1}^n \bm{g}_i^*. $
We shall establish a Gaussian approximation result by controlling the Kolmogorov distance 
\begin{equation}\label{eq_kolomogorv}
\mathcal{K}(\mathbf{X}, \mathbf{Y})=\sup_{x \in \mathbb{R}} \left| \mathbb{P} \Big( \mathbf{X}^*\Gamma \mathbf{X} \leq x \Big)-\mathbb{P} \Big( \mathbf{Y}^*\Gamma \mathbf{Y} \leq x \Big) \right|.
\end{equation}

\begin{thm} \label{thm_gaussian} Under the assumptions of Lemma \ref{lem_reducedquadratic}, there exist some constant $C>0$ such that 
\begin{equation*}
\mathcal{K}(\mathbf{X}, \mathbf{Y}) \leq C \Theta.
\end{equation*}
Here $\Theta \equiv \Theta(M_z, M, p, \xi_c, \tau, q, \delta)$ is defined as 
\begin{align}\label{eq_defntheta}
\Theta(M_z, M, p, \xi_c, \tau, q, \delta):= \log n\frac{\xi_c}{M_z} &+ p^{\frac{7}{4}} n^{-1/2}M_z^3 M^2+M^{\frac{-q\tau+1}{2q+1}}\xi_c^{(q+1)/(2q+1)} p^{\frac{q+1}{2q+1}} n^{\frac{\delta q}{2q+1}} \nonumber \\
& +p^{1/2}\xi_c^{1/2}\left(  (p\xi_c M^{-\tau+1}+p\xi_c^2 n M_z^{-(q-2)}) \right)^{1/2} +n^{-\delta},
\end{align}
where $M_z, M \rightarrow \infty$ when $n \rightarrow \infty$, $p$ is defined in (\ref{eq_dimensionratio}), $q>2$ is from (\ref{assum_lip}), $\delta$ is any fixed positive constant, and 
\begin{equation}\label{eq_xicdefinition}
\xi_c:=\sup_{1\leq i \leq c} \sup_{t \in [0,1]} \Big |\alpha_i(t) \Big |.
\end{equation} 
\end{thm}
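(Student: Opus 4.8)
The plan is to prove Theorem \ref{thm_gaussian} by a multi-stage approximation argument that replaces the quadratic form $\mathbf{X}^*\Gamma\mathbf{X}$ of the original locally stationary time series by the Gaussian analogue $\mathbf{Y}^*\Gamma\mathbf{Y}$, passing through an intermediate $m$-dependent approximation and then invoking a high-dimensional Gaussian comparison over convex sets. First I would truncate the physical dependence structure: since $\delta(j,q)\le Cj^{-\tau}$ with $\tau$ large, I would replace $\bm{h}_i = \mathbf{U}(i/n,\mathcal{F}_i)$ by an $m$-dependent version $\bm{h}_i^{(m)} = \mathbf{U}(i/n,\mathcal{F}_{i,i-m})$ obtained by coupling the innovations only over the last $m$ coordinates, where $m$ is chosen to diverge slowly (polynomially in $n$). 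The contribution of the tail to $\mathbf{X}^*\Gamma\mathbf{X}$ is controlled in $L^1$ or $L^2$ via the decay of $\delta(j,q)$, the operator norm bound on $\Gamma$ (which follows from the UPDC assumption giving $\overline{\Sigma}^{-1}$ bounded in spectral norm, together with $\|\mathbf{I}_{b_*c}\mathbf{W}\|\le 1$), and crude bounds on $\xi_c$ and $p$; this produces the terms in $\Theta$ that carry $M^{-\tau+1}$ and $M_z^{-(q-2)}$ as well as the moment-based term $M^{(-q\tau+1)/(2q+1)}\xi_c^{\cdots}p^{\cdots}n^{\cdots}$ after optimizing the truncation level $M$ against a moment (Markov/Rosenthal) bound. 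The same truncation is performed for the Gaussian surrogate $\bm{v}_i$, which only changes constants because the covariance structure is preserved.

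Next, having reduced both sides to $m$-dependent blocks, I would partition $\{b_*+1,\dots,n\}$ into alternating "big" blocks of length of order $L$ and "small" blocks of length $m$, so that the big blocks are mutually independent after deleting the small blocks (standard big-block/small-block scheme). Writing $\mathbf{X}$ as a sum of block contributions $\mathbf{S}_k = \frac{1}{\sqrt n}\sum_{i\in \text{block }k}\bm{z}_i$, the quadratic form $\mathbf{X}^*\Gamma\mathbf{X}$ becomes a sum of $\mathbf{S}_k^*\Gamma\mathbf{S}_k$ plus cross terms $\mathbf{S}_k^*\Gamma\mathbf{S}_{k'}$; the diagonal-dominant reduction and the $m$-dependence make the relevant object essentially a sum of independent (across big blocks) random vectors. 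I would then apply a high-dimensional Gaussian approximation over the class of convex sets — using the results of \cite{CF, FX} cited in the introduction — to the $p$-dimensional partial sum $\mathbf{X}=\frac{1}{\sqrt n}\sum \bm{z}_i^*$ versus its Gaussian counterpart $\mathbf{Y}$, which yields a bound of order $p^{7/4}n^{-1/2}M_z^3M^2$ after accounting for the truncation levels entering the third-moment and envelope quantities (this is the second term of $\Theta$). Crucially, since $\{x\in\mathbb{R}:\,\mathbf{x}^*\Gamma\mathbf{x}\le t\}$ need not be convex, I would instead compare $\mathbf{X}$ and $\mathbf{Y}$ in the sense of convex sets and then transfer to the quadratic functional using an anti-concentration argument: the Kolmogorov distance between $\mathbf{X}^*\Gamma\mathbf{X}$ and $\mathbf{Y}^*\Gamma\mathbf{Y}$ is bounded by the convex-set distance plus a Gaussian anti-concentration term for $\mathbf{Y}^*\Gamma\mathbf{Y}$ near level sets, which I would handle by writing $\mathbf{Y}^*\Gamma\mathbf{Y}$ in terms of a weighted sum of independent $\chi^2_1$ variables (eigendecomposition of $\Gamma^{1/2}\mathrm{Cov}(\mathbf{Y})\Gamma^{1/2}$) and using a small-ball estimate; this produces the $\log n\,\xi_c/M_z$ and $n^{-\delta}$ contributions.

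I expect the main obstacle to be the anti-concentration / level-set transfer step: turning a convex-set Gaussian approximation for the vector $\mathbf{X}$ into a Kolmogorov-distance bound for the scalar quadratic form $\mathbf{X}^*\Gamma\mathbf{X}$ in a genuinely high-dimensional regime ($p\to\infty$) without incurring dimension-dependent losses that would swamp the bound. The difficulty is that standard anti-concentration inequalities for quadratic forms of Gaussians degrade unless one has good control of the spectrum of $\Gamma\mathrm{Cov}(\mathbf{Y})$; here the UPDC assumption and the structure $\Gamma = \overline{\Sigma}^{-1}\mathbf{I}_{b_*c}\mathbf{W}\overline{\Sigma}^{-1}$, combined with the projection nature of $W = I-\bar B\bar B^*$, give that $\Gamma\mathrm{Cov}(\mathbf{Y})$ has bounded operator norm and a trace of order $p$, so its "effective rank" is comparable to $p$ and the quadratic form concentrates at scale $\sqrt p$; a careful bookkeeping of this scaling is what forces the precise powers of $p$ in $\Theta$. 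A secondary technical nuisance is balancing the several free parameters $m$, $L$, $M$, $M_z$ simultaneously so that every error term is $o(1)$ under the stated requirement $\tau>4.5+\varpi$ and $c=O(n^{\mathfrak a})$, $b_*/\log b_*\asymp n^{1/(\tau-1)}$; I would leave the explicit optimization of these parameters to the detailed proof, recording only that it is routine once the structural bounds above are in place.
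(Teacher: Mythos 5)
Your overall scheme---truncate the dependence, pass to an $m$-dependent version, apply a high-dimensional convex-set Gaussian approximation, and use chi-square anti-concentration---is in the right family, but two structural choices differ from the paper's argument and one of them would actually get you into trouble.

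First, you assert that the level sets $\{\mathbf{x}: \mathbf{x}^*\Gamma\mathbf{x}\le t\}$ ``need not be convex'' and build a ``transfer to the quadratic functional'' step around this. That premise is false here: $\Gamma=\overline{\Sigma}^{-1}\mathbf{I}_{b_*c}\mathbf{W}\overline{\Sigma}^{-1}$ with $W=I-\bar{B}\bar{B}^*$ a projection (since the basis contains the constant function, $\bar{B}$ is a unit vector), $\mathbf{I}_{b_*c}$ and $\mathbf{W}$ commuting block-diagonal matrices, so $\mathbf{I}_{b_*c}\mathbf{W}$ is positive semi-definite and $\Gamma$ is of the form $A^*B A$ with $B\succeq 0$, hence positive semi-definite. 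The paper opens the proof by observing exactly this and therefore treats $A_x=\{\mathbf{W}\in\mathbb{R}^p:\mathbf{W}^*\Gamma\mathbf{W}\le x\}$ directly as a convex set; the supremum over all convex sets upper-bounds $\mathcal{K}(\mathbf{X},\mathbf{Y})$ with no extra transfer step. The anti-concentration Lemma \ref{lem_chisquare} is still needed, but for a different reason than the one you give: it is used to absorb the random shifts $\mathcal{D}(\cdot,\cdot)$ in the threshold $x$ that arise when you replace $\mathbf{X}$ by its $M$-dependent/truncated version (or $\mathbf{Y}$ by $\widetilde{\mathbf{Y}}^M$ with slightly mismatched covariance), i.e.\ comparing $\mathbb{P}(\mathbf{Y}^*\Gamma\mathbf{Y}\le x)$ with $\mathbb{P}(\mathbf{Y}^*\Gamma\mathbf{Y}\le x+\mathcal{D})$. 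That is precisely where the term $p^{1/2}\xi_c^{1/2}(\cdot)^{1/2}$ and the optimized term $M^{(-q\tau+1)/(2q+1)}\xi_c^{(q+1)/(2q+1)}p^{(q+1)/(2q+1)}n^{\delta q/(2q+1)}$ come from.

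Second, you propose a big-block/small-block partition to manufacture independence and then worry about the cross terms $\mathbf{S}_k^*\Gamma\mathbf{S}_{k'}$ --- a step the paper avoids entirely. After the $M$-dependent replacement $\bm{h}_i^M=\mathbb{E}(\bm{h}_i\mid\eta_{i-M},\dots,\eta_i)$, the paper invokes Lemma \ref{lem_xf} (\cite{FX}) which is a local-dependence Gaussian approximation over convex sets: it requires only the existence of dependency neighborhoods $N_i\subset N_{ij}\subset N_{ijk}$ of sizes $n_1,n_2,n_3$, and for an $M$-dependent sequence one can take $n_1=n_2=n_3=M$. This delivers the bound $C\,p^{7/4}n^{-1/2}M_z^3 M^2$ directly, with no cross-term bookkeeping. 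Your blocking route would require you to bound the contribution of cross terms and reconcile the residual small-block pieces; this is not routine at the target precision, and the clean $p^{7/4}$ exponent quoted in $\Theta$ is exactly what the \cite{FX} lemma yields. You should replace the blocking step with a direct application of that local-dependence result, and remove the (spurious) convexity obstruction.

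A smaller bookkeeping point: after truncating $\bm{z}_i^M$ at level $M_z$, the paper is careful to note that $\overline{X}_i^M\ne X_i\mathbf{1}(|X_i|\le M_z)$ and therefore compares covariances term by term using $\mathbf{1}(|z|>M_z)\le |z|^{q-1}/M_z^{q-1}$; this is what produces the $M_z^{-(q-2)}$ power in $\Theta$ rather than a naive $M_z^{-q}$. It is worth making this explicit in your write-up since it affects the exponent you ultimately quote.
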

\begin{rem}
Several remarks are in order. { First, $p=(b_*+1)c$ is the number of predictors in our least squares regression. $p$ is a tuning parameter which needs to be selected by the user; see Section \ref{sec:choiceparameter} of \cite{DZ2supp}. Second, the parameters $M, M_z$ and $\delta$ are parameters needed in the M-dependence approximation and the Stein's method implementation of our theoretical investigations. Those parameters are only needed in the theoretical investigations and are not needed in practical implementation of our methodology. In particular, $M_z$ is the truncation level for the locally stationary time series $\{{\bf z}_i\}$ in (3.16), $M$ is the level of the M-dependence approximation, and $\delta$ is defined in such a way that $1-O(n^{-\delta})$ is the probability that the truncated time series approximates the original time series sufficiently well. } Third, we remark that $\xi_c$ in (\ref{eq_xicdefinition}) can be well controlled for many commonly used basis functions. For instance, $\xi_c=O(1)$ for the Fourier basis and the normalized orthogonal polynomials and $\xi_c=O(\sqrt{c})$ for orthogonal wavelet; see Section \ref{sec_basisfunctions} of \cite{DZ2supp} for more details.  Fourth, { it is easy to see that the approximation rate in Theorem \ref{thm_gaussian} converges to 0 under mild conditions. For example, when $\tau$ is sufficiently large, i.e., the temporal relation decays fast polynomially, $q$ is sufficiently large, i.e., the time series have sufficiently large fiinte moments and $\xi_c=O(1),$ we only need $M_z \gg \log n, M \gg n^{\epsilon}$ for some fixed sufficiently small constant $\epsilon>0,$ $p \ll n^{2/7}$ and $\delta$ can be any fixed constant. }
\end{rem}

By Theorem \ref{thm_gaussian}, the asymptotic normality of $nT$ can be readily obtained as in Proposition \ref{prop_normal} below. Denote the long-run covariance matrix for $\{\bm{h}_i\}$ at time $t$ as
\begin{equation}\label{eq_longrunh}
\Omega(t)=\sum_{j=-\infty}^{\infty} \text{Cov} \Big(\mathbf{U}(t, \mathcal{F}_0), \mathbf{U}(t, \mathcal{F}_j) \Big),
\end{equation}
and the aggregated covariance matrix as 
$\Omega=\int_0^1 \Omega(t) \otimes \Big( \mathbf{B}(t) \mathbf{B}^*(t) \Big)\mathrm{d}t.$ 
$\Omega$ can be regarded as the integrated long-run covariance matrix of $\{\bm{z}_i\}.$ For $k \in \mathbb{N}$ and $\Gamma$ in (\ref{eq_tfinalexpress}), we define
\begin{equation} \label{eq_f}
f_k=\Big( \text{Tr}[ \Omega^{1/2} \Gamma \Omega^{1/2} ]^k \Big)^{1/k},
\end{equation}
where $\text{Tr}(\cdot)$ is the trace of the given matrix. 
\begin{prop} \label{prop_normal} Under the assumptions of Lemma \ref{lem_reducedquadratic},  assuming that $\Theta$ in (\ref{eq_defntheta}) satisfying  $\Theta=o(1)$, when $\mathbf{H}_0$ in (\ref{eq_testnonzero1}) holds true, we have  
\begin{equation*}
\frac{nT-f_1}{f_2} \Rightarrow \mathcal{N}(0,2).
\end{equation*}
\end{prop}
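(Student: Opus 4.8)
The plan is to pass from $nT$ to a Gaussian quadratic form and then prove a central limit theorem for the latter. Under $\mathbf{H}_0$, Lemma~\ref{lem_reducedquadratic} gives $nT=\mathbf{X}^*\Gamma\mathbf{X}+o_{\mathbb P}(1)$, and Theorem~\ref{thm_gaussian} gives $\mathcal{K}(\mathbf{X},\mathbf{Y})\le C\Theta=o(1)$, where $\mathbf{Y}^*\Gamma\mathbf{Y}$ is the corresponding Gaussian quadratic form built from the centered Gaussian vectors $\{\bm v_i\}$ that reproduce the covariances of $\{\bm h_i\}$. Granting for the moment that $f_2\to\infty$ together with
\[
\frac{\mathbf{Y}^*\Gamma\mathbf{Y}-\mathbb{E}[\mathbf{Y}^*\Gamma\mathbf{Y}]}{f_2}\Rightarrow\mathcal{N}(0,2),
\qquad
\frac{\mathbb{E}[\mathbf{Y}^*\Gamma\mathbf{Y}]-f_1}{f_2}\to 0,
\]
the divergence of $f_2$ yields the anti-concentration bound $\sup_t\mathbb{P}(\mathbf{Y}^*\Gamma\mathbf{Y}\in(t,t+\varepsilon])\to0$ for every fixed $\varepsilon>0$; feeding this, the $o_{\mathbb P}(1)$ remainder, and $\mathcal K(\mathbf X,\mathbf Y)\to0$ into the inflation inequality $\mathbb{P}(nT\le t)\le\mathbb{P}(\mathbf{Y}^*\Gamma\mathbf{Y}\le t+\varepsilon)+\mathcal K(\mathbf X,\mathbf Y)+\mathbb{P}(|nT-\mathbf X^*\Gamma\mathbf X|>\varepsilon)$ and its mirror image gives $\sup_t|\mathbb{P}(nT\le t)-\mathbb{P}(\mathbf{Y}^*\Gamma\mathbf{Y}\le t)|\to0$, hence $\frac{nT-f_1}{f_2}\Rightarrow\mathcal N(0,2)$. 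So everything reduces to the two displayed facts about the Gaussian quadratic form.

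For the moments, since $\mathbf{Y}^*\Gamma\mathbf{Y}$ is a quadratic form in a centered Gaussian vector with covariance matrix $\Omega_n:=\operatorname{Cov}(\mathbf{Y})=\frac1n\sum_{i,j}\operatorname{Cov}(\bm h_i,\bm h_j)\otimes\big(\mathbf B(\tfrac in)\mathbf B^*(\tfrac jn)\big)$, the standard identities give $\mathbb{E}[\mathbf{Y}^*\Gamma\mathbf{Y}]=\operatorname{Tr}(\Gamma\Omega_n)$ and $\operatorname{Var}(\mathbf{Y}^*\Gamma\mathbf{Y})=2\operatorname{Tr}\big((\Gamma\Omega_n)^2\big)$. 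The short-range dependence of the locally stationary process $\{\bm h_i\}$ together with a Riemann-sum approximation shows that $\Omega_n\to\Omega$ in a Schatten-$1$ and Schatten-$2$ sense strong enough, relative to the diverging $f_2$, to give $\operatorname{Tr}(\Gamma\Omega_n)=f_1+o(f_2)$ and $\operatorname{Tr}((\Gamma\Omega_n)^2)=f_2^2(1+o(1))$ with $f_1,f_2$ as in \eqref{eq_f}; here one uses the UPDC assumption (via Proposition~\ref{prop_pdc}) to keep the eigenvalues of $\overline\Sigma$, and hence the operator norm of $\Gamma$, under control. Consequently $\frac{\mathbf{Y}^*\Gamma\mathbf{Y}-f_1}{f_2}$ has asymptotic mean $0$ and asymptotic variance $2$, so it remains to show that it is asymptotically normal.

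For the CLT I would diagonalize: writing the symmetric positive semidefinite matrix $\Omega_n^{1/2}\Gamma\Omega_n^{1/2}=\sum_k\lambda_k\bm u_k\bm u_k^*$ in spectral form, one has $\mathbf{Y}^*\Gamma\mathbf{Y}\stackrel{d}{=}\sum_k\lambda_k\zeta_k^2$ with $\{\zeta_k\}$ i.i.d.\ $\mathcal N(0,1)$, so $\frac{\mathbf{Y}^*\Gamma\mathbf{Y}-\sum_k\lambda_k}{f_2}\stackrel{d}{=}\frac{(\sum_k\lambda_k^2)^{1/2}}{f_2}\cdot\frac{\sum_k\lambda_k(\zeta_k^2-1)}{(\sum_k\lambda_k^2)^{1/2}}$, and the Lyapunov CLT for a weighted sum of i.i.d.\ centered $\chi^2_1$ variables delivers the $\mathcal N(0,2)$ limit as soon as $\lambda_{\max}/(\sum_k\lambda_k^2)^{1/2}\to0$, where $\lambda_{\max}$ is the largest eigenvalue of $\Omega_n^{1/2}\Gamma\Omega_n^{1/2}$. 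Verifying this Lindeberg-type condition is the crux: the numerator satisfies $\lambda_{\max}\le\|\Omega_n\|\,\|\Gamma\|$ and is controlled by the UPDC and short-range-dependence bounds, while the denominator $(\sum_k\lambda_k^2)^{1/2}=(\operatorname{Tr}((\Gamma\Omega_n)^2))^{1/2}\asymp f_2$ diverges because $\Gamma=\overline\Sigma^{-1}\mathbf I_{b_*c}\mathbf W\overline\Sigma^{-1}$ has of order $b_*(c-1)\asymp p$ nonzero eigenvalues of comparable size (after conjugation by the well-conditioned $\overline\Sigma^{-1}$ and sandwiching by the two-sided–bounded $\Omega_n$), so $f_2^2\gtrsim p\to\infty$. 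This simultaneously forces $f_2\to\infty$, makes the ratio vanish, and certifies that the $o(f_2)$ errors collected above are genuinely negligible.

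The main obstacle is precisely this spectral bookkeeping: controlling, uniformly in $n$ and in a form compatible with the high-dimensional regime $p=o(n^{2/7})$, the eigenvalues of the Kronecker-structured, Toeplitz-type matrices $\Sigma$ (hence $\overline\Sigma^{-1}$) and of the integrated long-run covariance $\Omega$, so that simultaneously $\lambda_{\max}$ is negligible relative to $f_2$ and $f_2$ dominates all of the approximation errors ($C\Theta$, the $o_{\mathbb P}(1)$ remainder of Lemma~\ref{lem_reducedquadratic}, and $\|\Omega_n-\Omega\|$). Once these estimates are in hand, the assembly described above is routine.
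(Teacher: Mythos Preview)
Your approach is essentially the same as the paper's: reduce $nT$ to $\mathbf{X}^*\Gamma\mathbf{X}$ via Lemma~\ref{lem_reducedquadratic}, pass to the Gaussian quadratic form $\mathbf{Y}^*\Gamma\mathbf{Y}$ via Theorem~\ref{thm_gaussian}, diagonalize and apply a Lindeberg-type CLT for a weighted sum of independent $\chi_1^2$'s, and verify the negligibility condition $d_1/f_2\to 0$ from operator-norm bounds on $\Omega^{1/2}\Gamma\Omega^{1/2}$ together with $f_2\asymp\sqrt{b_*c}\to\infty$. The paper's proof is considerably terser: it invokes Assumption~\ref{assu_basis}(1) and the inequality $\lambda_{\min}(A)\lambda_{\min}(B)\le\lambda_{\min}(AB)\le\lambda_{\max}(AB)\le\lambda_{\max}(A)\lambda_{\max}(B)$ to get $d_i=O(1)$, notes $r=O(b_*c)$, and concludes $d_1/f_2\to 0$ directly, without spelling out the anti-concentration step needed to absorb the $o_{\mathbb P}(1)$ remainder or the distinction between the finite-sample covariance $\Omega_n=\operatorname{Cov}(\mathbf{Y})$ and the integrated long-run covariance $\Omega$, both of which you handle explicitly.
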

{
Next, we discuss the power of the test under a class of local alternatives. For a given $\alpha,$ set
\begin{equation*}
\mathbf{H}_a:   \sum_{j=1}^{\infty} \int_0^1 \Big( \phi_j(t)-\bar{\phi}_j\Big)^2 dt>C_{\alpha} \frac{\sqrt{b_*c}}{n},
\end{equation*}
where $\bar{\phi}_j=\int_0^1 \phi_j(t)dt$ and 
$C_{\alpha} \equiv C_{\alpha}(n) \rightarrow \infty $  as $n \rightarrow \infty.$ 

\begin{prop} \label{prop_power} Under the assumptions of Lemma \ref{lem_reducedquadratic}, assuming that $\Theta$ in (\ref{eq_defntheta}) satisfying  $\Theta=o(1)$, when $\mathbf{H}_a$ holds true, we have 
\begin{equation*}
\frac{nT-f_1-n\sum_{j=1}^{\infty} \int_0^1 \Big( \phi_j(t)-\bar{\phi}_j \Big)^2 dt}{f_2} \Rightarrow \mathcal{N}(0,2). 
\end{equation*}
Consequently, under  $\mathbf{H}_a,$ the power of our test will asymptotically be 1, i.e., 
\begin{equation*}
\mathbb{P} \Big( \left| \frac{nT-f_1}{f_2} \right| \geq \sqrt{2} \mathcal{Z}_{1-\alpha} \Big) \rightarrow 1, \ n \rightarrow \infty,
\end{equation*}
where $\mathcal{Z}_{1-\alpha}$ is the $(1-\alpha)$th quantile of the standard Gaussian distribution.
\end{prop}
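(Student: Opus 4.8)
The plan is to decompose $nT$ under $\mathbf{H}_a$ into three pieces---a deterministic signal, a mean-zero cross term, and a noise term that is exactly the object analyzed under $\mathbf{H}_0$ in Lemma \ref{lem_reducedquadratic} and Proposition \ref{prop_normal}---and then to read off the limit law and the power from these pieces. Writing $\bar{\mathbb{B}}_j=\int_0^1\mathbb{B}_j(t)\,dt$, the centered sieve estimator splits as
\begin{equation*}
\widehat{\phi}_j(t)-\overline{\widehat{\phi}}_j=\big(\phi_j(t)-\bar{\phi}_j\big)+D_j(t),\qquad D_j(t):=\widehat{\bm{\beta}}^*\big(\mathbb{B}_j(t)-\bar{\mathbb{B}}_j\big)-\big(\phi_j(t)-\bar{\phi}_j\big),
\end{equation*}
where $D_j$ gathers the stochastic fluctuation of $\widehat{\bm{\beta}}$ around its population target together with the sieve error $O(c^{-d})$ from (\ref{eq_phiform}) and the AR-truncation error of Theorem \ref{thm_locallynonzero}. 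Squaring, integrating and summing over $1\le j\le b_*$ yields
\begin{equation*}
nT=n\,\mathrm{Sig}+2n\sum_{j=1}^{b_*}\int_0^1\big(\phi_j(t)-\bar{\phi}_j\big)D_j(t)\,dt+n\sum_{j=1}^{b_*}\int_0^1 D_j(t)^2\,dt,
\end{equation*}
with $\mathrm{Sig}:=\sum_{j\ge1}\int_0^1(\phi_j(t)-\bar{\phi}_j)^2\,dt$ (the tail $j>b_*$ contributing $o(n^{-1})$ by (\ref{eq_phibound1}) and (\ref{eq_controlarcoeff1}) when $\tau$ is large).

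\emph{Noise term.} Since $D_j$ has the same algebraic form that the centered estimator would have when $\phi_j(\cdot)$ is constant, the reduction behind Lemma \ref{lem_reducedquadratic} goes through unchanged---its only input is that $\bm{h}_i=\bm{x}_i\epsilon_i$ is a locally stationary sequence whose physical dependence obeys (\ref{eq_physcialbounbounbound}), which holds under $\mathbf{H}_a$ exactly as under $\mathbf{H}_0$---so $n\sum_{j=1}^{b_*}\int_0^1 D_j(t)^2\,dt=\mathbf{X}^*\Gamma\mathbf{X}+o_{\mathbb{P}}(1)$. I would then apply Theorem \ref{thm_gaussian} to pass to the Gaussian surrogate $\mathbf{Y}^*\Gamma\mathbf{Y}$ and invoke the classical central limit theorem for quadratic forms of Gaussian vectors---diagonalizing in the eigenbasis of the positive semidefinite matrix $\Omega^{1/2}\Gamma\Omega^{1/2}$, writing $\mathbf{Y}^*\Gamma\mathbf{Y}-f_1$ as $\sum_i\lambda_i(Z_i^2-1)$, and using $f_\infty/f_2\to0$ with $f_\infty$ the largest eigenvalue, just as in the proof of Proposition \ref{prop_normal}---to obtain $(\mathbf{X}^*\Gamma\mathbf{X}-f_1)/f_2\Rightarrow\mathcal{N}(0,2)$; here $f_1\asymp b_*c$ and $f_2\asymp\sqrt{b_*c}$.

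\emph{Cross term and signal.} The cross term equals $2n\,\bm{s}^*\bm{u}$ plus a bias contribution, where $\bm{s}=\sum_{j=1}^{b_*}\int_0^1(\phi_j(t)-\bar{\phi}_j)\mathbb{B}_j(t)\,dt$ satisfies $\|\bm{s}\|^2=\mathrm{Sig}+O(c^{-2d})$ and $\bm{u}$ is the leading stochastic part of $\widehat{\bm{\beta}}-\bm{\beta}$, whose covariance has operator norm $O(1/n)$ because $\widehat{\bm{\beta}}-\bm{\beta}\approx (Y^*Y)^{-1}Y^*\bm{\epsilon}$ and both the aggregated long-run covariance of $\{\bm{z}_i\}$ and $\overline{\Sigma}^{-1}$ are bounded in operator norm under the assumptions of Lemma \ref{lem_reducedquadratic}. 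By Cauchy--Schwarz, $\operatorname{Var}(2n\,\bm{s}^*\bm{u})\le 4n^2\|\bm{s}\|^2\|\operatorname{Cov}(\bm{u})\|_{\mathrm{op}}=O(n\,\mathrm{Sig})$, so the cross term is $O_{\mathbb{P}}(\sqrt{n\,\mathrm{Sig}})$; a parallel bookkeeping of the bias contribution using the rates of Theorem \ref{thm_locallynonzero} and (\ref{eq_phiform}) shows it is of smaller order. Under $\mathbf{H}_a$ we have $n\,\mathrm{Sig}\ge C_\alpha\sqrt{b_*c}\asymp C_\alpha f_2\to\infty$, so the cross term is $o_{\mathbb{P}}(n\,\mathrm{Sig})$, and in the near-boundary regime $n\,\mathrm{Sig}=o(f_2^2)$ it is also $o_{\mathbb{P}}(f_2)$; combining with the noise-term analysis gives $(nT-f_1-n\,\mathrm{Sig})/f_2\Rightarrow\mathcal{N}(0,2)$. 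Finally, writing $(nT-f_1)/f_2=(n\,\mathrm{Sig}/f_2)\big(1+o_{\mathbb{P}}(1)\big)+(\mathbf{X}^*\Gamma\mathbf{X}-f_1)/f_2+o_{\mathbb{P}}(1)$ with $n\,\mathrm{Sig}/f_2\ge C^{-1}C_\alpha\to\infty$, Slutsky's theorem yields $\mathbb{P}\big(|(nT-f_1)/f_2|\ge\sqrt{2}\,\mathcal{Z}_{1-\alpha}\big)\to1$.

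\emph{Main obstacle.} The delicate step is the control of the cross term and, more generally, of every non-leading piece inside $D_j$: one must verify that the sieve error $O(c^{-d})$, the AR-truncation error of Theorem \ref{thm_locallynonzero}, and the interaction between the signal $\phi_j-\bar{\phi}_j$ and the estimation noise are all negligible on the scale $f_2/n$, uniformly over $1\le j\le b_*$, which requires combining the operator-norm bounds on the design matrix and the long-run covariance (as in Lemma \ref{lem_reducedquadratic}) with the decay rates for $\phi_{ij}$ from Theorem \ref{lem_phibound}. Relatedly, although $n\sum_j\int_0^1 D_j^2\,dt=\mathbf{X}^*\Gamma\mathbf{X}+o_{\mathbb{P}}(1)$ under $\mathbf{H}_a$ is essentially a rerun of Lemma \ref{lem_reducedquadratic}, one has to check that replacing $\mathbf{H}_0$ by $\mathbf{H}_a$ does not change the orders of the remainders entering that reduction---in particular that the now non-vanishing target $\bm{\beta}$ only affects the deterministic signal term and not the quadratic noise term.
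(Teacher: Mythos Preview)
Your proposal is correct and follows essentially the same route as the paper: introduce the auxiliary statistic $\mathcal{T}=\sum_{j=1}^{b_*}\int_0^1 D_j(t)^2\,dt$ (your noise term), observe that Proposition \ref{prop_normal} applies to $n\mathcal{T}$ verbatim since the reduction in Lemma \ref{lem_reducedquadratic} does not use $\mathbf{H}_0$ beyond the algebraic identity you exploit, expand $nT=n\mathcal{T}+n\,\mathrm{Sig}+\text{cross}$, and show the cross term is negligible on the scale $f_2$. The only methodological difference is in how the cross term is controlled: the paper writes it as $\sum_j\bm{\beta}_j^*\widehat{B}(\bm{\beta}_j-\widehat{\bm{\beta}}_j)$ and invokes the uniform rate for $\widehat{\phi}_j-\phi_j$ (essentially (\ref{eq_coefficientbound})) together with $\|\widehat{B}\|=O(1)$, whereas you bound $\operatorname{Var}(2n\,\bm{s}^*\bm{u})$ via the operator norm of $\operatorname{Cov}(\bm{u})$. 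Your variance argument is slightly cleaner because it avoids the extra $\zeta_c\sqrt{\log n}$ factor coming from the $\sup$-norm bound, at the cost of giving only a second-moment control rather than a high-probability one; both yield $O_{\mathbb{P}}(\sqrt{n\,\mathrm{Sig}})$ for the cross term and hence the same conclusion. Your explicit remark that the CLT display is established in the near-boundary regime $n\,\mathrm{Sig}=o(f_2^2)$ is in fact more careful than the paper's presentation, which reaches the bound $O_{\mathbb{P}}(\sqrt{\log n}(b_*c)^{1/4}/n)$ by implicitly reading $\mathbf{H}_a$ at its boundary; for the power conclusion both arguments are identical.
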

}
Proposition \ref{prop_power} implies that our test $T$ can detect local alternatives when the ${\cal L}^2$ distance between $(\phi_1(t),\phi_2(t),\cdots)$ and $(\bar\phi_1,\bar\phi_2,\cdots)$ dominates $(b_*c)^{1/4}/\sqrt{n}\asymp p^{1/4}/\sqrt{n}$. Observe that Proposition \ref{prop_normal} requires that $p\ll n^{2/7}$. Therefore $p^{1/4}/\sqrt{n}$ converges to 0 faster than $n^{-3/7}$. { We mention that in the literature, for example \cite{paparoditis2016local}, the authors studied the local power properties of frequency domain based covariance stationarity tests. For some discussions and connections with our current time domain test of correlation stationarity, we refer the readers to Section \ref{suppl_subsec_remark4} of our supplement \cite{DZ2supp}.   }

\begin{rem}\label{rem_testtrend}
In this remark, we discuss how to deal with $T_g$ in (\ref{eq_defntg}). By a discussion similar to Lemma \ref{lem_reducedquadratic}, $T_g$ can also be expressed as a quadratic form $nT_g=\mathbf{X}^* \Gamma_g \mathbf{X}+o_{{\mathbb P}}(1), \ \Gamma_g=\overline{\Sigma}^{-1}  \mathbf{W} \overline{\Sigma}^{-1}, $ where we recall (\ref{eq_overlinesigma}). Consequently, the only difference lies in the deterministic weight matrix of the quadratic form. By Theorem \ref{thm_gaussian}, we can prove similar results to $n T_g$ as in Propositions \ref{prop_normal} and \ref{prop_power}.  We omit further details. 

\end{rem}

\subsection{Multiplier bootstrap procedure}\label{sec_bootstrapping}
In this subsection, we propose a practical procedure to implement the stability test based on a multiplier bootstrap procedure. 

On the one hand, it is difficult to directly use Proposition \ref{prop_normal} to carry out the stability test since the quantities $f_1$ and $f_2$ rely on $\Omega$ which is hard to estimate in general. 
On the other hand, the high-dimensional Gaussian quadratic form  $\mathbf{Y}^*\Gamma \mathbf{Y}$ converges at a slow rate.  To overcome these difficulties, we extend the strategy of \cite{ZZ1} and use a high-dimensional mulitplier bootstrap statistic to mimic the distributions of $nT.$ Note that (\ref{eq_tfinalexpress}) can be explicitly written as 
\begin{equation} \label{eq_defnnt}
nT= \Big( \frac{1}{\sqrt{n}} \sum_{i=b_*+1}^n \bm{z}_i^* \Big) \Gamma \Big( \frac{1}{\sqrt{n}} \sum_{i=b_*+1}^n \bm{z}_i \Big)+o_{{\mathbb P}}(1).
\end{equation}
Recall (\ref{eq_zikronecker}). For some positive integer $m,$ denote
\begin{equation}\label{eq_defnphi}
\Phi=\frac{1}{\sqrt{n-m-b_*+1}\sqrt{m}} \sum_{i=b_*+1}^{n-m} \Big[ \Big(\sum_{j=i}^{i+m} \bm{h}_j \Big) \otimes \Big( \mathbf{B}(\frac{i}{n}) \Big) \Big]  R_i,
\end{equation}
where $R_i, i=b_*+1, \cdots, n-m,$ are i.i.d. standard Gaussian random variables. $\Phi$ is an important statistic since the covariance of $\Phi$ is close to $\Omega$ conditional on the data; see (\ref{eq_lamdaomega}) of \cite{DZ2supp} for a more precise statement. 

Since $\{\bm{h}_i\}$ is based on $\{\epsilon_i\}$ which cannot be observed, we shall instead use the residuals
\begin{equation}\label{eq_residualdefinition}
\widehat{\epsilon}^{b_*}_i:=x_i-\widehat{\phi}_0(\frac{i}{n})-\sum_{j=1}^{b_*} \widehat{\phi}_j(\frac{i}{n})x_{i-j}.
\end{equation}
Denote $\{\widehat{\bm{h}}_i\}$ similarly as in (\ref{eq_zikronecker}) by replacing $\{\epsilon_i\}$ with $\{\widehat{\epsilon}^{b_*}_i\}.$ Accordingly, we denote $\widehat{\Phi}$ as in (\ref{eq_defnphi}) using $\{\widehat{\bm{h}}_i\}.$ With the above notations, we denote the bootstrap quadratic form  as
\begin{equation}\label{eq_defnmathcalt}
\widehat{\mathcal{T}}:=\widehat{\Phi}^* \widehat{\Gamma} \widehat{\Phi},
\end{equation}
where $\widehat{\Gamma}:= \widehat{\Sigma}^{-1} \mathbf{I}_{b_* c} \mathbf{W} \widehat{\Sigma}^{-1}$ with $\widehat{\Sigma}=\frac{1}{n} Y^* Y.$ Note that  $\widehat{\Gamma}$ is a consistent estimator of $\Gamma.$

In Theorem \ref{thm_bootstrapping} below, we prove that the conditional distribution of $\widehat{\mathcal{T}}$ can mimic that of $nT$ asymptotically. Denote
\begin{equation}\label{eq_zetacdefn}
\zeta_c:=\sup_t | \mathbf{B}(t) |. 
\end{equation}
\begin{thm}\label{thm_bootstrapping}
Suppose the assumptions of Lemma \ref{lem_reducedquadratic} hold  and  
\begin{equation}\label{eq_assumimply}
\sqrt{b_*}\zeta_c^2 c^{-1/2} \Big( \sqrt{\frac{m}{n}}+\frac{1}{m} \Big)=o(1).
\end{equation}
Furthermore, we assume that Assumption \ref{assum_local} holds with $q>4.$ When $\mathbf{H}_0$ holds true,  there exists some set $\mathcal{A}_n$ such that $\mathbb{P}(\mathcal{A}_n)=1-o(1)$ and under the event $\mathcal{A}_n,$ we have that conditional on the data $\{x_i\}_{i=b_*+1}^n,$   assuming that $\Theta$ in (\ref{eq_defntheta}) satisfying  $\Theta=o(1)$,
\begin{equation*}
\sup_{x \in \mathbb{R}} \left| \mathbb{P} \left( \frac{\widehat{\mathcal{T}}-f_1}{\sqrt{2}f_2} \leq x \right)-\mathbb{P} \left( \Psi \leq x \right) \right|=o(1),
\end{equation*} 
where $\Psi \sim \mathcal{N}(0,1)$ is a standard normal random variable. 
\end{thm}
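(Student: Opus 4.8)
The plan is to show that the bootstrap quadratic form $\widehat{\mathcal{T}} = \widehat{\Phi}^*\widehat{\Gamma}\widehat{\Phi}$, conditional on the data, is close in distribution to the standardized Gaussian quadratic form appearing in Proposition \ref{prop_normal}, and then invoke the unconditional asymptotic normality $nT \Rightarrow$ Gaussian from Proposition \ref{prop_normal} to pair the two together. The first and most substantial step is to control the difference between the feasible objects (built from the residuals $\widehat{\epsilon}^{b_*}_i$ and the empirical weight matrix $\widehat{\Gamma}$) and their infeasible counterparts (built from the true $\epsilon_i$ and $\Gamma$). For the weight matrix, I would use $\widehat{\Sigma} = \frac{1}{n}Y^*Y \to \overline{\Sigma}$ in operator norm together with the UPDC-type lower bound on the spectrum of $\overline{\Sigma}$ (so that inverses are well controlled), giving $\|\widehat{\Gamma} - \Gamma\| = o_{\mathbb{P}}(\text{small})$ on a high-probability event $\mathcal{A}_n$. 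For the residuals, I would write $\widehat{\epsilon}^{b_*}_i - \epsilon_i = -(\widehat{\phi}_0(i/n) - \phi_0(i/n)) - \sum_{j=1}^{b_*}(\widehat{\phi}_j(i/n) - \phi_j(i/n))x_{i-j} + (\text{approximation error from Theorem \ref{thm_locallynonzero}})$, and use the sieve-estimation rates for $\widehat{\phi}_j$ together with the moment bounds from Assumption \ref{assum_local} (now with $q>4$) to show $\widehat{\Phi} - \Phi$ is negligible in the relevant norm; here the assumption $q>4$ is what upgrades the moment control enough to handle the cross terms, and condition \eqref{eq_assumimply} is exactly what makes the $m$-dependent block sum and the residual error terms vanish after being inflated by $\sqrt{b_*}\zeta_c^2 c^{-1/2}$.

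The second step is to identify the conditional law of $\Phi^*\Gamma\Phi$. Conditional on the data, $\Phi$ is a centered Gaussian vector (linear in the i.i.d. multipliers $R_i$) whose conditional covariance is $\frac{1}{(n-m-b_*+1)m}\sum_{i}\big[(\sum_{j=i}^{i+m}\bm{h}_j)(\sum_{j=i}^{i+m}\bm{h}_j)^*\big]\otimes(\mathbf{B}(i/n)\mathbf{B}^*(i/n))$; by a standard $m$-dependent-block / long-run-variance estimation argument (the same device as in \cite{ZZ1}), this matrix converges to $\Omega$ in operator norm on $\mathcal{A}_n$, with the bias controlled by $m/n$ and the variance by $1/m$ — again matching \eqref{eq_assumimply}. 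Hence $\Phi^*\Gamma\Phi$ is conditionally distributed as $\mathbf{Y}^*\Gamma\mathbf{Y}$ up to a covariance perturbation of vanishing operator norm, and one can transfer the Kolmogorov-distance control of Theorem \ref{thm_gaussian} to this conditional Gaussian quadratic form: a small operator-norm perturbation of $\Omega$ perturbs $f_1 = \mathrm{Tr}[\Omega^{1/2}\Gamma\Omega^{1/2}]$ and $f_2$ by a negligible amount relative to $f_2$ (using that $\Gamma$ has bounded operator norm and that $p$ diverges slowly enough that the trace is of the right order). Therefore $(\widehat{\mathcal{T}} - f_1)/(\sqrt{2}f_2)$, conditional on the data and on $\mathcal{A}_n$, has a distribution function uniformly close to that of $(\mathbf{Y}^*\Gamma\mathbf{Y} - f_1)/(\sqrt{2}f_2)$.

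The final step is to show that the latter is uniformly close to the standard normal CDF. This follows from combining the CLT for Gaussian quadratic forms (the Lindeberg-type condition $f_4/f_2 \to 0$, or equivalently $\lambda_{\max}(\Omega^{1/2}\Gamma\Omega^{1/2})/f_2 \to 0$, which holds under the stated rate conditions on $p, \tau, \xi_c$) with the Berry–Esseen-type bound already packaged in Theorem \ref{thm_gaussian} and Proposition \ref{prop_normal}; indeed Proposition \ref{prop_normal} already asserts $(nT - f_1)/f_2 \Rightarrow \mathcal{N}(0,2)$, and $nT = \mathbf{X}^*\Gamma\mathbf{X} + o_{\mathbb{P}}(1)$ with $\mathcal{K}(\mathbf{X},\mathbf{Y}) = o(1)$, so $(\mathbf{Y}^*\Gamma\mathbf{Y} - f_1)/(\sqrt{2}f_2) \Rightarrow \mathcal{N}(0,1)$. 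Chaining the three comparisons via the triangle inequality for the Kolmogorov distance yields the claim, with $\mathcal{A}_n$ being the intersection of the high-probability events on which $\widehat{\Sigma}$, the residuals, and the conditional covariance of $\widehat{\Phi}$ are all simultaneously well behaved. I expect the main obstacle to be the first step — propagating the sieve estimation error in $\widehat{\phi}_j$ through the residuals and then through the quadratic form without losing the sharp rate, since this requires carefully tracking how the factors $\sqrt{b_*}$, $\zeta_c$, $c^{-1/2}$ and the moment order $q>4$ interact, and it is precisely here that the somewhat delicate-looking condition \eqref{eq_assumimply} must be used to full strength.
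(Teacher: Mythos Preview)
Your proposal is correct and follows essentially the same approach as the paper: both arguments (i) show the conditional covariance of $\Phi$ converges to $\Omega$ via the $m$-dependent block-sum device from \cite{ZZ1}, (ii) use conditional Gaussianity of $\Phi$ plus a Lindeberg-type CLT for Gaussian quadratic forms, and (iii) handle the replacement of $\epsilon_i$ by the residuals $\widehat{\epsilon}^{b_*}_i$ through the sieve estimation rates with $q>4$. The only cosmetic differences are the ordering (the paper treats the infeasible $\Phi^*\widehat{\Gamma}\Phi$ first and the residual replacement second, while you do the reverse) and the construction of $\mathcal{A}_n$: the paper defines it explicitly as the event on which the traces $\sum_i \lambda_i$ and $\sum_i \lambda_i^2$ of $\Lambda^{1/2}\widehat{\Gamma}\Lambda^{1/2}$ are within $o(\sqrt{b_*c})$ of $f_1,f_2^2$, then compares eigenvalue sums directly rather than routing through $\mathbf{Y}^*\Gamma\mathbf{Y}$ as you suggest.
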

\begin{rem}\label{remark_optimalm}
First, $\zeta_c$ can be well controlled by the commonly used sieve basis functions. For example, we have $\zeta_c=O(\sqrt{c})$ for the Fourier basis and orthogonal wavelets, and $\zeta_c=O(c)$ for the Legendre polynomials; see Section \ref{sec_basisfunctions} of \cite{DZ2supp} for more details.  Second, in the scenario where $\zeta_c=O(\sqrt{c}),$ \eqref{eq_assumimply} is equivalent to 
$\sqrt{p}   \Big( \sqrt{\frac{m}{n}}+\frac{1}{m} \Big)=o(1).$
Hence, in the optimal case when $m=O(n^{1/3}),$ we are allowed to choose $p \ll n^{2/3}$ if $\zeta_c=O(\sqrt{c})$. In this regime, Theorem \ref{thm_gaussian} still holds true.
Third,  for the detailed construction of $\mathcal{A}_n,$ we refer the reader to (\ref{eq_constructionset}) of \cite{DZ2supp}. Finally, a theoretical discussion of  the accuracy of the bootstrap can be found in Section \ref{sec_appendix_one} of \cite{DZ2supp} and the choices of the hyperparameters $m,c,b_*,$ are discussed in Section \ref{sec:choiceparameter} of \cite{DZ2supp}. { We point out that the performance of our proposed statistic and the multiplier bootstrap procedure are robust against these hyperparameters, especially $b_*$ and $m$. For more detailed discussion on this aspect, we refer the readers to Section \ref{sec_suppl_robustparameter} of our supplement \cite{DZ2supp} for more details. } 
\end{rem}



%

{
Based on Theorem \ref{thm_bootstrapping}, we can use Algorithm \ref{alg:boostrapping} for practical implementation to calculate the $p$-value of the stability test. 

\begin{algorithm}[!ht]
\caption{\bf Multiplier  Bootstrap}
\label{alg:boostrapping}

\normalsize
\begin{flushleft}
\noindent{\bf Inputs:}   tuning parameters $b_*$, $c$ and $m$ chosen by the data-drive procedure demonstrated in Section \ref{sec:choiceparameter} of \cite{DZ2supp}, time series $\{x_i\},$ and sieve basis functions.

\noindent{\bf Step one:} Compute $\widehat{\Sigma}^{-1}$ using $n(Y^*Y)^{-1}$ and the residuals $\{\widehat{\epsilon}^{b_*}_i\}_{i=b_*+1}^n$ according to (\ref{eq_residualdefinition}).

\noindent{\bf Step two:}  Generate B (say 1000) i.i.d. copies of $\{\widehat{\Phi}^{(k)}\}_{k=1}^B.$ Compute $\widehat{\mathcal{T}}_k, k=1,2,\cdots, B,$  correspondingly as in (\ref{eq_defnmathcalt}). 

\noindent{\bf Step three:} Let $\widehat{\mathcal{T}}_{(1)} \leq \widehat{\mathcal{T}}_{(2)} \leq \cdots \leq \widehat{\mathcal{T}}_{(B)}$ be the order statistics of $\widehat{\mathcal{T}}_k, k=1,2,\cdots, B.$ Reject $\mathbf{H}_0$ at the level $\alpha$ if $nT>\widehat{\mathcal{T}}_{(\lfloor B(1-\alpha)\rfloor)},$ where $\lfloor x \rfloor$ denotes the largest integer smaller or equal to $x.$  Let $B^*=\max\{r: \widehat{\mathcal{T}}_{r} \leq nT\}.$

\noindent{\bf Output:} $p$-value of the test can be computed as $1-\frac{B^*}{B}.$
\end{flushleft}
\end{algorithm}



%



%


\section{Applications to globally optimal forecasting}\label{sec_application}
In this section, independent of Section \ref{sec:test}, we discuss an application of our AR approximation theory in optimal global forecasting for locally stationary time series. We first introduce the notion of \emph{asymptotically optimal predictor}.
{
\begin{defn}\label{defn_asymptotic}
A linear predictor $\widetilde{z}$ of a random variable $z$ based on $x_1,\cdots, x_n,$ is called asymptotically optimal if
\begin{equation}\label{eq_asymptotic_weak}
\mathbb{E}(z-\widetilde z)^2\le \sigma_n^2+o(1),
\end{equation} 
and the predictor is called strongly asymptotically optimal if
\begin{equation}\label{eq_asymptotic}
\mathbb{E}(z-\widetilde z)^2\le \sigma_n^2+o(1/n),
\end{equation} 
where $\sigma_n^2$ is the mean squared error (MSE) of the best linear predictor of $z$ based on $x_1,\cdots, x_n$. 
\end{defn}
}

The rationale for the definition of strong asymptotic optimality is that, in practice, the MSE of forecast can only be estimated with a smallest possible error of $O(1/n)$ when the time series length is $n$. Specifically, it is well-known that the parametric rate for estimating the coefficients of a time series model is $O(n^{-1/2})$. When one uses the estimated coefficients to forecast the future, the corresponding influence on the MSE of forecast is $O(1/n)$ (at best). Therefore, if a linear predictor achieves an MSE of forecast within $o(1/n)$ range of the optimal one, it is practically indistinguishable from the optimal predictor asymptotically.  




In what follows, we shall focus on the discussion of one-step ahead prediction. The general { $h$-step ahead prediction for $h \leq h_0,$ where $h_0 \in \mathbb{N}$ is some fixed constant,} can be handled similarly { with some necessary modification; we refer the readers to Section \ref{sec_suppl_generalization1} of our supplement \cite{DZ2supp} for more details}.  In order to make the forecasting feasible, we assume that the smooth data generating mechanism extends to time $n+1$. That is, we assume that the time series $\{x_1,\cdots, x_{n+1}\}$ satisfies the locally stationary assumptions imposed in the paper.
Naturally, we propose the following estimate for  $\hat x_{n+1}$, the best linear predictor of $x_{n+1}$ based on its predecessors $x_n, x_{n-1},\cdots, x_{1}$,
\begin{equation}\label{eq_forecast}
\widehat{x}_{n+1}^b=\phi_0(1)+\sum_{j=1}^b \phi_j(1)x_{n+1-j}, \ n>b.
\end{equation}
Observe that \eqref{eq_forecast} is a truncated linear predictor where $x_n,x_{n-1},\cdots, x_{n-b+1}$ (instead of $x_n,\cdots,x_1$) is used to forecast $x_{n+1}$.  Note that here $b$ is a generic order which may be different from the order $b_*$ used in the test of stability. The next theorem shows that $\widehat{x}^b_{n+1}$ is an asymptotic optimal predictor satisfying  (\ref{eq_asymptotic_weak}) or (\ref{eq_asymptotic}) in Definition \ref{defn_asymptotic} under mild conditions. 




\begin{thm}\label{thm_prediction} Suppose Assumptions \ref{assu_pdc}, \ref{assum_shortrange}, \ref{assu_smoothtrend} and \ref{assum_local}  hold true. Then for sufficiently large $n$, {
\begin{equation}\label{eq_msemm}
\mathbb{E}(x_{n+1}-\widehat{x}^b_{n+1})^2 \leq \mathbb{E}(x_{n+1}-\widehat{x}_{n+1})^2+ O\left( \left( (\log b)^{\tau}  b^{-(\tau-2)}+\frac{b^{2.5}}{n} \right)^2 \right).
\end{equation} }
\end{thm}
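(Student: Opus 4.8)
The plan is to convert the comparison of the two mean squared prediction errors into a single $L^2$ estimate by exploiting the defining orthogonality of the optimal forecast error, and then to read that estimate off the locally stationary AR approximation of Theorem \ref{thm_locallynonzero}.

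First I would record a Pythagorean decomposition. Since $\widehat{x}_{n+1}$ is the best linear predictor of $x_{n+1}$ based on $x_1,\dots,x_n$, it is the $L^2$ projection of $x_{n+1}$ onto $\mathcal{H}_n:=\mathrm{span}\{1,x_1,\dots,x_n\}$ (recall that the best linear predictor carries an intercept, c.f. (\ref{eq_arapproxiamtionnoncenter})), so $x_{n+1}-\widehat{x}_{n+1}\perp\mathcal{H}_n$. The proposed $\widehat{x}^b_{n+1}$ in (\ref{eq_forecast}) is a deterministic-coefficient affine function of $x_{n-b+1},\dots,x_n$, hence $\widehat{x}^b_{n+1}\in\mathcal{H}_n$ and therefore $\widehat{x}_{n+1}-\widehat{x}^b_{n+1}\in\mathcal{H}_n$. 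Consequently the cross term vanishes and
\begin{equation*}
\mathbb{E}(x_{n+1}-\widehat{x}^b_{n+1})^2=\mathbb{E}(x_{n+1}-\widehat{x}_{n+1})^2+\mathbb{E}(\widehat{x}_{n+1}-\widehat{x}^b_{n+1})^2 .
\end{equation*}
It is worth emphasizing that this exact cancellation is precisely what produces the \emph{squared} rate in (\ref{eq_msemm}); controlling the cross term by Cauchy--Schwarz would only yield the un-squared rate.

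Next I would bound $\|\widehat{x}_{n+1}-\widehat{x}^b_{n+1}\|_2$ through Theorem \ref{thm_locallynonzero}. Under the standing assumption that the locally stationary structure persists to time $n+1$, the argument behind (\ref{eq_choleskylocal}) applies at the index $i=n+1$ (with scaled time $t_{n+1}=(n+1)/n\to 1$, so the decay rates and constants are unchanged), giving
\begin{equation*}
x_{n+1}=\phi_0\big(\tfrac{n+1}{n}\big)+\sum_{j=1}^{b}\phi_j\big(\tfrac{n+1}{n}\big)x_{n+1-j}+\epsilon_{n+1}+O_{\ell^2}\!\Big((\log b)^{\tau-1}b^{-(\tau-3.5)}+\tfrac{b^{2.5}}{n}\Big),
\end{equation*}
where $\epsilon_{n+1}=x_{n+1}-\widehat{x}_{n+1}$ is the optimal one-step error. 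Replacing each $\phi_j((n+1)/n)$ by $\phi_j(1)$ costs $O(1/n)$ per coefficient by Lipschitz continuity of the $\phi_j(\cdot)$ (Theorem \ref{thm_locallynonzero}) and, after multiplication by the $x_{n+1-j}$ and summation of the $b$ terms, contributes an extra $O_{\ell^2}(b/n)$, which is absorbed into the stated error. Rearranging and using $x_{n+1}-\epsilon_{n+1}=\widehat{x}_{n+1}$ gives $\widehat{x}_{n+1}-\widehat{x}^b_{n+1}=O_{\ell^2}\big((\log b)^{\tau-1}b^{-(\tau-3.5)}+b^{2.5}/n\big)$; squaring this and inserting it into the Pythagorean identity yields (\ref{eq_msemm}).

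The genuine content here is inherited from Theorem \ref{thm_locallynonzero}, and the only step that is not pure bookkeeping is the extension of (\ref{eq_choleskylocal}) — established for $b<i\le n$ — to $i=n+1$; this is where the hypothesis that $\{x_1,\dots,x_{n+1}\}$ still obeys the locally stationary assumptions enters, and concretely one re-runs the Yule--Walker and banded-inverse estimates underlying Theorems \ref{lem_phibound} and \ref{thm_locallynonzero} with the horizon extended by one step. The connection to Definition \ref{defn_asymptotic} is then immediate: choosing $b$ of order $n^{1/(\tau-1)}$ up to logarithmic factors (as in (\ref{eq_boptimal})) makes the right-hand side of (\ref{eq_msemm}) of order $o(1)$, and if in addition $\tau$ is sufficiently large it is $o(1/n)$, giving asymptotic, respectively strong asymptotic, optimality.
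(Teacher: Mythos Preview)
Your proof is correct and follows essentially the same approach as the paper: both use the Pythagorean decomposition with the orthogonality of $x_{n+1}-\widehat{x}_{n+1}$ to $\mathrm{span}\{1,x_1,\dots,x_n\}$ to kill the cross term exactly, and both bound $\|\widehat{x}_{n+1}-\widehat{x}^b_{n+1}\|_2$ via Theorem \ref{thm_locallynonzero}. The only cosmetic difference is that the paper writes out the difference explicitly as $\sum_{j=1}^b(\phi_{n+1,j}-\phi_j(1))x_{n+1-j}+\sum_{j>b}\phi_{n+1,j}x_{n+1-j}$ and bounds the two pieces separately via (\ref{eq_controlarcoeff1}), (\ref{eq_controlarcoeff2}) and (\ref{eq_phibound1}), whereas you invoke the packaged statement (\ref{eq_choleskylocal}) at $i=n+1$; these are the same estimate.
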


{It is easy to see that the order $b$ which minimizes the right hand side of \eqref{eq_msemm} is of the same order as that in (\ref{eq_boptimal}). When $n$ is sufficiently large,  the corresponding error on the right-hand side of (\ref{eq_msemm}) equals
$O(n^{-2+\frac{5}{\tau-1}}).$
Hence Theorem \ref{thm_prediction} states that the estimator (\ref{eq_forecast}) is an asymptotic optimal one-step ahead forecast if $\tau>3.5$ and it is asymptotically strongly optimal if $\tau>6.$ 

Theorem \ref{thm_prediction} verifies the asymptotic global optimality of truncated linear predictors for locally stationary time series under mild conditions. For general stationary processes, \cite{Baxter_1962} and \cite{baxter1963}, among others, established profound theory on the decay rate of the AR approximation coefficients as well as the magnitude of the truncation error. As we mentioned in the Introduction, those results are derived using sophisticated spectral domain techniques which are difficult to extend to non-stationary processes. In this section, using the AR approximation theory established in this paper, we are able to establish a global optimal forecasting theory for the truncated linear predictors for a general class of locally stationary processes. 


}

In practice, one needs to estimate the optimal forecast coefficients $\phi_j(1)$, $j=0,\cdots, b$ as well as the MSE of the forecast $\mathbb{E}(x_{n+1}-\widehat{x}^b_{n+1})^2$. To investigate the estimation accuracy of those parameters, we need to impose certain dependence measures for the series. Therefore, for the rest of this subsection, we shall focus on the physical representation as well as dependence measures as in (\ref{defn_model}) and \eqref{eq_phygeneral1}. {To obtain an estimation for the predictor, 
in light of (\ref{eq_phijest}), based on (\ref{eq_forecast}), we shall estimate $\widehat{x}_{n+1}^b,$ or equivalently, forecast $x_{n+1}$ using
\begin{equation}\label{eq_forcaestequation}
\widehat{\mathsf{x}}_{n+1}^b=\widehat{\phi}_0(1)+\sum_{j=1}^b \widehat{\mathsf{\phi}}_j(1) x_{n+1-j}.
\end{equation}
} Next, we discuss the estimation of the MSE of the forecast.  Denote the series of estimated forecast error $\{\widehat{\epsilon}^b_i\}$ by
$\widehat{\epsilon}_i^b:=x_i-\widehat{\phi}_0(i/n)-\sum_{j=1}^b \widehat{\mathsf{\phi}}_j(i/n) x_{i-j}$
and the variance of $\{\epsilon_i\}$ by $\{\sigma_i^2\}.$  Recall the definition of $\epsilon_i$ in \eqref{eq_choeq}. { According to \cite[Lemma 3.11]{DZ1}, we find that there exists a smooth function $\varphi(\cdot) \in C^d([0,1])$ such that for some constant $C>0,$
\begin{equation}\label{eq_vvvffff}
\sup_{i>b} |\sigma_i^2-\varphi(\frac{i}{n})| \leq C\left((\log b)^{\tau-1}  b^{-(\tau-1.5)} +\frac{b^{2}}{n}\right). 
\end{equation}}

Therefore the estimation of $\sigma^2_i$ reduces to the estimation of the smooth function $\varphi$ as the estimation error of \eqref{eq_vvvffff} is sufficiently small for appropriately chosen $b$.
Similar to the estimation of the smooth AR approximation coefficients,  one can again use the method of sieves to estimate the smooth function $\varphi(\cdot)$. Specifically, similar to (\ref{eq_phiform}), write $\varphi(\frac{i}{n})=\sum_{k=1}^c \mathfrak{b}_k \alpha_k(\frac{i}{n})+O(c^{-d}). $ Furthermore,  by equation (3.14) of \cite{DZ1}, we write 
\begin{equation*}
(\widehat{\epsilon}_i^b)^2=\sum_{k=1}^c \mathfrak{b}_k \alpha_k(\frac{i}{n})+\nu_i+O_{\mathbb{P}}\Big(b(\zeta_c \frac{\log n}{\sqrt{n}}+c^{-d})\Big), \ i>b,
\end{equation*} 
where $\{\nu_i\}$ is a centered sequence of locally stationary time series satisfying Assumptions \ref{assu_pdc}, \ref{assum_shortrange}, \ref{assu_smoothtrend} and \ref{assum_local}. Consequently, we can use an OLS with $(\widehat{\epsilon}_i^b)^2$ being the response and $\alpha_k(\frac{i}{n})$, $k=1,\cdots, c,$ being the explanatory variables  to estimate $\mathfrak{b}_k$'s, which are denoted as $\widehat{\mathfrak{b}}_k, k=1,2,\cdots,c.$  
Finally, we estimate
\begin{equation}\label{eq_varianceestsieve} 
\widehat{\varphi}(i/n)=\sum_{k=1}^c \widehat{\mathfrak{b}}_k \alpha_k(i/n),
\end{equation}
and use $\widehat{\varphi}(1)$ to estimate the MSE of the forecast. We  now state the asymptotic behaviour of the MSE of (\ref{eq_forcaestequation}) in Theorem \ref{thm_finalresult} below. Recall (\ref{eq_zetacdefn}).

\begin{thm}\label{thm_finalresult}
Suppose Assumptions \ref{assu_pdc}, \ref{assum_shortrange}, \ref{assu_smoothtrend}, \ref{assum_local} and (1) and (2) of Assumption \ref{assu_basis} of \cite{DZ2supp}   hold true. 
 We have 
{
\begin{equation*}
\left |{\sigma_{n+1}^2}-\widehat{\varphi}(1)\right |=O_{\mathbb{P}}\Big(b(\zeta_c \sqrt{\frac{\log n}{n}}+c^{-d})+(\log b)^{\tau}  b^{-(\tau-2)}+\frac{b^{2.5}}{n}\Big). 
\end{equation*}
}
\end{thm}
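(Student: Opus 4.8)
The plan is to split the error by the triangle inequality,
\[
\left|\sigma_{n+1}^2-\widehat{\varphi}(1)\right|\le \left|\sigma_{n+1}^2-\varphi(1)\right|+\left|\varphi(1)-\widehat{\varphi}(1)\right|,
\]
and bound the two pieces separately: the first is the \emph{approximation error} incurred by replacing the true one-step forecast variance $\sigma_{n+1}^2$ by its smooth proxy $\varphi(1)$, and the second is the \emph{sieve estimation error} of $\widehat{\varphi}(1)$ for $\varphi(1)$.

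For the approximation error, I would invoke \eqref{eq_vvvffff} at the index $i=n+1$. This is legitimate because, as assumed in this section, the locally stationary data generating mechanism extends to time $n+1$, so Lemma 3.11 of \cite{DZ1} (and hence \eqref{eq_vvvffff}) continues to apply; combined with the Lipschitz continuity of $\varphi$, which turns $\varphi((n+1)/n)$ into $\varphi(1)+O(1/n)$, this yields $\left|\sigma_{n+1}^2-\varphi(1)\right|=O\big((\log b)^{\tau-1}b^{-(\tau-3.5)}+b^{2.5}/n\big)$, already within the asserted rate.

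For the estimation error, I would use the basis expansion $\varphi(t)=\sum_{k=1}^c\mathfrak{b}_k\alpha_k(t)+O(c^{-d})$ together with the residual expansion $(\widehat{\epsilon}_i^b)^2=\sum_{k=1}^c\mathfrak{b}_k\alpha_k(i/n)+\nu_i+O_{\mathbb{P}}\big(b(\zeta_c\log n/\sqrt{n}+c^{-d})\big)$ from equation (3.14) of \cite{DZ1}, where $\{\nu_i\}$ is a centered, weakly dependent locally stationary sequence. Writing $A$ for the $(n-b)\times c$ sieve design matrix with rows $\mathbf{B}(i/n)^*$ and collecting the remainder terms into a vector $\bm{r}$, the OLS estimator satisfies $\widehat{\bm{\mathfrak{b}}}-\bm{\mathfrak{b}}=(\tfrac1n A^*A)^{-1}\tfrac1n A^*(\bm{\nu}+\bm{r})$, and hence $\widehat{\varphi}(1)-\varphi(1)=\mathbf{B}(1)^*(\widehat{\bm{\mathfrak{b}}}-\bm{\mathfrak{b}})+O(c^{-d})$. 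By parts (1)--(2) of Assumption \ref{assu_basis}, the empirical Gram matrix $\tfrac1n A^*A$ has eigenvalues bounded away from $0$ and $\infty$ with probability tending to one; the stochastic term $\tfrac1n A^*\bm{\nu}$ is controlled by a moment/maximal inequality for partial sums of the weakly dependent locally stationary array $\{\mathbf{B}(i/n)\nu_i\}$ under \eqref{eq_physcialbounbounbound}; and $\tfrac1n A^*\bm{r}$ is handled through the uniform order of the entries of $\bm{r}$. Using $\|\mathbf{B}(1)\|\le\zeta_c$ from \eqref{eq_zetacdefn}, these estimates combine to give $\left|\varphi(1)-\widehat{\varphi}(1)\right|=O_{\mathbb{P}}\big(b(\zeta_c\sqrt{\log n/n}+c^{-d})\big)$, and adding the two bounds completes the proof.

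The main obstacle is the second piece: one has to carefully propagate the estimation error of the coefficient functions $\widehat{\phi}_j$ into the fitted residuals $\widehat{\epsilon}_i^b$ — this is the origin of the factor $b\zeta_c\log n/\sqrt{n}$, reflecting $b$ summed sieve-estimated AR coefficients, each at accuracy roughly $\zeta_c\sqrt{\log n/n}$, passed through the squaring — and then run a sieve least-squares analysis for a weakly dependent non-stationary regression, which requires both the design-matrix concentration and a suitable moment bound for the locally stationary noise under the dimension constraints $c=O(n^{\mathfrak{a}})$ and $b_*/\log b_*\asymp n^{1/(\tau-1)}$. Most of this technical machinery is already packaged in \cite{DZ1}, so the argument is largely an assembly of \eqref{eq_vvvffff}, the expansion (3.14) of \cite{DZ1}, and standard sieve-OLS estimates.
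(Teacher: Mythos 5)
Your decomposition and route match the paper's own (terse) argument: the paper likewise splits into the approximation error $\left|\sigma_{n+1}^2-\varphi(1)\right|$, bounded via \eqref{eq_vvvffff} and the smoothness of $\varphi$, and the sieve estimation error, controlled by a uniform coefficient bound of order $O_{\mathbb{P}}\big(b\zeta_c\sqrt{\log n/n}+bc^{-d}\big)$ imported from \cite[Theorem 3.7 and Corollary 3.8]{DZ1} and then propagated through the residual expansion and a standard sieve-OLS analysis. Your write-up simply fills in the plumbing the paper delegates to \cite{DZ1}; the approach is the same.
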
 
We point out that the error term on the right-hand side of the above equation vanishes asymptotically under mild conditions. For instance, assuming that $d$ is sufficiently large, $b$ slowly diverges as $n \rightarrow \infty$ (for example, as in (\ref{eq_boptimal})) and the temporal dependence decays fast enough (i.e. $\tau$ is some large constant), the leading error term in Theorem \ref{thm_finalresult} is $b \zeta_c \sqrt{\log n/n}.$ 
Moreover, if we assume exponential decay of temporal dependence as in Remark \ref{rem_exponentialdecay} and $\phi(\cdot)$ is infinitely differentiable,  then the error almost achieves the parametric $n^{-1/2}$ rate except a factor of logarithm.


{

}

\section{Simulation studies}\label{sec:simu}
 In this section, we perform Monte Carlo simulations to study the finite-sample accuracy and power of the multiplier bootstrap Algorithm \ref{alg:boostrapping}  for the test of stability of AR approximation coefficients and compare it with some existing methods on testing covariance stationarity in the literature. { We point out that in Section \ref{sec_suppl_forecastingexamination} of our supplement \cite{DZ2supp}, we also conduct some simulations to examine the numerical performance of our proposed forecast (\ref{eq_forcaestequation}). Due to space constraint, the simulation setups and results will be mainly reported in Section \ref{simu_intro} of our supplement \cite{DZ2supp}. } 

\subsection{Accuracy and power of the stability test}\label{sec_poer}
In this subsection, we study the performance of the proposed test (\ref{eq_testnonzero1}).  The simulation setups can be found in Section \ref{simu_intro} of our supplement \cite{DZ2supp}.
First, we study the finite sample accuracy of our test under the null hypothesis that  
\begin{equation}\label{eq_testingcases}
a_1(\frac{i}{n}) = a_2(\frac{i}{n}) \equiv 0.4.
\end{equation}
Observe that the simulated time series are not covariance stationary as the marginal variances change smoothly over time. We choose the values of $b_*,c$ and $m$ according to the methods described in Section \ref{sec:choiceparameter} of \cite{DZ2supp}. The simulation results can be found in Section \ref{simu_intro} of our supplement \cite{DZ2supp}. It can be seen from Table \ref{table_typei} of \cite{DZ2supp} that our bootstrap testing procedure is reasonably accurate for all three types of sieve basis functions even for a smaller sample size $n=256.$  


Second, we study the power of the tests and report the results in Table \ref{table_power} of \cite{DZ2supp} when the underlying time series is not correlation stationary, i.e., the AR approximation coefficients are time-varying.  Specifically, we use
\begin{equation}\label{eq_testingcasesalternative}
a_1(\frac{i}{n}) \equiv 0.4, \ a_2(\frac{i}{n})=0.2+ \delta \sin(2 \pi \frac{i}{n}),  \ \delta>0 \ \text{is some constant}, 
\end{equation}
for the model setups in Section \ref{simu_intro} of \cite{DZ2supp}. It can be seen that the simulated powers are reasonably good even for smaller values of $\delta$ and  sample sized, and the results will be improved when $\delta$ and the sample size increase. Additionally, the power performances of the three types of sieve basis functions are similar in general.

\subsection{Comparison with tests for covariance stationarity}\label{sec_simucovsta}
 
In this subsection, we compare our method with some existing works on the tests of covariance stationarity: the {non-smoothed} $\mathcal{L}^2$ distance method in \cite{DPV}, { the smoothed $\mathcal{L}^2$ distance method in \cite{paparoditis2009testing}, the Kolmogorov-Smirnov(KS) type test in \cite{preuss2013test},}
the discrete Fourier transform method in \cite{DR} and the Haar wavelet periodogram method in \cite{GN}. The first three methods are easy to implement; for the fourth method, we use the codes from the author's website (see \url{https://www.stat.tamu.edu/~suhasini/test_papers/DFT_covariance_lagl.R}); and for the last method, we employ the R package
 \texttt{locits}, which is contributed by the author.  The detailed setups of those models can be found in Section  \ref{simu_intro}  of our supplement \cite{DZ2supp}. 


For all the simulations, we report the type I error rates under the nominal levels $0.05$ and $0.1$ for all the seven models in Table \ref{table_compare_typeone} of \cite{DZ2supp}, where for models 1-5 we use the setup (\ref{eq_testingcases}). Our simulation results are based on 1,000 repetitions, { where NS-$\mathcal{L}^2$  refers to the non-smoothed $\mathcal{L}^2$ distance method, S-$\mathcal{L}^2$ refers to the smoothed $\mathcal{L}^2$ method, KS refers to the Kolmogorov-Smirnov type method,} DFT 1-3 refer to the approaches using the imagery  part, real part, both imagery and real parts of the discrete Fourier transform method, respectively, HWT is the Haar wavelet periodogram method and MB is our multiplier bootstrap method Algorithm \ref{alg:boostrapping} using orthogonal wavelets constructed by (\ref{eq_meyerorthogonal}) of \cite{DZ2supp} with Daubechies-9 wavelet. 

Since HWT needs the length to be a power of two, we set the length of time series to be 256 and 512. For the parameters of the first three tests, we use $M=8, N=32$ for $n=256,$ and $M=8, N=64$ for $n=512.$ For the DFT, we choose the lag to be $0$ as suggested by the authors in \cite{DR}. Since the mean of model 5 is non-zero, we test its first order difference for the methods mentioned above. Moreover, we report the power of the above tests under certain alternatives in Table \ref{table_comprare_power} of \cite{DZ2supp} for models $6^\#-7^\#$ and models 1-5 under the setup (\ref{eq_testingcasesalternative}).

{

}

We first discuss the results for models 6-7 since they are not only correlation stationary but also covariance stationary. 
It can be seen from Table \ref{table_compare_typeone} of \cite{DZ2supp} that all the methods including our MB achieve a reasonable level of accuracy for the linear model 6. However, for the nonlinear model 7, we conclude from Table \ref{table_compare_typeone} of \cite{DZ2supp} that the non-smoothed $\mathcal{L}^2$ method tends to be over-conservative due to the fact that the latter test is designed only for linear models driven by independent errors. { Moreover, the performance of the smoothed $\mathcal{L}^2$ method is better.} Regarding the power in Table \ref{table_comprare_power} of \cite{DZ2supp}, we shall first discuss the results for models $6^\#$ and $7^\#$ where the errors of the models are  i.i.d. For model $6^\#$, when the sample size and $\delta$ are smaller ($n=256$, $\delta=0.2$ or $0.35$), our MB method is significantly more powerful than the other methods. { When $n=256$ and $\delta$ increases, the first three tests starts to become similarly powerful. But when $\delta$ is smaller (i.e., the alternative is weaker), we find that the smoothed $\mathcal{L}^2$ test outperforms the non-smoothed $\mathcal{L}^2$. This is consistent with the observations as in Sections 4.2 and 5 of \cite{paparoditis2016local} and can be understood using the results of Section 3.2 therein. }Further, when both the sample size and $\delta$ increase, the HWT method becomes similarly powerful. Similar discussion holds for model $7^\#.$ Therefore, we conclude that, when the variances of the AR approximation errors stay constant, other methods in the literature are accurate for the purpose of testing for correlation stationarity (which is equivalent to covariance stationarity in this case). Furthermore, in this case the MB method is more powerful when the sample size is moderate and/or the departure from covariance stationarity is small for the alternative models experimented in our simulations.

Next, we study models 1-5. None of these models is covariance stationary. For the type I error rates, we use the setting (\ref{eq_testingcases}) where  all the models are correlation stationary. For the power, we use the setup (\ref{eq_testingcasesalternative}). We find that DFT-3 is accurate for models 1-4 but with low power across all the models. { Moreover, both smoothed and non-smoothed $\mathcal{L}^2$ tests seem to have a high power for models 3-5.} But this is at the cost of blown-up type I error rates. { Similar conclusions can be made for the KS type test}. This inaccuracy in Type-I error increases when the sample size becomes larger. { In addition, for models 1-2, the first three methods seem to be accurate. The KS and smoothed $\mathcal{L}^2$ methods have reasonably higher power, especially the KS method has a high power even $\delta$ is relatively small. In this regard, it seems that the conclusions of \cite{paparoditis2016local} still hold true beyond the time-varying linear Gaussian process.} For the HWT method, even though its power becomes larger when the sample size and $\delta$ increase, it also loses its accuracy. Finally, for all the models 1-5, our MB method  obtains both reasonably high accuracy and power. In summary, most of the existing tests for covariance stationarity are not suitable for the purpose of testing for correlation stationarity. Of course, the latter is expected as those tests are designed for testing covariance stationarity which is surely a different problem from correlation stationarity or stability of AR approximation. From our  simulation studies, our multiplier bootstrap method Algorithm \ref{alg:boostrapping} performs well for the latter purpose.

\section{An empirical illustration}\label{sec:realdata}
In this section, we illustrate the usefulness of our results by analyzing a financial data set. {In Section \ref{sec_suppl_anotherrealdata} of our supplement \cite{DZ2supp}, we also apply our method to study a global temperature data set. }

We study the stock return data of the Nigerian Breweries (NB) Plc. This stock is traded in Nigerian Stock Exchange (NSE). Regarding on market
returns, the brewery industry in Nigerian has done pretty well in
outperforming Brazil, Russia, India, and China (BRIC)
and emerging markets by a wide margin over the past
ten years. Nigerian Breweries Plc is the  largest brewing company in Nigeria, which  mainly serves the Nigerian market and also exports to other parts of West Africa. 
The data can be found on the website of morningstar  
(see  \url{http://performance.morningstar.com/stock/performance-return.action?p=price_history_page&t=NIBR&region=nga&culture=en-US}). We are interested in understanding the volatility of the NB stock. We shall study the absolute value of the  daily log-return of the stock for the latter purpose.

{
We perform our analysis on the time period 2008-2014 (Figure \ref{0814_timeseries} of our supplement \cite{DZ2supp}). This time series  contains  the data of the 2008 global financial crisis and its post period. As said in the report from the Heritage Foundation \cite{Report11}, "the economy is experiencing the slowest recovery in 70 
years" and even till 2014, the economy does not fully recover.    
}


{Then we apply the methodologies described in Sections \ref{sec:test} for the absolute values of log-return time series. { We point out that we focus on the log-return in our study without imposing any specific model assumption. We refer the readers to \cite{karmakar2022simultaneous} for a nonparametric model-based approach to volatility inference where a locally stationary GARCH model is imposed.} It is clear that we need to fit a mean curve for this model. Then we test the stability of the AR approximation as described in Section \ref{sec:test} using Algorithm \ref{alg:boostrapping}. For the sieve basis functions, we use the orthogonal wavelets constructed by (\ref{eq_meyerorthogonal}) of \cite{DZ2supp} with Daubechies-9 wavelet. We choose the parameters $b_*,c$ and $m$ based on the discussion of Section \ref{sec:choiceparameter} of \cite{DZ2supp} which yields $b_*=7,$ $J_n=5$ (i.e., $c=32$) and $m=18$. We apply the bootstrap procedure described in Algorithm \ref{alg:boostrapping} and find that the $p$-value is $0.0825$. We hence conclude that the prediction is likely to be unstable during this time period. }

Next, we use the time series 2008-2014 as a training dataset to study the (rolling) forecast performance over the first month of 2015 using (\ref{eq_forcaestequation}). We employ the data-driven approach from Section \ref{sec:choiceparameter} of \cite{DZ2supp} to choose $b=5$ and $J_n=3.$
The averaged MSE is  $0.189$.  We point out that this leads to a $20.9 \%$ improvement compared to simply fitting a stationary ARMA model using all the data from 2008 to 2014 where the MSE is 0.239, and leads to a $24.7 \%$ improvement compared to the benchmark of simply using $\hat{x}_{n+1}=x_n$ where the MSE is 0.251. { In Table \ref{table:fin} of our supplement \cite{DZ2supp}, we also compare our proposed forecasting (\ref{eq_forcaestequation}) with other methods in the literature.}

Finally, we study the absolute value of the stock return from 2012 to 2014. We apply our bootstrap procedure Algorithm \ref{alg:boostrapping} to test correlation stationarity of the sub-series. We select $b_*=6,$ $J_n=4$ (i.e., $c=16$) and $m=12$ for this sub-series and find that the $p$-value is $0.599$. We hence conclude that the prediction is stable during this time period. Therefore, we fit a  stationary ARMA model to this sub-series and do the prediction. This yields an MSE of 0.192 which is close to 0.189, the MSE when we use the whole non-stationary time series and the methodology proposed in Section \ref{sec_application}. The result from this sub-series shows an interesting trade-off between forecasting using a shorter and correlation-stationary time series and a longer but non-stationary series { as described by the Rules of Thumb in \cite{KPF}. The forecast model of the shorter stationary period via segment can be estimated at a faster rate but at the expense of a smaller sample size.}  The opposite happens to the longer non-stationary period. Note that 2012-2014 is nearly half as long as 2008-2014 and hence the length of the shorter stationary period is substantial compared to that of the long period. In this case we see that the forecasting accuracy using the shorter period is comparable to that of the longer period. { In many applications where the data generating mechanism is constantly changing, the stable period is typically very short and in this case the methodology proposed in Section \ref{sec_application} is expected to give better forecasting results under the assumption that the time series is locally stationary.} Finally, we emphasize that the correlation stationarity test proposed in this paper is an important tool to determine a period of prediction stability.
}

\vspace{4pt}

{
\section*{Acknowledgments} The authors would like to thank the editor, associated editor and three  anonymous reviewers for their valuable and insightful comments which have improved the paper significantly.  }

\vspace*{4pt}

\center{\bf  \uppercase{Supplement to "Auto-Regressive Approximations to Non-stationary Time Series, with Inference and Applications}}

\appendix

\renewcommand{\thesection}{\Alph{section}}
\counterwithin{table}{section}
\counterwithin{figure}{section}

\section{Some conventions}
Throughout the supplement, we consistently use the conventions listed in the end of Section \ref{sec_intro} of the main manuscript. { Moreover, for notational convenience and simplicity, till the end of the supplement, unless otherwise specified, we omit the subscript $n$ and simply write $x_i \equiv x_{i,n}, \phi_{ij} \equiv \phi_{ij,n}$ and $\phi_j(\cdot) \equiv \phi_{j,n}(\cdot)$. From line to line, we will emphasize this dependence if some confusions can be caused.} We also recall that for any random variable $x \in \mathbb{R},$ we simply write $\|x\|_q$ to denote the $L^q$ norm of $x$. For any deterministic vector $\bm{y},$ we use $|\bm{y}|$ to denote its Euclidean norm. For two sequences of positive real values $\{a_n\}$ and $\{b_n\},$ we write $a_n \asymp b_n$ if $a_n=O(b_n)$ and $b_n=O(a_n).$


Moreover, we use $\| A\|$ to denote the operator norm if $A$ is a matrix. Consequently, if $\{A_n\}$ is a sequence of deterministic matrices and $\{a_n\}$ is a sequence of positive real values, the notation $\|A_n \|=O(a_n)$ means that there exists some constant $C>0$ so that $\|A_n \| \leq C a_n.$
Moreover, if $\{A_n\}$ is a sequence of random matrices, the notation $\|A_n \|=O_{\mathbb{P}}(a_n)$ means that the operator norm of $A_n$ is stochastically bounded by $a_n.$ { Finally, for two integers $l_1<l_2,$ we define $\llbracket l_1, l_2 \rrbracket:=[l_1, l_2] \cap \mathbb{Z}$.}


\section{Results on simulations and real data analysis}


\subsection{Simulation setup and results}\label{simu_intro}
In this subsection, we introduce our simulation setups and summarize the main simulation results. 

We first consider four different types of non-stationary time series models: two linear time series models, a two-regime model, a Markov switching model and a bilinear model.  
\begin{enumerate}
\item Linear AR model: Consider the following time-varying AR(2) model
\begin{equation*}
x_i=\sum_{j=1}^2 a_j(\frac{i}{n}) x_{i-j}+\epsilon_i, \ \epsilon_i=\Big(0.4+0.4\Big|\sin(2\pi\frac{i}{n})\Big|) \eta_i,  
\end{equation*} 
where $\eta_i, i=1,2,\cdots, n,$ are i.i.d. random variables whose distributions will be specified when we finish introducing the models. It is elementary to see that when $a_j(\frac{i}{n}), j=1,2,$ are constants, the prediction is stable. 
{
\item Linear MA model: Consider the following time-varying MA(2) model
\begin{equation*}
x_i=\sum_{j=1}^2 a_j(\frac{i}{n}) \epsilon_{i-j}+\epsilon_i, \  \epsilon_i=\Big(0.4+0.4\Big|\sin(2\pi\frac{i}{n})\Big|) \eta_i. 
\end{equation*}
}
\item Two-regime model: Consider the following  self-exciting threshold auto-regressive (SETAR) model \cite{FY,HT2011}
\begin{equation*}
x_i=
\begin{cases}
a_1(\frac{i}{n})x_{i-1}+\epsilon_i, \ x_{i-1} \geq 0, \\
a_2(\frac{i}{n}) x_{i-1}+\epsilon_i, \ x_{i-1}<0. 
\end{cases}
\epsilon_i=\Big(0.4+0.4\Big|\sin(2\pi\frac{i}{n})\Big|) \eta_i.
\end{equation*}
It is easy to check that the SETAR model is stable if  { $a_j(\frac{i}{n}), \  j=1,2,$} are constants and bounded by one.
\item Markov two-regime switching model:  Consider the following Markov switching AR(1) model 
\begin{equation*}
x_i=
\begin{cases}
a_1(\frac{i}{n})x_{i-1}+\epsilon_i, \ s_i=0, \\
a_2(\frac{i}{n}) x_{i-1}+\epsilon_i, \ s_i=1.
\end{cases}
\epsilon_i=\Big(0.4+0.4\Big|\sin(2\pi\frac{i}{n})\Big|) \eta_i,
\end{equation*}
where the unobserved state variable $s_i$ is a discrete Markov chain taking values $0$ and $1,$ with transition probabilities $p_{00}=\frac{2}{3}, \ p_{01}=\frac{1}{3}, \ p_{10}=p_{11}=\frac{1}{2}.$  It is easy to check that the above model is stable if the
functions $a_j(\cdot), j=1,2,$ are constants and bounded by one \cite{REQ}. In the simulations, the initial state is chosen to be 1. 
\item Simple bilinear model: Consider the first order bilinear model
\begin{equation*}
x_i=\Big( a_1(\frac{i}{n})\epsilon_{i-1}+a_2(\frac{i}{n}) \Big)x_{i-1}+\epsilon_i , \ \epsilon_i=\Big(0.4+0.4\Big|\sin(2\pi\frac{i}{n})\Big|) \eta_i.
\end{equation*}
It is known from \cite{FY} that when the functions $a_j(\cdot),j=1,2,$ are constants and bounded by one, $x_i$ has an ARMA representation and hence stable.  

\end{enumerate} 

In the simulations below, we record our results based on 1,000 repetitions and for Algorithm \ref{alg:boostrapping}, we choose $B=1,000.$ {For the choices of random variables $\eta_i, i=1,2,\cdots,$ we set $\eta_i$ to be student-$t$ distribution with degree of $5$, i.e., $t$(5), for models 1-2 and standard normal random variables for models 3-5.} For the purpose of comparison of accuracy, besides the above five models,  we also consider two strictly stationary time series models, model 6 for a stationary ARMA(1,1) and model 7 for a stationary SETAR. Furthermore, for the comparison of power, we  consider two non-stationary time series models whose errors have constant variances, denoted as models $6^\#$ and $7^\#.$
%
%
%
%
%
%
\begin{enumerate}
\item[6.] Linear time series: stationary ARMA(1,1) process. We consider the following  process
\begin{equation*}
x_i-0.5x_{i-1}=\epsilon_i+0.5 \epsilon_{i-1}, 
\end{equation*}
where $\epsilon_i$ are i.i.d. $\mathcal{N}(0,1)$ random variables.
\item[7.] Nonlinear time series: stationary SETAR. We consider the following model 
\begin{equation*}
x_i=
\begin{cases} 
0.4 x_{i-1}+\epsilon_i, & x_{i-1} \geq 0, \\
0.5 x_{i-1}+\epsilon_i, & x_{i-1}<0,
\end{cases}
\end{equation*}  
where $\epsilon_i$ are i.i.d. $\mathcal{N}(0,1)$ random variables. 
\end{enumerate}  

\begin{enumerate}
\item[$6^{\#}$.] Non-stationary linear time series. We consider the following process
\begin{equation*}
x_i=\delta \sin (4\pi\frac{i}{n}) x_{i-1}+\epsilon_i,
\end{equation*} 
where $\epsilon_i, i=1,2,\cdots,n,$ are i.i.d. standard normal random variables. 
\item[$7^{\#}$.] Piece-wise locally stationary linear time series.  We consider the following process
\begin{equation*}
x_i=
\begin{cases}
\delta \sin(4 \pi \frac{i}{n}) x_{i-1}+\epsilon_i,& 1 \leq i \leq 0.75n, \\
0.4 x_{i-1}+\epsilon_i, & 0.75n<i \leq n \ \text{and} \ x_{i-1} \geq 0, \\
0.3 x_{i-1}+\epsilon_i,  & 0.75n<i \leq n \ \text{and} \ x_{i-1} < 0, \\
\end{cases}
\end{equation*}
where $\epsilon_i, i=1,2,\cdots,n,$ are i.i.d. standard normal random variables. 
\end{enumerate}  

\begin{table}[ht]
\begin{center}
\setlength\arrayrulewidth{1pt}
\renewcommand{\arraystretch}{1.3}
{\fontsize{9}{9}\selectfont 
\begin{tabular}{|c|ccccc|ccccc|}
\hline
      & \multicolumn{5}{c|}{$\alpha=0.1$}                                                                                                                       & \multicolumn{5}{c|}{$\alpha=0.05$}                                                                                                                        \\ \hline
Basis/Model & \multicolumn{1}{c|}{1} & \multicolumn{1}{c|}{2} & \multicolumn{1}{c|}{3} & \multicolumn{1}{c|}{4} & \multicolumn{1}{c|}{5}  & \multicolumn{1}{c|}{1} & \multicolumn{1}{c|}{2} & \multicolumn{1}{c|}{3} & \multicolumn{1}{c|}{4} & \multicolumn{1}{c|}{5} \\ 
\hline
     & \multicolumn{10}{c|}{$n$=256}                                                                                                                                                                                                                                                                                          \\
   \hline
Fourier     &          0.132  & 0.11                          &                          0.12 &         0.13                  &                          0.11 &                    0.067   & 0.07    & 0.06   &                                     0.04 &       0.06                    \\
Legendre    &     0.091       & 0.136                          &                          0.13 &   0.12                        &           0.13                &              0.06    & 0.059         &  0.041  &                                     0.07 &                            0.07 \\
Daubechies-9    &  0.132 & 0.12   & 0.11                        &      0.133             &         0.132                 & 0.063                 &0.067 &  0.059  &          0.068                            &                 0.065   \\
\hline
      & \multicolumn{10}{c|}{$n$=512}                                                                                                                                                                                                                                                                                         \\
       \hline
Fourier     &            0.09      & 0.13                 &                          0.11 &      0.13                     &  0.127  &                         0.05 & 0.06 & 0.067   &    0.068                                  &              0.069                 \\
Legendre     &    0.09    & 0.094    &                  0.092                                &          0.12                 &                       0.118   & 0.04   & 0.058                       & 0.07    &        0.043                              &                            0.057 \\
Daubechies-9     &  0.091  & 0.11    &      0.098                   &                         0.11   & 0.118                          &          0.048               &  0.052 & 0.054 &                                 0.053  &                  0.054           \\
 \hline
\end{tabular}
}
\end{center}
\caption{Simulated type I errors using the setup (\ref{eq_testingcases}). The models are listed in Section \ref{simu_intro} and the basis functions can be found in Section \ref{sec_basisfunctions}. The results are reported based on 1,000 simulations. We can see that our multiplier bootstrap procedure is reasonably accurate for both $\alpha=0.1$ and $\alpha=0.05$. 
}
\label{table_typei}
\end{table}

\begin{table}[ht]
\begin{center}
\setlength\arrayrulewidth{1pt}
\renewcommand{\arraystretch}{1.3}
{\fontsize{9}{9}\selectfont 
\begin{tabular}{|c|ccccc|ccccc|}
\hline
      & \multicolumn{5}{c|}{$\delta=0.2/0.5$}                                                                                                                       & \multicolumn{5}{c|}{$\delta=0.35/0.7$}                                                                                                                        \\ \hline
Basis/Model & \multicolumn{1}{c|}{1} & \multicolumn{1}{c|}{2} &  \multicolumn{1}{c|}{3} & \multicolumn{1}{c|}{4} & \multicolumn{1}{c|}{5}  & \multicolumn{1}{c|}{1} & \multicolumn{1}{c|}{2} &  \multicolumn{1}{c|}{3} & \multicolumn{1}{c|}{4} & \multicolumn{1}{c|}{5} \\ 
\hline
      & \multicolumn{10}{c|}{$n$=256}                                                                                                                                                                                                                                                                                          \\
   \hline
Fourier     & 0.84     & 0.86                              &                          0.84 &                  0.837          &                      0.94 &                        0.97 &  0.97 & 0.96  & 0.99                                      &              0.98              \\
Legendre     &         0.8   & 0.806                          &                          0.81 &               0.84           & 0.83                          & 0.97    & 0.968                       &  0.95  &                                     0.97 &        0.91                     \\
Daubechies-9    &     0.81 & 0.81 & 0.86                        &                       0.81   & 0.81                          &           0.97       & 0.96 & 0.983  & 0.98                                    &        0.98           \\
\hline
      & \multicolumn{10}{c|}{$n$=512}                                                                                                                                                                                                                                                                                         \\
       \hline
Fourier     &  0.91         & 0.9                       &                           0.96&          0.9              &  0.93                      &           0.96               &  0.97  &  0.973 &    0.98                                &                  0.97             \\
Legendre    &     0.9         & 0.91                       &                          0.92&                        0.893   & 0.91                           &           0.94    & 0.95          &0.98  &        0.97                             &        0.96   \\
Daubechies-9     &             0.87  & 0.88                       &                      0.93     &  0.91                          &                       0.91    &  0.96                         & 0.99 & 0.97  &        0.97                              &  0.96                             \\
 \hline
\end{tabular}
}
\end{center}
\caption{Simulated power under the setup (\ref{eq_testingcasesalternative}) using nominal level $0.1.$ For models 1-2, we consider the cases $\delta=0.2 $ and $\delta=0.35$, whereas for models 3-5, we use $\delta=0.5$ and $\delta=0.7.$ The results are based on 1,000 simulations. 
}
\label{table_power}
\end{table}

\begin{table}[ht]
\begin{center}
\setlength\arrayrulewidth{1pt}
\renewcommand{\arraystretch}{1.3}
\newcolumntype{L}[1]{>{\raggedright\arraybackslash}p{#1}}
\hspace*{-3cm}
{\fontsize{8}{8}\selectfont 
\begin{tabular}{|p{0.9cm}|p{0.75cm}p{0.7cm}p{0.7cm}p{0.7cm}p{0.7cm}p{0.7cm}p{0.7cm}p{0.8cm}|p{0.75cm}p{0.7cm}p{0.7cm}p{0.7cm}p{0.7cm}p{0.7cm}p{0.7cm}p{0.8cm}|}
\hline
      & \multicolumn{8}{c|}{$\alpha=0.1$}                                                                                                                       & \multicolumn{8}{c|}{$\alpha=0.05$}                                                                                                                        \\ \hline
Model & \multicolumn{1}{l|}{NS-$\mathcal{L}^2$} & \multicolumn{1}{l|}{S-$\mathcal{L}^2$} & \multicolumn{1}{l|}{KS} & \multicolumn{1}{l|}{DFT1} & \multicolumn{1}{l|}{DFT2} & \multicolumn{1}{l|}{DFT3} & \multicolumn{1}{l|}{HWT} & MB & \multicolumn{1}{l|}{NS-$\mathcal{L}^2$} & \multicolumn{1}{l|}{S-$\mathcal{L}^2$} & \multicolumn{1}{l|}{KS} & \multicolumn{1}{l|}{DFT1} & \multicolumn{1}{l|}{DFT2} & \multicolumn{1}{l|}{DFT3} & \multicolumn{1}{l|}{HWT} & MB \\ \hline
      & \multicolumn{16}{c|}{$n$=256}                                                                                                                                                                                                                                                                                         \\ \hline
1     &          0.08  & 0.084 & 0.078                          &                          0.148 &         0.057                  &                        0.13   &                          0.18 & 0.132   &                                     0.024 & 0.032 & 0.029                          &                          0.067 &     0.017                      &         0.063                  &                          0.083& 0.063   \\
2     &      0.081 & 0.088 & 0.082                                                       &                          0.097 &    0.068                       &                          0.12 &       0.085                    &    0.12  &           0.038                          & 0.042 & 0.039                          &                          0.04 &     0.07                     &          0.057                &               0.028         &  0.067   \\
3     &     0.171                                &0.19 & 0.134 &                          0.183 &   0.04                        &                          0.137 &                          0.227&  0.11  &                                     0.087 & 0.087& 0.092&                         0.103 &    0.011                      &           0.033               &                          0.093 &   0.059 \\
4     &     0.2      & 0.24 & 0.19                           &                        0.163   &      0.05                    &                          0.12 &                   0.176       & 0.133   &                                     0.077 &  0.086& 0.082&                  0.087       &  0.013 &              0.034             &               0.113           & 0.068    \\
5     &   0.46      & 0.39 & 0.23                            &                          0.293 &         0.077                  &                          0.19 &                          0.153 & 0.132  &                                     0.29 & 0.19 & 0.15&                       0.21    &        0.03                   &             0.14              &       0.12 & 0.065   \\ 
6     &         0.11 & 0.103& 0.089                         &                          0.105 &           0.096                &                          0.09 &                 0.087          &  0.088  &   0.047 & 0.053 & 0.059                                   &       0.053                    &        0.053                 &             0.039              &               0.052          &  0.057   \\
7     &      0.051  & 0.088 & 0.13                             &                          0.097 &    0.08                       &                          0.092 &       0.085                    &    0.127  &           0.018   & 0.039 & 0.036                        &                          0.04 &     0.06                     &          0.047                &               0.038         &  0.061   \\
\hline
      & \multicolumn{16}{c|}{$n$=512}                                                                                                                                                                                                                                                                                         \\ \hline
1     &   0.087                                  & 0.089 & 0.083 &                          0.127 &       0.03                    &                          0.13 &     0.237                     &  0.091  &                                     0.023 & 0.035 & 0.03&     0.1                     &                          0.02 &                 0.043          &                         0.137 &  0.048  \\
2     &      0.051   & 0.087 & 0.079                          &                          0.096 &    0.085                       &                          0.093 &       0.075                    &    0.11  &           0.026    & 0.035 & 0.033                       &                          0.036 &     0.067                     &          0.044                &               0.033         &  0.052   \\
3     &            0.26 & 0.17& 0.22                          &                          0.16 &      0.04                     &                          0.117 &       0.243                  & 0.098   &                                    0.127  & 0.18 & 0.233&    0.1                      &                          0.007 &      0.037                     &                         0.14 & 0.054   \\
4     &  0.287  & 0.22 & 0.197                                  &                          0.167 &             0.027              &                          0.09 &         0.247                 & 0.11   &                                     0.177 &  0.16 & 0.11&      0.103                   &                          0.013 &           0.073                &                         0.163 & 0.053    \\
5     &     0.64   & 0.26 & 0.22                              &                          0.303 &        0.087                   &                          0.283 &          0.35                &   0.118 &                                     0.413 & 0.15& 0.17&     0.26                     &                          0.063 &      0.167                     &                         0.23 & 0.054   \\
6    &    0.11 & 0.1 & 0.104                                 &                          0.093 &        0.084                   &                          0.088 &      0.088                     &   0.092 &                                     0.035 &  0.043 & 0.048 &       0.046                 &   0.047      &                        0.048 &    0.053    &  0.048  \\
7     &      0.051 & 0.078 & 0.077                                & 0.087                           &          0.113             &      0.083                     &       0.093                    &  0.092  &                                     0.013 & 0.037 & 0.039& 0.037                         &                          0.047 &       0.043                    &                        0.04 & 0.051   \\
 \hline
\end{tabular}
}
\end{center}
\caption{Comparison of accuracy for models 1-7 using different methods.
}
\label{table_compare_typeone}
\end{table}

\begin{table}[ht]
\begin{center}
\setlength\arrayrulewidth{1pt}
\renewcommand{\arraystretch}{1.3}
\newcolumntype{L}[1]{>{\raggedright\arraybackslash}p{#1}}
\hspace*{-3cm}
{\fontsize{8}{8}\selectfont 
\begin{tabular}{|p{0.9cm}|p{0.75cm}p{0.7cm}p{0.7cm}p{0.7cm}p{0.7cm}p{0.7cm}p{0.7cm}p{0.8cm}|p{0.75cm}p{0.7cm}p{0.7cm}p{0.7cm}p{0.7cm}p{0.7cm}p{0.7cm}p{0.8cm}|}
\hline
      & \multicolumn{8}{c|}{$\delta=0.2/0.5$}                                                                                                                       & \multicolumn{8}{c|}{$\delta=0.35/0.7$}                                                                                                                        \\ \hline
Model & \multicolumn{1}{l|}{NS-$\mathcal{L}^2$} & \multicolumn{1}{l|}{S-$\mathcal{L}^2$} & \multicolumn{1}{l|}{KS} & \multicolumn{1}{l|}{DFT1} & \multicolumn{1}{l|}{DFT2} & \multicolumn{1}{l|}{DFT3} & \multicolumn{1}{l|}{HWT} & MB & \multicolumn{1}{l|}{NS-$\mathcal{L}^2$} & \multicolumn{1}{l|}{S-$\mathcal{L}^2$} & \multicolumn{1}{l|}{KS} & \multicolumn{1}{l|}{DFT1} & \multicolumn{1}{l|}{DFT2} & \multicolumn{1}{l|}{DFT3} & \multicolumn{1}{l|}{HWT} & MB \\ \hline
      & \multicolumn{16}{c|}{$n=256$}                                                                                                                                                                                                                                                                                         \\ \hline
1     &      0.263                                &   0.29 & 0.33&                              0.14 & 0.03                   &                          0.07 &                  0.3         & 0.81  &                             0.503       &  0.49 & 0.43&                         0.113 &               0.053            &       0.089                   &                          0.4 & 0.97   \\
2     &      0.183  & 0.28 & 0.3                              &                          0.497 &    0.08                       &                          0.092 &       0.585                    &    0.81  &           0.68 & 0.69 & 0.85                          &                          0.14 &     0.06                     &          0.047                &               0.38         &  0.96   \\
3     &         0.44 & 0.53 & 0.62                             &                          0.153 &           0.04                &                          0.16 &                   0.393       &  0.86  &                          0.7           &   0.84& 0.79&                       0.14 &               0.05            &        0.09                   &                         0.64 &    0.983 \\
4     &    0.603   & 0.598 & 0.71                               &                         0.16 &         0.04                  &                          0.203 &            0.44              &  0.81 &                             0.86 & 0.83 & 0.79       &                 0.2          &          0.07                 &           0.12               &       0.647                  &  0.98  \\
5     &      0.92   & 0.83 & 0.88                             &                          0.243 &             0.143              &                          0.24 &        0.57                 &    0.81 &                           0.997           &    0.9& 0.85&                      0.347 &            0.193               &       0.397                    &                         0.797 &  0.98   \\
$6^{\#}$     &      0.697                               & 0.88 & 0.9 &                          0.12 &             0.093             &                          0.11 &        0.327                  & 0.86   &                                    0.923 & 0.93& 0.98&  0.16                         &            0.15               &       0.15                    &                         0.563 & 0.94     \\
$7^{\#}$     &      0.463                               & 0.54 & 0.69 &                          0.137 &             0.107             &                          0.133 &        0.273                 &  0.85   &                           0.81          & 0.79 & 0.87&                          0.193 &            0.203             &       0.223                   &                         0.483 &  0.96   \\
 \hline
      & \multicolumn{16}{c|}{$n=512$}                                                                                                                                                                                                                                                                                         \\ \hline
1     &     0.477                                & 0.532& 0.48 &                          0.173 &        0.04                   &                          0.08 &            0.52              &  0.87  &                                     0.857 &    0.86 & 0.86&     0.137                  &                          0.03 &         0.1                  &                         0.75 &  0.96  \\
2     &      0.51   & 0.58 & 0.49                            &                          0.297 &    0.082                       &                          0.092 &       0.385                    &    0.88  &           0.918                          & 0.9 & 0.94 &                          0.24 &     0.06                     &          0.047                &               0.838         &  0.99   \\
3     &        0.657                             & 0.66 & 0.65&                          0.24 &          0.05                 &                          0.083 &                  0.61        & 0.93   &                                     0.96 & 0.98& 0.93&      0.17                    &                          0.24 &      0.113                     &                         0.95 &  0.97  \\
4     &   0.84 & 0.8 & 0.798                                  &                          0.23 &                    0.043       &                          0.143 &               0.773           &0.91    &                                     0.987 &  0.97& 0.976&     0.293                     &                          0.053 &             0.19              &                         0.97 &0.97    \\
5     &          0.963 & 0.93& 0.95                          &                          0.297 &  0.127                         &                          0.263 &  0.87                         &  0.91  & 0.983                                     & 0.97 &0.964 &          0.523                 &     0.24                      &       0.478 &    0.994 & 0.96  \\
$6^{\#}$     & 0.847                                & 0.89 & 0.94 &                          0.147 &             0.087          &                          0.103 &          0.67              &  0.88   &                                     0.95 &        0.96 & 0.98&        0.13            &                      0.09 &                         0.133 &                         0.963 &  0.95   \\
$7^{\#}$     &    0.69                              & 0.79 & 0.88 &                          0.14 &         0.13                  &                          0.217 &       0.383                &  0.91 &                                     0.953 & 0.99 & 0.978&            0.3             &                          0.313 &           0.383                &                         0.823 &  0.943    \\ 
 \hline
\end{tabular}
}
\end{center}
\caption{ Comparison of power at nominal level 0.1 using different methods.
}
\label{table_comprare_power}
\end{table}

{
\subsection{Performance of forecasting for locally stationary time series}\label{sec_suppl_forecastingexamination} In this subsection, we study the prediction accuracy of our proposed adaptive sieve forecast (\ref{eq_forcaestequation}) by comparing it with some state-of-the-art methods. Specifically, we compare with the Tapered Yule-Walker estimate (TTVAR) in \cite{RSP}, the non-decimated wavelet estimate (LSW) in \cite{FBS}, the model switching method (SNSTS) in \cite{KPF},  the best linear prediction using all the previous samples (SBLP) \footnote{The prediction is based on the stationary assumption and an ARMA model.}, the best linear prediction using $b$ recent samples (PBLP) and our adaptive sieve forecast (\ref{eq_forcaestequation}) (Sieve). We implement TTVAR with constant taper function $g \equiv 1$ and the bandwidth is selected according to \cite[Corollary 4.2]{RSP}. For the wavelet method, we use the matlab codes from the first author's website (see \url{http://stats.lse.ac.uk/fryzlewicz/flsw/flsw.html}) and for the model switching method, we use the R package \texttt{forecastSNSTS}. For our sieve method, we use the orthogonal wavelets (\ref{eq_meyerorthogonal}) with Daubechies-9 wavelet and the data-driven approach described in Section \ref{sec:choiceparameter} to choose $b$ and $c.$ 

In Table \ref{table_compare_prediction}, we record the mean square error over 1,000 simulations  for one-step ahead prediction of the models 1-5 in Section \ref{simu_intro} with the coefficients chosen according to (\ref{eq_testingcasesalternative}). Here we choose
$\delta=0.35$ for models 1-2 and $\delta=0.5$ for models 3-5. It can be seen that our proposed method outperforms the other methods in literature for five models in both sample sizes $n=256$ and $n=512$. The forecasting accuracy improvement is more significant for non-AR type models such as the MA and bilinear models.

\begin{table}[ht]
\begin{center}
\setlength\arrayrulewidth{1pt}
\renewcommand{\arraystretch}{1.5}
{\fontsize{10}{10}\selectfont 
\begin{tabular}{|c|ccccccc|lllllll|}
\hline
Model & \multicolumn{1}{c|}{TTVAR} & \multicolumn{1}{c|}{LSW} & \multicolumn{1}{c|}{SNSTS} & \multicolumn{1}{c|}{SBLP} & \multicolumn{1}{c|}{PBLP} & \multicolumn{1}{c|}{Sieve} & Improvement  \\ \hline
      & \multicolumn{7}{c|}{$n$=256}                                                                                                                                                                                                                                                                                         \\ \hline
1     &   0.24                                  &     0.21                     &     0.45                      &                     0.284      &                          0.24 & {\bf 0.189} & 10 $\%$  \\
2     &                 0.28                    &      0.27                   &       0.28                   &                  0.273         &                 0.283        &  {\bf 0.22} & 18.5 $\%$  \\ 
3     &     0.21                                 &                          0.185 & 0.198                         &                 0.241          &                          0.194 &  {\bf 0.178} & 3.8 $\%$  \\
4     &        0.207                              &                          0.195 & 0.2                         &                      0.247     &                   0.199       & {\bf 0.187} & 4.1 $\%$     \\
5     &                 0.22                    &      0.22                   &       0.24                   &                  0.246         &                 0.273        &  {\bf 0.176} & 20 $\%$  \\ 
\hline
      & \multicolumn{7}{c|}{$n$=512}                                                                                                                                                                                                                                                                                         \\ \hline
1     &                        0.21             &                          0.2 & 0.2                          &                   0.233      &                         0.209 &  {\bf 0.181} & 9.5 $\%$\\
2     &                 0.26                   &      0.26                   &       0.264                   &                  0.276         &                 0.283        &  {\bf 0.196} & 24.62 $\%$ \\ 
3     &    0.207                                &                          0.183 & 0.192                         &                0.213           &                      0.194   & {\bf 0.18} & 1.7 $\%$   \\
4     &      0.205                               &                          0.175 & 0.188                           &                   0.211        &         0.181              & {\bf 0.17 } & 2.86 $\%$   \\
5     &       0.23                               &                          0.21 & 0.24                          &       0.23                    &                        0.22 &  {\bf  0.183} & 12.86 $\%$    \\
 \hline
\end{tabular}
}
\end{center}
\caption{Comparison of prediction accuracy for models 1-5 using different methods.  We highlight the smallest mean square errors and record the percentage of improvement of our method compared with the next best method.  }
\label{table_compare_prediction}
\end{table}

Before concluding this subsection, we compare the (rolling) forecasting performance of the aforementioned different methods for the real application (i.e., the stock return data of the Nigerian Breweries Plc.) in Section \ref{sec:realdata}. Especially,  we use the time series 2008-2014 as the training dataset to study the prediction performance over the first month of 2015. We employ the data-driven approach from Section \ref{sec:choiceparameter} to choose the parameters and we obtain that $b=5$ and $J_n=3.$
The MSE is  $0.189$ for our proposed Sieve prediction (\ref{eq_forcaestequation}).  We compare this result with the other methods and record the results in Table \ref{table:fin}. We find that our prediction performs better than the other methods. Especially,  we get a $20.9 \%$ improvement compared to simply fitting a stationary model using all the time series from 2008 to 2014.    
\begin{table}[H]
\center{
\begin{threeparttable}
\begin{tabular}{ccccccc}
\hline
Method & {\bf Sieve} & TTVAR & LSW & SNSTS & SBLP & PBLP \\ \hline
MSE  &  {\bf 0.189}       &    0.198        &   0.198            &      0.202        &  0.239            & 0.249 \\ 
\hline
\end{tabular}
\end{threeparttable}}
\caption{Comparison of prediction accuracy for stock return data of Nigerian Breweries Plc. For SBLP,  we use all the time series from 2008 to 2014 to fit a stationary ARMA model. For PBLP, we used the most recent $b=5$ samples. }\label{table:fin}
\end{table}

Finally, we make a comment on our proposed method with the model choice methodology proposed by \cite{KPF}, where one of two competing approaches was chosen based on its empirical, finite-sample performance with respect to forecasting in terms of the empirical mean squared prediction error (MSPE). The two competing approaches are: a stationary AR($p$) prediction model and a time-varying AR($p$) model. For the stationary AR($p$) model, the paper estimated the coefficients using the standard Yule-Walker equation, and for the time-varying AR($p$) model, the paper used the stationary method on short overlapping segments of the time series as in \cite{dahlhaus1998optimal}. Our method utilizes all the data points and \cite{KPF} only uses the most recent data points via segmentation. In practice, the data generating mechanism can be complicated so that the stationary period can be very short. Therefore, the segmentation approach could be misleading. In fact, based on our simulations and data analysis, we see improvements in all the simulations and real data analysis. In this regard, for the estimation of the time-varying AR model of \cite{KPF}, we suggest using our proposed optimal sieve prediction instead of the segment based estimator.

\subsection{Discussion on the robustness of the choices of parameters}\label{sec_suppl_robustparameter} In this subsection, we use Monte-Carlo simulations to conduct sensitivity analysis to our proposed statistics and the multiplier bootstrap procedure. Especially, we will examine the robustness of the key hyperparameters $b_*, c$ and $m.$ In what follows, we focus on reporting the results of  Models 2 and 5 as in Section \ref{simu_intro}. In fact, we also conducted simulations for the AR type Models 1,3 and 4,  the results and discussions are similar. Due to space constraint, we will not report these results here.

First, we examine the sensitivity of the hyperparameters to the simulated type I error under the null hypothesis (\ref{eq_testingcases}). As concluded in Section \ref{sec_poer}, the performance of different basis functions are similar in general. Hence, for ease of discussion, we use the Fourier basis functions in the following simulations. Moreover, we use  the sample size $n=512$ and focus on the type I error $\alpha=0.1.$ For other simulation settings (e.g., $n=256$ or $\alpha=0.5$), the results are similar and we will not present the detail to avoid distraction.  

In order to have a through understanding of the impact of all three parameters, we discuss a wide range of all three parameters so that
\begin{equation*}
(b_*, c, m) \in \llbracket 1, 20 \rrbracket \times  \llbracket 1, 20 \rrbracket \times \llbracket 1, 20 \rrbracket.
\end{equation*}
Then for each chosen triplet $(b_*, c, m),$ we apply our proposed multiplier bootstrap procedure Algorithm \ref{alg:boostrapping} to obtain the simulated type I error $\widehat{\alpha}\equiv \widehat{\alpha}(b_*, c, m).$ For comparison, for each triplet, we consider the discrepancy between the true type error $\alpha=0.1$ and the simulated type I error $\widehat{\alpha}$
\begin{equation*}
\mathrm{d} \equiv \mathrm{d}(b_*,c, m):=|\widehat{\alpha}-0.1|.
\end{equation*}
In order to visualize how the values of $\mathrm{d}$ change with each of the parameters, we discuss them one by one by fixing the rest two of the parameters. The results are recorded in Figure \ref{fig_senm1} for Model 2 and in Figure \ref{fig_senm4} for Model 5. Based on the simulations, we see that the performance is overall robust against the choices of the parameters. Especially, we observe a U curve for $b_*$ and $m$, and the choices of these two parameters are quite flexible. For example, $b_* \in \llbracket 5,14 \rrbracket$ and $m \in \llbracket 4,16 \rrbracket$ can result in accurate testings. Moreover, we also observe that our results are more sensitive to the value of $c$. Under the null hypothesis (\ref{eq_testingcases}) (i.e., $c=1$), we should choose a smaller value of $c$ like $c \in \llbracket 1,3 \rrbracket.$ This range is much narrower than those of $b_*$ and $m.$ We emphasize that our proposed data-driven procedure in Section \ref{sec:choiceparameter} can successfully provide a triplet $(b_*,m, c)$ lying in the range of these hyperparameters which can result in accurate testings. For example, our data-driven procedure will select $(b_*, m, c)=(7,8,1)$ for Model 2 and $(b_*, m, c)=(6,6,1)$ for Model 5 which match our sensitivity analysis. 

\begin{figure}[!ht]
\hspace*{-2.0cm}
\begin{subfigure}{0.35\textwidth}
\includegraphics[width=5.1cm,height=4.8cm]{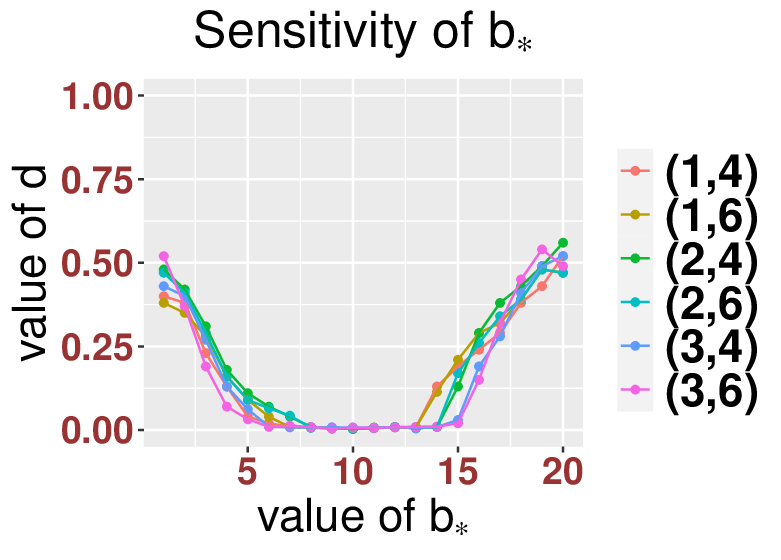}
\end{subfigure}
\begin{subfigure}{0.35\textwidth}
\includegraphics[width=5.1cm,height=4.8cm]{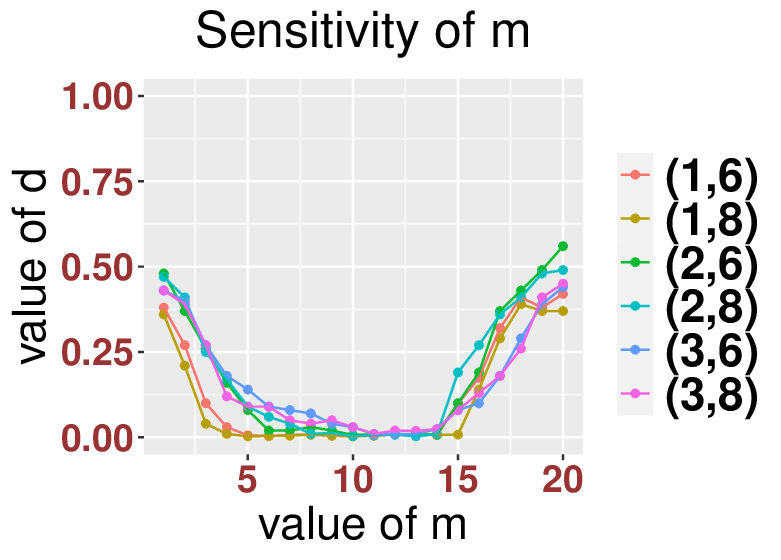}
\end{subfigure}
\begin{subfigure}{0.35\textwidth}
\includegraphics[width=5.1cm,height=4.8cm]{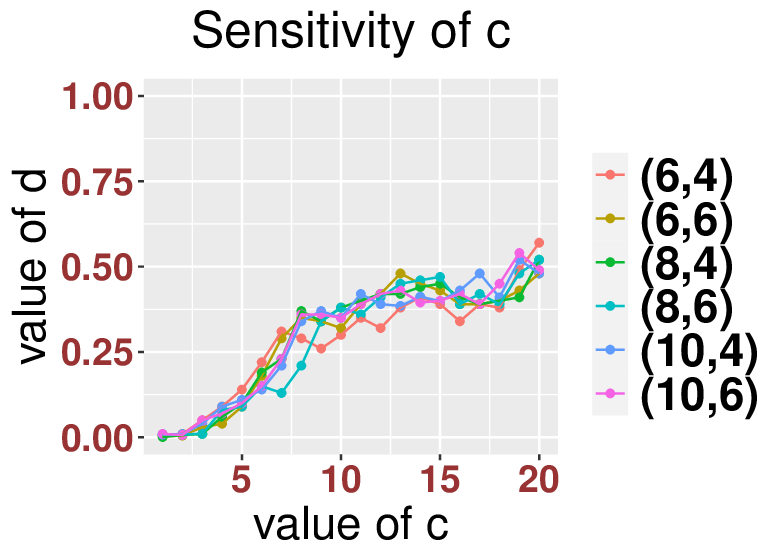}
\end{subfigure}
\caption{ {\footnotesize Type I error sensitivity analysis of hyperparameters for Model 2. We study each parameter by fixing the rest two of them. For example, the left panel studies the parameter $b_*$ by considering various fixed combinations of $c$ and $m.$ The legends there are defined as follows. $(1,4)$ means that $c=1$ and  $m=4.$ Similar definitions apply to the other two figures. The simulation results are based on 1,000 repetitions. }  }
\label{fig_senm1}
\end{figure}

\begin{figure}[!ht]
\hspace*{-2.0cm}
\begin{subfigure}{0.35\textwidth}
\includegraphics[width=5.1cm,height=4.8cm]{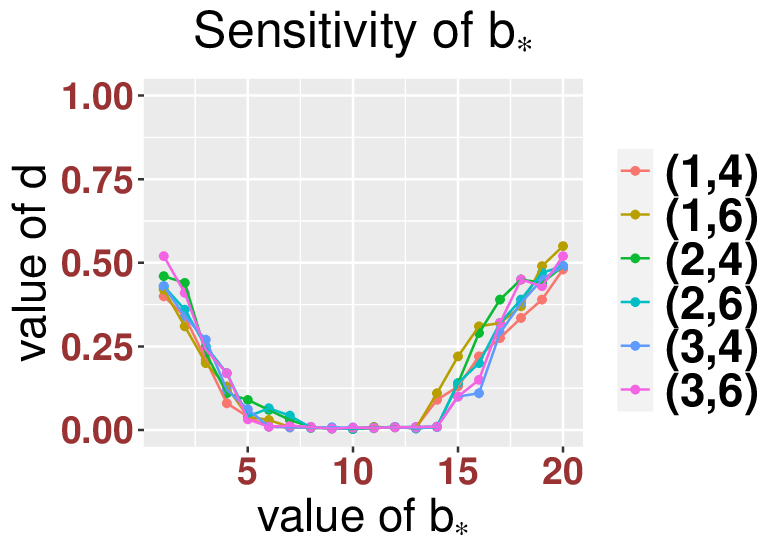}
\end{subfigure}
\begin{subfigure}{0.35\textwidth}
\includegraphics[width=5.1cm,height=4.8cm]{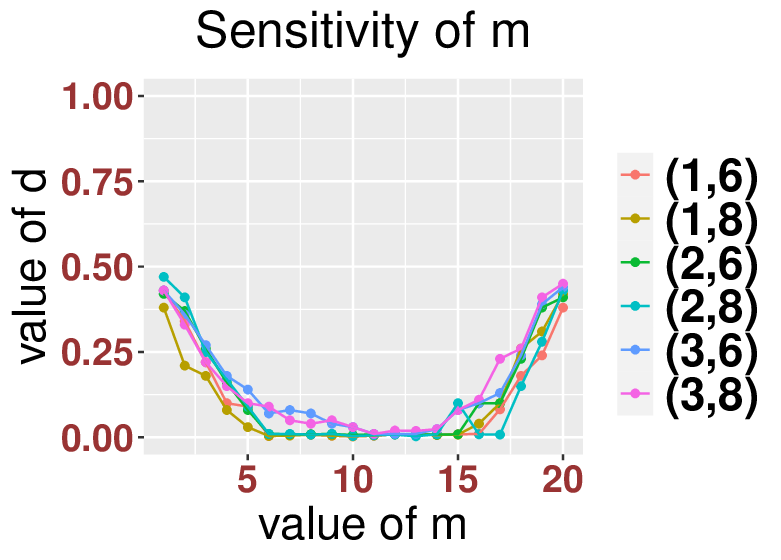}
\end{subfigure}
\begin{subfigure}{0.35\textwidth}
\includegraphics[width=5.8cm,height=5cm]{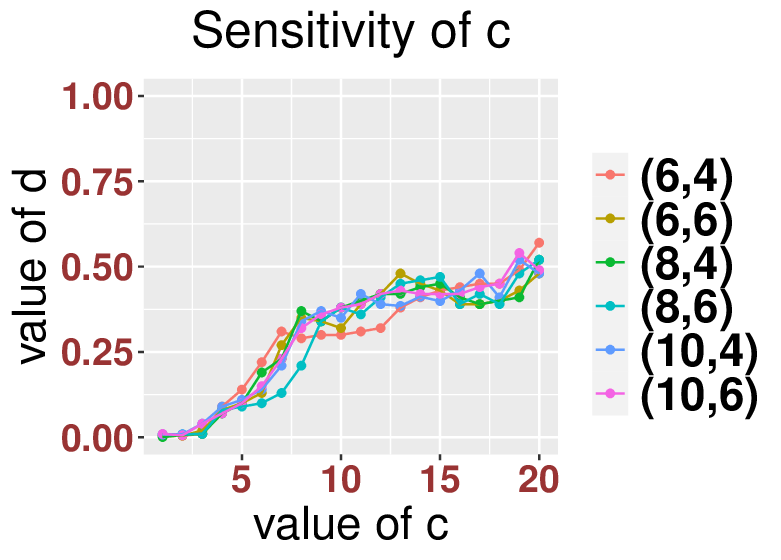}
\end{subfigure}
\caption{ {\footnotesize Type I error sensitivity analysis of hyperparameters for Model 5. See the caption of Figure \ref{fig_senm1}  for more detail of the legends.}  }
\label{fig_senm4}
\end{figure}

Second, we examine the sensitivity of the hyperparameters to the simulated power under the alternative hypothesis (\ref{eq_testingcasesalternative}) with $\delta=0.35$ for Model 2 and $\delta=0.7$ for Model 5. We mention that other values of $\delta$ have similar performance and we will not report such results. Analogous to the findings for type I error,  we see that the performance is overall robust against the choices of the parameters in terms of power. Especially, we observe an upside down U curve for $b_*$ and $m$, and the choices of these two parameters are quite flexible. For example, $b_* \in \llbracket 5,16 \rrbracket$ and $m \in \llbracket 4,16 \rrbracket$ can result in accurate testings. We mention that the U shape is not as clear as that in the type I error analysis. The main reason is because it is very likely that larger values of $b_*$ and $m$ will result in larger errors so that the testing statistic will be in favor of rejecting.  Moreover, we also observe that our results are more sensitive to the value of $c$. Under the alternative (\ref{eq_testingcasesalternative}) (i.e., $c=2$), we should choose $c$ that $c \geq 2.$ We also observe that no U curve exists for $c$. The main reason is that a larger value of $c$ will increase the estimation error significantly. Therefore, the value of the test statistics will be much larger so that it will reject the null hypothesis.  We emphasize that our proposed data-driven procedure in Section \ref{sec:choiceparameter} can successfully provide a triplet $(b_*,m, c)$ lying in the range of these hyperparameters which can result in accurate and powerful testings. For example, our data-driven procedure will select $(b_*, m, c)=(7,7,2)$ for Model 2 and $(b_*, m, c)=(6,7,3)$ for Model 5 which match our sensitivity analysis.

\begin{figure}[!ht]
\hspace*{-2.0cm}
\begin{subfigure}{0.35\textwidth}
\includegraphics[width=5.1cm,height=4.8cm]{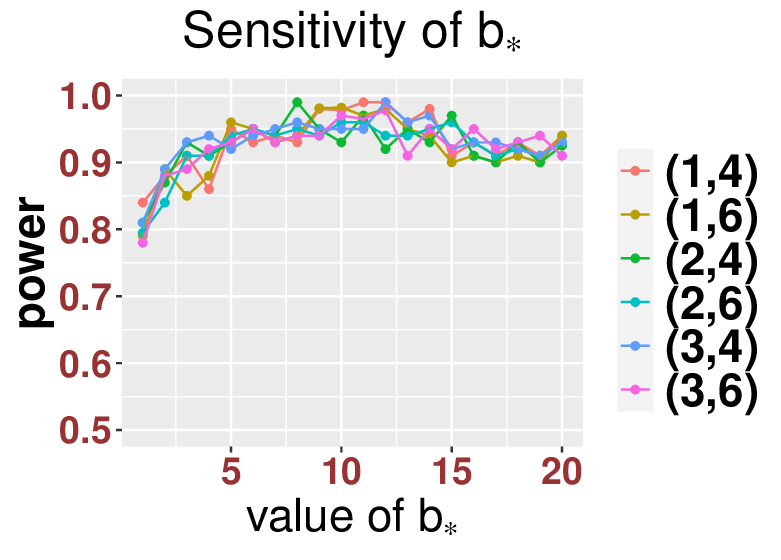}
\end{subfigure}
\begin{subfigure}{0.35\textwidth}
\includegraphics[width=5.1cm,height=4.8cm]{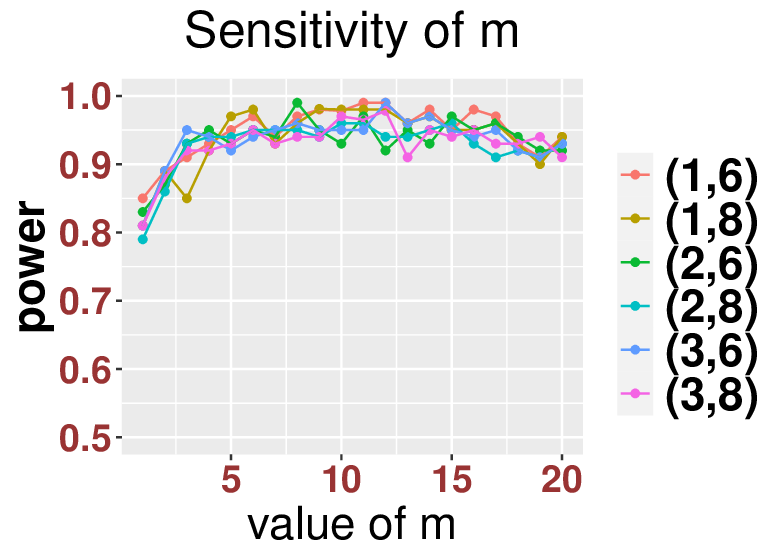}
\end{subfigure}
\begin{subfigure}{0.35\textwidth}
\includegraphics[width=5.1cm,height=4.8cm]{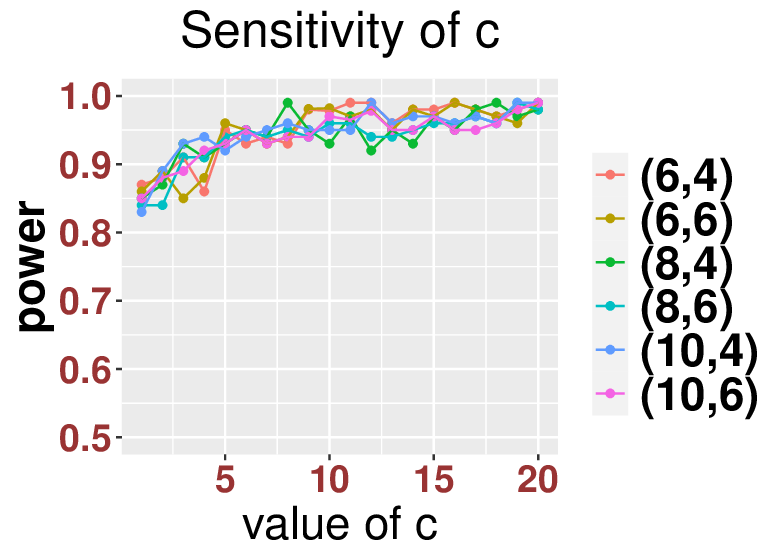}
\end{subfigure}
\caption{ {\footnotesize Power sensitivity analysis of hyperparameters for Model 2. See the caption of Figure \ref{fig_senm1}  for more detail of the legends. }}
\label{fig_senm1p}
\end{figure}

\begin{figure}[!ht]
\hspace*{-2.0cm}
\begin{subfigure}{0.3\textwidth}
\includegraphics[width=5.8cm,height=5cm]{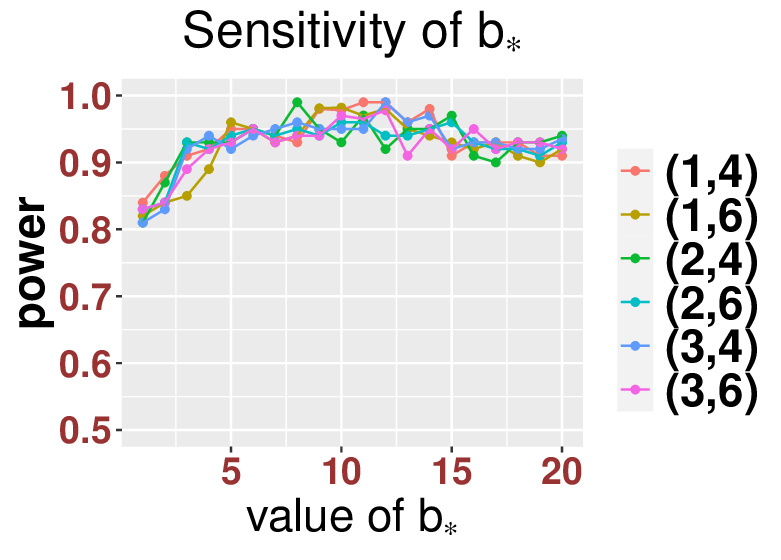}
\end{subfigure}
\begin{subfigure}{0.3\textwidth}
\includegraphics[width=5.8cm,height=5cm]{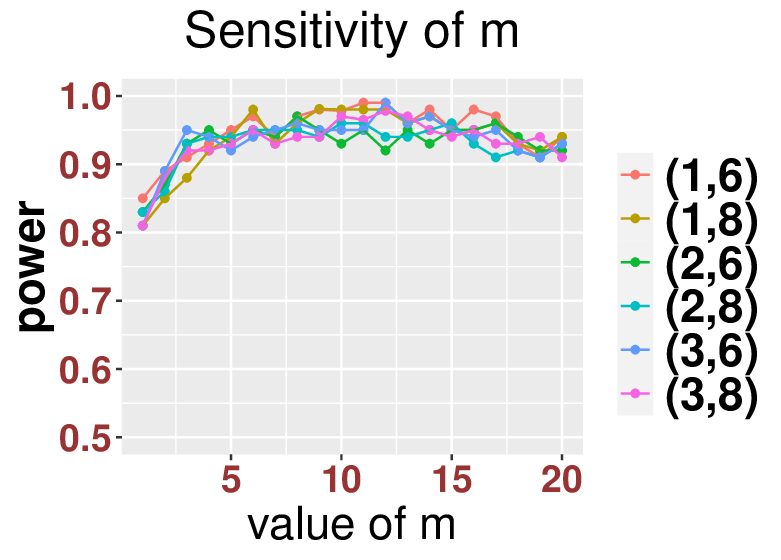}
\end{subfigure}
\begin{subfigure}{0.3\textwidth}
\includegraphics[width=5.8cm,height=5cm]{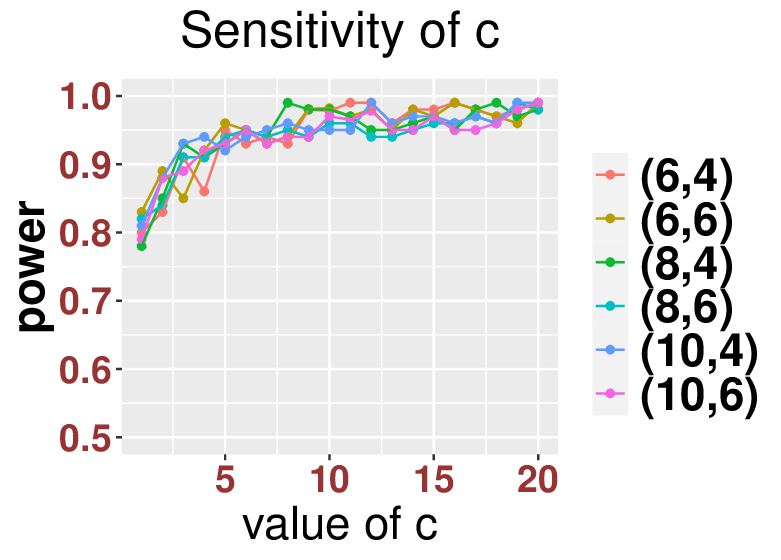}
\end{subfigure}
\caption{ {\footnotesize Power sensitivity analysis of hyperparameters for Model 5. See the caption of Figure \ref{fig_senm1}  for more detail of the legends. } }
\label{fig_senm4p}
\end{figure}


\subsection{Examination for roots of AR polynomials: additional simulation setting}\label{sec_sub_suppledalhaus} In this section, we consider some additional simulations using the locally stationary AR(2) considered by Dahlhaus in \cite{dahlhaus1997fitting}. Especially, Dahlhaus considered the model that
\begin{equation}\label{eq_ar2model}
x_i=a_1(i/n) \epsilon_{i-1}+a_2(i/n) \epsilon_{i-2}+\epsilon_i,
\end{equation} 
where $\epsilon_i, 1 \leq i \leq n,$ are  i.i.d. Gaussian random variables with variance $\sigma^2_i$ 
and 
\begin{equation}\label{eq_additionalexample}
a_1(t)=-1.8 \cos(1.5-\cos 4 \pi t), \ a_2(t) \equiv 0.81. 
\end{equation}
It was shown in Section 6 of \cite{dahlhaus1997fitting} that for the above AR(2) model, when $t$ is fixed, the roots of the AR(2) characteristics polynomials $z_{\pm} \equiv z_{\pm}(t)$ are complex number so that 
\begin{equation*}
z_{\pm}=\frac{10}{9} \exp(\pm \mathrm{i}(1.5-\cos 4 \pi t)).
\end{equation*}
Note that the AR polynomial of the above model has complex roots and these roots are rather close to the unit circle.

In Dahlhaus's original paper, he considered the  standard Gaussian random variable that $\sigma_i \equiv 1$ for all $1 \leq i \leq n.$ In our current paper, we consider the locally stationary white noise process as in Section \ref{simu_intro} so that 
\begin{equation}\label{eq_variancesettup}
\sigma_i=0.4+0.4 \left|\sin(2 \pi i/n) \right|.
\end{equation} 
In order to examine the accuracy and power of our proposed test, we consider the setting that (\ref{eq_additionalexample}) holds for some pre-given $t_0.$ That is to say, for the model (\ref{eq_ar2model}),  to study the simulated type I error rate, for some fixed $0 \leq t_0 \leq 1,$ we consider the null hypothesis that 
\begin{equation}\label{eq_h0additional}
\mathbf{H}_0: \ a_1(i/n) \equiv -1.8 \cos(1.5-\cos 4 \pi t_0), \ a_2(i/n) \equiv 0.81. 
\end{equation}
Moreover, to study the power, we consider the following alternative as in (\ref{eq_additionalexample}), i.e., 
\begin{equation}\label{eq_h0additiona11}
\mathbf{H}_a:  \ a_1(i/n) \equiv -1.8 \cos(1.5-\cos 4 \pi i/n), \ a_2(i/n) \equiv 0.81. 
\end{equation} 

First, we study the finite-sample accuracy of our test under the null hypothesis (\ref{eq_h0additional}) for various choices of $t_0=0, 0.6, 1$ with the time-varying Gaussian white noise with (\ref{eq_variancesettup}).  We also examine how the sample size $n$ can affect the accuracy. For simplicity, we focus on the type I error $\alpha=0.1$ and analyzing the discrepancy between the simulated type I error $\widehat{\alpha}$ and $0.1,$ denoted as 
\begin{equation*}
\mathrm{d} \equiv \mathrm{d}(n, t_0)=|\widehat{\alpha}-0.1|.  
\end{equation*}  
For convenience, in this subsection, we report the results for Fourier basis functions. For orthogonal polynomials and wavelet functions, the results are similar and deferred to Section \ref{sec_additionalofadditionalbasis}. In the left panel of Figure \ref{fig_additional1} below, we show how the value of $\mathrm{d}$ changes with the sample size $n$ under different values of $t_0$ for the null hypothesis in (\ref{eq_h0additional}). We find that even for this complicated case, our method still works well when the sample size is sufficiently large. The near unit root feature seems to have a stronger impact when the sample size is smaller. Similar observations have been made in other settings for forecasting under the near unit root case in \cite{KPF}. Second, we study the power in the right panel of Figure \ref{fig_additional1}. It can be concluded that once the sample size is larger,  our method will be able to obtain high power. 

\begin{figure}[!ht]
\hspace*{-2.0cm}
\begin{subfigure}{0.55\textwidth}
\includegraphics[width=7.8cm,height=4.8cm]{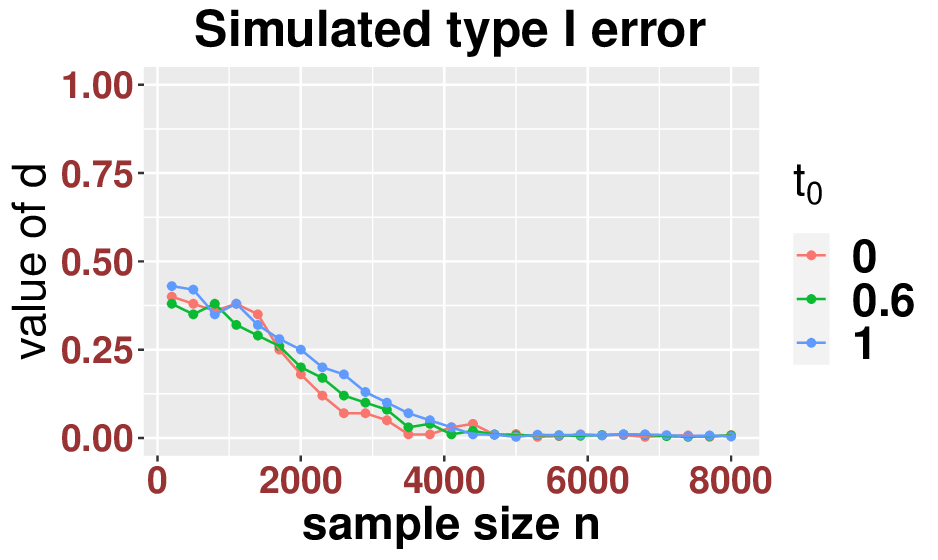}
\end{subfigure}
\begin{subfigure}{0.55\textwidth}
\includegraphics[width=7.8cm,height=4.8cm]{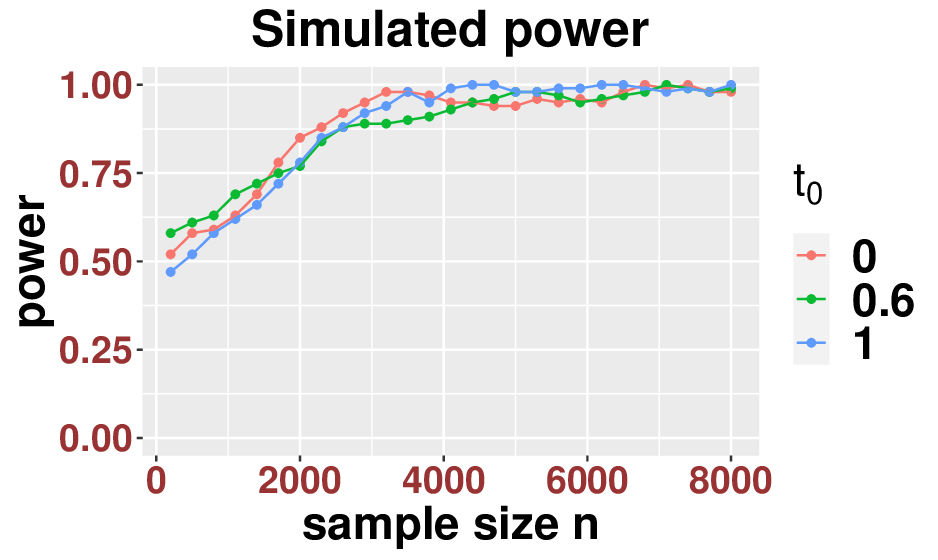}
\end{subfigure}
\caption{ {\footnotesize Simulated type I errors and power.  We consider three different types null hypothesis as in (\ref{eq_h0additional}) for $t_0=0, 0.6, 1,$ respectively. The alternative hypothesis is  (\ref{eq_h0additiona11}). The basis functions are the Fourier bases and the results are reported based on 1,000 simulations. }  }
\label{fig_additional1}
\end{figure}

Finally, to better understand how the sample size influence our type I and power, we provide the receiver operating characteristic curves (ROC) for various values of sample size $n=200, 500, 1000, 2000, 4000, 6000$ in Figure \ref{fig_roc} below.  ROC analysis is
commonly used in medical decision making, and in recent years has been used increasingly
in machine learning and data mining research. In the ROC curve, the $x$-axis is the Type I
error rate and the $y$-axis is the power of statistical test. In practice, researchers will choose the test with the largest area under an ROC curve (AUC). We can see from the simulations that the AUC increases with the sample size $n$, and when the sample size is relatively large, we obtain almost perfect AUC.


\begin{figure}[!ht]
\hspace*{-2.0cm}
\begin{subfigure}{0.32\textwidth}
\includegraphics[width=6.2cm,height=5cm]{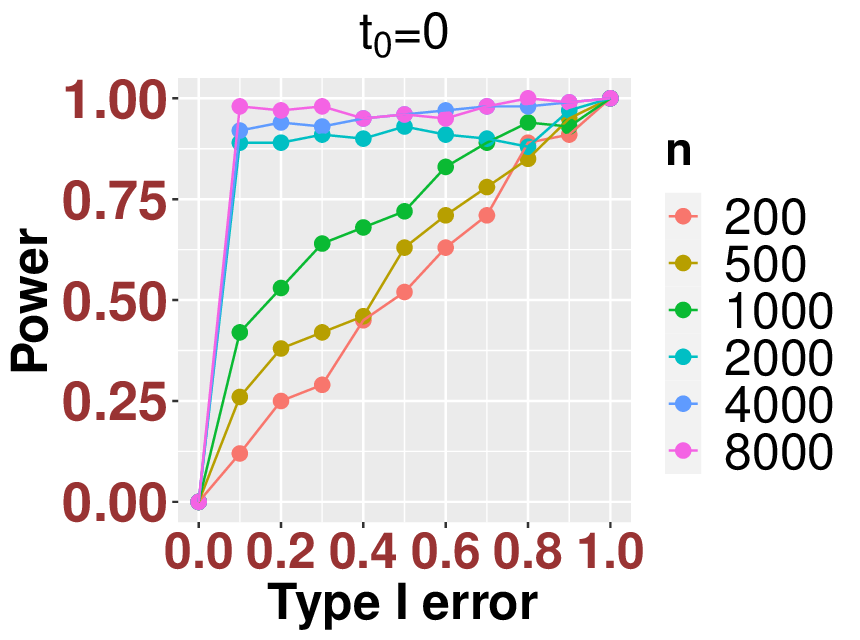}
\end{subfigure}
\begin{subfigure}{0.32\textwidth}
\includegraphics[width=6.2cm,height=5cm]{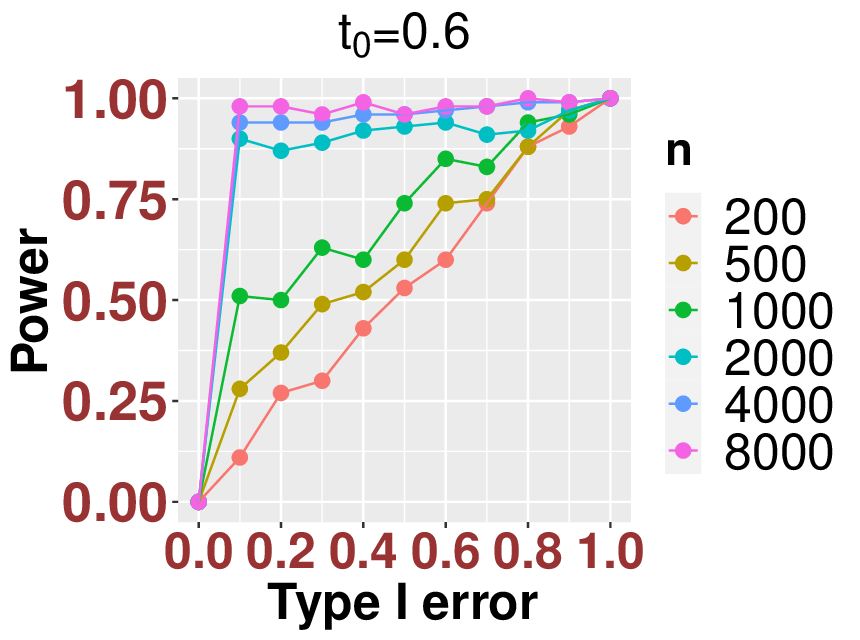}
\end{subfigure}
\begin{subfigure}{0.35\textwidth}
\includegraphics[width=6cm,height=5cm]{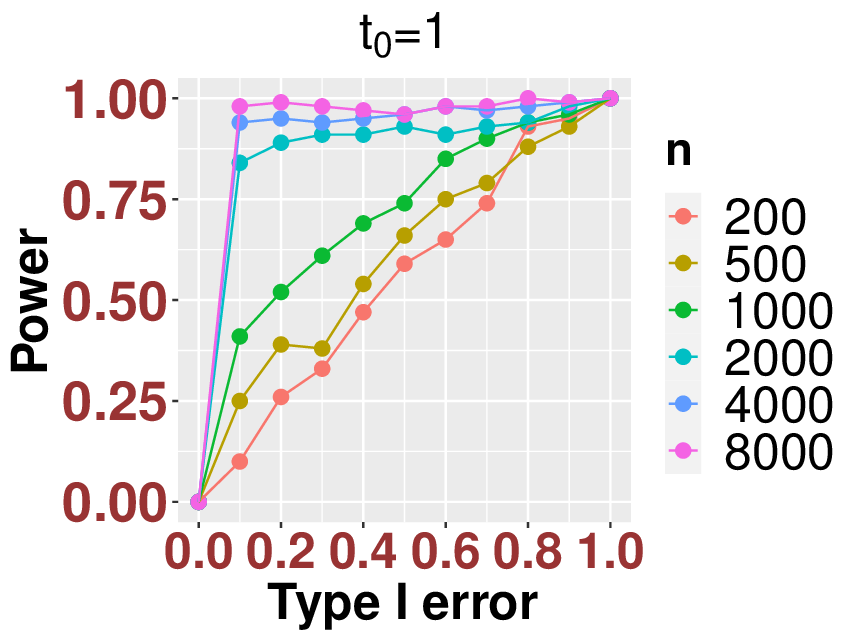}
\end{subfigure}
\caption{ {\footnotesize ROC curves for different values of $t_0.$ The setup is the same as in the caption of Figure \ref{fig_additional1}. } }
\label{fig_roc}
\end{figure}

\subsection{Additional real data analysis: global temperature data}\label{sec_suppl_anotherrealdata} In this subsection, we consider another real data analysis. We study the global temperature time series using the dataset Global component of Climate at a Glance (GCAG). As explained on the website of National Oceanic and Atmospheric Administration (NOAA) \footnote{\url{https://www.ncdc.noaa.gov/cag/global/data-info}}, GCAG comes from the Global Historical Climatology Network-Monthly (GHCN-M) Data Set and International Comprehensive Ocean-Atmosphere Data Set (ICOADS), which have data from 1880 to the present. These two datasets are blended into a single product to produce the combined global land and ocean temperature anomalies.
The term \textit{temperature anomaly} means a departure from a reference value or long-term average. 

 The available time series of global-scale temperature anomalies are calculated with respect to the 20th century average \cite{SRPL}, while the mapping tool displays global-scale temperature anomalies with respect to the 1981-2016 based period. ( see \url{https://datahub.io/core/global-temp#readme} for the dataset). This dataset is a global-scale climate diagnostic tool and provides a big picture overview of average global temperatures compared to a reference value.

\begin{figure}[H]
\centering
\includegraphics[width=10cm,height=6.5cm]{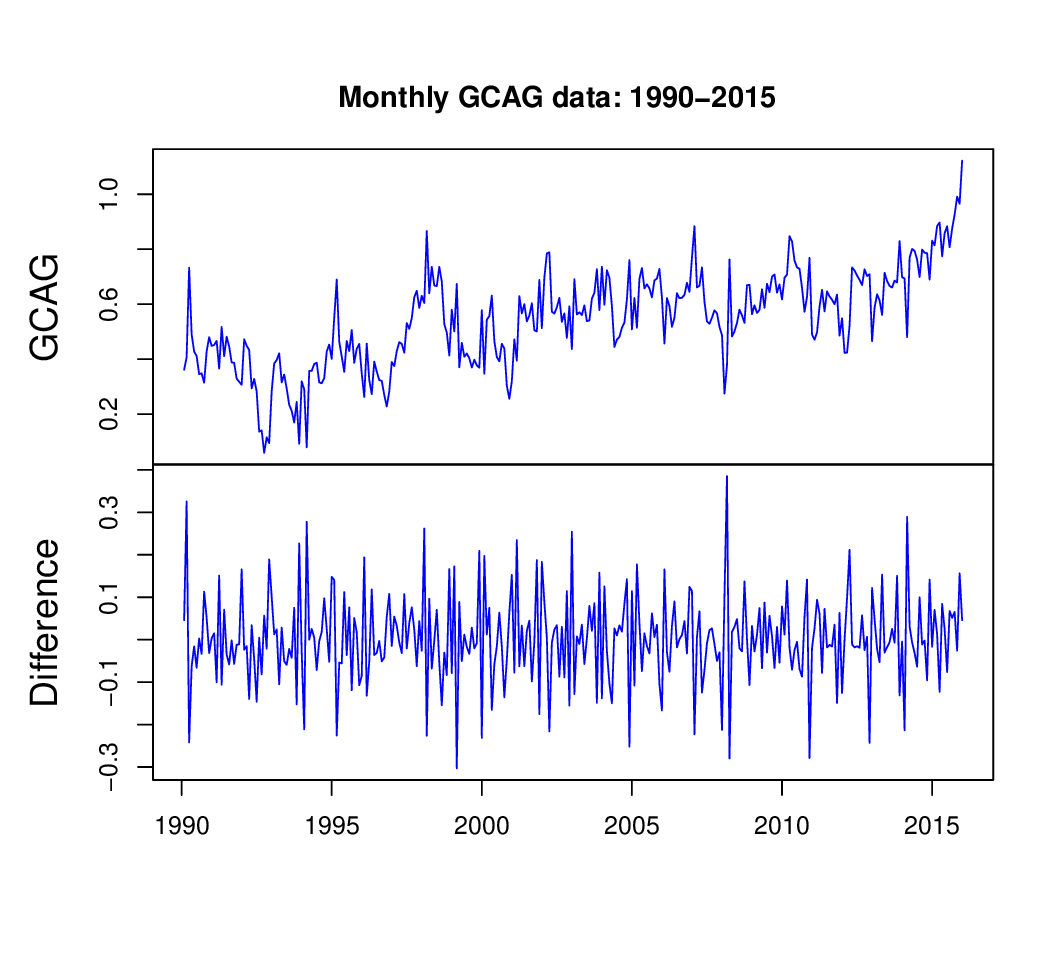}
\caption{Monthly (1990-2015) global temperature using data set GCAG. }
\label{annal_timeseries}
\end{figure}
We study the monthly time series from this dataset for the time period  1990-2015 (Figure \ref{annal_timeseries}). 
As indicated from the above figure, the  global temperature has an increasing trend and we consider its first order difference.


{
Then we apply the methodologies described in Sections \ref{sec:test} and \ref{sec_application} to study the time series.  We first employ the methods from Section \ref{sec:test} to test whether this time series is correlation stationary. For the sieve basis functions, we use the orthogonal wavelets constructed by (\ref{eq_meyerorthogonal}) with Daubechies-9 wavelet. The tuning parameters $b,c$ and $m$ are chosen according to Section \ref{sec:choiceparameter} which yields $b=5$, $J_n=3$ (i.e. $c=8$) and $m=10$. We apply the bootstrap procedure described in the end of Section \ref{sec_bootstrapping} to test the stationarity of the correlation and find that the $p$-value is $0.026$. We hence conclude that the prediction is unstable during this time period. 

}

{
Next, we use time series 1990-2015  as the training dataset to study the (rolling) forecasting performance over the year 2016, i.e., we do a one-step ahead prediction for each month of 2016 in a rolling manner and take the average of the square error.  We use the data-driven approach as described in Section \ref{sec:choiceparameter} to choose $b=6$ and $J_n=3.$ The MSE of our Sieve prediction 
is  $0.381.$  We compare this result with the methods mentioned in  Section \ref{sec_suppl_forecastingexamination} and record the results in Table \ref{table:tem}. We find that our prediction performs better than the other methods. Especially, we get a $16.6 \%$ improvement compared to simply fitting a stationary model using all the time series from 1990 to 2015 (SBLP).    

\begin{table}[H]
\center{
\begin{threeparttable}
\begin{tabular}{ccccccc}
\hline
Method & {\bf Sieve} & TTVAR & LSW & SNSTS & SBLP & PBLP \\ \hline
MSE  &  {\bf 0.381}       &    0.3913        &   0.3851            &      0.3969        &  0.45706           & 0.483  \\ 
\hline
\end{tabular}
\end{threeparttable}}
\caption{Comparison of prediction accuracy for GCAG. For SBLP,  we use all the time series from 1990 to 2015 to fit a stationary ARMA model. For PBLP, we used the most recent $b=6$ samples.
}\label{table:tem}
\end{table}
}
}

\section{A few further discussions, remarks and generalizations}\label{sec_appendix_one}

\subsection{Some assumptions and discussions}
First, we will need the following further assumption in the paper. 

\begin{assu}\label{assu_basis} We assume that the following assumptions hold true for the sieve basis functions and parameters: \\
(1). For any $k=1,2,\cdots, b_*, $ denote $\Sigma^k(t) \in \mathbb{R}^{k \times k}$ whose $(i,j)$-th entry is  $\Sigma^k_{ij}(t)=\gamma(t, |i-j|)$ where $\gamma(\cdot,\cdot)$ is defined in (\ref{eq_defncov}), we assume that the eigenvalues of 
$$ \int_0^1 \Sigma^k(t) \otimes \left( \mathbf{B}(t) \mathbf{B}^*(t) \right),$$
are bounded above and also away from zero by a universal constant $\kappa>0$. { Moreover, we assume that the derivatives of $\gamma(t,j)$ decay with $j$ as follows
\begin{equation*}
\sup_{t \in [0,1]} \sum_{j=0}^{\infty}|\gamma^{(d)}(t,j)|<\infty,
\end{equation*}
where $\gamma^{(d)}(t,j)$ is the d$th$ derivative of $\gamma(t,j)$ with respect to $t$. 
}
\\
(2).  There exist constants $\omega_1, \omega_2 \geq 0,$ for some constant $C>0,$ we have 
\begin{equation*}
\sup_t | \nabla \mathbf{B}(t) | \leq C n^{\omega_1} c^{\omega_2}.
\end{equation*}
(3). We assume that for $\tau$ defined in (\ref{eq_physcialbounbounbound}), $d$ defined in Assumption \ref{assu_smoothtrend} and $\mathfrak{a}$ defined in (\ref{eq_defnc}), there exists a large constant $C>2,$ such that
\begin{equation*}
\frac{C}{\tau}+\mathfrak{a}<1 \ \text{and} \ d \mathfrak{a}>2.
\end{equation*}
\end{assu}
{We mention that the above assumptions are mild and easy to check. First, { the first part of } (1) of Assumption \ref{assu_basis} guarantees the invertibility of the design matrix $Y$ and the existence of the OLS solution. It can be easily verified, for example, for the linear non-stationary process (\ref{ex_linear}), where { the first part of} (1) will be satisfied if $\sup_t \sum |a_j(t)|<1.$ { For the second part of (1), since $\phi_j(t)$ are defined via (\ref{eq_phismoothdefinition}), together with general Leibniz rule and implicit differentiation, it guarantees that the $d$th derivative of $\phi_j(t)$ is bounded (uniformly in $n$) so that the sieve approximation (\ref{eq_phiform}) always holds true. For the linear process (\ref{ex_linear}), the condition is satisfied once (\ref{eq_coeffdecay}) holds.} 
Second, (2) is a mild regularity condition on the sieve basis functions and is satisfied by many commonly  used basis functions. We refer the readers to \cite[Assumption 4]{CC} for further details. Finally, (3)  can be easily satisfied by choosing $C<\tau$
and $\mathfrak{a}$ accordingly. When the physical dependence is of exponential decay,
we only need  $d \mathfrak{a}>2.$ We refer the readers to \cite[Assumption 3.5]{DZ1} for more details. 
}

Second, the accuracy of the multiplier bootstrap in Section \ref{sec_bootstrapping} is determined by the closeness of its conditional covariance structure to that of $\Omega$. Following  \cite[Section 4.1.1]{ZZ1}, we shall use
\begin{equation}\label{eq_omegadefn}
\mathcal{L}(m)=\left| \left|\widehat{\Omega}-\Omega\right| \right|,
\end{equation}
where $\widehat{\Omega}$ is defined in (\ref{eq_widehatomega}), to quantify the latter closeness. The following theorem establishes the bound for $\mathcal{L}(m)$. Its proof  follows from (\ref{eq_lamdaomega}) below and the assumptions of Theorem \ref{thm_bootstrapping}. We omit the details here. 

\begin{thm}[Optimal choice of $m$] \label{thm_choicem} Under the assumptions of Theorem \ref{thm_bootstrapping}, we have 
\begin{equation*}
\mathcal{L}(m)=O_{\mathbb{P}} \left( b_* \zeta_c^2 \Big( \sqrt{\frac{m}{n}}+\frac{1}{m} \Big)  \right).
\end{equation*}
Consequently, the optimal choice of $m$ is of the order $O(n^{1/3}).$
\end{thm}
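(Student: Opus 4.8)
The plan is to control $\mathcal{L}(m) = \|\widehat{\Omega} - \Omega\|$ by decomposing it according to the three sources of error in the overlapping-block estimator $\widehat{\Omega}$ defined in (\ref{eq_widehatomega}), and then to optimize the resulting bound over the block length $m$. Concretely, I would write
$\widehat{\Omega} - \Omega = (\widehat{\Omega} - \widetilde{\Omega}) + (\widetilde{\Omega} - \mathbb{E}\widetilde{\Omega}) + (\mathbb{E}\widetilde{\Omega} - \Omega)$,
where $\widetilde{\Omega}$ is the infeasible analogue of $\widehat{\Omega}$ built from the true innovations $\bm{h}_j = \bm{x}_j \epsilon_j$ rather than the residual-based $\widehat{\bm{h}}_j$.

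The last, deterministic, term is the bias of the overlapping-block long-run covariance estimator, and its operator norm is governed by two effects: (i) truncation of the long-run covariance series at lag $m$, which, using the polynomial covariance decay of Assumption \ref{assum_shortrange} together with Lemma \ref{lem_coll}, contributes $O(1/m)$ after accounting for the factor $\zeta_c^2 = \sup_t\|\mathbf{B}(t)\|^2$ and the $b_*$-dimensional structure of $\Omega(t)$; and (ii) the local-stationarity/Lipschitz approximation, since $\mathbf{U}(\cdot,\mathcal{F})$ and $\mathbf{B}(\cdot)$ vary by $O(m/n)$ over a block of length $m$, giving a further $O(m/n)$ term which is absorbed because $m/n \le \sqrt{m/n}$. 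Hence $\|\mathbb{E}\widetilde{\Omega} - \Omega\| = O\!\big(b_* \zeta_c^2 (1/m + m/n)\big)$. For the stochastic fluctuation $\|\widetilde{\Omega} - \mathbb{E}\widetilde{\Omega}\|$ I would argue that $\widetilde{\Omega}$ is an average over the $n-m-b_*+1$ overlapping blocks of terms of conditional magnitude $O(\zeta_c^2 m)$ after the $1/m$ normalization, and that, via the physical-dependence moment bounds for quadratic forms of locally stationary time series (or equivalently an $m$-dependent approximation of $\{\bm{h}_j\}$, using Assumption \ref{assum_local} with $q>4$ to supply enough moments), this $p\times p$ matrix concentrates around its mean in operator norm at rate $O_{\mathbb{P}}(b_* \zeta_c^2 \sqrt{m/n})$ — the $\sqrt{m/n}$ being the usual overlapping-block rate (effective number of blocks $\asymp n/m$, variance per block $O(1)$, overlap inflating the variance by a factor $m$). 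Finally, the residual-replacement term $\|\widehat{\Omega} - \widetilde{\Omega}\|$ is shown to be of strictly lower order by substituting the sieve estimation rates for $\widehat{\phi}_j(\cdot)$ and the AR approximation error from Theorem \ref{thm_locallynonzero} and (\ref{eq_choleskylocal}); this is precisely the content of (\ref{eq_lamdaomega}), which is established as part of the proof of Theorem \ref{thm_bootstrapping}, so here I would simply invoke it. Combining the three pieces yields $\mathcal{L}(m) = O_{\mathbb{P}}\!\big(b_* \zeta_c^2(\sqrt{m/n} + 1/m)\big)$.

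For the optimal $m$, it then remains to minimize $g(m) = \sqrt{m/n} + 1/m$ over $m$: solving $g'(m) = \tfrac{1}{2\sqrt{mn}} - m^{-2} = 0$ gives $m^{3/2} \asymp \sqrt{n}$, i.e. $m \asymp n^{1/3}$, at which point $\mathcal{L}(m) = O_{\mathbb{P}}(b_* \zeta_c^2 n^{-1/3})$.

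The main obstacle is the stochastic fluctuation term: it is an operator-norm deviation of a genuinely high-dimensional ($p\times p$ with $p=(b_*+1)c$ diverging) sum of dependent, only finitely-many-moments block statistics, so it requires the overlapping-block analysis combined with the $m$-dependent approximation and moment inequalities for quadratic forms under physical dependence, and one must verify that the dimension $p$ remains within the budget imposed by (\ref{eq_assumimply}) and the assumptions of Lemma \ref{lem_reducedquadratic}. Everything else — the bias expansion and the residual-replacement step — is routine bookkeeping built on already-established results, and the final optimization over $m$ is elementary calculus.
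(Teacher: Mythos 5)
Your proposal is correct and takes essentially the same route as the paper: Theorem \ref{thm_choicem} is proved in the paper by simply invoking (\ref{eq_lamdaomega}), which is itself established inside the proof of Theorem \ref{thm_bootstrapping} via exactly the bias-plus-fluctuation decomposition you describe (Lemma \ref{lem_a2} for the operator-norm concentration at rate $\sqrt{m/n}$, Lemma \ref{lem_a3} together with the stationary-block and Riemann-sum steps for the $O(1/m)$ bias), after which the choice $m\asymp n^{1/3}$ follows by the same elementary calculus. The only slip is that $\widehat{\Omega}$ in (\ref{eq_widehatomega}) is $E[\Phi\Phi^*\mid x_1,\dots,x_n]$ and is already built from the \emph{true} innovations $\bm{h}_j$ rather than the residuals $\widehat{\bm{h}}_j$, so your term $\widehat{\Omega}-\widetilde{\Omega}$ is identically zero and the residual-replacement bound (which is a separate step at the end of the proof of Theorem \ref{thm_bootstrapping}, not part of (\ref{eq_lamdaomega})) is not actually needed for $\mathcal{L}(m)$.
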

Note that compared to \cite[Theorem 4]{ZZ1}, the difference from Theorem \ref{thm_choicem} is that we get an extra factor $b_* \zeta_c^2$ due to the high dimensionality. For instance, when we use the Fourier basis, normalized Chebyshev orthogonal polynomials and orthogonal wavelets, we shall have that $b_* \zeta_c^2=p,$ which is the dimension of  $\bm{z}_i$ defined in (\ref{eq_zikronecker}). However, it will not influence the optimal choice of $m$. 

{ Finally, we show that $\{\bm{h}_i\}$ defined in (\ref{eq_zikronecker}) can be expressed using a physical representation and its physical dependence decays polynomially.  Recall that we assume $x_i$ has a physical representation as in (\ref{defn_model}) that $x_i=G(i/n, \mathcal{F}_i).$ 
\begin{lem}\label{lem_lemporductlocally}
Suppose Assumptions \ref{assu_pdc},  \ref{assu_smoothtrend}, \ref{assum_local} and \ref{assu_basis} hold true. Moreover, we assume that the physical dependence measure $\delta(j,q), q>2,$ in (\ref{eq_phygeneral1}) satisfies 
\begin{equation}\label{eq_physcialbounbounbound11}
\delta(j,q) \leq Cj^{-\tau}, \ j \geq 1, 
\end{equation}
for some constant $C>0$ and  $\tau>1$. Then for $\bm{h}_i=\bm{x}_i \epsilon_i \in \mathbb{R}^{b_*},$ we can find some measurable function $\mathbf{U}(\cdot,\cdot)=(u_1(\cdot,\cdot), \cdots, u_{b_*}(\cdot,\cdot))$ so that $\bm{h}_i$ admits a physical representation 
\begin{equation*}
\bm{h}_i=\mathbf{U}\left( \frac{i}{n}, \mathcal{F}_i \right).
\end{equation*}
Moreover, denote $\{\delta_{u_k}(j,q), 1 \leq k \leq b_*\}$ as the physical dependence measures of $\{u_k(\cdot, \cdot), 1\leq k \leq b_*\}.$ Then we have that
\begin{equation*}
\max_{1 \leq k \leq b_*} \delta_{u_k}(j,q) \leq C j^{-\tau},
\end{equation*}
for some universal constant $C>0.$ 
\end{lem}
\begin{proof}
For the first part of the result, according to \cite[Lemma 2.9]{DZ1}, $\{\epsilon_i\}$ admits a physical representation that
\begin{equation*}
\epsilon_i=H(i/n, \mathcal{F}_i),
\end{equation*}
and its physical dependence measure satisfies that 
\begin{equation}\label{eq_epsilonboundboundbound}
\delta_{\epsilon}(j, q) \leq Cj^{-\tau},
\end{equation}
for some universal constant $C>0.$ Recall that $x_i=G(i/n, \mathcal{F}_i).$ Therefore, for $1 \leq k \leq b_*,$ we can set $u_k(i/n, \mathcal{F}_i)=H(i/n, \mathcal{F}_i) G((i+1-k)/n, \mathcal{F}_{i+1-k}).$ This concludes the first part of the proof. 

For the second part of the proof, without loss of generality, we focus on $\delta_{u_1}.$ Note that 
\begin{equation*}
\delta_{u_1}(j,q)=\sup_{t \in [0,1]}\| H(t,\mathcal{F}_{0})G(t, \mathcal{F}_{0})-H(t,\mathcal{F}_{0,j})G(t, \mathcal{F}_{0,j}) \|_q
\end{equation*}
Then it is easy to see that the second part follows from the (\ref{eq_epsilonboundboundbound}) and the assumption (\ref{eq_physcialbounbounbound11}). 
\end{proof}
}

\subsection{Some remarks}\label{suppl_subsec_remark}
{ Six remarks are in order. }
\subsubsection{}\label{suppl_subsec_remark1}
{ First, we provide some remarks on the connection on the AR coefficients in equation (\ref{eq_arapproxiamtionnoncenter}) and the Cholesky decomposition following Section 2 of \cite{kang2020variable}. Without loss of generality, we consider the centered time series so that $\phi_{i0}=\mathbb{E} x_i \equiv 0,$ and for the general setting we refer the readers to \cite{pourahmadi1999joint}. We now write the time series $\{x_i\}_{i=1}^n$ into a column vector that $\bm{x}=(x_1, x_2, \cdots, x_n)^* \in \mathbb{R}^n$ and denote its covariance matrix as $\Sigma \in \mathbb{R}^{n \times n}.$ Consider (\ref{eq_arapproxiamtionnoncenter}) with the convention that $\epsilon_1 \equiv x_1$ and denote $D$ as an $n \times n$ diagonal matrix containing the variances of $\{\epsilon_i\}.$ By setting the $n \times n$ matrix $A$ as follows 
\begin{equation*}
A:=
\begin{pmatrix}
0 & 0 & 0 & \cdots & 0\\
\phi_{21} & 0 & 0& \cdots & 0\\
\phi_{31} & \phi_{32} & 0 &\cdots & 0\\ 
\vdots & \vdots & \ddots & \vdots & \vdots \\
\phi_{n1} & \phi_{n2} &\cdots & \phi_{n.n-1} & 0 
\end{pmatrix},
\end{equation*}
we can write $\bm{\epsilon}=(\epsilon_1, \cdots, \epsilon_n)^* \in \mathbb{R}^n $ as 
\begin{equation*}
\bm{\epsilon}=(I-A)\bm{x}. 
\end{equation*}
This leads to that 
\begin{equation*}
D=(I-A) \Sigma (I-A)^*.
\end{equation*}
Consequently, the above repressions reduces the challenge of modeling a covariance matrix or precision matrix into dealing with regression problems \cite{DZ1, kang2020variable, pourahmadi1999joint}.   
  }
  
  \subsubsection{}\label{suppl_subsec_remark2}
  { Second, we now explain that testing the AR coefficients is easier than testing the correlations directly. {On the one hand, in the literature, there exist some works on  testing covariance stationarity of a time series using techniques from the spectral domain. See, for instance,  \cite{DPV,DR, GN, EP2010}. Nevertheless, testing covariance stationary and correlation stationary are two different problems. Specifically, in order to adopt their methods for the purpose of testing correlation stationarity we observe that the time-varying marginal variance has to be estimated and removed from the time series first. However, it is generally unknown whether the errors introduced in such estimation would influence the finite sample and asymptotic behaviour of the tests. Furthermore, estimating the marginal variance usually involves the difficult choice of a smoothing parameter. One major advantage of our test when used as a test of correlation stationarity is that it is totally free from the marginal variance as the latter quantity is absorbed into the errors of the AR approximation and hence is independent of the AR approximation coefficients. Therefore the test statistic is much more robust to time-varying marginal variances than the existing methods. On the other hand, in another relevant work  \cite{zhao2015inference}, the author infers the autocorrelation function from the time-domain utilizing $\rho(t,j)=\gamma(t,j)/\gamma(t,0).$ Again, the inference procedure therein rely on estimating and inferring the marginal variance function $\gamma(t,0)$ which is a complicated task and could lead to numerical instability in finite samples .

}
\subsubsection{}\label{suppl_subsec_remark3}
{
Third, we make a remark on the connection between the two definitions of locally stationarity, i.e., Definition \ref{defn_locallystationary} and equation (\ref{eq_definition11111}). In fact, our required 
Definition \ref{defn_locallystationary} is a little bit more general in the sense that it requires relatively weaker assumptions compared to (\ref{eq_definition11111}).  In other words, equation (\ref{eq_definition11111}) implies Definition \ref{defn_locallystationary} in the sense that  if a locally stationary time series follows (\ref{eq_definition11111}), it will satisfy Definition \ref{defn_locallystationary}. Recall $h_i(u)$ from (\ref{eq_definition11111}) and denote $\gamma(u,\cdot)$ as the autocovariance function of $\{h_i(u)\}.$ Then we can write that 
\begin{equation}\label{eq_gammuij}
\gamma(u,|i-j|)=\operatorname{Cov}(h_i(u), h_j(u)).
\end{equation}
{\color{red} Moreover}, we have that
\begin{align}\label{eq_covarianceij}
\operatorname{Cov}(x_i, x_j) & =\operatorname{Cov}(h_i(t_i)+x_i-h_i(t_i), h_j(t_i)+x_j-h_j(t_i)) \notag \\
&=\operatorname{Cov}(t_i, |i-j|)+\sum_{k=1}^3 \mathrm{P}_k,
\end{align}
where $\mathrm{P}_k's$ are defined as 
\begin{equation*}
\mathrm{P}_1=\operatorname{Cov}(h_i(t_i), x_j-h_j(t_i)),
\end{equation*}
\begin{equation*}
\mathrm{P}_2=\operatorname{Cov}(x_i-h_i(t_i),h_j(t_i)),
\end{equation*}
\begin{equation*}
\mathrm{P}_3=\operatorname{Cov}(x_i-h_i(t_i), x_j-h_j(t_i)). 
\end{equation*}
It suffices to control $\mathrm{P}_k, k=1,2,3.$

 Using (\ref{eq_definition11111}) with  $u=t_i,$ we readily see that 
\begin{equation*}
|x_i-h_i(t_i)| \leq  \frac{U_i(t_i)}{n}, \ \text{and} \ |x_j-h_j(t_i)| \leq \left( \frac{|i-j|+1}{n} \right) U_j(t_i)   \ a.s.  
\end{equation*}
Consequently, we find that 
\begin{equation}\label{eq_errorbound}
\mathrm{P}_1=O\left( \frac{|i-j|+1}{n} \right), \ \mathrm{P}_2=O\left( \frac{1}{n} \right), \ \mathrm{P}_3=O\left( \frac{|i-j|+1}{n^2} \right).
\end{equation}
Combining (\ref{eq_gammuij}) (with $u=t_i$), (\ref{eq_covarianceij}) and (\ref{eq_errorbound}), we see that (\ref{eq_definition11111}) implies Definition \ref{defn_locallystationary}. }

\subsubsection{}\label{suppl_subsec_remark4}
{
Finally, we provide a remark on the local power in light of the work \cite{paparoditis2016local}.
 Before proceeding to compare the local power properties under different settings, we first summarize the results of \cite{paparoditis2016local}. In the aforementioned paper, the authors investigate some local power properties of frequency domain-based tests for stationarity for a general class of local alternatives. In particular, the authors consider the time-varying linear Gaussian process 
\begin{equation}\label{eq_timeseriesprocess}
X_{i,n}=\sum_{j=-\infty}^{\infty} a_{n}(i/n,j) \epsilon_{t-j},
\end{equation}
where $\{\epsilon_i\}$ are i.i.d. standard Gaussian random variables, and the sequence of functions $a_n(\cdot,l)$ are
twice continuously differentiable and satisfy certain decay and regularity conditions (see equation (3.2) of \cite{paparoditis2016local}). Let $f(u, \lambda)$ be the local spectral density of $X_{t,n}$ in (\ref{eq_timeseriesprocess}). The authors are interested in understanding the following hypothesis testing problem
\begin{align*}
& \mathbf{H}_0: \ f(u, \lambda)=g(\lambda), \ \text{a.e.}, \ g(\lambda)=\int_0^1 f(u,\lambda) \mathrm{d} u, \\
& \mathbf{H}_a: \ f(u,\lambda) \neq g(\lambda) \ \text{on a set} \ A \subset [0,1] \ \text{with positive Lebesgue measure}. 
\end{align*}  
Note that under $\mathbf{H}_0,$ $X_{t,n}$ is a stationary time series under some minor regularity conditions. Since $\mathbf{H}_a$ is too general, in order to study the local power properties, the authors consider two specific local alternatives in terms of the coefficients of $X_{t,n}.$  The first kind belongs to Pitman-type and is global in time that 
\begin{equation}\label{eq_firstalter}
\mathbf{H}_{a1}: \ a_n(u, l)=a_0(l)(1+c_n b(u,l)), \ n \in \mathbb{N},
\end{equation}  
where $c_n=n^{-\kappa}$ for some $\kappa>0$ and $b(\cdot,l): [0,1] \rightarrow \mathbb{R}$ are twice continuously differentiable functions satisfying some regularity conditions. The second kind is time localized that  
\begin{equation}\label{eq_secondalter}
\mathbf{H}_{a2}: \ a_n(u,l)=a_0(l)(1+c_n b((u-u_0)/\gamma_n,l)),
\end{equation}
where $u_0 \in (0,1)$ is some fixed time point, $c_n=n^{-\kappa}$ and $\gamma_n=n^{-\zeta}$ for some $\kappa, \zeta>0,$ and $b(\cdot, l)$ is a function satisfying some regularity conditions. Notice that under $\mathbf{H}_{a2},$ when $n$ is sufficiently large, it will become more concentrated around the time point $u_0.$ Then in \cite{paparoditis2016local}, the authors considered three different frequency-domain based tests as in \cite{DPV,paparoditis2009testing,preuss2013test}  and developed a framework for investing the local power properties of the three methods.

With the above discussion, we first point out several major differences in the setup between \cite{paparoditis2016local} and our current paper. First, in \cite{paparoditis2016local}, the authors are concerned with the frequency-domain methods whereas ours is a time-domain based approach. Second, in \cite{paparoditis2016local}, the authors focus on testing the covariance stationarity of the time series while our current paper studies the correlation stationarity. Note that many time series are correlation stationarity but not covariance stationary, for example, the time-varying white noise process. Third, in \cite{paparoditis2016local}, probably for technical simplicity, the temporal decay condition (i.e., (3.2) of \cite{paparoditis2016local}) is chosen in a fixed manner that for some fixed constant $C>0$
\begin{equation}\label{eq_decayregime}
\sup_{u \in [0,1]} \sum_j|j||a_n(u,j)| \leq C. 
\end{equation} 
All the results are then derived based on this assumption, especially the transition in terms of $\kappa$ and $\zeta$; see Theorems 3.1-3.6 in \cite{paparoditis2016local}. In contrast, in our paper, our temporal decay is more general and flexible. Particularly, in terms of (\ref{eq_timeseriesprocess}), we assume that 
\begin{equation*}
\sup_{u \in [0,1]} |a_n(u,j)| \leq C j^{-\tau}.
\end{equation*}
Consequently, our results in Proposition \ref{prop_power} are adaptive to the temporal decay in terms of the parameter $\tau.$ (Note that Proposition \ref{prop_power} is adaptive to $b_*$ and $b_*$ is essentially related to $\tau$ via equation (\ref{eq_choiceofb}).) Our Proposition \ref{prop_power} demonstrates that when $\tau$ is larger, i.e., the temporal decay is faster, our test can achieve asymptotic power one for weaker alternatives. Since \cite{paparoditis2016local} focuses on a specific regime (\ref{eq_decayregime}), when the temporal decays faster, the results derived in Sections 3.2 and 3.3 of \cite{paparoditis2016local} are not necessarily sharp. Fourth, similar arguments apply to the smoothness of the coefficients $a_n(u,j).$ In \cite{paparoditis2016local}, the authors only consider two continuously differentiable. However, our results in Proposition  \ref{prop_power} of the revised manuscript is adaptive to the smoothness via the value of $c.$ Especially, equation (\ref{eq_phiform}) of the revised manuscript implies that $c$ can be chosen that $c \asymp n^{1/d}.$ That is to say, when the functions are smoother, only a smaller value of $c$ is required. Therefore, when the coefficients are smoother, the results in \cite{paparoditis2016local} may not be sharp.   Finally, for technical convenience,  in \cite{paparoditis2016local}, a linear Gaussian process is studied but our current approach can be applied to nonlinear and non-Gaussian time series.

We can see that there exist significant differences in the settings and scope between \cite{paparoditis2016local} and our current paper. Even though a direct comparison between our current method and \cite{paparoditis2016local} is unfair in general,  we make the following efforts to compare and unify the understanding of tests of stationarity using (some examples of) the linear time-varying process.  For simplicity and to avoid distraction, we focus on the global in time local alternatives $\mathbf{H}_{a1}.$ Similar arguments and discussions apply to $\mathbf{H}_{a2}$ after necessary modifications.    

First, we consider a time-varying AR(1) process whose causal representation can be written into (\ref{eq_timeseriesprocess}). More specifically, we consider that $x_i \equiv x_{i,n}$
\begin{equation*}
x_i=\phi(i/n) x_{i-1}+\epsilon_i,
\end{equation*} 
where $\{\epsilon_i\}$ is a time-varying white noise process in general. We now consider the alternative that 
\begin{equation}\label{eq_commonalternative}
\mathbf{H}_a': \ \phi(i/n)=\phi(1+c_n b(i/n)),
\end{equation}
for some $c_n=o(1)$ and constant $|\phi|<1$. In what follows, we will show how the local alternatives of (\ref{eq_firstalter}) and (\ref{eq_secondalter}) relate and differ to that in our paper in the setting of (\ref{eq_commonalternative}). On the one hand, using (\ref{eq_commonalternative}), the local alternative of our current paper reads as   
\begin{equation}\label{eq_ccccc}
\mathbf{H}_a: \ \int_0^1 (\phi(t)-\bar{\phi})^2 \mathrm{d}t = \phi^2 c_n^2 \int_0^1 (b(t)-\int_0^1 b(t))^2 \mathrm{d} t.  
\end{equation}
Assuming that $\int_0^1 (b(t)-\int_0^1 b(t))^2 \mathrm{d} t$ is bounded from below, for example, $b(t)=\cos(2 \pi t),$ then our Proposition \ref{prop_power} holds once $c_n^2>C \sqrt{b_* c}/n$ for some large constant $C>0.$ Especially in the setting of \cite{paparoditis2016local} with $c_n=n^{-\kappa},$ the phase transition happens when $n^{-2\kappa} >C \sqrt{b_*c}/n.$ Considering the setting of \cite{paparoditis2016local} that we shall choose $b_* \asymp n^{1/2}$ and $c \asymp n^{1/2},$ we find that the phase transition happens when $\kappa=1/4.$  Moreover, when the temporal relation decays faster and the functions are infinitely differentiable, we can choose $b_*, c \asymp \log n$  so that the phase transition happens when $\kappa=1/2$. In general, it is adaptive to the temporal decay rate and the smoothness of the functions.  On the other hand,  as $\sup_t |\phi(t)|<1$, using the method of matching coefficients, we readily see that $x_i$ can be rewritten as 
\begin{equation}\label{eq_ar1causal}
x_i=\sum_{j=0}^{\infty} \phi(i/n)^j \epsilon_{i-j}.
\end{equation} 
Note that (\ref{eq_ar1causal}) is an example of (\ref{eq_timeseriesprocess}) by setting $a_n(i/n,j) \equiv 0, j<0$ and $a_n(i/n,j)=\phi(i/n)^j.$
Then in terms of the alternative (\ref{eq_commonalternative}),  $\mathbf{H}_{a1}$ in (\ref{eq_firstalter}) reads as 
\begin{equation}\label{eq_transferassumption}
\mathbf{H}_{a1}: a(i/n,j)=\phi^j(1+c_n b(i/n))^j.  
\end{equation} 
Heuristically, when $j$ becomes large, $\phi^j$ will be negligible so that we only need to focus on small values of $j \leq C \log n.$ In this setting, as $c_n \asymp n^{-\kappa},$ we have that when $n$ is sufficiently large, $(1+c_n b(i/n))^j \asymp 1+j c_n b(i/n).$ Consequently, we can approximate $\mathbf{H}_{a1}$ using
\begin{equation*}
\mathbf{H}_{a1}': a(i/n,j)=\phi^j(1+j c_n b(i/n)).
\end{equation*}
Therefore, the results of Section 3.2 of \cite{paparoditis2016local} apply. Especially, we notice that the transition of Theorem 3.2 therein is $\kappa=1/4$ and Theorem 3.3 is $\kappa=1/2$. For the transition in Theorem 3.1, when our $b_*$ and $c$ take some specific values, we can also match the transition. { In summary, under the time-varying AR(1) model which is a special case of the setting  in \cite{paparoditis2016local}, we find our time-domain method which is used to test correlation stationarity has the same phase transition as the frequency method proposed by \cite{DPV} which is used to test covariance stationarity. Especially, when $\{\epsilon_i\}$ are stationary white noise as in \cite{paparoditis2016local}, these two tests are asymptotically equivalent in terms of power. Moreover, if the temporal decays faster and the functions are smoother, our method has better performance and is asymptotically equivalent to the frequency based test in \cite{preuss2013test}.}


Second, we consider a time-varying MA(1) process which is also a special case of (\ref{eq_timeseriesprocess}). We point out that it is meaningless to compare the MA(0) process which is the time-varying white noise. The reason is because regardless of whether it is covariance stationary, it is always correlation stationarity which is our null hypothesis. For the MA(1), for simplicity, we consider that
\begin{equation*}
x_i=\theta(i/n)\epsilon_{i-1}+\epsilon_i, 
\end{equation*}
where $\epsilon_i$ are white noise process and $\sup_t |\theta(t)|<1.$ In this setting, we consider the local alternative $\mathbf{H}_{a1}$ as in (\ref{eq_firstalter}) that
\begin{equation}\label{eq_ma1ha1}
\mathbf{H}_{a1}: \theta(i/n)=\theta(1+c_nb(i/n)),
\end{equation}
for some $c_n=o(1)$ and constant $|\theta|<1.$ On the one hand, under such a setting, the results in Section 3.2 of \cite{paparoditis2016local} apply. On the other hand, as $\sup_t |\theta(t)|<1,$ using the method of matching coefficients, we find that $x_i$ can be rewritten as
\begin{equation*}
x_i=\sum_{j=1}^{\infty} \left(-\theta\left(\frac{i}{n}\right)\right)^j x_{i-j}+\epsilon_i.
\end{equation*} 
The above form is consistent with the AR approximation used in our paper. By a discussion similar to the paragraph below (\ref{eq_transferassumption}), in terms of (\ref{eq_ma1ha1}), it suffices to consider $j \leq C \log n$  so that we have $\left(-\theta\left(\frac{i}{n}\right)\right)^j \approx (-\theta)^j(1+c_n j b(i/n)).$ In other words, $b_* \asymp \log n.$ Consequently, in terms of (\ref{eq_ma1ha1}),  we can approximate our alternative similar to (\ref{eq_ccccc}) using
\begin{equation*}
 \sum_{j=1}^{\infty} \theta^{2j} j^2  c_n^2 \int_0^1 (b(t)-\int_0^1 b(t) \mathrm{d} t)^2 \mathrm{d} t.
\end{equation*}     
Assuming that $\int_0^1 (b(t)-\int_0^1 b(t))^2 \mathrm{d} t$ is bounded from below, for example, $b(t)=\cos(2 \pi t),$ using the setup $c \asymp n^{-\kappa},$ our local alternative can be approximated as  
\begin{equation*}
\mathbf{H}_{a}':  c_n^2 >C \sqrt{c\log n }/n.
\end{equation*}
Assuming that the functions are twice continuously differentiable that $c \asymp n^{1/2}$, we see that the phase transition happens when $\kappa=3/8.$ This matches the result of Theorem 3.1 of \cite{paparoditis2016local} for some properly chosen $\delta$ and $\rho$ (see the definitions therein). Moreover, when the functions are infinitely differentiable, we have $c \asymp \log n$ so that the phase transition happens $\kappa=1/2$ which matches the results of Theorem 3.3. When the smoothness of the functions varies, we can also match the results of Theorem 3.2. { In summary, under the time-varying MA(1) model which is a special case of the setting in \cite{paparoditis2016local}, we find our time-domain method which is used to test correlation stationarity has the same phase transition as the frequency method proposed by \cite{paparoditis2009testing} which is used to test covariance stationarity, provided the parameters in \cite{paparoditis2009testing} are properly chosen. Especially, when $\{\epsilon_i\}$ are stationary white noise as in \cite{paparoditis2016local}, these two tests are asymptotically equivalent in terms of power. Moreover, if the temporal decays faster and the functions are smoother, our method has better performance and is asymptotically equivalent to the frequency based test in \cite{preuss2013test}.} 

Based on the analysis of the above two examples, we find that different methods have their own advantage in testing stationarity. In the setting when the marginal variance is stationary (i.e., correlation stationarity is the same as covariance stationarity), under the exact assumption of \cite{paparoditis2016local}, for the time-varying AR(1) model, our proposed method has the same local power properties as \cite{DPV}; and for the time-varying MA(1),  our proposed method has the same local power properties as \cite{paparoditis2009testing} provided the parameters are properly chosen. Moreover, if the functions are more smooth, our method will have the same power properties as \cite{preuss2013test}. Nevertheless, we want to emphasize again, our method is proposed for correlation stationarity testing. That is to say, even when the marginal variance is time-varying (which is clearly covariance non-stationary), we can have correlation stationary. Finally, for general time-varying AR($p$) model and MA($q$) (even MA$(\infty)),$ we can apply the same ideas here to study the comparison. The computation will be more tedious by working on the causal representation of the AR model and invertible representation of the MA model. To     to fully understand the similarity and differences, we need to work on another paper and this will be the future work.

}
}

{
\subsubsection{}\label{secsec_arch} In this subsection, we discuss the locally stationary ARCH model considered in \cite{dahlhaus2006statistical}  where the authors considered the following model. For some smooth functions $a_i(t), 0 \leq i \leq p: [0,1] \rightarrow \mathbb{R}^+,$ they consider the time series $x_i \equiv x_{i,n}, 1 \leq i \leq n,$ that  
\begin{equation}\label{eq_locallystationarymodel}
x_i=\left(a_0 \left(i/n \right)+a_1(i/n)x^2_{i-1}+\cdots+a_p(i/n)x^2_{i-p} \right)^{1/2}\epsilon_i,
\end{equation}     
where $\{\epsilon_i\}$ are i.i.d. random variables with $\mathbb{E} \epsilon_i=0$ and $\mathbb{E} \epsilon_i^2=1.$  
{In terms of the squared time series $x_i^2$, \eqref{eq_locallystationarymodel} leads to 
\begin{equation}\label{eq_locallystationarymodel11}
x^2_i=\left( a_0 \left(i/n \right)+a_1(i/n)x^2_{i-1}+\cdots+a_p(i/n)x^2_{i-p} \right)\epsilon_i^2.
\end{equation}    

Using a slight extension of the proofs of the results in Section 2.4.2 of Mayer, Z{\" a}hle and Zhou (2020) \cite{mayer2020functional}, it can be shown that under some mild conditions,
\begin{equation*}
x_i=G(i/n,{\cal F}_i)+O_{\mathbb{P}}(1/n),
\end{equation*}
where $G(i/n,{\cal F}_i)$ is a locally stationary time series as defined in Example \ref{eexample_linear}. Furthermore, $G(i/n,{\cal F}_i)$ is a white noise process, i.e., a time-varying MA(0) process. Therefore, if the driving noise process $\{z_t\}$ in (\ref{ex_linear}) is a white noise process, our expression (\ref{ex_linear}) covers the time-varying ARCH model in  \cite{dahlhaus2006statistical} (approximately). As a side note, if the driving noise process $\{z_t\}$ has to be i.i.d., then expression (\ref{ex_linear}) does not cover the time-varying ARCH model in general.

On the other hand, for the squared time series (\ref{eq_locallystationarymodel11}), denote $y_i=x_i^2,$ then we can rewrite it as follows
\begin{equation}\label{eq_locallystationaryrepresentationarch}
y_i=a_0(i/n)+\sum_{j=1}^p a_j(i/n)y_{i-j}+\pi_i,
\end{equation} 
where $\pi_i$ is defined as 
\begin{equation*}
\pi_i=(\epsilon_i^2-1)\left( a_0(i/n)+\sum_{j=1}^p a_j(i/n)y_{i-j}\right). 
\end{equation*}
Recall that $\mathbb{E} \epsilon^2_i=1.$ Following the same argument as above, $y_i=G^2(i/n,{\cal F}_i)+O_{\mathbb{P}}(1/n)$ and $\pi_i$ is (approximately) a locally stationary white noise process.
That is to say, (\ref{eq_locallystationarymodel11}) is approximately a white-noise-driven locally stationary AR($p$) model. Moreover, according to \cite{zhou2013inference}, under mild conditions, (\ref{eq_locallystationaryrepresentationarch}) can be well approximated by a time-varying MA($\infty$) process. Therefore, we can conclude that (\ref{ex_linear}) also covers (\ref{eq_locallystationarymodel11}) asymptotically. In summary, when $n$ is sufficiently large, under suitable condition,  (\ref{ex_linear}) covers the locally stationary ARCH($p$) model in terms of both (\ref{eq_locallystationarymodel}) and (\ref{eq_locallystationarymodel11}).  The rigorous justification is out of the scope of the current paper and we will pursue this direction in the future works. 

 Moreover, in terms of testing,  if the  mechanism of $x_i$ is locally stationary (G)ARCH, then our test $T$ can be used to test the constancy of the ARCH coefficients when it is applied to the squared time series $x_i^2$. In particular,
for (\ref{eq_locallystationarymodel11}), it can be regarded (approximately) as a white-noise-driven locally stationary AR($p$) process where the AR coefficients are exactly the ARCH coefficients of $x_i$. Hence, our statistics $T$ can be utilized to test the constancy of the functions $a_k(\cdot), 0 \leq k \leq p,$ asymptotically, i.e., the stationarity of the ARCH model. Again, the rigorous mathematical treatment of the approximation errors shall be pursued in our future works.

 
}
}

\subsubsection{}\label{rem_collectionofremark}
Two remarks are in order. First, we point out that
as can be seen from the proof of Theorem \ref{thm_bootstrapping}, especially the discussion between (\ref{eq_lamdaomega}) and (\ref{eq_hatUp}), when conditional on the data,  the covariance matrix of $\Phi$ ($\widehat{\Phi}$) can be explicitly computed as in (\ref{eq_defnphi}) which could serve as a plug-in estimator for $\Omega$. People can then estimate the quantities $f_1, f_2$ in Proposition \ref{prop_normal} using those plug-in estimators; see (\ref{eq_festimatorsuppl})  for more details.  Instead, our Algorithm \ref{alg:boostrapping} directly mimic the distribution of $T$ without using the plug-in estimator for the purpose of faster convergence and more accurate finite-sample performance. In Section \ref{sec_numericalplguin}, we use extensive numerical simulations to illustrate the superior empirical performance of our Algorithm \ref{alg:boostrapping} compared to the plug-in approach. Second, we point that that other resampling methods such as the AR sieve bootstrap may also work for the implementation of $T$. We refer to \cite{buhlmann2002bootstraps,kreiss2012bootstrap,kreiss2011} for  reviews of bootstrap methods for time series. Even though some spectral domain bootstrap methods have been developed for locally stationary linear process, for example \cite{kreiss2015bootstrapping}, most of the time domain bootstrap techniques have been developed or justified only for stationary time series. The generalization of these methods to quadratic forms of locally stationary time series is highly nontrivial and will be studied in the future; see Section \ref{sec_suppl_generalization3} for more discussions.          
\subsection{Some generalizations}\label{sec_suppl_generalization}
{ Some arguments on potential generalizations are recorded in order.}
\subsubsection{}\label{sec_suppl_generalization1}
{ First, we make a remark on the $h$-step ahead prediction for $h \leq h_0,$ where $h_0$ is some fixed positive integer. Since $h$ is fixed and $b \equiv b(n)$ diverges with $n$, when $n$ (and $b$) is sufficiently large, for $b+1 \leq i \leq n,$ we consider the $h$-step ahead best linear prediction $\widehat{x}_{i,h}$ of $x_i$ which utilizes all its predecessors up to $x_{i-h},$ i.e., 
\begin{equation*}
\hat{x}_{i,h}=\phi_{i0,h}+\sum_{j=h}^{i-1} \phi_{ij,h} x_{i-j}, \ i=b+1,\cdots, n.  
\end{equation*}
Denote $\epsilon_{i,h}=x_i-\hat{x}_{i,h}.$ Similar to (\ref{eq_arapproxiamtionnoncenter}), we can rewrite 
\begin{equation*}
x_i=\phi_{i0,h}+\sum_{j=h}^{i-1} \phi_{ij,h} x_{i-j}+\epsilon_{i,h}, \ i=b+1,\cdots, n.
\end{equation*}
On the one hand, as $h$ is fixed, we can check that the results of Section \ref{sec:nonzeromeandiscussion} hold for $\{\phi_{ij,h}\}$ after some minor modification. On the other hand, when $\{x_i\}$ is a locally stationary time series satisfying the assumptions of Section \ref{sec:arappoximate}, we can show that the results of Section \ref{sec:arappoximate} still apply with some notational changes. Especially, similar to (\ref{eq_phismoothdefinition}), for the smooth function $\bm{\phi}_h(t)=(\phi_{1,h}(t), \cdots, \phi_{b,h}(t))^* \mathbb{R}^b$ such that 
\begin{equation}\label{eq_yulewalkerh}
\bm{\phi}_h(t)=\Gamma_h(t)^{-1} \bm{\gamma}_h(t),
\end{equation}
where $\Gamma_h(t)$ and $\bm{\gamma}_h(t)$ are defined as follows that whose entries satisfy 
\begin{equation*}
\Gamma_{h,ij}(t)=\gamma(t,|i-j|), \ \bm{\gamma}_i(t)=\gamma(t,i), \ i,j=h+1, h+2, \cdots, h+b,
\end{equation*}
we can show that 
\begin{equation*}
x_i=\phi_{0,h}(i/n)+\sum_{j=h}^b\phi_{j,h}(i/n)x_{i-j}+o_{\ell_2}(1). 
\end{equation*} 
Consequently, for the $h$-step ahead prediction, similar to (\ref{eq_forecast}) for the one-step ahead prediction, for sufficiently large $n,$ we shall use the following linear predictor  
\begin{equation}\label{eq_truecoefficientestimator}
\widehat{x}_{n+h}^b=\phi_{0,h}(1)+\sum_{j=h}^b\phi_{j,h}(1)x_{n+1-j}.
\end{equation} 
Moreover, as $h$ is fixed, we can prove a result similar to Theorem \ref{thm_prediction} and show $\widehat{x}_{n+h}^b$ is an asymptotic linear optimal predictor. 

Next, due to the smoothness of $\{\phi_{j,h}\}_{j=0}^b,$ we can use the method of sieves to estimate them as in (\ref{eq_phiform}), i.e., 
\begin{equation*}
\phi_{j,h}(i/n)=\sum_{k=1}^c a_{jk,h} \alpha_k(i/n)+o(1), \ 0 \leq j \leq b, \ i>b,
\end{equation*}  
where we recall that $\{\alpha_k\}$ are some basis functions on $[0,1]$ and $c$ is the number of basis functions. Consequently, it suffices to estimate the coefficients $a_{jk,h}'s$ using OLS as in (\ref{eq_choeq}). Denote the OLS estimates as $\hat{a}_{jk,h}.$ We can then estimate $\phi_{j,h}(t)$ using
\begin{equation*}
\hat{\phi}_{j,h}(t)=\sum_{k=1}^c \hat{a}_{jk,h} \alpha_k(t). 
\end{equation*}  
In view of (\ref{eq_truecoefficientestimator}), similar to (\ref{eq_forcaestequation}), we can forecast $x_{n+h}$ using 
\begin{equation*}
\widehat{\mathsf{x}}_{n+h}^b=\hat{\phi}_{0,h}(1)+\sum_{j=h}^b \widehat{\phi}_{j,h}(1)x_{n+1-j}. 
\end{equation*}  
In addition, we can study the MSE of the forecast and prove a consistent result similar to Theorem \ref{thm_finalresult}. 

\subsubsection{} \label{sec_suppl_generalization2}
Second, we briefly discuss how to generalize our arguments in Section \ref{sec:test} from locally stationary time series to the piecewise locally stationary time series as introduced in \cite{dette2019change,wu2019multiscale, ZZ1}. Consider a locally stationary time series with possible abrupt changes following \cite{wu2019multiscale} 
\begin{equation}\label{eq_plsmodel}
x_i=\nu(i/n)+y_i,
\end{equation} 
where $\nu(t)$ is a piece-wise smooth function with $\mathrm{p}$ jump points $0<d_1<d_2<\cdots<d_{\mathrm{p}}<1,$ and $\{y_i\}$ is a centered piece-wise locally stationary process \cite{dette2019change, ZZ1} defined as follows.

\begin{defn}[Piece-wise locally stationary processes] Let $\{\eta_i\}_{i \in \mathbb{Z}}$ be a sequence of i.i.d. random variables and $\mathcal{F}_i=(\eta_s, s \leq i).$ The mean-zero sequence $\{y_i\}$ is called piece-wise locally stationary (PLS) with $\mathrm{q}$ abrupt change points if there exist constants $0=c_0<c_1<c_2<\cdots<c_{\mathrm{q}}<c_{\mathrm{q}+1}=1$ and some measurable functions $G_j, 0 \leq j \leq \mathrm{q},$  such that 
\begin{equation*}
y_i=G_j(i/n, \mathcal{F}_i), \ c_j<i/n \leq c_{j+1}, \ 0 \leq j \leq \mathrm{q}, 
\end{equation*}
where 
\begin{equation*}
\| G_j(t, \mathcal{F}_0)-G_j(s, \mathcal{F}_0) \|_\ell \leq C|t-s|,
\end{equation*}
for all $t,s \in (c_j, c_{j+1}],  0 \leq j \leq \mathrm{q},$ for some finite constant $\ell>2$ and some finite constant $C>0.$
\end{defn} 

Note that when $\mathrm{p}=\mathrm{q}=0,$ the model (\ref{eq_plsmodel}) reduces to the locally stationary time series considered in Section \ref{sec:test}. When $\mathrm{p}+\mathrm{q} \neq 0,$ we need to detect these change points before applying our current methodology. We define the abrupt change points time index sets as $\mathcal{C}:=\mathcal{C}_1 \bigcup \mathcal{C}_2$ where   
\begin{equation*}
\mathcal{C}_1:=\{d_1, d_2, \cdots, d_{\mathrm{p}}\}, \ \mathcal{C}_2:=\{c_1, c_2, \cdots, c_{\mathrm{q}}\}. 
\end{equation*}
Without loss of generality, we assume that all the abrupt change points are distinct. For simplicity, we assume that both $\mathrm{p}$ and $\mathrm{q}$ are finite. On the one hand, the detection of the change points $\mathcal{C}_1$ has been studied in \cite{wu2019multiscale, ZZ1} under the model (\ref{eq_plsmodel}).  On the other hand, as discussed in \cite{wu2019multiscale}, under  (\ref{eq_plsmodel}), the two-stage detection method proposed in \cite{wu2019multiscale} can also been applied to detect the change points in $\mathcal{C}_2$. In this regard, we can apply the method of  \cite{wu2019multiscale} to detect the 
change points set $\mathcal{C}=\mathcal{C}_1 \bigcup \mathcal{C}_2.$  For simplicity, we denote
\begin{equation*}
\mathcal{C}:=\{\mathsf{s}_1< \cdots< \mathsf{s}_{\mathrm{p}+\mathrm{q}}\},
\end{equation*}
and set $\mathsf{s}_0=0$ and $\mathsf{s}_{\mathrm{p}+\mathrm{q}+1}=1$ for convenience. Then for $x_i$ restricted in each interval that $i \in (\mathsf{s}_j, \mathsf{s}_{j+1}],  0\leq j \leq \mathrm{p}+\mathrm{q},$ it is a locally stationary time series. Therefore we can apply our inferential theory of Section \ref{sec:test} to each of these intervals. In particular, we will be able to show that $x_i$ can be approximated by a locally stationary AR process on each interval $(\mathsf{s}_j, \mathsf{s}_{j+1}],  0\leq j \leq \mathrm{p}+\mathrm{q}$ so that overall the time series $x_i, 1 \leq i \leq n,$ can be well approximated by a piece-wise locally stationary AR process. Moreover, we can test the constancy of the AR approximation coefficients on each of these intervals. Finally, in terms of forecasting, we can utilize the last interval, i.e., $(\mathsf{s}_{\mathrm{p}+\mathrm{q}},1].$ Since this is not the main focus of the current paper, we will study such a generalization in the future works.

\subsubsection{}\label{sec_suppl_generalization3}
Third, we briefly discuss the ideas and main challenges of locally stationary AR sieve bootstrap. The rigorous justification and development will be left as future works since it is not the main focus of the current paper.

In the literature, AR sieve bootstrap is only developed and fully justified for stationary process \cite{kreiss2011} and has not been modified to fit for locally stationary time series. To study the generalization of stationary AR sieve bootstrap to non-stationary time series will require a rather substantial discussion. In fact, except for Gaussian time series, directly using the pseudo time series generated from the fitted AR($b_*$) model may fail to approximate the distribution of $nT.$ The main reason is because the standard AR sieve bootstrap is only able to replicate the covariance information of the underlying time series. However, according to Proposition \ref{prop_normal}, in order to apply the distribution results, we need to know the long run covariance matrix of a high dimensional locally stationary time series $\{\bm{h}_i\} \in \mathbb{R}^{b_*}$ as defined in (\ref{eq_zikronecker}).  Note that $\bm{h}_i=\bm{x}_i \epsilon_i$ depends on both  the univariate time series and the residual, and is a high dimensional locally stationary time series in general. Since the distribution of $nT$ depends on a quadratic form of $\{\bm{h}_i\},$ it actually relies on the first fourth cumulants of the underlying time series.

In this regard, in order to apply the idea of AR sieve bootstrap for our $\mathcal{L}^2$ test, we conjecture that we shall work with $\{\bm{h}_i\}$ instead of $\{x_i\}.$ In order to establish the AR bootstrap for $\{\bm{h}_i\}$, unlike in the current paper we focus on univariate time series,  we need to establish the AR approximation theory for high dimensional locally stationary time series $\{\bm{h}_i\},$ i.e., for some time-varying matrix coefficients $A_j(t) \in \mathbb{R}^{b_* \times b_*}$ and some slowly divergent value $\mathrm{b}_{**},$ we conjecture that 
\begin{equation}\label{eq_highdimensionalAR}
\bm{h}_i=A_0(i/n)+\sum_{j=1}^{\mathrm{b}_{**}} A_j(i/n) \bm{h}_{i-j}+\bm{e}_i+o(1),
\end{equation}     
where $\{\bm{e}_i\}$ is the white noise vector process. Moreover, the covariance matrix of $\{\bm{e}_i\}$ is smoothly time-varying. Nevertheless, the probabilistic investigation into the above high-dimensional and non-stationary AR approximation is very difficult and we shall actively investigate this line of research in our future endeavours. 


In summary, we believe that the standard AR sieve bootstrap cannot be applied directly to our testing problem. We conjecture that our test can be implemented by an AR-sieve bootstrap procedure if AR approximation results can be established for high dimensional and locally stationary time series. We point out that in the literature, the authors of \cite{kreiss2011} studied the AR sieve bootstrap for stationary processes and provided some very general and deep results.  The generalization to locally stationary time series is nontrivial and should be studied separately in another project.

Moreover, we point out that resampling for temporally dependent data typically requires one tuning parameter to account for the temporal dependence.  For example, the block bootstrap and subsampling will need to introduce the block size parameter. For our multiplier bootstrap, the parameter $m$ is introduced for the same purpose.  We notice that for the AR sieve bootstrap in (\ref{eq_highdimensionalAR}), one also needs to choose an order $\mathrm{b}_{**}$ for the AR approximation. Note that $\mathrm{b}_{**}$ is typically required to be different from the AR approximation order $b_*$ of the original time series $\{x_i\}$. Please also note that the role of $\mathrm{b}_{**} $ is again for adjusting the bootstrap to the temporal dependence. Therefore, we believe that, introducing a tuning parameter to account for temporal dependence is quite typical for resampling methods for time series.

}

\section{Technical proofs}\label{sec_mainproof}
This section is devoted to the technical proofs of the paper. { For the reader's convenience, we offer a brief description of the proof strategies before providing the actual technical detail. }

%

\subsection{Proofs of the main results of Section \ref{sec:preliminary}}

{ In this subsection, we provide the technical proof for the results regarding AR approximation theory established in Section \ref{sec:preliminary}. }
%

{ In what follows, we first prove Theorem \ref{lem_phibound}. It contains two parts. The first part (\ref{eq_phibound1}) is to show that the AR coefficients $\phi_{ij}$ decays polynomially fast with $j$ when $n$ is sufficiently large. The starting point of the proof is the Yule-Walker's equation representation  for $\phi_{ij}, 1 \leq j \leq i-1$ as in (\ref{eq_system1}). Such a representation is valid due to the UPDC as in Assumption \ref{assu_pdc}. Consequently, in order to study $\phi_{ij},$ it suffices to control the entries of the $j$th row of  $\Gamma_i^{-1}$ and all the entries of $\bm{\gamma}_i.$ Unfortunately, a direct control for the entries of $\Gamma_i^{-1}$ is not available. Instead, we need to firstly find an approximation for $\Gamma_i.$ The main motivation is from a result in modern operator theory (i.e., Lemma \ref{lem_band}) which states that the inverse of a banded matrix can also be approximated by another banded-like matrix. Note that under the short-range temporal decay condition Assumption \ref{assum_shortrange}, for each large $j,$ we can find a banded matrix $\Gamma_{i}^s \equiv \Gamma_i^s(j)$ as in (\ref{eq_bandedmatrixform}) so that $\Gamma_i^{-1}$ and $(\Gamma_i^s)^{-1}$ are close in the sense of (\ref{eq_bandedapprox}), which can be used to study the Yule-Walker's equations. Armed with these ingredients, we can construct an approximation for $\phi_{ij}$ via the Yule-Walker representation (\ref{eq_expansiondefn}), denoted as $\phi_{ij}^s.$ On the one hand, $\phi_{ij}$ and $\phi_{ij}^s$ are close in the sense of (\ref{eq_bbbbbbb111111}). On the other hand, we point out that $\phi_{ij}^s$ is constructed via Yule-Walker's equation using the inverse of the banded matrix $\Gamma_i^s.$  Therefore, according to Lemma \ref{lem_band}, the entries of (the $j$th row of) $(\Gamma_i^s)^{-1}$ can be effectively controlled as in (\ref{eq_bandin}) so that together with Assumption \ref{assum_shortrange}  $\phi_{ij}^s$ can be estimated as in (\ref{eq_phijscontrol}). This concludes the first part of the proof.  For the second part of the proof in (\ref{eq_phibound2}), note that the approximation $\{\phi_{ij}^b\}_{j \geq 1}$ can be expressed as the solution of a linear system via Yule-Walker's equation as in (\ref{eq_system2}), whereas the original coefficients $\{\phi_{ij}\}_{j \geq 1}$ are defined via another linear system via Yule-Walker's equation as in  (\ref{eq_system1}). In this regard,  (\ref{eq_system2}) can be viewed as a perturbation of  (\ref{eq_system1}) so that the error analysis for solutions of perturbation of linear systems (i.e., Lemma \ref{lem_nuem}) can be applied. The discussion for $j=0$ is straightforward using their definitions as in (\ref{eq_errorcontrol}) once we obtain the results for $j \geq 1.$  { We point out that the error rate $b^{-(\tau-1)}$ is obtained by studying a $b$-banded perturbed Yule-Walker's equation via the control of linear system; see (\ref{eq_deltagammaone}) below for more details. Moreover, the slower convergence of the trends { $|\phi_{i0,n}-\phi_{i0,n}^b|$} is mainly technical due to the use of Cauchy-Schwarz inequality; see (\ref{eq_errorcontrol}) below for more details.}
}

{
\begin{proof}[\bf Proof of Theorem \ref{lem_phibound}] We start with the proof of (\ref{eq_phibound1}). 
Till the end of the proof, we focus our discussion on each fixed $j.$ { Recall from (\ref{eq_newdefnphi}) that $\bm{\phi_i}=(\phi_{i1}, \cdots, \phi_{i,i-1})^* \in \mathbb{R}^{i-1}.$} By the Yule-Walker's equation, we have 
\begin{equation}\label{eq_system1}
\bm{\phi_i}=\Gamma_i^{-1} \bm{\gamma}_i,
\end{equation}
where $\Gamma_i=\text{Cov}(\bm{x}_{i-1}, \bm{x}_{i-1})$ and $\bm{\gamma}_i=\text{Cov}(\bm{x}_{i}, x_i) $ with $\bm{x}_{i-1}=(x_{i-1}, \cdots, x_1)^*.$
Note that for some constants $C_1, C_2>0,$
\begin{equation}\label{eq_boundedproof}
| \bm{\phi_i} | \leq \frac{1}{\lambda_{\min}(\Gamma_i)} |  \bm{\gamma}_i | \leq C_1 \kappa \sum_{k=1}^{i-1} k^{-\tau} \leq C_2, 
\end{equation} 
where in the second inequality we used the UPDC condition in Assumption \ref{assu_pdc} and Assumption \ref{assum_shortrange} that $\tau>1.$

Note that when $j=O(1),$ the result holds immediately according to (\ref{eq_boundedproof}). We next focus our discussion on the case when  $j$ diverges with $n.$
Since $i>j,$ $i$ also diverges with $n.$ We denote the $(i-1) \times (i-1)$ symmetric banded matrix $\Gamma_i^s \equiv \Gamma_{i}^s(j)$  by
\begin{equation}\label{eq_bandedmatrixform}
(\Gamma_i^s)_{kl}=
\begin{cases} 
(\Gamma_i)_{kl}, & |k-l| \leq \frac{j}{K \log j}; \\
0, & \text{otherwise}.
\end{cases}
\end{equation}
{Here $K>0$ is some large constant. By Lemma \ref{lem_disc}, Assumption \ref{assum_shortrange} and the UPDC condition in Assumption \ref{assu_pdc},}
we have for some constant $C>0,$  $$\lambda_{\min}(\Gamma_i^s) \geq \kappa-Cj^{1-\tau} (K \log j)^{\tau-1},$$ for all $i>j.$  Similarly, we can show that $\lambda_{\max}(\Gamma_i^s) \leq C$ for some constant $C>0.$  Since $n$ is sufficiently large and $j$ diverges with $n$, the above arguments
show that the support of the spectrum of $\Gamma_i^s$ is bounded from both above and below by some constants. Therefore, by Lemma \ref{lem_band}, we conclude that for some $\delta \in (0,1)$ and some constant $C>0,$ we have 
\begin{equation}\label{eq_bandin}
\left| (\Gamma_i^s)^{-1}_{kl}\right| \leq C \delta^{(K|k-l| \log j)/j}.
\end{equation} 
By Cauchy-Schwarz inequality and Lemma \ref{lem_disc}, when $n$ is large enough, for some constant $C>0,$ we have that  
\begin{align}\label{eq_bandedapprox}
 \left| \Gamma_i^{-1}\bm{\gamma}_i-(\Gamma_i^s)^{-1} \bm{\gamma}_i \right|  & \leq  
\| \Gamma_i-\Gamma_i^s \| \| \Gamma_i^{-1} \| \| (\Gamma_i^s)^{-1} \| | \bm{\gamma}_i | \nonumber \\
 & \leq C j^{1-\tau} (K \log j)^{\tau-1},
\end{align}
where we used (\ref{eq_polynomialdecay}), the UPDC in Assumption \ref{assu_pdc} and the conclusion $\lambda_{\min}(\Gamma_i^s) \geq C_1,$ for some constant $C_1>0.$   
 Denote $\bm{\phi}_i^s=(\phi_{i1}^s, \cdots, \phi_{i,i-1}^s)$ such that 
\begin{equation}\label{eq_expansiondefn}
\bm{\phi}_i^s=(\Gamma_i^s)^{-1} \bm{\gamma}_i.
\end{equation}
Then we get immediately from (\ref{eq_bandedapprox}) that 
\begin{equation}\label{eq_bbbbbbb111111}
|\phi_{ij}-\phi_{ij}^s| \leq  C j^{1-\tau} (K \log j)^{\tau-1}.
\end{equation}
Hence, it suffices to control $\phi_{ij}^s.$ {By (\ref{eq_expansiondefn}),} we note that $\phi_{ij}^s=\sum_{k=1}^{i-1} (\Gamma^s_i)_{jk}^{-1} \gamma_{ik},$ where we recall that $\gamma_{ik}=\text{Cov}(x_i, x_{i-k}).$ By (\ref{eq_polynomialdecay}) and (\ref{eq_bandin}), we have that for some constants $C, C_1>0$ 
\begin{align}\label{eq_phijscontrol}
|\phi_{ij}^s|  \leq C \sum_{k=1}^{i-1}\delta^{(K|k-j| \log j)/j} k^{-\tau} & =C \left( \sum_{k=1}^{j-1} \delta^{(K(j-k) \log j)/j} k^{-\tau}+ \sum_{k=j}^{i-1} \delta^{(K(k-j) \log j)/j} k^{-\tau} \right) \nonumber \\
& \leq  C_1 \left(\sum_{k=1}^{j-1} \delta^{K \log j (j-k)/j} k^{-\tau} +j^{1-\tau} \right).
\end{align}
where in the second inequality we used the fact that 
$\delta^{(K(k-j) \log j)/j}$ is bounded for $k \geq j$. Furthermore, to control the first summation of the right-hand side of (\ref{eq_phijscontrol}), since $j$ diverges with $n,$ we see that 
\begin{equation*}
\sum_{k=1}^{j-1} \delta^{K \log j (j-k)/j} k^{-\tau}\asymp \sum_{k=1}^{j-1} j^{-K(1-k/j)} k^{-\tau}. 
\end{equation*}
Let $f(k)=j^{-K(1-k/j)} k^{-\tau}.$ By an elementary derivative argument, it is easy to see that $f(k)$ is decreasing between $1$ and $\tau j/(K \log j)$ and increasing between $\tau j/(K \log j)$ and $j-1.$  As a result, since $j^{-K(1-k/j)}$ is bounded when $k<j,$ for some constants $C, C_1>0,$ we have  
\begin{align*}
\sum_{k=1}^{j-1} j^{-K(1-k/j)} k^{-\tau} & \leq \left(\frac{\tau j}{K \log j} \right) j^{-K(1-1/j)}+C\sum_{k=\tau j/(K \log j)}^{j-1} k^{-\tau} \\
& \leq C_1 (j/\log j)^{-\tau+1},
\end{align*}   
where in the second inequality we used the fact that $K$ is a large constant and $j$ diverges. 
Together with (\ref{eq_bbbbbbb111111}), we conclude our proof of (\ref{eq_phibound1}). 

Then we proceed to prove the first equation of (\ref{eq_phibound2}) using Lemma \ref{lem_nuem}. For the convenience of our discussion, we denote the $(k,l)$-entry of $\Gamma_i$ as $\Gamma_i(k,l).$
For $i>b,$ we denote the $(i-1) \times (i-1)$ block matrix $\Gamma_i^b$ and the block vector $\bm{\gamma}_i^b \in \mathbb{R}^{i-1}$ via
\begin{equation*}
\Gamma_{i}^b=
\begin{bmatrix}
\operatorname{Cov}(\bm{x}_i^b, \bm{x}_i^b) & \bm{E}_1 \\
\bm{E}_3 & \bm{E}_2
\end{bmatrix}, \ 
\bm{\gamma}_{i}^b=(\operatorname{Cov}(\bm{x}_i^b, x_i), \bm{0}),
\end{equation*}
where $\bm{x}_i^b=(x_{i-1}, \cdots, x_{i-b})^*$ and $\bm{E}_i, i=1,2,$ are defined as
\begin{equation}\label{eq_defne1e2}
\bm{E}_1=\operatorname{Cov}( \bm{x}_i^m , \bm{x}^b_i) \in \mathbb{R}^{b \times (i-b-1)}, \  \bm{E}_2=\operatorname{Cov}(\bm{x}_i^m, \bm{x}_i^m) \in \mathbb{R}^{(i-b-1) \times (i-b-1)},
\end{equation}
and $\bm{x}_i^m=(x_{i-b-1}, \cdots, x_1)^*.$ Moreover, $\bm{E}_3=(\bm{E}_3(k,l)) \in \mathbb{R}^{(i-b-1) \times b}$ is denoted as { 
\begin{equation}\label{eq_e3}
\bm{E}_3=\bm{E}_1^*.
\end{equation} }
%
{ Recall from (\ref{eq_newdefnphi}) that $\bm{\phi}_i^b=(\phi^b_{i1}, \cdots, \phi^b_{ib}, \bm{0})^* \in \mathbb{R}^{i-1}.$} We have that 
\begin{equation}\label{eq_system2}
\Gamma_i^b\bm{\phi}_i^b=\bm{\gamma_i}^b-\Delta \bm{\gamma}_i, 
\end{equation}
where $ \Delta\bm{\gamma}_i$ is defined as 
\begin{equation*}
\Delta \bm{\gamma}_i=(\bm{E}_3 \widetilde{\bm{\phi}}_i^b, \bm{0}), \ \widetilde{\bm{\phi}}_i^b=(\phi_{i1}^b, \cdots, \phi_{ib}^b)^*.
\end{equation*}
{ Therefore, it suffices to provide an upper bound for 
\begin{equation}\label{eq_differenbound}
| \bm{\phi}_i-\bm{\phi}_i^b |.
\end{equation}} 
Now we employ Lemma \ref{lem_nuem} with $A=\Gamma_i, \Delta A=\Gamma_i^b-\Gamma_i, x=\bm{\phi}_i, \Delta x=\bm{\phi}_i^b-\bm{\phi}_i, v=\bm{\gamma}_i, \Delta v=\bm{\gamma}_i^b-\bm{\gamma}_i-\Delta \bm{\gamma}_i$ to the systems (\ref{eq_system1}) and (\ref{eq_system2}). By the UPDC in Assumption \ref{assu_pdc}, for some constant $C>0,$ we find that $\kappa(A) \leq C.$ By Lemma \ref{lem_disc} and (\ref{eq_polynomialdecay}), we find that for some constant $C>0,$ we have 
\begin{equation*}
\| \Delta A \|\leq (b/\varsigma)^{-\tau+1} \leq C b^{-\tau+1}.
\end{equation*}
Moreover, note that
\begin{equation*}
| \Delta v | \leq | \bm{\gamma}_i^b-\bm{\gamma}_i |+| \Delta \bm{\gamma}_i |.
\end{equation*}
The first term of the right-hand side of the above equation can be bounded by $C b^{-\tau+1}$ using (\ref{eq_polynomialdecay}). For the second term, by a discussion similar to (\ref{eq_phibound1}), we find that  
\begin{equation}\label{eq_bbb}
|\phi_{ij}^b| \leq C ((\log j+1)/j)^{\tau-1}.
\end{equation}
 Using the definition of $\bm{E}_3$ in (\ref{eq_e3}), we claim that for some constants $C>0$ {
\begin{align}\label{eq_deltagammaone}
| \Delta \bm{\gamma}_i |  \leq C b^{-\tau+1} (\log b)^{\tau-1}.  
\end{align}
Combining the above discussion, we have that 
\begin{equation}\label{eq_l2norm}
| \bm{\phi}_i-\bm{\phi}_i^b | \leq C b^{-\tau+1} (\log b)^{\tau-1}. 
\end{equation} } 
{ To see (\ref{eq_deltagammaone}), using (\ref{eq_e3}) and  (\ref{eq_bbb}),  we notice that for $1 \leq k \leq i-b-1$
\begin{align*}
\left(\bm{E}_3 \widetilde{\bm{\phi}}_i^b \right)_{k} & =O\left( \sum_{j=1}^b (k+b-j+1)^{-\tau} j^{-\tau+1} (\log j)^{\tau-1} \right) \\
&=O\left( (\log b)^{\tau-1} \sum_{j=1}^b (k+b-j+1)^{-\tau} j^{-\tau+1}  \right).
\end{align*}
Moreover, since $\tau>2,$ we can control 
\begin{align*}
 \sum_{j=1}^b (k+b-j+1)^{-\tau} j^{-\tau+1} & =\frac{1}{(k+b+1)^{\tau}} \sum_{j=1}^b \frac{(k+b+1-j+j)^{\tau}}{(k+b-j+1)^{\tau} j^{\tau-1}}   \\
 & \leq \frac{C}{(k+b+1)^\tau} \left( 1+ \sum_{j=1}^b \frac{k+b+1}{(k+b+1-j)^{\tau}} \right) \\
 & \leq \frac{C}{(k+b+1)^\tau}  \left(1+\frac{k+b+1}{(k+1)^{\tau-1}} \right),
\end{align*}
where  $C>0$ is some constant and in the second step we used the elementary inequality that for $\mathfrak{a}, \mathfrak{b}>0,$ $(\mathfrak{a}+\mathfrak{b})^{\tau} \leq 2^{\tau-1}(\mathfrak{a}^\tau+\mathfrak{b}^\tau)$ for the denominator.  Consequently, by the definition of $\Delta \bm{\gamma}_i,$  we have that 
 \begin{align*}
 | \Delta \bm{\gamma}_i |_2^2 & =O\left( (\log b)^{2\tau-2} \left[ \sum_{k=1}^{i-b-1} \frac{1}{(k+b+1)^{2\tau}}+\sum_{k=1}^{i-b-1} \frac{1}{(k+1)^{2\tau-2} (k+b+1)^{2\tau-2}} \right] \right) \\
 & =O\left( (\log b)^{2\tau-2} \left[ b^{-2\tau+1}+b^{-2\tau+2} \right] \right)=O((\log b)^{2\tau-2} b^{-2\tau+2} ),
 \end{align*}
 where in the third step we used the assumption $\tau>2$ and
 \begin{align*}
 \sum_{k=1}^{i-b-1} \frac{1}{(k+1)^{2\tau-2} (k+b+1)^{2\tau-2}} & \leq \frac{1}{(b+2)^{2\tau-2}} \sum_{k=1}^{i-b-1} \frac{1}{(k+1)^{2\tau-2}} \\
& =O\left( b^{-2\tau+2} \right).
 \end{align*}
}
This finishes our proof of (\ref{eq_deltagammaone}) and hence the first equation of (\ref{eq_phibound2}). 

Finally, we prove the second equation of (\ref{eq_phibound2}). Note that
\begin{equation}\label{eq_noncenteredintercept}
\phi_{i0}=\mu_i-\sum_{j=1}^{i-1} \phi_{ij} \mu_{i-j}, \ \ \phi_{i0}^b=\mu_i-\sum_{j=1}^b\phi_{ij}^b \mu_{i-j},
\end{equation}
where $\mu_i=\mathbb{E}x_i, i=1,2,\cdots, n,$ is  the sequence of trends of $\{x_i\}.$  We have 
\begin{equation}\label{eq_errorcontrol}
\phi_{i0}-\phi_{i0}^b=\sum_{j=1}^b(\phi_{ij}^b-\phi_{ij})\mu_{i-j}-\sum_{j=b+1}^{i-1} \phi_{ij} \mu_{i-j}.
\end{equation}
{ Under Assumption \ref{assum_shortrange}, the first term of the right-hand side of the above equation is bounded by $C (\log b)^{\tau-1} b^{-(\tau-1.5)} $ using (\ref{eq_l2norm}) and Cauchy-Schwarz inequality and the second term can be bounded by  {$C b^{-(\tau-2)} (\log b)^{\tau}$ using (\ref{eq_phibound1}) when $b$ is sufficiently large. } This concludes our proof. }


%
\end{proof}
}


{ Once Theorem \ref{lem_phibound} is established, we can prove Theorem \ref{thm_arrepresent} by decomposing $x_i$ as in (\ref{eq_xidecompose}). The last (residual) term on the right-hand side of (\ref{eq_xidecompose}) can be controlled using Theorem \ref{lem_phibound} as in (\ref{eq_controlll22}).  }

\begin{proof}[\bf Proof of Theorem \ref{thm_arrepresent}] 
We start with the first part.  First of all, when $i \leq
 b,$ it holds by setting $\phi_{ij}$ to be the coefficients of best linear prediction. When $i>b, $  by (\ref{eq_arapproxiamtionnoncenter}), we decompose that 
 \begin{equation}\label{eq_xidecompose}
 x_i=\phi_{i0}+\sum_{j=1}^b \phi_{ij}x_{i-j}+\epsilon_i+\sum_{j=b+1}^{i-1} \phi_{ij} x_{i-j}.
 \end{equation}
By (\ref{eq_polynomialdecay}) of the main article, we have that
\begin{equation}\label{eq_controlll22}
\mathbb{E}\left|\sum_{j=b+1}^{i-1} \phi_{ij} x_{i-j} \right|^2 \leq C \sum_k \sum_j \phi_{i,j} \phi_{i,j+k} k^{-\tau}.
\end{equation} 
Together with Theorem \ref{lem_phibound}, we find that 
\begin{equation*}
\sum_{j=b+1}^{i-1} \phi_{ij} x_{i-j}=O_{\ell^2}(b^{-(\tau-1.5)} (\log b)^{\tau-1} ). 
\end{equation*}

This concludes our proof of the first part. 

Next, we prove the second part. Recall (\ref{defn_xistart}).
Clearly, $\{x_i^*\}$ is an AR($b$) process when $i>b.$ For $i=b+1,$ we have that
\begin{equation*}
x_i-x_i^*=0.
\end{equation*}
Suppose (\ref{eq_induction}) holds true for $k>b+1,$ then for $k+1,$ we have 
\begin{align*}
x_{k+1}-x_{k+1}^* & =\sum_{j=1}^b \phi_{ij} (x_{k+1-j}-x^*_{k+1-j})+\sum_{j=b+1}^{k} \phi_{ij} x_{k+1-j} \\
& =O_{\ell^2}(b^{-(\tau-1.5)} (\log b)^{\tau-1} ), 
\end{align*}

where in the second step we used induction and Theorem \ref{lem_phibound}.
\end{proof}

{ Then we prove Proposition \ref{prop_pdc}. First of all, due to the short-range dependence assumption, i.e., Assumption \ref{assum_shortrange}, we can show that the eigenvalues of $\operatorname{Cov}(x_1, \cdots, x_n)$ is sufficiently close to those of a banded matrix $\Sigma^{d_n}$ as in (\ref{eq_bandband}), especially the smallest eigenvalues are close as in (\ref{eq_boundlargeusesmall}). Here $d_n$ is some parameter  which controls the bandedness and will be chosen in the proof. For the sufficiency part, the key ingredient is Lemma \ref{lem_bandedbound}. It proves that if the spectral density is bounded from below, then for a length $d_n$  subsequence of the time series, it must satisfy the UPDC. In fact, when $d_n$ is small, due to the local stationarity assumption, the subsequence of the time series behaves like stationary time series so that Herglotz's theorem (i.e., Lemma \ref{lem_spectralbound}) implies UPDC. Moreover, the covariance matrix of the subsequence is close to the non-zero entries of the banded matrix $\Sigma^{d_n}.$ Together with (\ref{eq_boundlargeusesmall}), we can prove the sufficient part. The proof of necessity is similar and contains two steps. In the first step,  using the short-range dependence assumption,  following the classic theory of spectral density function, we construct a function $f_n(t, \omega)$ in (\ref{defn_fndefn}) which is sufficiently close to $f(t, \omega)$ when $n$ is sufficiently large.  In the second step, we will show that $f_n(t,w)$ is bounded from below by a fixed positive constant. To do so, we construct another function $g_n(t,\omega)$ defined in (\ref{eq_gndefinition}) using the banded matrix $\Sigma^{d_n}$ which is bounded from below by a positive constant by UPDC and the short-range assumption. Moreover, $f_n(t,\omega)$ and $g_n(t,\omega)$ are sufficiently close when $n$ is sufficiently large as in (\ref{eq_gf}).

  }

{
\begin{proof}[\bf Proof of Proposition \ref{prop_pdc}]
Denote the covariance matrix of $(x_1, \cdots, x_n)$ as $\Sigma \equiv \Sigma_n.$  For a given truncation level { $d_n \asymp n^f, \ 0<f<\frac{1}{2},$ } we define the banded matrix $\Sigma^{d_n}$ such that 
\begin{equation}\label{eq_bandband}
\Sigma_{ij}^{d_n}= 
\begin{cases}
\Sigma_{ij}, & \text{if} \ |i-j| \leq d_n; \\
0 , & \text{otherwise}. 
\end{cases}
\end{equation} 
Throughout the proof, we let $\lambda_n$ be the smallest eigenvalue of $\Sigma$ and $\mu_n$ be that of $\Sigma^{d_n}.$ {Under Assumption \ref{assum_shortrange}, when $n$ is large enough,} by Lemma \ref{lem_disc}, we have 
\begin{equation}\label{eq_boundlargeusesmall}
\lambda_n=\mu_n+o(1),
\end{equation}
{where we used the assumption that $\tau>1$ in (\ref{eq_polynomialdecay}).} Therefore, it is equivalent to study the UPDC for $\Sigma^{d_n}.$ We now consider a longer time series $\{x_i\}_{i=-d_n}^{n+d_n},$ where we use the convention $x_i=G(0, \mathcal{F}_i)$ if $i<0$ and $x_i=G(1, \mathcal{F}_i)$ if $i>n.$ We will need the following lemma to prove the sufficiency.
\begin{lem} \label{lem_bandedbound} Let $\Sigma_i^{d_n}$ be the covariance matrix of $(x_i, x_{i+1}, \cdots, x_{i+d_n}).$ Then for  all $-d_n \leq i \leq n,$ let $\lambda_{d_n}(\Sigma_i^{d_n})$ be the smallest eigenvalue of $\Sigma_i^{d_n}.$ Then if the spectral density (\ref{eq_spetraldensity}) is bounded from below, we have that for some constant $\varsigma>0,$
\begin{equation*}
\lambda_{d_n} (\Sigma_i^{d_n}) \geq \varsigma>0, \  \text{for all} \ i. 
\end{equation*}
\end{lem}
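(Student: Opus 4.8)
The plan is to bound $\lambda_{d_n}(\Sigma_i^{d_n})$ from below uniformly in $i$ by comparing $\Sigma_i^{d_n}$ with a Toeplitz matrix built from the local autocovariance at a single reference time, and then invoking the spectral lower bound $f(t,\omega)\ge\kappa$ on that Toeplitz form. Set $t_i^\ast:=\min\{\max\{i/n,0\},1\}$ and let $T_i$ be the $(d_n+1)\times(d_n+1)$ symmetric Toeplitz matrix with $(k,l)$ entry $\gamma(t_i^\ast,|k-l|)$, $0\le k,l\le d_n$; this is the covariance matrix of the \emph{stationary} vector $(G(t_i^\ast,\mathcal{F}_0),\dots,G(t_i^\ast,\mathcal{F}_{d_n}))$. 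It suffices to establish (i) $\lambda_{\min}(T_i)\ge\kappa$ for every $i$ and $n$, and (ii) $\|\Sigma_i^{d_n}-T_i\|=o(1)$ uniformly in $i$; then Weyl's inequality gives $\lambda_{d_n}(\Sigma_i^{d_n})\ge\lambda_{\min}(T_i)-\|\Sigma_i^{d_n}-T_i\|\ge\kappa-o(1)$, so any $\varsigma\in(0,\kappa)$ works once $n$ is large.

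For (i), note that Assumption \ref{assum_shortrange}, Definition \ref{defn_locallystationary} and the continuity of $\gamma(\cdot,j)$ in $t$ together give $\sup_{t\in[0,1]}|\gamma(t,j)|\le C(j+1)^{-\tau}$, so the series defining $f(t,\cdot)$ in \eqref{eq_spetraldensity} converges absolutely, $f(t,\cdot)$ is continuous, and $\gamma(t,j)=\frac{1}{2\pi}\int_{-\pi}^{\pi}f(t,\omega)e^{\mathrm{i}j\omega}\,\mathrm{d}\omega$. For a unit vector $u=(u_0,\dots,u_{d_n})^\ast\in\mathbb{R}^{d_n+1}$ and $P_u(\omega):=\sum_{k=0}^{d_n}u_k e^{\mathrm{i}k\omega}$, a direct computation and Parseval's identity yield
\begin{equation*}
u^\ast T_i u=\frac{1}{2\pi}\int_{-\pi}^{\pi}f(t_i^\ast,\omega)\,|P_u(\omega)|^2\,\mathrm{d}\omega\ \ge\ \kappa\,\frac{1}{2\pi}\int_{-\pi}^{\pi}|P_u(\omega)|^2\,\mathrm{d}\omega=\kappa\|u\|^2=\kappa,
\end{equation*}
using $f(t,\omega)\ge\kappa$; hence $\lambda_{\min}(T_i)\ge\kappa$, with $\kappa$ independent of $i$ and $n$.

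For (ii), I would first prove the uniform entrywise bound $\max_{0\le k,l\le d_n}|(\Sigma_i^{d_n})_{kl}-(T_i)_{kl}|\le C d_n/n$. When $i+k,i+l\in\{0,\dots,n\}$ this follows from Definition \ref{defn_locallystationary}, which gives $\operatorname{Cov}(x_{i+k},x_{i+l})=\gamma(t_{i+k},|k-l|)+O((|k-l|+1)/n)$, together with Lipschitz continuity of $\gamma(\cdot,|k-l|)$, which replaces $t_{i+k}$ by $t_i^\ast$ at cost $O(d_n/n)$ since $|t_{i+k}-t_i^\ast|\le d_n/n$; both errors are $O(d_n/n)$. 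For windows that extend outside $\{0,\dots,n\}$ one invokes the stationary extension together with the stochastic Lipschitz property \eqref{assum_lip} of Assumption \ref{assum_local}, which gives $\|x_{i+k}-G(t_i^\ast,\mathcal{F}_{i+k})\|_q\le C d_n/n$ for all $k$; combined with the moment bound \eqref{assum_moment} and the Cauchy--Schwarz inequality, the corresponding covariances again differ from $\gamma(t_i^\ast,|k-l|)$ by $O(d_n/n)$. Since $\Sigma_i^{d_n}-T_i$ is symmetric with $d_n+1$ entries per row, its operator norm is at most the maximal absolute row sum, so $\|\Sigma_i^{d_n}-T_i\|\le C d_n^2/n=O(n^{2f-1})=o(1)$, where the last step uses $d_n=O(n^f)$ with $f<1/2$. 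This establishes (ii), and (i)--(ii) together prove the lemma.

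The step I expect to be the main obstacle is precisely this operator-norm control in (ii): the entrywise errors are only $O(d_n/n)$, but across a row of length $d_n+1$ they accumulate to $O(d_n^2/n)$, so the conclusion genuinely requires the regime $d_n=O(n^f)$ with $f<1/2$ that is fixed in the proof of Proposition \ref{prop_pdc}. A secondary nuisance is the bookkeeping for windows that straddle $\{0,\dots,n\}$, handled via the stationary extension as above. If desired, the bound $O(d_n^2/n)$ can be sharpened by exploiting the $O(|k-l|^{-\tau})$ decay of both $(\Sigma_i^{d_n})_{kl}$ and $(T_i)_{kl}$ at large lags, but this refinement is not needed here.
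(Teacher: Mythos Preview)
Your proposal is correct and follows essentially the same route as the paper: compare $\Sigma_i^{d_n}$ with the Toeplitz covariance matrix of the stationary process at a single reference time, lower-bound the Toeplitz eigenvalues via the spectral density hypothesis (the paper cites Lemma~\ref{lem_spectralbound}, whereas you reprove it directly with the quadratic-form computation), and then control the perturbation in operator norm by $O(d_n^2/n)$ using the entrywise $O(d_n/n)$ bound and a row-sum argument. The only cosmetic differences are that the paper phrases the perturbation step through Gershgorin (Lemma~\ref{lem_disc}) rather than Weyl, and records the sharper entrywise bound $C\min\{\max(i,j)/n,\,|i-j|^{-\tau}\}$, which it does not actually need here.
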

\begin{proof}
Without loss of generality, we set $i=0.$ Consider the stationary process $\{x_i^0\}$ such that $\gamma(0,\cdot)$ is its autocovariance function.  By Lemma \ref{lem_spectralbound}, when the spectral density is bounded below, we find that 
\begin{equation}\label{eq_boundone}
\lambda_{d_n}(\text{Cov}(x_i^0, \cdots, x_{d_n}^0)) \geq \varsigma>0,
\end{equation}
for any $d_n.$ Moreover, when $1 \leq i, j \leq d_n,$ for some constant $C>0,$ we have 
\begin{equation*}
\left| \text{Cov}(x_i, x_j)-\text{Cov} (x_i^0, x_j^0) \right| \leq C \min \left( \frac{\max(i,j)}{n}, |i-j|^{-\tau} 
\right),
\end{equation*}
{where the first bound comes from the (\ref{eq_covdefn}) and the Lipschitz continuity of $\gamma$ in $t$,  and the second bound is due to (\ref{eq_polynomialdecay}).} As a consequence, by Lemma \ref{lem_disc}, we find that
\begin{equation*}
|\lambda_{d_n}(\Sigma_i^{d_n})-\lambda_{d_n}(\text{Cov}(x_i^0, \cdots, x_{d_n}^0))| \leq C \frac{d_n^2}{n}.
\end{equation*}
Together with (\ref{eq_boundone}), we finish the proof.
\end{proof}
With the above preparation, we proceed with the final proof. We start with the sufficiency part. For any non-zero vector $\bm{a}=(a_1, \cdots, a_{i+2d_n})^* \in \mathbb{R}^{i+2d_n},\ i=-d_n, \cdots, n,$ denote 
\begin{equation*}
F(\bm{a}, i):=\sum_{k=1}^{i+d_n} \sum_{l=1}^{i+d_n} a_k (\Sigma_i^{d_n})_{k,l} a_l.
\end{equation*}  
By Lemma \ref{lem_bandedbound} and { the structure of $\Sigma^{d_n}$}, we find that 
\begin{equation}\label{eq_fbound}
F(\bm{a}, i) \geq \varsigma \sum_{l=i}^{i+d_n} a_l^2.  
\end{equation}
Now we let the first and last $d_n$ entries of $\bm{a}$ be zeros. Then using a discussion similar to (\ref{eq_fbound}), we find that 
\begin{equation}\label{eq_finalbound2}
\frac{1}{d_n} \sum_{i=-d_n}^n F(\bm{a}, i) \geq \varsigma \sum_{l=1}^n a_l^2. 
\end{equation}
Furthermore, by Lemma \ref{lem_disc}, it is easy to see that for some constant $C>0,$
\begin{equation*}
\left| \frac{1}{d_n} \sum_{i=-d_n}^n F(\bm{a}, i)-\sum_{k=1}^n \sum_{l=1}^n a_k \Sigma^{d_n}_{kl} a_l \right| \leq \frac{C}{d_n} \sum_{k=1}^n a_k^2. 
\end{equation*}
Together with (\ref{eq_finalbound2}), we find that 
\begin{equation*}
\sum_{k=1}^n \sum_{l=1}^n a_k \Sigma^{d_n}_{kl} a_l \geq \frac{\varsigma}{2} \sum_{l=1}^n a_l^2,
\end{equation*}
when $n$ is large enough. This shows that $\Sigma^{d_n}$ satisfies PDC and hence finishes the proof of the sufficient part. 

Next we briefly discuss the proof of necessity. We make use of the structure of $\Sigma^{d_n}.$ For any given $t_i:=\frac{i}{n}$ and $\omega,$ denote {
\begin{align}\label{defn_fndefn}
f_{n}(t_i, \omega) & =\frac{1}{2 \pi n} \sum_{k,l=1}^n e^{-\mathrm{i} k \omega} \gamma(t_i, k-l)  e^{\mathrm{i} l \omega} \\
&=\frac{1}{2 \pi } \sum_{|h|<n} \left(1-\frac{|h|}{n} \right) e^{-\mathrm{i} h \omega} \gamma(t_i,h).
\end{align} }
{ Using the short-range dependence Assumption \ref{assum_shortrange},} it is easy to check that (for instance see a similar discussion as in \cite[Corollary 4.3.2]{BD}) { for sufficiently large $n$}
\begin{equation}\label{eq_ffi}
f_n(t_i, \omega)= f(t_i, \omega)+o(1).
\end{equation} 
{ Consequently, it suffices to show that $f_n(t_i,\omega)$ is bounded from below by a constant for sufficiently large $n$. To achieve this, we introduce a spectral density function defined by the banded matrix $\Sigma^{d_n},$ i.e.,} 
\begin{equation}\label{eq_gndefinition}
g_n(t_i,\omega)=\frac{1}{2 \pi n} \sum_{k,l=1}^n e^{-\mathrm{i} k \omega}  \Sigma^{d_n}_{i, |k-l|} e^{\mathrm{i} l \omega},
\end{equation}
{ where we used the notation that $\Sigma^{d_n}_{i, |k-l|}$ is the $(i, |k-l|)$ entry of the banded matrix $\Sigma^{d_n}$. Recall that $d_n \asymp n^{f}$ for $0<f<1/2.$} { Using the definitions of $f_n(t_i,\omega)$ and $g_n(t_i, \omega)$}, we find that { for some constants $C_1, C_2, C_3>0,$ when $n$ is sufficiently large
\begin{align*}
\left| g_n(t_i, \omega)-f_n(t_i,\omega) \right| & \leq  \frac{C_1}{n} \sum_{k,l=1}^n \left|\gamma(t_i, k-l)-\Sigma_{i,|k-l|}^{d_n} \right| \notag \\ 
& \leq C_2 \left[ \frac{1}{n}\left( \sum_{k=1}^n \sum_{|k-l|>d_n} |\gamma(t_i, k-l) |\right)+ \frac{1}{n}\left( \sum_{k=1}^n \sum_{|k-l| \leq d_n} |\gamma(t_i, k-l)-\operatorname{Cov}(x_k, x_l)|\right) \right] \notag \\
& \leq C_3 \left( n^{f(-\tau+1)}+n^{-f+1} \right),  
\end{align*}
where in the second step we used the definition of $\Sigma^{d_n}$ and in third step we used the assumption (\ref{eq_covdefn}) and Assumption \ref{assum_shortrange} (or equivalently (\ref{eq_definedmodified})).
Consequently, as $\tau>1$ and $0<f<1/2,$ when $n$ is sufficiently large we have that 
\begin{equation}\label{eq_gf}
g_n(t_i, \omega)=f_n(t_i, \omega)+o(1). 
\end{equation}
Using the structure of $\Sigma^{d_n}$, the assumption that $\Sigma$ satisfies UPDC and (\ref{eq_boundlargeusesmall})}, we find that for some constant $\kappa>0,$
\begin{equation*}
g_n(t_i, \omega) \geq \kappa. 
\end{equation*} 
In light of  (\ref{eq_ffi}) and (\ref{eq_gf}), we find that $f(t_i, \omega) \geq \kappa.$ Finally, we can conclude our proof using the continuity of $f(t, \omega)$ in $t$. This concludes our proof.   
\end{proof}

}

%

{ Finally, we prove Theorem \ref{thm_locallynonzero}. The proof ideas are similar to those used earlier. For example, the key part of the proof of (\ref{eq_controlarcoeff1}) makes use of the Yule-Walker representation (\ref{eq_phismoothdefinition}) and  (\ref{eq_yueyueeee2222}). The actual control, as in the discussion between (\ref{eq_trianaleinequality}) and (\ref{eq_bcontrol}), rely on Cauchy-Schwarz inequality with UPDC  Assumption \ref{assu_pdc} and the local stationarity condition (\ref{eq_covdefn}).  }

{
\begin{proof}[ \bf Proof of Theorem \ref{thm_locallynonzero}] For the first statement regarding the smoothness of $\phi_j(t),$ under Assumption \ref{assu_smoothtrend}, the case $1 \leq j \leq b$ follows from Lemma 3.1 of \cite{DZ1}. When $j=0,$ i.e., $\phi_0(t)$ defined in (\ref{eq_phi0tdefn}), the smoothness can be easily proved using
term by term differentiation, Assumption \ref{assu_smoothtrend} and the results $\phi_j(t) \in C^d([0,1]), 1 \leq j \leq b$.

{
We then prove (\ref{eq_controlarcoeff1}). Since $\eta_i$'s in the filtration $\mathcal{F}_i$ are i.i.d., in light of the definition of $\phi_j(t),$ we can equivalently write $\phi_j(i/n), 1 \leq j \leq b$ in the following way. Recall ${\bm{\phi}}(\frac{i}{n})=(\phi_1(\frac{i}{n}), \cdots, \phi_b(\frac{i}{n}))^*$ via $\bm{\phi}(\frac{i}{n})=\Gamma(i/n)^{-1} \bm{\gamma}(i/n)$ as in (\ref{eq_phismoothdefinition}). 
In view of (\ref{eq_phibound2}), similar to the discussion of (\ref{eq_differenbound}),  it suffices to offer an upper bound for $| \bm{\phi}^b_i-\bm{\phi}(\frac{i}{n}) |.$ Note that $\bm{\phi}_i^b=(\phi_{i1}^b, \cdots, \phi_{ib}^b)^* \in \mathbb{R}^b$ is governed by the following Yule-Walker's equation 
\begin{equation}\label{eq_yueyueeee2222}
\bm{\phi}_i^b=\Gamma_{ib}^{-1} \bm{\gamma}_{ib},
\end{equation} 
where $\Gamma_{ib}=\text{Cov}(\bm{x}_i^b, \bm{x}_i^b), \bm{\gamma}_{ib}=\text{Cov}(\bm{x}_i^b, x_i)$ with $\bm{x}_i^b=(x_{i-1}, \cdots, x_{i-b})^*.$ Consequently, we have that 
\begin{align}\label{eq_trianaleinequality}
|  \bm{\phi}^b_i-\bm{\phi}(\frac{i}{n}) | & \leq \| \Gamma_{ib}^{-1} \| | \bm{\gamma}_{ib} -\bm{\gamma}(i/n) | \nonumber \\
&+\| \Gamma(i/n)^{-1} \| \| \Gamma_{ib}^{-1} \| \| \Gamma(i/n)-\Gamma_{ib} \| | \bm{\gamma}(i/n) |.
\end{align}
By (\ref{eq_covdefn}) and Cauchy-Schwarz inequality, it is easy to see that for some constant $C>0$
\begin{equation*}
|  \bm{\gamma}_{ib} -\bm{\gamma}(i/n) | \leq C \frac{b^{1.5}}{n}.
\end{equation*}
Similarly, together with Lemma \ref{lem_disc}, we see that
\begin{equation*}
\| \Gamma(i/n)-\Gamma_{ib} \| \leq C\frac{b^2}{n}.
\end{equation*}
Under Assumption \ref{assu_pdc}, combing with (\ref{eq_trianaleinequality}), we conclude that 
\begin{equation}\label{eq_bcontrol}
|  \bm{\phi}^b_i-\bm{\phi}(\frac{i}{n})| \leq C\frac{b^2}{n}.
\end{equation}
This completes our proof for (\ref{eq_controlarcoeff1}). 

For the proof of (\ref{eq_controlarcoeff2}), (\ref{eq_phi0tdefn}) implies that 
\begin{equation*}
\phi_0(\frac{i}{n})=\mu(\frac{i}{n})-\sum_{j=1}^{b} \phi_j(\frac{i}{n}) { \mu(\frac{i}{n})}.
\end{equation*}
By Assumption \ref{assu_smoothtrend} that $|\mu(i/n)-\mu((i-j)/n)| \leq C_1 b/n$ when $|i-j| \leq b$ for some constant $C_1>0,$  (\ref{eq_noncenteredintercept}) and  (\ref{eq_bcontrol}), we find that for some constant $C>0,$
\begin{equation*}
\left| \phi_0(\frac{i}{n})-\phi_{i0}^b \right| \leq C\frac{b^{2.5}}{n},
\end{equation*}
where we used Cauchy-Schwarz inequality. Then we can prove (\ref{eq_controlarcoeff2}) using (\ref{eq_phibound2}).

Further, invoking (\ref{eq_arapproxiamtionnoncenter}),  (\ref{eq_choleskylocal}) follows from (\ref{eq_controlarcoeff1}), (\ref{eq_controlarcoeff2}) and (\ref{assum_moment}). Finally, the proof of (\ref{cor_localboundmse}) is similar to those of Theorem \ref{thm_arrepresent} using Theorems \ref{thm_locallynonzero} and \ref{thm_arrepresent} and we omit further details here. 


}



\end{proof}
}

\subsection{Proofs of the main results of Section \ref{sec:test}} { In this subsection, we provide the technical proof for the results regarding the inferential theory for locally stationary time series established in Section \ref{sec:test}.} We point out that throughout this subsection, we use the choice of $b_*$ as in (\ref{eq_choiceofb}). { First of all, we prove Theorem \ref{lem_reducedtest}. For the proof of (\ref{eq:zzz}), we need to construct $\varrho(j).$ In fact, by the established approximation theory, we find that $\operatorname{Corr}(x_i, x_{i+j})$ can be well approximated by a locally stationary AR process as in (\ref{eq_originalcontrol}). Moreover, the ACF of the locally AR process is sufficiently close to that of a stationary AR process as defined in (\ref{eq_stationaryzidefinition}). { For the proof of (\ref{eq_errortermone}), it relies on a perturbation argument with Lemma \ref{lem_nuem}. The staring point is the  Yule-Walker's representation (\ref{eq_acfdefinitionyulewalker}) of $\bm{\phi}(i/n).$ Under the null hypothesis that the time series is correlation stationary, we can show that it is close to another system (\ref{eq_pertubed}) which results in time invariant solution. Moreover, the error can be controlled using the locally stationary assumption (\ref{eq_covdefn}) and the UPDC in Assumption \ref{assu_pdc}. } This completes the proof.  

}



%
%
%
%
%
%

\begin{proof}[\bf Proof of Theorem \ref{lem_reducedtest}]
By Assumptions \ref{assum_shortrange} and \ref{assum_local}, we find that there exists some constant $C>0,$ such that {
\begin{equation*}
\sup_i \left| \operatorname{Corr}(x_i, x_{i+j}) \right| \leq C |j|^{-\tau}. 
\end{equation*} }
Therefore, we only consider the correlation when $|j| \leq b_*.$ Indeed, due to Assumption \ref{assum_local}, for some constant $C>0,$ we have 
{
\begin{equation*}
\sup_{1 \leq i \leq b_*} \left| \operatorname{Corr}(x_i, x_{i+j})-\operatorname{Corr}(x_{b_*+i}, x_{b_*+j+i}) \right| \leq C \frac{b_*}{n}.
\end{equation*}  
}
Therefore, it suffices to test the stationarity  for the correlation of $x_i$ and $x_{i+j},$ where $i>b_*$ and $|j| \leq b_*.$ { Without loss of generality, in what follows, we assume $j \geq 0$  and focus on the setting $0 \leq j \leq b_*.$}


{ We start with the proof of (\ref{eq:zzz}). Recall (\ref{eq_phismoothdefinition}) and the physical representation (\ref{eq_defncov}). }
{ We first notice that when $i>b_*$ and $j \leq b_*,$ by Theorem \ref{thm_locallynonzero}, we can see that $x_i$ can be well approximated by a locally stationary AR process, i.e., 
\begin{equation*}
x_i-\phi_{i0}=x_i^{**}-\phi_0(i/n)+O_{\ell_2}\left((\log b)^{\tau-1} b^{-(\tau-1)}+\frac{b^2}{n}\right),
\end{equation*} 
where $x_i^{**}$ is defined in (\ref{eq_xiss}). This implies that
\begin{equation}\label{eq_originalcontrol}
\operatorname{Corr}(x_i, x_{i+j})= \operatorname{Corr}(x^{**}_i, x^{**}_{i+j})+O\left((\log b)^{\tau-1} b^{-(\tau-1)}+\frac{b^2}{n}\right).
\end{equation}
Now under the assumption of $\mathbf{H}_0$ that $\phi_k(\cdot), 1 \leq k \leq b_*$ are constant functions, we can hence write 
\begin{equation*}
x_i^{**}-\phi_0(i/n)=\sum_{j=1}^b \phi_j x_{i-j}^{**}+ \sigma_i \widetilde{\epsilon}_i, 
\end{equation*}
where $\sigma_i^2$ is the variance of $\epsilon_i$ and $\{\widetilde{\epsilon}_i\}$ is a stationary white noise process with mean zero and variance one. This implies that 
\begin{equation*}
\frac{x_i^{**}-\phi_0(i/n)}{\sigma_i}=\sum_{j=1}^b \phi_j \frac{x_{i-j}^{**}}{\sigma_i}+ \widetilde{\epsilon}_i. 
\end{equation*}
By (\ref{eq_vvvffff}), we can see that for some smooth function $\varphi(\cdot)$ 
\begin{equation}\label{eq_sigmaclose}
\sigma_i=\varphi(i/n)+O\left( (\log b)^{\tau-1} b^{-(\tau-1.5)}+\frac{b^2}{n} \right).
\end{equation}
This implies that for $k>0$
\begin{equation}\label{eq_differencecontrol}
\sigma_i=\sigma_{i\pm k}+O\left((\log b)^{\tau-1} b^{-(\tau-1.5)}+\frac{b^2}{n}+\frac{k}{n} \right). 
\end{equation}
Now we denote 
\begin{equation*}
y_i=\frac{x_i^{**}-\phi_0(i/n)}{\sigma_i}. 
\end{equation*}
On the one hand, by definition, we have that  $\operatorname{Corr}(x_i^{**}, x_j^{**})=\operatorname{Corr}(y_i, y_j).$ On the other hand, we see from (\ref{eq_differencecontrol}) and Cauchy-Schwarz inequality that
\begin{equation*}
y_i=\sum_{i=1}^b \phi_j y_{i-j}+\widetilde{\epsilon}_i+O\left((\log b)^{\tau-1} b^{-(\tau-2)}+\frac{b^{2.5}}{n} \right).
\end{equation*}
Let $z_i$ be defined as 
\begin{equation}\label{eq_stationaryzidefinition}
z_i=\sum_{i=1}^b \phi_j z_{i-j}+\widetilde{\epsilon}_i,
\end{equation}
which is clearly a stationary time series whose ACF is denoted as $\varrho$. It is easy to see that 
\begin{equation*}
\operatorname{Corr}(z_i, z_{i+j})= \operatorname{Corr}(y_i, y_{i+j})+O\left((\log b)^{\tau-1} b^{-(\tau-2)}+\frac{b^{2.5}}{n} \right).
\end{equation*}
Combining with (\ref{eq_originalcontrol}), we can  conclude our proof.}

{ Then we prove (\ref{eq_errortermone}). Using Yule-Walker's equation, by setting $t=i/n,$ we can write 
\begin{equation}\label{eq_acfdefinitionyulewalker}
\Gamma(i/n) \bm{\phi}(i/n)=\gamma(i/n). 
\end{equation}
Furthermore, under the null hypothesis  $\mathbf{H}'_0,$ that $\{x_i\}$ is correlation stationary, we can define $\bm{\phi} \equiv \bm{\phi}^{b_*}$ independent of $i/n$ according to 
\begin{equation*}
\mathrm{P} \bm{\phi}=\bm{\rho},
\end{equation*}
where $\mathrm{P}$ is the correlation matrix of $\bm{x}_{i-1}^{b_*}=(x_{i-1}, \cdots, x_{i-b_*})^*$ and $\bm{\rho}$ is the correlation vector of $x_i$ and $\bm{x}^{b_*}_{i-1}.$ We can multiply $\operatorname{Var}(x_i)$ on both sides of the above equation and rewrite it as
\begin{equation}\label{eq_pertubed}
\operatorname{Var}(x_i) \mathrm{P} \bm{\phi}=\operatorname{Var}(x_i) \bm{\rho}. 
\end{equation}
In general, (\ref{eq_pertubed}) can be regarded as a perturbed system of (\ref{eq_acfdefinitionyulewalker}), and when $\operatorname{Var}(x_i)$ is independent of $i,$ (\ref{eq_pertubed}) is identical to (\ref{eq_acfdefinitionyulewalker}). Now we write (\ref{eq_pertubed}) as
\begin{equation*}
\left(\Gamma(i/n)+\operatorname{Var}(x_i) \mathrm{P}-\Gamma(i/n) \right)\left( \bm{\phi}(i/n)+\bm{\phi}-\bm{\phi}(i/n) \right)=\left( \bm{\gamma}(i/n)+\operatorname{Var}(x_i) \bm{\rho}-\bm{\gamma}(i/n) \right).
\end{equation*}
In order to control $\| \bm{\phi}-\bm{\phi}(i/n) \|,$ we apply the perturbation theory, i.e., Lemma \ref{lem_nuem}. Note that for the k$th$ entry, $1 \leq k \leq b_*$ of  $\operatorname{Var}(x_i) \bm{\rho}-\bm{\gamma}(i/n),$ it reads as 
\begin{equation*}
\operatorname{Var}(x_i) \rho(k)-\sqrt{\gamma(i/n,0) \gamma((i-k)/n,0)} \rho(k),
\end{equation*}
where we recall the definition of $\gamma(\cdot, \cdot)$ in (\ref{eq_covdefn}).  Moreover, by the assumption of (\ref{eq_covdefn}), it is easy to see that  
\begin{equation*}
\operatorname{Var}(x_i)=\sqrt{\gamma(i/n,0) \gamma((i-k)/n,0)}+O\left( \frac{k+1}{n} \right). 
\end{equation*}
Therefore, we can conclude that 
\begin{equation*}
\left | \operatorname{Var}(x_i) \bm{\rho}-\bm{\gamma}(i/n) \right|=O\left( \frac{b^2_*}{n} \right).
\end{equation*}
Similarly, using Lemma \ref{lem_disc}, we can show that
\begin{equation*}
\left\| \operatorname{Var}(x_i) \mathrm{P}-\Gamma(i/n) \right\|=O\left( \frac{b^2_*}{n} \right). 
\end{equation*}  
By the UPDC in Assumption \ref{assu_pdc} and  a discussion similar to (\ref{eq_differenbound}), we can complete our proof using Lemma \ref{lem_nuem}. 
}


 
\end{proof}

{ Then we prove Lemma \ref{lem_reducedquadratic} following a straightforward algebraic computation (\ref{eq_tform}) with the OLS representation as in (\ref{beta_est})}

\begin{proof}[\bf Proof of Lemma \ref{lem_reducedquadratic}] 
Under the null assumption $\mathbf{H}_0$ that $\phi_j(t)$ are identical in $t$,  we have
\begin{equation*}
(\widehat{\phi}_j(t)-\overline{\widehat{\phi}}_j)^2=\left(\widehat{\phi}_j(t)-\phi_j(t)-\left(\int_0^1 (\widehat{\phi}_j(s)-\phi_j(s) )ds\right)\right)^2.
\end{equation*}
By (\ref{eq_phiform}), we can  write 
\begin{equation}\label{eq_tform}
T=\sum_{j=1}^{b_*} \left( \bm{\beta}_j^*-\widehat{\bm{\beta}}_j^* \right) W \Big( \bm{\beta}_j-\widehat{\bm{\beta}}_j \Big)+O(b_* c^{-d}), \ W=\Big(I-\bar{B} \bar{B}^*\Big),
\end{equation}
where $\bm{\beta}_j \in \mathbb{R}^c$ satisfies that $\bm{\beta}_{jk}=\bm{\beta}_{jc+k}, \ 1 \leq k \leq c.$ 
It is well-known that the OLS estimator satisfies 
\begin{equation}\label{beta_est}
\widehat{\bm{\beta}}=\bm{\beta}+\left(\frac{Y^*Y}{n} \right)^{-1}\frac{Y^* \bm{\epsilon}}{n}, \ \bm{\epsilon}=(\epsilon_{b_*+1}, \cdots, \epsilon_n)^*.
\end{equation}
 By (\ref{beta_est}), we find that $nT$ is a quadratic form in terms of $\frac{1}{\sqrt{n}} \sum_{i=b_*+1}^n \bm{z}_i.$ 
We find that 
\begin{equation*}
nT=\mathbf{X}^* \left( \frac{Y^*Y}{n} \right)^{-1} \mathbf{I}_{b_* c} \mathbf{W} \left( \frac{Y^*Y}{n} \right)^{-1} \mathbf{X}+O_{\mathbb{P}}(b_* c^{-d}).
\end{equation*}
By (2) of Lemma \ref{lem_coll} and (1) and (3) of Assumption \ref{assu_basis}, we can conclude our proof.  

\end{proof}

{ Next, we prove the Gaussian approximation result Theorem \ref{thm_gaussian} utilizing the Gaussian approximation result on convex sets for $m$-dependent sequence (i.e., Lemma \ref{lem_xf}). The starting point is that since our statistic is a quadratic form, the control of $\mathcal{K}(\mathbf{X}, \mathbf{Y})$ is reduced to proving a Gaussian approximation result on convex sets as in (\ref{eq_defncalk}). We point out that such a result has been proved for bounded and $m$-dependent sequence in \cite[Theorem 2.1]{FX}, i.e., Lemma \ref{lem_xf}. Even though our observations are not exactly bounded and $m$-dependent,  thanks to the short-range dependence assumption and the established concentration inequalities as summarized in Lemma \ref{lem_con}, we can find such a bounded and $m$-dependent approximation $\overline{\mathbf{X}}^M$ as in the lines below (\ref{eq_setsetsetsetsetset}) so that $\operatorname{Cov}(\mathbf{X})$ is close to $\operatorname{Cov}(\overline{\mathbf{X}}^M)$ as in (\ref{eq_covariance}). This will enable us to apply Lemma \ref{lem_xf}. For the actual proof, we can control $\mathcal{K}(\mathbf{X}, \mathbf{Y})$ using triangle inequality, i.e., 
\begin{equation*}
\mathcal{K}(\mathbf{X}, \mathbf{Y}) \leq \mathcal{K}(\mathbf{X}, \mathbf{X}^M)+\mathcal{K}(\mathbf{X}^M, \overline{\mathbf{X}}^M)+\mathcal{K}(\overline{\mathbf{X}}^M, \mathbf{Y}),
\end{equation*} 
where $\mathbf{X}^M$ is the $m$-approximation without truncation. First, $\mathcal{K}(\overline{\mathbf{X}}^M, \mathbf{Y})$ can be controlled using Lemma \ref{lem_xf} as $\overline{\mathbf{X}}^M$ is bounded and $m$-dependent. The result is recorded in (\ref{eq_onepartcontrol}). Second, $\mathcal{K}(\mathbf{X}, \mathbf{X}^M)$ can be controlled using the $m$-dependent approximation result for locally stationary time series as in (\ref{eq_controlcontrolthird}) using Lemma \ref{lem_con}. Finally, $\mathcal{K}(\mathbf{X}^M, \overline{\mathbf{X}}^M)$ can be controlled using the concentration inequalities in Lemma \ref{lem_con} since $\overline{\mathbf{X}}^M$ is the truncated version of $\mathbf{X}^M.$
  }
\begin{proof}[\bf Proof of Theorem \ref{thm_gaussian}]  
Denote 
\begin{equation*}
A_x:=\Big\{ \mathbf{W} \in \mathbb{R}^p: \mathbf{W}^* \Gamma \mathbf{W} \leq x \Big\}.
\end{equation*}
It is easy to check that  $A_x$  is convex as $\Gamma$ is positive semi-definite. By definition, we have 
\begin{align}\label{eq_defncalk}
\mathcal{K}(\mathbf{X}, \mathbf{Y})=\sup_x \Big| \mathbb{P} \Big( \mathbf{X} \in A_x \Big)-\mathbb{P} \Big( \mathbf{Y} \in A_x \Big) \Big|\le \sup_{A \in \mathcal{A}} \Big| \mathbb{P} \Big( \mathbf{X} \in A \Big)-\mathbb{P} \Big( \mathbf{Y} \in A \Big) \Big|,
\end{align}
where  $\mathcal{A}$ is the collection of all convex sets in $\mathbb{R}^p$. Given a large constant $M \equiv M(n),$ denote
\begin{equation*}
\bm{h}_i^M= \mathbb{E}(\bm{h}_i | \eta_{i-M}, \cdots, \eta_i), \ i=b_*+1, \cdots,n,
\end{equation*}
and $\bm{z}_i^M=\bm{h}_i^M \otimes \mathbf{B}(\frac{i}{n})=(z_{i1}^M, \cdots, z_{ip}^M)^*.$ Recall $p=(b_*+1)c.$ Then we can define $\mathbf{X}^M$ accordingly and then $\mathbf{Y}^M$ can be defined similarly. Note that  in Lemma \ref{lem_xf}, we have $n_1=n_2=n_3=M.$ Next we provide a truncation for the $M$-dependent sequence.      Now we choose $M_z$ for $\gamma \in (0,1),$ such that
\begin{equation*}
\mathbb{P} \Big( \max_{b_*+1 \leq i \leq n} \max_{1 \leq j \leq p} |z^M_{ij}| \geq M_z\Big) \leq \gamma.
\end{equation*}
Denote the set
\begin{equation}\label{eq_setsetsetsetsetset}
\mathcal{B}(M_z):=\Big\{ \max_{b_*+1 \leq i \leq n} \max_{1 \leq j \leq p} |z^M_{ij}| \leq M_z\Big\},
\end{equation}
and $\mathbf{X}=(X_1, \cdots, X_p).$ Similarly, we can define its $M$-dependent approximation as $\mathbf{X}^M$ and truncated version as $\overline{\mathbf{X}}^M.$ We decompose the probability by  
\begin{align}
\mathcal{K}(\mathbf{X}^M, \mathbf{Y})&=
\mathcal{K}(\mathbf{X}^M, \mathbf{Y} \cap \mathcal{B}(M_z)) +\mathcal{K}(\mathbf{X}^M, \mathbf{Y} \cap \mathcal{B}^c(M_z)) \nonumber \\
& \leq \mathcal{K}(\overline{\mathbf{X}}^M, \mathbf{Y})+C \gamma, \label{eq_finalshow}
\end{align}
where $C>0$ is some constant. 
%
Note that on $\mathcal{B}(M_z),$
\begin{equation*}
\Big|\frac{1}{\sqrt{n}}\bm{z}^M_i \Big|=\frac{1}{\sqrt{n}}\Big| \bm{h}^M_i \otimes \mathbf{B}(\frac{i}{n}) \Big| \leq \frac{C \sqrt{p} M_z}{\sqrt{n}}. 
\end{equation*}
Denote $\widetilde{\mathbf{Y}}^M$ as the Gaussian random vector with the same covariance structure with $\overline{\mathbf{X}}^M$ whose Gaussian part is the same as $\mathbf{Y}.$ By Lemma \ref{lem_xf}, we conclude that 
\begin{equation*}
\mathcal{K}(\overline{\mathbf{X}}^M, \widetilde{\mathbf{Y}}^M) \leq C p^{\frac{7}{4}} n^{-1/2}M_z^3 M^2.
\end{equation*}
In light of (\ref{eq_finalshow}), it suffices to control the difference of the covariance matrices between $\overline{\mathbf{X}}^M$ and $\mathbf{X}.$
First, we show that the covariance matrices between $\overline{\mathbf{X}}^M$ and $\mathbf{X}^M$ are close.  We emphasize that $\overline{X}_i^M \neq X_i \mathbf{1}(|X_i| \leq M_z).$ We need to conduct a more careful analysis. For $i=1,2,\cdots,p,$
\begin{equation}\label{eq_variancedecomposition}
\operatorname{Var}(\overline{X}_i^M)-\operatorname{Var}(X_i^M)=\mathbb{E}(\overline{X}_i^M)^2-\mathbb{E} (X_i^M)^2+(\mathbb{E}(\overline{X}_i^M-X_i^M))(\mathbb{E}(\overline{X}_i^M+X_i^M)).
\end{equation}
Note that
\begin{align*}
\left|\mathbb{E}(\overline{X}_i^M-X_i^M)\right|=\left|\frac{1}{\sqrt{n}} \sum_{k=b_*+1}^n \mathbb{E}(z_{ki}^M -\overline{z}_{ki}^M) \right|=\frac{1}{\sqrt{n}} \left| \sum_{k=b_*+1}^n \mathbb{E}z_{ki}^M \mathbf{1}(|z_{ki}^M|> M_z) \right| \leq \sqrt{n} \xi_c M_{z}^{-q+1},
\end{align*}
where we used the fact 
\begin{equation*}
\mathbf{1}\left(|z_{ki}^M|>M_z \right) \leq \frac{|z_{ki}^M|^{q-1}}{M_z^{q-1}},
\end{equation*}
 and Markov inequality. By Cauchy-Schwarz inequality, we can show analogously that for some constant $C>0$ 
\begin{equation*}
\left| \mathbb{E}(\overline{X}_i^M)^2-\mathbb{E} (X_i^M)^2 \right| \leq C \xi_c^{2} n M_z^{-(q-1)}.
\end{equation*}
This implies that for some constant $C>0$ 
\begin{equation*}
\left|\operatorname{Var}(\overline{X}_i^M)-\operatorname{Var}(X_i^M) \right| \leq C n \xi_c^2 M_z^{-(q-2)}.
\end{equation*}
Similarly, we can show that 
\begin{equation*}
\left|\operatorname{Cov}(\overline{X}_i^M,\overline{X}_j^M)-\operatorname{Cov}(X_i^M, X_j^M) \right| \leq C n \xi_c^2 M_z^{-(q-1)}.
\end{equation*}
Together with Lemma \ref{lem_disc}, we find that 
\begin{equation*}
\| \operatorname{Cov}(\mathbf{X}^M)- \operatorname{Cov}(\overline{\mathbf{X}}^M)  \|\leq C \xi_c^2 pn M_z^{-(q-2)}.
\end{equation*}

Second, we control the difference between $\mathbf{X}^M$ and $\mathbf{X}.$ By \cite[Lemma A.1]{LL} (or Lemma \ref{lem_con}), we have 
\begin{equation}\label{eq_bbbd}
\mathbb{E} \left(|X_j-X_j^M|^q \right)^{2/q} \leq C \Theta^2_{M,j,q}.
\end{equation}
By (\ref{eq_physcialbounbounbound}), we conclude that 
\begin{equation}\label{eq_bbbd1}
\Theta_{M,j,q} \leq C \xi_c M^{-\tau+1}.
\end{equation}
Consequently, by Jenson's inequality, we have that 
\begin{equation}\label{eq_mommentboundfinal}
\mathbb{E}|X_j-X_j^M| \leq C \xi_c M^{-\tau+1}, \ \mathbb{E}|X_j-X_j^M|^2 \leq C \xi_c^2 M^{-2\tau+2}. 
\end{equation}
Therefore, we have that for some constant $C>0,$
\begin{equation*}
\left|\operatorname{Var}(X_i^M)-\operatorname{Var}(X_i^M) \right| \leq C \xi_c M^{-\tau+1},
\end{equation*}
where we use a discussion similar to (\ref{eq_variancedecomposition}). Similarly, we can show that 
\begin{equation*}
\left|\operatorname{Cov}(X_i^M,X_j^M)-\operatorname{Cov}(X_i, X_j) \right| \leq C \xi_c M^{-\tau+1}.
\end{equation*}
Together with Lemma \ref{lem_disc},  we find that 
\begin{equation*}
\| \operatorname{Cov}(\mathbf{X})- \operatorname{Cov}(\mathbf{X}^M)  \|\leq C p\xi_c M^{-\tau+1}.
\end{equation*}
As a result, we conclude that 
\begin{equation}\label{eq_covariance}
\| \operatorname{Cov}(\mathbf{X})- \operatorname{Cov}(\overline{\mathbf{X}}^M)  \|\leq C(p\xi_c M^{-\tau+1}+p n \xi_c^2 M_z^{-(q-2)}).
\end{equation}
We decompose that
\begin{equation}\label{eq_ffffff}
\mathbb{P}(\mathbf{Y} \Gamma \mathbf{Y}^* \leq x)-\mathbf{P}(\widetilde{\mathbf{Y}}^M \Gamma (\widetilde{\mathbf{Y}}^M)^* \leq x)=\mathbb{P}(\mathbf{Y} \Gamma \mathbf{Y}^* \leq x)-\mathbb{P}(\mathbf{Y} \Gamma \mathbf{Y}^* \leq x+\mathcal{D}(\mathbf{Y}, \widetilde{\mathbf{Y}}^M)),
\end{equation}
where $\mathcal{D}(\mathbf{Y}, \widetilde{\mathbf{Y}})$ is defined as 
\begin{equation*}
\mathcal{D}(\mathbf{Y}, \widetilde{\mathbf{Y}}^M):=-\widetilde{\mathbf{Y}}^M \Gamma (\widetilde{\mathbf{Y}}^M)^*+\mathbf{Y} \Gamma \mathbf{Y}^* .
\end{equation*}
By (\ref{eq_covariance}), a decomposition similar to (\ref{eq_decompositionone}) below and Bernstein's inequality (see Example 2.11 of \cite{MR3967104}),  for some small constant $\delta>0,$ we have with $1-O(n^{-\delta})$ probability 
\begin{equation*}
\|  \mathcal{D}(\mathbf{Y}, \widetilde{\mathbf{Y}}^M) \|_2 \leq C p n^{\delta} \xi_c (p\xi_c M^{-\tau+1}+p\xi_c^2 n M_z^{-(q-2)}).
\end{equation*}
By Lemma \ref{lem_chisquare} and (\ref{eq_ffffff}), we find that with $1-O(n^{-\delta})$ probability 
\begin{equation*}
\mathcal{K}(\mathbf{Y}, \widetilde{\mathbf{Y}}^M) \leq C \left( p  \xi_c (p\xi_c M^{-\tau+1}+p\xi_c^2 n M_z^{-(q-2)}) \right)^{1/2}.
\end{equation*}
Therefore, using the definition of $\mathcal{K}(\cdot,\cdot)$ in (\ref{eq_defncalk}), we conclude that for some small constant $\delta>0,$
\begin{equation}\label{eq_onepartcontrol}
\mathcal{K}(\mathbf{X}^M, \mathbf{Y}) \leq C \left(\gamma+p^{\frac{7}{4}} n^{-1/2}M_z^3 M^2+\left( p\xi_c  (p\xi_c M^{-\tau+1}+p \xi_c^2 n M_z^{-(q-2)}) \right)^{1/2} \right)+n^{-\delta}.
\end{equation}
It is clear that we can choose $\gamma=O\Big(\log n \frac{\xi_c}{M_z}\Big).$ Finally, we control $\mathcal{K}(\mathbf{X}, \mathbf{X}^M)$ to finish our proof. We first introduce some notations. Denote the physical dependence measure for $z_{kl}$ as $\delta_{kl}^z(s,q)$ and 
\begin{equation*}
\theta_{k,j,q}=\sup_{k} \delta_{kl}^z(s,q), \ \Theta_{s,l,q}=\sum_{o=s}^{\infty} \theta_{o,l,q}.
\end{equation*}
By (\ref{eq_physcialbounbounbound}), we conclude that 
\begin{equation}\label{eq_boundboundbound}
\sup_{1 \leq l \leq p} \Theta_{s,l,q}<\xi_c, \ \sum_{s=1}^{\infty} \sup_{1 \leq l \leq p}s \theta_{s,l,3}<\xi_c.  
\end{equation}
Denote the set
\begin{equation*}
\mathcal{I}(\Delta_M):=\Big \{ \max_{1 \leq j \leq p} \Big| X_j-X_j^{(M)} \Big| \leq \Delta_M \Big\}.
\end{equation*}
We claim that for arbitrary small $\delta>0,$ we can decompose the probability by { 
\begin{align}\label{eq_decompositionkey}
\mathcal{K}(\mathbf{X}^M, \mathbf{X})&=
\mathcal{K}(\mathbf{X}^M, \mathbf{X} \cap \mathcal{I}(\Delta_M)) +\mathcal{K}(\mathbf{X}^M, \mathbf{X}\cap \mathcal{I}^c(\Delta_M))  \\
& \leq C \Big( \sqrt{p \Delta_M \xi_c n^{\delta}} +n^{-\delta}+\mathbb{P}(\mathcal{I}^c(\Delta_M)) \Big), \nonumber
\end{align}
where we use the definition of $\mathcal{K}(\cdot, \cdot)$ to control the second term of the right-hand side of (\ref{eq_decompositionkey}). For the first term, note that
\begin{equation*}
\mathbb{P}\left( (\mathbf{X}^M)^* \Gamma \mathbf{X}^M \leq x \right)-\mathbb{P} \left( \mathbf{X} \Gamma \mathbf{X} \leq x \right)=\mathbb{P}\left( (\mathbf{X}^M)^* \Gamma \mathbf{X}^M \leq x \right)-\mathbb{P}\left( (\mathbf{X}^M)^* \Gamma \mathbf{X}^M \leq x+\mathcal{D}(\mathbf{X}^M, \mathbf{X}) \right),
\end{equation*}
where $\mathcal{D}(\mathbf{X}^M, \mathbf{M})$ is defined as
\begin{equation*}
\mathcal{D}(\mathbf{X}^M,\mathbf{X})=-\mathbf{X}^* \Gamma \mathbf{X}+(\mathbf{X}^M)^* \Gamma \mathbf{X}^M .
\end{equation*}
Further, we have
\begin{align}\label{eq_decompositionone}
\| \mathcal{D}(\mathbf{X}^M, \mathbf{M}) \|_2 \leq \|(\mathbf{X}^M)^* \Gamma (\mathbf{X}^M-\mathbf{X}) \|_2+\| (\mathbf{X}^M-\mathbf{X})^* \Gamma \mathbf{X}\|_2. 
\end{align}
Recall (\ref{eq_bbbd}) and (\ref{eq_bbbd1}). Restricted on $\mathcal{I}(\Delta_M),$ by Cauchy-Schwarz inequality, the fact $\Gamma$ is bounded, Lemma \ref{lem_con} with (\ref{eq_boundboundbound}), we find that for some constant $C>0,$ 
\begin{equation*}
\| \mathcal{D}(\mathbf{X}^M, \mathbf{X}) \|_2 \leq  C \sqrt{p} \xi_c (\sqrt{p} \Delta_M)=C p \Delta_M \xi_c.
\end{equation*}
Therefore, conditional on $\mathcal{I}(\Delta_M),$ for some constant $C>0,$ we have 
\begin{align*}
\left|\mathbb{P}\left( (\mathbf{X}^M)^* \Gamma \mathbf{X}^M \leq x \right)-\mathbb{P} \left( \mathbf{X} \Gamma \mathbf{X} \leq x \right)  \right|&  \leq C n^{-\delta} \\
& +\left|\mathbb{P}\left( (\mathbf{X}^M)^* \Gamma \mathbf{X}^M \leq x  \right)-\mathbb{P}\left( (\mathbf{X}^M)^* \Gamma \mathbf{X}^M \leq x+n^{\delta} p \Delta_M \xi_c  \right) \right|. 
\end{align*}
Moreover, we have 
\begin{align*}
\left|\mathbb{P}\left( (\mathbf{X}^M)^* \Gamma \mathbf{X}^M \leq x  \right) -\mathbb{P}\left( (\mathbf{X}^M)^* \Gamma \mathbf{X}^M \leq x+n^{\delta} p \Delta_M \xi_c  \right) \right| \leq & 2 \mathcal{K}(\mathbf{X}^M, \mathbf{Y})+ \mathbb{P}\left( \mathbf{Y}^* \Gamma \mathbf{Y} \leq x+n^{\delta} p \Delta_M \xi_c  \right) \\
& -\mathbb{P}\left( (\mathbf{Y}^* \Gamma \mathbf{Y} \leq x  \right).
\end{align*}
Since $\Gamma$ is positive definite and bounded, by Lemma \ref{lem_chisquare} and the rotation invariance property of Gaussian random vectors, we obtain the bound for the first term of the right-hand side of (\ref{eq_decompositionkey}).}  Next, by Markov inequality and a simple union bound, we have that 
\begin{equation*}
\mathbb{P}(\mathcal{I}^c(\Delta_M)) \leq C \sum_{j=1}^p \frac{\Theta_{M,j,q}^q}{\Delta_M^q}.
\end{equation*}
Consequently, we can control 
\begin{equation*}
\mathcal{K}(\mathbf{X}^M, \mathbf{X}) \leq C \Big( \sqrt{p \Delta_M \xi_c n^{\delta}} +n^{-\delta}+p \xi_c  M^{-q\tau+1}/\Delta^q_M \Big).
\end{equation*}
By optimizing $\Delta_M,$ we conclude that
\begin{equation}\label{eq_controlcontrolthird}
\mathcal{K}(\mathbf{X}^M, \mathbf{X}) \leq C \Big( M^{\frac{-q\tau+1}{2q+1}}\xi_c^{(q+1)/(2q+1)} p^{\frac{q+1}{2q+1}} n^{\frac{\delta q}{2q+1}} +n^{-\delta}\Big),
\end{equation}
 This finishes our proof using triangle inequality.
\end{proof}

{
\begin{rem}
The current convergence rate of Theorem \ref{thm_gaussian} depends on $\xi_c$ in such a way that the rate is slower for basis functions with larger $\xi_c$. For example, for wavelets,
\begin{equation*}
\xi_c=\sup_{1 \leq i\leq c} \sup_t |\alpha_i(t)|=2^{\log c/2}=O(c^{1/2}).
\end{equation*}
We believe that this is an artifact of the proof. Specifically, observe that the maximum magnitude of $ \bm{z}_i$ over $i$ is determined by $\xi_c$. Furthermore, the truncation effect as well as the dependence measures of $\{\bm{z}_i\}$ are associated with the magnitude of $\{\bm{z}_i\}$. In particular, our proof relies on controlling the truncation effect as well as the dependence measures utilizing the $l_\infty$ norm of the basis functions. As pointed out by one referee, utilizing other norms such as the $l_1$ or $l_2$ norm of the basis functions could improve the convergence rates of Theorem \ref{thm_gaussian}. As of now it is unclear to us how to use other norms of the basis functions to control the truncation effects and the dependence measures. We will pursue this direction in the future works.

\end{rem}
}

{ Armed with Theorem \ref{thm_gaussian}, we can prove Proposition \ref{prop_normal} using Lindeberg's central limit theorem. }

\begin{proof}[\bf Proof of Proposition \ref{prop_normal}] Denote $r=\text{Rank}( \Omega^{1/2} \Gamma \Omega^{1/2})$ and the eigenvalues of $\Omega^{1/2} \Gamma \Omega^{1/2}$ as $d_1 \geq d_2>\cdots\geq d_r. $ Under (1) of Assumption \ref{assu_basis}, the definition of $\mathbf{W}$ and the fact that 
\begin{equation*}
\lambda_{\min}(A) \lambda_{\min}(B) \leq \lambda_{\min}(AB) \leq \lambda_{\max}(AB) \leq \lambda_{\max}(A) \lambda_{\max}(B),
\end{equation*}
for any given positive semi-definite matrices $A$ and $B$, we conclude that $d_i=O(1),\ i=1,2,\cdots,r.$ For the basis functions we used, we have that $r=O(b_* c).$ Therefore, we have 
\begin{equation*}
\frac{d_1}{f_2} \rightarrow 0. 
\end{equation*}
Hence, by Theorem \ref{thm_gaussian} and Lindeberg's central limit theorem, we finish our proof. 

\end{proof}

{ Once Proposition \ref{prop_normal} is proved, using some straightforward decompositions (\ref{eq_ntttt1}) and (\ref{eq_nttttt2}) below, we can prove Proposition \ref{prop_power}.}

\begin{proof}[\bf Proof of Proposition \ref{prop_power}] 
 Denote the statistic $\mathcal{T}$ as
\begin{equation}\label{eq_ntttt1}
\mathcal{T}:=\sum_{j=1}^{b_*} \int_0^1 \Big( \widehat{\phi}_j(t)-\phi_j(t)- \Big( \int_0^1 \widehat{\phi}_j(s)-\phi_j(s) ds \Big) \Big)^2 dt.
\end{equation}
One one hand, by Proposition \ref{prop_normal}, we have that 
\begin{equation*}
\frac{n \mathcal{T}-f_1}{f_2} \Rightarrow \mathcal{N}(0,2).
\end{equation*}
On the other hand, by an elementary computation, we have 
\begin{equation}\label{eq_nttttt2}
n\mathcal{T}=nT+n \sum_{j=1}^{b_*} \int_0^1 \Big(\phi_j(t)-\bar{\phi}_j \Big)^2dt-2n \sum_{j=1}^{b_*} \int_0^1 \Big( \phi_j(t)-\bar{\phi}_j \Big)\Big( \widehat{\phi}_j(t)-\bar{\widehat{\phi}}_j \Big) dt. 
\end{equation}
Furthermore, we can rewrite the above equation as 
\begin{equation*}
n\mathcal{T}=nT-n\sum_{j=1}^{b_*} \int_0^1 \left( \phi_j(t)-\bar{\phi}_j \right)^2 dt+2n \sum_{j=1}^{b_*} \int_0^1 \left( \phi_j(t)-\bar{\phi}_j \right) \left( \phi_j(t)-\widehat{\phi}_j(t)-(\bar{\phi}_j-\bar{\widehat{\phi}}_j) \right) dt.
\end{equation*}
By (\ref{eq_phiform}), we find that
\begin{equation*}
\int_0^1 \left( \phi_j(t)-\bar{\phi}_j \right) \left( \phi_j(t)-\widehat{\phi}_j(t)-(\bar{\phi}_j-\bar{\widehat{\phi}}_j) \right)dt=\bm{\beta}_j^*\widehat{B}(\bm{\beta}_j-\widehat{\bm{\beta}}_j)+O(b_* c^{-d}),
\end{equation*}
where $\widehat{B}$ is defined as 
\begin{equation*}
\widehat{B}=\int_0^1 (\mathbf{B}(t)-\bar{B})(\mathbf{B}(t)-\bar{B})^*dt. 
\end{equation*}
It is easy to see that $\| \widehat{B} \|=O(1).$ Therefore, under the alternative hypothesis $\mathbf{H}_a,$ we find that 
\begin{equation*}
\int_0^1 \left( \phi_j(t)-\bar{\phi}_j \right) \left( \phi_j(t)-\widehat{\phi}_j(t)-(\bar{\phi}_j-\bar{\widehat{\phi}}_j) \right)dt=O_{\mathbb{P}} \left( \sqrt{\log n}\frac{(b_* c)^{1/4}}{n} \right),
\end{equation*}
where we use Theorem \ref{thm_finalresult} and Assumption \ref{assu_basis}. This concludes our proof of part one. For part two, it follows directly
from part one.   
\end{proof}

{ Finally, we justify the validity of our proposed multiplier bootstrap method, i.e., Theorem \ref{thm_bootstrapping}. In particular, we will show the asymptotic normality for $\widehat{\mathcal{T}}.$ The proof consists of two steps. In the first step, we work on $\mathcal{T}$ as in (\ref{eq_sampleversiont}). The difference between $\mathcal{T}$ and $\widehat{\mathcal{T}}$ is that in (\ref{eq_sampleversiont}) we use $\Phi$ as in (\ref{eq_defnphi}) which is defined using the white noise $\{\epsilon_i\}$ instead of the residual. By definition, $\Phi$ is Gaussian. According to Proposition \ref{prop_normal}, it suffices to show that conditional on the data, $\operatorname{Cov}(\Phi)$ is close to $\Omega.$ This is accomplished in  Lemmas \ref{lem_a2} and \ref{lem_a3} and concluded in (\ref{eq_lamdaomega}). In the second step, we can prove the results for $\widehat{\mathcal{T}}$ by showing the closeness of $\Phi$ and $\widehat{\Phi}.$  }


\begin{proof}[\bf Proof of Theorem \ref{thm_bootstrapping}] 
We divide our proofs into two steps. In the first step, we show that the result holds for $\mathcal{T}$ defined as
\begin{equation}\label{eq_sampleversiont}
\mathcal{T}:=\Phi^* \widehat{\Gamma} \Phi,
\end{equation}
In the second step, we control the closeness between $\mathcal{T}$ and $\widehat{\mathcal{T}}$ defined in (\ref{eq_defnmathcalt}). We start with the first step  following the proof strategy of \cite[Theorem 3]{ZZ1}. Denote
\begin{equation*}
\Lambda=\frac{1}{(n-m-b_*)} \sum_{i=b_*+1}^{n-m} \Upsilon_{i,m} \Upsilon_{i,m}^*,
\end{equation*}
where we use
\begin{equation*}
\Upsilon_{i,m}=\frac{1}{\sqrt{m}}H_i \otimes \mathbf{B}(\frac{i}{n}), \ H_i=\Big(\sum_{j=i}^{i+m} \bm{h}_j \Big) .
\end{equation*}
We first propose and prove the following Lemmas \ref{lem_a2} and \ref{lem_a3}.

\begin{lem}\label{lem_a2} Under the assumptions of Theorem \ref{thm_bootstrapping}, we have that for all  $b_*+1 \leq i \leq n-m$
\begin{equation*}
 \left| \left| \Upsilon_{i,m} \Upsilon_{i,m}^*-\mathbb{E} \Big( \Upsilon_{i,m} \Upsilon_{i,m}^* \Big) \right| \right|=O_{\mathbb{P}} \Big( b_* \zeta^2_c \sqrt{m} \Big).
\end{equation*}
\end{lem}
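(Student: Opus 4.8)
The plan is to exploit the rank-one structure of $\Upsilon_{i,m}\Upsilon_{i,m}^*$ so that the operator-norm bound reduces to a moment bound on a Euclidean norm. Writing $v=\Upsilon_{i,m}$, we have $\Upsilon_{i,m}\Upsilon_{i,m}^* = vv^*$, so $\|vv^*\|=\|v\|^2$ and $\|\mathbb{E}(vv^*)\|=\sup_{\|u\|=1}\mathbb{E}[(u^*v)^2]\le \mathbb{E}\|v\|^2$; hence, by the triangle inequality,
\[
\bigl\|\Upsilon_{i,m}\Upsilon_{i,m}^*-\mathbb{E}(\Upsilon_{i,m}\Upsilon_{i,m}^*)\bigr\|\le \|\Upsilon_{i,m}\|^2+\mathbb{E}\|\Upsilon_{i,m}\|^2.
\]
By the definition of $\Upsilon_{i,m}$ and the norm identity for Kronecker products, $\|\Upsilon_{i,m}\|^2=m^{-1}\|\mathbf{B}(i/n)\|^2\|H_i\|^2\le m^{-1}\zeta_c^2\|H_i\|^2$ with $H_i=\sum_{j=i}^{i+m}\bm{h}_j$. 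So the whole statement reduces to an upper bound on $\mathbb{E}\|H_i\|^2$ (and, if uniformity over $i$ is wanted, a higher moment of $\|H_i\|$).

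The main step is to bound $\mathbb{E}\|H_i\|^2$ by using that $\{\bm{h}_j\}_{j>b_*}=\{\mathbf{U}(j/n,\mathcal{F}_j)\}$ is a centered $(b_*+1)$-dimensional locally stationary time series with short-range dependence. Centering is immediate from the normal equations defining $\epsilon_j$: $\mathbb{E}\epsilon_j=0$ and $\operatorname{Cov}(\epsilon_j,x_{j-k})=0$ for $1\le k\le b_*$, so $\mathbb{E}\bm{h}_j=\bm{0}$. For short-range dependence, the entries of $\bm{h}_j$ are the products $x_{j-k}\epsilon_j$; by Assumption \ref{assum_local} (with $q>4$) these lie in $L^{q/2}$ uniformly in $j,k$, and their physical dependence measures decay polynomially: the measure of $\{x_i\}$ decays like $(\cdot)^{-\tau}$, that of $\{\epsilon_j\}$ is controlled by it through the summable projection coefficients of Theorem \ref{lem_phibound}, and the product rule for physical dependence measures then yields a polynomially decaying (hence summable) dependence measure for each coordinate of $\bm{h}_j$. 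A standard moment inequality for sums of weakly dependent variables then gives $\operatorname{Var}(H_{i,k})=O(m)$ uniformly in $i,k$ (and more generally $\mathbb{E}|H_{i,k}|^{q/2}=O(m^{q/4})$); summing over the $b_*+1$ coordinates, $\mathbb{E}\|H_i\|^2=O(mb_*)$ (and $\mathbb{E}\|H_i\|^{q/2}=O((mb_*)^{q/4})$ after a power-mean inequality).

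Combining the two steps, $\mathbb{E}\|\Upsilon_{i,m}\|^2=m^{-1}\zeta_c^2\cdot O(mb_*)=O(\zeta_c^2 b_*)$, and Markov's inequality gives $\|\Upsilon_{i,m}\|^2=O_{\mathbb{P}}(\zeta_c^2 b_*)$, which is comfortably within the asserted $O_{\mathbb{P}}(b_*\zeta_c^2\sqrt{m})$; when a bound uniform over the $O(n)$ indices $i$ is needed one instead applies Markov to the $(q/4)$-th moment together with a union bound and absorbs the resulting polynomial-in-$n$ correction into the slack $\sqrt{m}$ factor under the growth conditions already in force. I expect the only real obstacle to be the weak-dependence verification for the product process $\{\bm{h}_j=\bm{x}_j\epsilon_j\}$: one must first transfer the polynomial decay of the dependence measure of $\{x_i\}$ to $\{\epsilon_j\}$ via the coefficient bounds of Theorem \ref{lem_phibound}, and then treat the products $x_{j-k}\epsilon_j$ carefully enough that all estimates are uniform in the time index $j$ and in the diverging coordinate index $k$.
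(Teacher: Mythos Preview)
Your argument is correct and in fact yields the sharper bound $O_{\mathbb{P}}(b_*\zeta_c^2)$, comfortably inside the stated $O_{\mathbb{P}}(b_*\zeta_c^2\sqrt{m})$. The route, however, is genuinely different from the paper's. The paper does not use the crude triangle inequality $\|vv^*-\mathbb{E}(vv^*)\|\le\|v\|^2+\mathbb{E}\|v\|^2$; instead it factors $\Upsilon_{i,m}\Upsilon_{i,m}^*=m^{-1}(H_iH_i^*)\otimes(\mathbf{B}\mathbf{B}^*)$, reduces to $\|H_iH_i^*-\mathbb{E}(H_iH_i^*)\|$, and then controls this \emph{centered} matrix entrywise. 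For a typical entry $w=(\sum_{j=i}^{i+m}x_{j-1}\epsilon_j)^2$ it establishes a physical dependence bound for the \emph{quadratic} $w$ (using $\|\sum x_{j-1}\epsilon_j\|_q=O(\sqrt m)$ from Lemma~\ref{lem_con} together with Minkowski), obtaining $\|w-\mathbb{E}w\|_2=O(m^{3/2})$, and finishes with Gershgorin (Lemma~\ref{lem_disc}) over the $b_*+1$ entries per row.

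What each approach buys: yours is more elementary because it only needs $\operatorname{Var}(H_{i,k})=O(m)$, i.e.\ a moment bound for \emph{linear} sums of $\{x_{j-k}\epsilon_j\}$, and never touches the dependence structure of the quadratic $H_iH_i^*$. The paper's longer route is not wasted effort, though: immediately after the lemma it asserts $\|\Lambda-\mathbb{E}\Lambda\|=O_{\mathbb{P}}(b_*\zeta_c^2\sqrt{m/n})$ ``by a discussion similar to the lemma above,'' where $\Lambda$ is the average of $\Upsilon_{i,m}\Upsilon_{i,m}^*$ over $i$. That $\sqrt{m/n}$ rate requires genuine cancellation in the average, which the paper obtains by reusing the physical dependence measure it computed for the \emph{entries} of $H_iH_i^*$ and applying Lemma~\ref{lem_con} to the sum over $i$. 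Your triangle-inequality bound, being purely a size estimate on each summand, cannot be recycled for that step; if you later need $\|\Lambda-\mathbb{E}\Lambda\|$ you would have to revisit the quadratic dependence analysis the paper already set up here.
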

\begin{proof}
Using the basic property of Kronecker product, we find 
\begin{equation*}
\Upsilon_{i,m} \Upsilon_{i,m}^*=\frac{1}{m} \left[ H_i H_i^*  \right] \otimes \left[  \mathbf{B}(\frac{i}{n})  \mathbf{B}^*(\frac{i}{n})\right].
\end{equation*}
As a consequence, we have that 
\begin{equation}\label{eq_upsionone}
 \left| \left| \Upsilon_{i,m} \Upsilon_{i,m}^*-\mathbb{E} \Big( \Upsilon_{i,m} \Upsilon_{i,m}^* \Big) \right| \right| \leq 
  \left| \left| H_i H_i^*-\mathbb{E} \Big( H_i H_i^* \Big) \right| \right| \frac{\zeta^2_c}{m},
\end{equation}
where we use the property of the spectrum of Kronecker product and the fact  $  \mathbf{B}(\frac{i}{n})  \mathbf{B}^*(\frac{i}{n})$ is a rank-one matrix. Now we focus on studying the first entry of $H_i H_i^*,$ which is of the form $w=\Big(\sum_{j=i}^{i+m} x_{j-1}\epsilon_j \Big)^2.$ We first study its physical dependence measure.  Note that $w$ is $\mathcal{F}_{i+m}$ measurable and can be written as $f_i(\mathcal{F}_{i+m}).$ Denote $w(l)=f_i(\mathcal{F}_{i+m,l}).$ By (\ref{eq_physcialbounbounbound}) and Lemma \ref{lem_con}, we conclude that 
\begin{equation}\label{eq_priorbound}
 \left | \left | \sum_{j=i}^{i+m} x_{j-1} \epsilon_j \right| \right|_q=O(\sqrt{m}).
\end{equation} 
Recall that by Jensen's inequality, if $x \in L^q, q>4$, we have 
\begin{equation}\label{eq_jensonbound}
\mathbb{E}|x|^2 \leq (\mathbb{E}|x|^q)^{2/q}.
\end{equation} 
Therefore, by (\ref{eq_priorbound}), (\ref{eq_jensonbound}) and Minkowski's inequality, we have 
\begin{equation*}
||w-w(l)||_2=O(\sqrt{m}) \Big( \sum_{j=l-m}^l \delta(j,q) \Big).
\end{equation*}
By Lemma \ref{lem_con} and (\ref{eq_physcialbounbounbound}), we have 
\begin{equation*}
||w-\mathbb{E}w||_2=O(m^{3/2}).
\end{equation*}
Therefore, by (\ref{eq_upsionone}) and Lemma \ref{lem_disc}, we conclude our proof.


\end{proof}
Using a discussion similar to the lemma above and by (\ref{eq_assumimply}), it is easy to conclude that 
\begin{equation}\label{eq_lambdaconvergence}
|| \Lambda-\mathbb{E}(\Lambda) ||=O_{\mathbb{P}} \Big( b_*\zeta_c^2 \sqrt{m/n} \Big).
\end{equation}

Next, we show that the covariance of a stationary time series can be used  to approximate $\mathbb{E} \Big(H_jH_j^* \Big),$ where the stationary time series can closely preserve the long-run covariance matrix (\ref{eq_longrunh}). Recall (\ref{eq_defnh}).  Denote the stationary time series as
\begin{equation*}
\widetilde{\bm{h}}_{i,j}=\mathbf{U}(\frac{i}{n}, \mathcal{F}_{j}), \ i \leq j \leq i+m.
\end{equation*}
Correspondingly, we can define 
\begin{equation*}
\widetilde{\Upsilon}_{i,m}=\frac{1}{\sqrt{m}}\widetilde{H}_i \otimes \mathbf{B}(\frac{i}{n}), \ \widetilde{H}_i=\sum_{j=i}^{i+m} \widetilde{\bm{h}}_{i,j}.
\end{equation*} 
\begin{lem}\label{lem_a3} Under the assumptions of Theorem \ref{thm_bootstrapping}, we have that for all $b_*+1 \leq i \leq n-m$ 
\begin{equation*}
\left| \left| \mathbb{E} \Big( \Upsilon_{i,m} \Upsilon_{i,m}^*\Big)-\mathbb{E} \Big(\widetilde{\Upsilon}_{i,m} \widetilde{\Upsilon}_{i,m}^* \Big) \right| \right|=O \Big( \Big( \frac{mb_*^2}{n} \Big)^{1-2/\tau}  b_*\zeta_c^2 \Big).
\end{equation*}
\end{lem}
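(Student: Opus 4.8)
The plan is to follow the template of the proof of Lemma \ref{lem_a2}: peel off the Kronecker factor, expand the expectation into a double sum over the window $[i,i+m]$, and bound each summand by balancing a smoothness estimate against a dependence-decay estimate. For the first step, write $\Upsilon_{i,m}\Upsilon_{i,m}^{*}=\frac{1}{m}\big(H_iH_i^{*}\big)\otimes\big(\mathbf{B}(\tfrac{i}{n})\mathbf{B}^{*}(\tfrac{i}{n})\big)$ and likewise for $\widetilde\Upsilon_{i,m}$; since the spectrum of a Kronecker product is the product of the spectra and $\|\mathbf{B}(\tfrac{i}{n})\mathbf{B}^{*}(\tfrac{i}{n})\|=\|\mathbf{B}(\tfrac{i}{n})\|^{2}\le\zeta_c^{2}$, it suffices to bound $\big\|\mathbb{E}(H_iH_i^{*})-\mathbb{E}(\widetilde H_i\widetilde H_i^{*})\big\|$ by a constant times $m\,b_*\,(mb_*^{2}/n)^{1-2/\tau}$. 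Expanding the sums gives $\mathbb{E}(H_iH_i^{*})-\mathbb{E}(\widetilde H_i\widetilde H_i^{*})=\sum_{j,k=i}^{i+m}\Delta_{jk}$ with $\Delta_{jk}:=\mathbb{E}(\bm{h}_j\bm{h}_k^{*})-\mathbb{E}(\widetilde{\bm{h}}_{i,j}\widetilde{\bm{h}}_{i,k}^{*})$; each $\Delta_{jk}$ is a $(b_*+1)\times(b_*+1)$ matrix whose entries decay away from the diagonal in the coordinate indices, so Lemma \ref{lem_disc} bounds $\big\|\sum_{j,k}\Delta_{jk}\big\|$ by a maximal row-sum of the $\|\Delta_{jk}\|$.

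To estimate $\|\Delta_{jk}\|$ I would use two competing bounds. First, since $\bm{h}_j=\mathbf{U}(\tfrac{j}{n},\mathcal{F}_j)$ and $\widetilde{\bm{h}}_{i,j}=\mathbf{U}(\tfrac{i}{n},\mathcal{F}_j)$ differ only through the time argument, stochastic Lipschitz continuity (\ref{assum_lip}), together with the boundedness of the AR approximation coefficients from Theorem \ref{lem_phibound}, the short-range covariance decay of Assumption \ref{assum_shortrange}, and the moment bound (\ref{assum_moment}), yields after Cauchy--Schwarz and H\"older an estimate of the form $\|\Delta_{jk}\|\le C\,b_*^{a_1}\,m/n$, the exponent $a_1$ being fixed by the $(b_*+1)$-dimensionality of $\mathbf{U}$ and by the dependence of $\epsilon_j$ on $b_*$ coefficients. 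Second, using the physical-dependence bound (\ref{eq_physcialbounbounbound}), Lemma \ref{lem_con} and Cauchy--Schwarz, both $\mathbb{E}(\bm{h}_j\bm{h}_k^{*})$ and $\mathbb{E}(\widetilde{\bm{h}}_{i,j}\widetilde{\bm{h}}_{i,k}^{*})$, and hence $\Delta_{jk}$, satisfy $\|\Delta_{jk}\|\le C\,b_*^{a_2}\,(|j-k|+1)^{-\tau+c_0}$ for fixed $a_2$ and an arbitrarily small $c_0>0$. Splitting the row-sum at a lag $\ell_0$, using the first bound for $|j-k|\le\ell_0$ and the second for $|j-k|>\ell_0$, optimizing over $\ell_0$, multiplying by the $O(m)$ admissible values of $j$, and passing back through the Kronecker reduction then gives a bound of the announced form, with the exponent $1-2/\tau$ emerging from the optimization; I would relegate to routine arithmetic the reconciliation of the powers of $b_*$ and $\zeta_c$.

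The main obstacle is precisely the pairwise estimate on $\Delta_{jk}$, since $\bm{h}_j=\bm{x}_j\epsilon_j$ is a product in which $\epsilon_j$ is itself the order-$b_*$ AR approximation error; consequently neither a stochastic Lipschitz modulus nor a quantitative dependence-decay rate for the sequence $\{\bm{h}_j\}$ is available directly, and both must be transferred from the corresponding properties of $G$ through the AR approximation theory of Section \ref{sec:preliminary}, uniformly over the window $[i,i+m]$. Each such transfer, as well as every Cauchy--Schwarz, H\"older and Lemma \ref{lem_disc} application, has to be carried out while carefully tracking the growing dimension $b_*+1$, so that it enters the final bound only as in $(mb_*^{2}/n)^{1-2/\tau}b_*\zeta_c^{2}$ rather than as a larger polynomial in $b_*$. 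This bookkeeping, not any genuinely new probabilistic device, is where the difficulty lies.
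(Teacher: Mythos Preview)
Your plan is sound and would reach the stated bound, but the paper takes a different and more direct route after the common Kronecker reduction. Rather than expanding $\mathbb{E}(H_iH_i^*)-\mathbb{E}(\widetilde H_i\widetilde H_i^*)$ into the double sum $\sum_{j,k}\Delta_{jk}$ and balancing smoothness against covariance decay termwise, the paper fixes a scalar entry, writes it as $a^2-b^2$ with $a=\sum_j\widetilde x_{j-1}\widetilde\epsilon_j$ and $b=\sum_j x_{j-1}\epsilon_j$, uses the factorization $a^2-b^2=(a-b)(a+b)$, bounds $\|a+b\|_q=O(\sqrt m)$ via (\ref{eq_priorbound}), and applies Lemma~\ref{lem_con} to the difference sequence $y_j:=\widetilde x_{j-1}\widetilde\epsilon_j-x_{j-1}\epsilon_j$. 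Your min-of-two-bounds then arises automatically as a bound on the physical dependence measure of $\{y_j\}$: the $m/n$ piece follows from $\|y_j\|_q=O(m/n)$ by the stochastic Lipschitz continuity of $G$, and the $\delta(l,2)$ piece is inherited from the underlying process. Gershgorin (Lemma~\ref{lem_disc}) is invoked only once at the end, to pass from a single entry to the full $(b_*{+}1)\times(b_*{+}1)$ matrix.

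The payoff is that the paper entirely sidesteps the obstacle you identify as the main difficulty. By working with the scalar $a-b$ and using concentration on the already-differenced sequence $\{y_j\}$, it never needs a Lipschitz modulus or a quantitative dependence-decay rate for the $(b_*{+}1)$-dimensional process $\mathbf{U}$; only the entrywise properties of $G$ from Assumption~\ref{assum_local} enter, and no $b_*$-bookkeeping is required until the final Gershgorin step. Your termwise bound on $\|\Delta_{jk}\|$ would force exactly the transfer-through-AR-approximation you correctly flag as cumbersome. Both routes lead to the same rate, but the paper's avoids the double sum altogether by treating the whole window as a single partial sum and factoring the square.
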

\begin{proof}
Similar to (\ref{eq_upsionone}), we have 
\begin{equation*}
 \left| \left| \mathbb{E} \Big( \Upsilon_{i,m} \Upsilon_{i,m}^*\Big)-\mathbb{E} \Big(\widetilde{\Upsilon}_{i,m} \widetilde{\Upsilon}_{i,m}^* \Big) \right| \right|\leq  \left| \left| \mathbb{E}(\widetilde{H}_i \widetilde{H}_i^*)-\mathbb{E} ( H_i H_i^* ) \right| \right| \frac{\zeta^2_c}{m}.
\end{equation*}
We also focus on studying the first entry of $ \widetilde{H}_i \widetilde{H}_i^*- H_i H_i^*,$ which is of the form $\Big(\sum_{j=i}^{i+m} \widetilde{x}_{j-1}\widetilde{\epsilon}_j \Big)^2-\Big(\sum_{j=i}^{i+m} x_{j-1}\epsilon_j \Big)^2.$ We first observe that
\begin{align*}
\left| \left| \sum_{j=i}^{i+m} \Big( \widetilde{x}_{j-1} \widetilde{\epsilon}_j-x_{j-1} \epsilon_j \Big) \right| \right|_2=O\left( \left| \left| \sum_{j=i}^{i+m} x_{j-1}(\widetilde{\epsilon}_{j}-\epsilon_{j}) \right| \right|_2 \right).
\end{align*}
Hence, by Lemma \ref{lem_con} and Assumption \ref{assum_local}, we have 
\begin{equation*}
\left| \left| \sum_{j=i}^{i+m} \Big( \widetilde{x}_{j-1} \widetilde{\epsilon}_j-x_{j-1} \epsilon_j \Big) \right| \right|_2=O\Big( \sqrt{m} \sum_{j=0}^{\infty}\min \{\frac{m}{n}, \delta(j,2)\} \Big)=O \Big( \sqrt{m} \Big( \frac{m}{n} \Big)^{1-2/\tau} \Big),
\end{equation*}
where we use the fact $\delta(j,2) \leq \delta(j,q).$ Hence, by (\ref{eq_jensonbound}) and Minkowski's inequality,  we have that 
\begin{equation*}
 \left| \left| \Big(\sum_{j=i}^{i+m} \widetilde{x}_{j-1}\widetilde{\epsilon}_j \Big)^2-\Big(\sum_{j=i}^{i+m} x_{j-1}\epsilon_j \Big)^2 \right| \right|_2 =O\Big(m \Big( \frac{m}{n} \Big)^{1-2/\tau}  \Big).
\end{equation*}
This concludes our proof using Lemma \ref{lem_disc}. 
\end{proof}
Furthermore, by \cite[Lemma 4]{ZZ1} and a discussion similar to (\ref{eq_upsionone}), we have 
 \begin{equation*}
\left| \left| \mathbb{E} \Big(\widetilde{\Upsilon}_{i,m} \widetilde{\Upsilon}_{i,m}^* \Big)-\Omega(\frac{i}{n}) \otimes \left(\mathbf{B}(\frac{i}{n}) \mathbf{B}(\frac{i}{n})^*\right) \right| \right|=O\Big(\frac{b_* \zeta_c^2}{m} \Big). 
 \end{equation*}
Hence, by Assumption \ref{assu_smoothtrend} and  \cite[Theorem 1.1]{HT}, we have
\begin{equation*}
\left\|\frac{1}{n-m-b_*} \sum_{i=b_*+1}^{n-m} \mathbb{E} \Big(\widetilde{\Upsilon}_{i,m} \widetilde{\Upsilon}_{i,m}^* \Big)-\int_0^1 \Omega(t) \otimes \left(\mathbf{B}(t) \mathbf{B}(t)^*\right)dt \right\|=O\Big(\frac{b_*\zeta_c^2}{m}+\frac{1}{(n-m-b_*)^2}\Big).
\end{equation*}
We now come back to our proof of Theorem \ref{thm_bootstrapping}. Under (\ref{eq_assumimply}), by Lemmas \ref{lem_con}, \ref{lem_a2} and \ref{lem_a3}, we have that 
\begin{equation}\label{eq_lamdaomega}
||\Lambda-\Omega||=O_{\mathbb{P}}\Big( \theta(m) \Big), \ \theta(m)=b_* \zeta_c^2 \left( \sqrt{\frac{m}{n}}+\frac{1}{\sqrt{n}}\Big(\frac{mb_*^2}{n} \Big)^{1-2/\tau}+\frac{1}{m} \right).
\end{equation}
It is easy to check that as $\tau>4,$
\begin{equation*}
\frac{1}{\sqrt{n}}\Big(\frac{m}{n} \Big)^{1-2/\tau} \leq \frac{1}{m},
\end{equation*}
where we use the assumption that $m \ll n.$ By definition, conditional on the data, $\Phi$ is normally distributed. Hence, we may write
\begin{equation*}
\Phi \equiv \Lambda^{1/2} \mathbf{G},
\end{equation*}
where $\mathbf{G} \sim \mathcal{N} (0, I_p)$ and $\equiv$ means that they have the same distribution. Define $r=\text{Rank}(\Lambda^{1/2} \widehat{\Gamma} \Lambda^{1/2})$ and the eigenvalues of $\Lambda^{1/2} \widehat{\Gamma} \Lambda^{1/2}$ as $\lambda_1 \geq \lambda_2 \geq \cdots \geq \lambda_r>0.$ By (\ref{eq_lambdaconvergence}) and Assumption \ref{assu_basis}, it is easy to see that  $\lambda_i=O(1)$ when conditional on the data. Therefore, by Lindeberg's central limit theorem, we have 
\begin{equation*}
\frac{\mathbf{G}^* \Lambda^{1/2} \widehat{\Gamma} \Lambda^{1/2} \mathbf{G}-\sum_{i=1}^r \lambda_i}{(\sum_{i=1}^r \lambda_i^2)^{1/2}} \Rightarrow \mathcal{N}(0,2). 
\end{equation*}
Recall that $d_1 \geq d_2 \geq \cdots \geq d_r>0$ are the eigenvalues of $\Omega^{1/2} \Gamma \Omega^{1/2}$ and note $d_i=O(1).$  Recall that $r=O(b_* c)$ and denote the set $\mathcal{A} \equiv \mathcal{A}_n$ as 
\begin{align}\label{eq_constructionset}
\mathcal{A} \equiv \mathcal{A}_n:=\Big\{|\sum_{i=1}^r (\lambda_i-d_i)| \leq b_n \sqrt{b_* c}, \ |\sum_{i=1}^r (\lambda_i^2-d_i^2)| \leq c_n \sqrt{b_* c} \Big\},
\end{align}
where $b_n, c_n=o(1).$ On the event $\mathcal{A},$ we have that
\begin{align} \label{eq_gapproximation}
\frac{\mathbf{G}^*  \Lambda^{1/2} \widehat{\Gamma} \Lambda^{1/2} \mathbf{G}-f_1}{f_2}&=\frac{\mathbf{G}^*  \Lambda^{1/2} \widehat{\Gamma} \Lambda^{1/2} \mathbf{G}-\sum_{i=1}^r \lambda_i+\sum_{i=1}^r \lambda_i-f_1}{(\sum_{i=1}^r \lambda_i^2)^{1/2}} \left(\frac{(\sum_{i=1}^r \lambda_i^2)^{1/2}}{f_2} \right) \nonumber \\
& = \frac{\mathbf{G}^* \Lambda^{1/2} \widehat{\Gamma} \Lambda^{1/2} \mathbf{G}-\sum_{i=1}^r \lambda_i}{(\sum_{i=1}^r \lambda_i^2)^{1/2}}+o(1). 
\end{align}
Therefore, we have shown that Theorem \ref{thm_bootstrapping} holds true on the event $\mathcal{A}.$ Under (\ref{eq_assumimply}), using a discussion similar to (\ref{eq_lamdaomega}) \footnote{The operator norm and the difference of trace share the same order as we apply Lemma \ref{lem_disc}.}  and  (2) of Lemma \ref{lem_coll}, we find that
\begin{equation*}
|| \widehat{\Sigma}-\Sigma ||=O_{\mathbb{P}} \Big( \frac{\zeta_c \log n}{\sqrt{n}} \Big).
\end{equation*}   
Consequently, we have that
$$\mathbb{P}(\mathcal{A})=1-o(1).$$ Hence, we can conclude our proof for $\mathcal{T}$ using Theorem \ref{thm_gaussian}. 

For the second step, by Theorems \ref{lem_phibound} and  a discussion similar to \cite[Theorem 3.7 and Corollary 3.8]{DZ1}, we conclude that
\begin{equation*}
\sup_{i>b} |\epsilon_i-\widehat{\epsilon}^b_i|=O_{\mathbb{P}}(\vartheta(n)), \ \vartheta_n= n^{1/q} \Big( b_* \zeta_c \sqrt{\frac{\log n}{n}}+n^{-d\mathfrak{a}} \Big).
\end{equation*}
Denote $\widehat{\Upsilon}_{i,m}$ by replacing $\bm{h}_{i}$ with $\widehat{\bm{h}}_i,$ i.e.,
\begin{equation}\label{eq_hatUp}
\widehat{\Upsilon}_{i,m}:=\frac{1}{\sqrt{m}} \widehat{H}_i \otimes \mathbf{B}(\frac{i}{n}), \ \widehat{H}_i=\sum_{j=i}^{i+m} \widehat{\bm{h}}_j.
\end{equation}
 By a discussion similar to Lemma \ref{lem_a3}, we conclude that
\begin{equation*}
\sup_{b_*+1 \leq i \leq n-m} \left| \left|  \Upsilon_{i,m} \Upsilon_{i,m}^*-\widehat{\Upsilon}_{i,m} \widehat{\Upsilon}_{i,m}^*  \right| \right|=O_{\mathbb{P}}(b_* \zeta_c^2 \vartheta_n).
\end{equation*}
Hence, we have 
\begin{equation*}
||\Lambda-\widehat{\Lambda} ||=O_{\mathbb{P}}\Big(\frac{1}{\sqrt{n}} b_* \zeta_c^2 \vartheta_n \Big).
\end{equation*}
Using a discussion similar to (\ref{eq_gapproximation}), we can conclude our proof.   

%
%

\end{proof}

%
\subsection{Proofs of the main results of Section \ref{sec_application}} { In this subsection, we prove the results related to optimal forecasting as in Section \ref{sec_application}. Since the forecasting part is an application of our AR approximation theory established in Section \ref{sec:arappoximate}, the proof of Theorem \ref{thm_prediction} is relatively straightforward using the established results. Moreover, Theorem \ref{thm_finalresult} is a slight generalization of  \cite[Theorem 3.7 and Corollary 3.8]{DZ1} and we will focus on explaining the differences. }

\begin{proof}[\bf Proof of Theorem \ref{thm_prediction}] 
Note that by adding and subtracting $\widehat{x}_{n+1},$ we have 
\begin{align*}
\mathbb{E}(x_{n+1}-\widehat{x}_{n+1}^b)^2=\mathbb{E}(x_{n+1}-\widehat{x}_{n+1})^2+\mathbb{E}(\widehat{x}_{n+1}-\widehat{x}_{n+1}^b)^2+2\mathbb{E}(x_{n+1}-\widehat{x}_{n+1})(\widehat{x}_{n+1}-\widehat{x}_{n+1}^b).
\end{align*}
It suffices to control the second and third terms of the above equations. First, 
\begin{equation}\label{eq_expandifference}
\widehat{x}_{n+1}-\widehat{x}_{n+1}^b=\sum_{j=1}^b(\phi_{nj}-\phi_j(1))x_{n+1-j}+\sum_{j=b+1}^n
\phi_{nj} x_{n+1-j}.
\end{equation}
Therefore, by Theorem \ref{thm_locallynonzero}, (\ref{assum_moment}) and (\ref{eq_phibound1}), we find that there exists some constant $C>0$ such that 
{
\begin{equation}\label{eq_aaa}
\mathbb{E}(\widehat{x}_{n+1}-\widehat{x}_{n+1}^b)^2 \leq C \left((\log b)^{\tau}  b^{-(\tau-2)}+\frac{b^{2.5}}{n} \right)^2.
\end{equation}}
Second,  since $\widehat{x}_{n+1}$ is the best linear forecasting based on $\{x_1, \cdots, x_n\}$, then $x_{n+1}-\widehat{x}_{n+1}$ is uncorrelated with  any linear combination of $\{x_1, \cdots, x_n\}$. Together with (\ref{eq_expandifference}), we readily obtain that 
\begin{equation*}
\mathbb{E}(x_{n+1}-\widehat{x}_{n+1})(\widehat{x}_{n+1}-\widehat{x}_{n+1}^b)=0.
\end{equation*}
This completes our proof. 
\end{proof}

\begin{proof}[\bf Proof of Theorem \ref{thm_finalresult}]
By a discussion similar to \cite[Theorem 3.7 and Corollary 3.8]{DZ1}, we find that
\begin{equation}\label{eq_coefficientbound}
\sup_{i>b, 0 \leq j \leq b} \left| \varphi_j(\frac{i}{n})-\widehat{\varphi}_j(\frac{i}{n}) \right|=O_{\mathbb{P}}\left( b\zeta_c \sqrt{\frac{\log n}{n}}+bc^{-d} \right). 
\end{equation}
In fact, the only difference of the proof is that our design matrix $Y $ is the $(n-b) \times (b+1)c$ rectangular matrix whose $i$-th row is $\bm{x}_{i} \otimes \mathbf{B}(\frac{i}{n}).$  Here $\bm{x}_{i}=(1,x_{i-1}, \cdots, x_{i-b}) \in \mathbb{R}^{b+1},$ $\mathbf{B}(i/n)=(\alpha_1(\frac{i}{n}), \cdots, \alpha_c(\frac{i}{n})) \in \mathbb{R}^c$ and $\otimes$ is the Kronecker product. Then it is easy to see that the proof follows from (\ref{eq_coefficientbound}), (\ref{eq_vvvffff}) and the smoothness of $\varphi(\cdot)$.  Together with (\ref{eq_vvvffff}), we can conclude our proof. 

\end{proof}

\section{Choices of tuning parameters}\label{sec:choiceparameter}

In this section, we discuss how to choose the parameters. As we have seen from (\ref{eq_forecast}) and (\ref{eq_phiform}), we need to choose two important parameters in order to get an accurate prediction: $b$ and $c.$ We use a data-driven procedure proposed in \cite{bishop2013pattern} to choose such parameters. 

For a given integer $l,$ say $l=\lfloor 3 \log_2 n \rfloor,$ we divide the time series into two parts: the training part $\{x_i\}_{i=1}^{n-l}$ and the validation part $\{x_i\}_{i=n-l+1}^n.$  With some preliminary initial pair $(b,c)$, we propose a sequence of candidate pairs  $(b_i, c_j), \ i=1,2,\cdots, u, \ j=1,2,\cdots, v,$ in an appropriate neighbourhood of $(b,c)$ where $u, v$ are some given integers. For each pair of the choices $(b_i, c_j),$  we fit a time-varying AR($b_i$) model (i.e., $b=b_i$ in (\ref{eq_forecast})) with $c_j$ sieve basis expansion using the training data set.  Then using the fitted model, we forecast the time series in the validation part of the time series.  Let $\widehat x_{n-l+1,ij}, \cdots, \widehat x_{n,ij}$ be the forecast of $x_{n-l+1},..., x_n,$ respectively using the parameter pair $(b_i, c_j)$. Then we choose the pair $(b_{i_0},c_{j_0})$ with the minimum sample MSE of forecast, i.e.,
 \begin{equation*} 
({i_0},{j_0}):= \argmin_{((i,j): 1 \leq i \leq u, 1 \leq j \leq v)} \frac{1}{l}\sum_{k=n-l+1}^n (x_k-\widehat x_{k,ij})^2.
 \end{equation*}

{
Then we discuss how to choose $m$ for practical implementation. In \cite{ZZ1}, the author used the minimum volatility (MV) method to choose the window size $m$ for the scalar covariance function. The MV method does not depend on the specific form of the underlying time series dependence structure and hence is robust to misspecification of
the latter structure \cite{politis1999subsampling}. The MV method utilizes the fact that the covariance structure of $\widehat{\Omega}$ becomes stable when the
block size $m$ is in an appropriate range, where $\widehat{\Omega}=E[\Phi\Phi^*|(x_1,\cdots,x_n)]$ is defined as 
{ 
\begin{equation}\label{eq_widehatomega}
\widehat{\Omega}:=\frac{1}{(n-m-b+1)m} \sum_{i=b+1}^{n-m} \Big[ \Big(\sum_{j=i}^{i+m} \bm{h}_i \Big) \otimes \Big( \mathbf{B}(\frac{i}{n}) \Big) \Big] \times \Big[ \Big(\sum_{j=i}^{i+m} \bm{h}_i \Big) \otimes \Big( \mathbf{B}(\frac{i}{n}) \Big) \Big]^*.
\end{equation}
}    Therefore, it desires to minimize the standard errors of the latter covariance structure in a suitable range of candidate $m$'s.

In detail, for a give large value $m_{n_0}$ and a neighborhood control parameter $h_0>0,$  we can choose a sequence of window sizes $m_{-h_0+1}<\cdots<m_1< m_2<\cdots<m_{n_0}<\cdots<m_{n_0+h_0}$  and obtain $\widehat{\Omega}_{m_j}$ by replacing $m$ with $m_j$ in (\ref{eq_defnphi}), $j=-h_0+1,2, \cdots, n_0+h_0.$ For each $m_j, j=1,2,\cdots, m_{n_0},$ we calculate the matrix norm error of $\widehat{\Omega}_{m_j}$ in the $h_0$-neighborhood, i.e., 
\begin{equation*}
\mathsf{se}(m_j):=\mathsf{se}(\{ \widehat{\Omega}_{m_{j+k}}\}_{k=-h_0}^{h_0})=\left[\frac{1}{2h_0} \sum_{k=-h_0}^{h_0} \| \overline{\widehat{\Omega}}_{m_j}-\widehat{\Omega}_{m_j+k} \|^2 \right]^{1/2},
\end{equation*}
where $\overline{\widehat{\Omega}}_{m_j}=\sum_{k=-h_0}^{h_0} \widehat{\Omega}_{m_j+k} /(2h_0+1).$
Therefore, we choose the estimate of $m$ using 
\begin{equation*}
\widehat{m}:=\argmin_{m_1 \leq m \leq m_{n_0}} \mathsf{se}(m).
\end{equation*}
Note that in \cite{ZZ1} the author used $h_0=3$ and we also adopt this choice in the current paper. 
}

{ \section{Additional remarks}\label{sec_suppl_remarks}
In this section, we provide a few more remarks. First, we explain a little bit more on the connection of Rosenblatt transform and (\ref{eq_xi}). We start with bivariate dependence. For a pair of jointly distributed random variables $(X,Y),$ let $F_{Y|X}$ be the conditional distribution function of $Y$ given $X.$ For $u \in (0,1),$ denote the conditional quantile function as
\begin{equation*}
G(x,u)=\inf\{y \in \mathbb{R}: F_{Y| X}(y| x) \geq u\}.
\end{equation*}  
Under some suitable conditions on $F_{Y|X},$ for some random variable $U \sim \text{Uniform}[0,1]$ independent of $X,$ people can conclude that $(X,Y)$ has the same distribution as $(X, G(X,U)).$ Consequently, we can write $Y=G(X,U).$ The above idea can be generalized to study multivariate dependence of the random vector $(X_1, \cdots, X_n). $ Denote $\mathbf{X}_m=(X_1, \cdots, X_m)$ for $m \leq n.$ For some measurable function $G_n$ and $U[0,1]$ random variable $U_n$ independent of $\mathbf{X}_n,$  in terms of distribution, we can write $X_n=G_n(\mathbf{X}_{n-1}, U_n).$ Iterating this process, as in equation (12) of \cite{wu2010new}, we have that for a sequence of i.i.d. $U[0,1]$ random variables $U_i, 1 \leq i \leq n,$ which are independent of $\mathbf{X}_n,$ and some measurable functions $H_1, \cdots, H_n,$ we have that 
\begin{equation} \label{eq_distributionequivalence}
\begin{pmatrix}
X_1 \\
X_2  \\
\vdots \\
X_n
\end{pmatrix} \cong
\begin{pmatrix}
H_1(\mathbf{U}_1) \\
H_2(\mathbf{U}_2) \\
\vdots \\
H_n(\mathbf{U}_n)
\end{pmatrix},
\end{equation}
where $\cong$ means equal in distribution and $\mathbf{U}_m=(U_1, \cdots, U_m), m \leq n.$  

Based on the above summary, we can see that for any time series $\{x_i\}, x_i \equiv x_{i,n},$ regardless of the stationarity, we can always rewrite it using some physical representation using Rosenblatt transform. This is the main advantage of this transform that guarantees the existence of the physical representation. Even though the physical representation is not unique so that Rosenblatt transform may not offer the most convenient  choice and the physical representation may not be explicit in general even for the linear stationary process, our established theory and methodology only require the existence of the physical representation form. This indicates that our method and theory are quite general and do not need to reply on specific structural assumptions of the underlying time series.

\section{Additional simulation results}
%

\subsection{Finite sample numerical comparison of (\ref{eq_simplified}) and (\ref{eq_defnntori})}\label{sec_differentTcomparison} In this subsection, we conduct some numerical simulations to compare the finite sample performance of  (\ref{eq_simplified}) and (\ref{eq_defnntori}) using the Fourier and Legendre basis functions. For these two specific bases, they satisfy (\ref{eq_basispropertyhaha}) so that (\ref{eq_defnntori}) can be reduced to (\ref{eq_simplified}). 

For comparison, we follow the setup of Section \ref{sec_poer} to compare the performance of these two equivalent expressions for the two basis functions. First, the finite sample accuracy of the two statistics under the null hypothesis (\ref{eq_testingcases}) are recorded in Tables \ref{table_cttfourier} and \ref{table_cttpoly} for Fourier basis and Legendre basis respectively using the models from Section \ref{simu_intro}. We conclude that even though (\ref{eq_simplified}) and (\ref{eq_defnntori}) are equivalent, (\ref{eq_simplified}) seems to be more accurate overall, especially when the sample size $n$ is smaller. Second, we study the power of the tests under the alternative (\ref{eq_testingcasesalternative}). Analogously, we find from Tables \ref{table_cttfourierpower} and \ref{table_cttpolypower} that (\ref{eq_simplified}) seems to have a better finite sample performance overall when the sample size $n$ is smaller. 

\begin{table}[ht]
\begin{center}
\setlength\arrayrulewidth{1pt}
\renewcommand{\arraystretch}{1.3}
{\fontsize{9}{9}\selectfont 
\begin{tabular}{|c|ccccc|ccccc|}
\hline
      & \multicolumn{5}{c|}{$\alpha=0.1$}                                                                                                                       & \multicolumn{5}{c|}{$\alpha=0.05$}                                                                                                                        \\ \hline
Statistics/Model & \multicolumn{1}{c|}{1} & \multicolumn{1}{c|}{2} & \multicolumn{1}{c|}{3} & \multicolumn{1}{c|}{4} & \multicolumn{1}{c|}{5}  & \multicolumn{1}{c|}{1} & \multicolumn{1}{c|}{2} & \multicolumn{1}{c|}{3} & \multicolumn{1}{c|}{4} & \multicolumn{1}{c|}{5} \\ 
\hline
     & \multicolumn{10}{c|}{$n$=256}                                                                                                                                                                                                                                                                                          \\
   \hline
(\ref{eq_defnntori})    &          0.132  & 0.11                          &                          0.12 &         0.13                  &                          0.11 &                    0.067   & 0.07    & 0.06   &                                     0.04 &       0.06                    \\
(\ref{eq_simplified})    &     0.128       & 0.089                          &                          0.114 &   0.125                        &           0.09                &              0.06    & 0.059         &  0.041  &                                     0.061 &                            0.041 \\
\hline
      & \multicolumn{10}{c|}{$n$=512}                                                                                                                                                                                                                                                                                         \\
       \hline
(\ref{eq_defnntori})     &            0.09      & 0.13                 &                          0.11 &      0.13                     &  0.127  &                         0.05 & 0.06 & 0.067   &    0.068                                  &              0.069                 \\
(\ref{eq_simplified})     &    0.09    & 0.091    &                  0.113                                &          0.126                 &                       0.108   & 0.046   & 0.049                       & 0.065    &        0.058                              &                            0.043 \\
 \hline
\end{tabular}
}
\end{center}
\caption{Comparison of simulated type I errors using the setup (\ref{eq_testingcases}) with Fourier bases. The results are reported based on 1,000 simulations.  
}
\label{table_cttfourier}
\end{table}
  
\begin{table}[ht]
\begin{center}
\setlength\arrayrulewidth{1pt}
\renewcommand{\arraystretch}{1.3}
{\fontsize{9}{9}\selectfont 
\begin{tabular}{|c|ccccc|ccccc|}
\hline
      & \multicolumn{5}{c|}{$\alpha=0.1$}                                                                                                                       & \multicolumn{5}{c|}{$\alpha=0.05$}                                                                                                                        \\ \hline
Statistics/Model & \multicolumn{1}{c|}{1} & \multicolumn{1}{c|}{2} & \multicolumn{1}{c|}{3} & \multicolumn{1}{c|}{4} & \multicolumn{1}{c|}{5}  & \multicolumn{1}{c|}{1} & \multicolumn{1}{c|}{2} & \multicolumn{1}{c|}{3} & \multicolumn{1}{c|}{4} & \multicolumn{1}{c|}{5} \\ 
\hline
     & \multicolumn{10}{c|}{$n$=256}                                                                                                                                                                                                                                                                                          \\
   \hline
(\ref{eq_defnntori})   &     0.091       & 0.136                          &                          0.13 &   0.12                        &           0.13                &              0.06    & 0.059         &  0.041  &                                     0.07 &                            0.07   \\
(\ref{eq_simplified})    &     0.093       & 0.118                          &                          0.128 &   0.087                        &           0.118                &              0.061    & 0.041         &  0.061  &                                     0.064 &                            0.059 \\
\hline
      & \multicolumn{10}{c|}{$n$=512}                                                                                                                                                                                                                                                                                         \\
       \hline
(\ref{eq_defnntori})    &    0.09    & 0.094    &                  0.092                                &          0.12                 &                       0.118   & 0.04   & 0.058                       & 0.07    &        0.043                              &                            0.057             \\
(\ref{eq_simplified})     &    0.091    & 0.093    &                  0.108                                &          0.11                 &                       0.114   & 0.058   & 0.042                       & 0.064    &        0.053                              &                            0.054 \\
 \hline
\end{tabular}
}
\end{center}
\caption{Comparison of simulated type I errors using the setup (\ref{eq_testingcases}) with Legendre orthogonal polynomials. The results are reported based on 1,000 simulations.  
}
\label{table_cttpoly}
\end{table}

\begin{table}[ht]
\begin{center}
\setlength\arrayrulewidth{1pt}
\renewcommand{\arraystretch}{1.3}
{\fontsize{9}{9}\selectfont 
\begin{tabular}{|c|ccccc|ccccc|}
\hline
      & \multicolumn{5}{c|}{$\delta=0.2/0.5$}                                                                                                                       & \multicolumn{5}{c|}{$\delta=0.35/0.7$}                                                                                                                        \\ \hline
Statistics/Model & \multicolumn{1}{c|}{1} & \multicolumn{1}{c|}{2} & \multicolumn{1}{c|}{3} & \multicolumn{1}{c|}{4} & \multicolumn{1}{c|}{5}  & \multicolumn{1}{c|}{1} & \multicolumn{1}{c|}{2} & \multicolumn{1}{c|}{3} & \multicolumn{1}{c|}{4} & \multicolumn{1}{c|}{5} \\ 
\hline
     & \multicolumn{10}{c|}{$n$=256}                                                                                                                                                                                                                                                                                          \\
   \hline
(\ref{eq_defnntori})    &          0.84  & 0.86                          &                          0.84 &         0.837                  &                          0.94 &                    0.97   & 0.97    & 0.96   &                                     0.99 &       0.98                    \\
(\ref{eq_simplified})    &     0.843       & 0.85                          &                          0.859 &   0.877                        &           0.943                &              0.969    & 0.983         &  0.971  &                                     0.986 &                            0.983 \\
\hline
      & \multicolumn{10}{c|}{$n$=512}                                                                                                                                                                                                                                                                                         \\
       \hline
(\ref{eq_defnntori})     &            0.91      & 0.9                 &                          0.96 &      0.9                     &  0.93  &                         0.96 & 0.97 & 0.973   &    0.98                                  &              0.97                 \\
(\ref{eq_simplified})     &    0.907    & 0.94    &                  0.95                                &          0.93                 &                       0.946   & 0.94   & 0.978                       & 0.98    &        0.976                              &                            0.97 \\
 \hline
\end{tabular}
}
\end{center}
\caption{Comparison of simulated power using the setup (\ref{eq_testingcasesalternative}) with Fourier bases. The results are reported based on 1,000 simulations.  
}
\label{table_cttfourierpower}
\end{table}
  
\begin{table}[ht]
\begin{center}
\setlength\arrayrulewidth{1pt}
\renewcommand{\arraystretch}{1.3}
{\fontsize{9}{9}\selectfont 
\begin{tabular}{|c|ccccc|ccccc|}
\hline
      & \multicolumn{5}{c|}{$\alpha=0.1$}                                                                                                                       & \multicolumn{5}{c|}{$\alpha=0.05$}                                                                                                                        \\ \hline
Statistics/Model & \multicolumn{1}{c|}{1} & \multicolumn{1}{c|}{2} & \multicolumn{1}{c|}{3} & \multicolumn{1}{c|}{4} & \multicolumn{1}{c|}{5}  & \multicolumn{1}{c|}{1} & \multicolumn{1}{c|}{2} & \multicolumn{1}{c|}{3} & \multicolumn{1}{c|}{4} & \multicolumn{1}{c|}{5} \\ 
\hline
     & \multicolumn{10}{c|}{$n$=256}                                                                                                                                                                                                                                                                                          \\
   \hline
(\ref{eq_defnntori})   &     0.8       & 0.806                          &                          0.81 &   0.84                        &           0.83                &              0.97    & 0.968         &  0.95  &                                     0.97 &                            0.91   \\
(\ref{eq_simplified})    &     0.834       & 0.846                          &                          0.84 &   0.836                        &           0.87                &              0.97    & 0.99         &  0.94  &                                     0.97 &                            0.97 \\
\hline
      & \multicolumn{10}{c|}{$n$=512}                                                                                                                                                                                                                                                                                         \\
       \hline
(\ref{eq_defnntori})    &    0.9    & 0.91    &                  0.92                                &          0.893                 &                       0.91   & 0.94   & 0.95                       & 0.98    &        0.97                              &                            0.96             \\
(\ref{eq_simplified})     &    0.931    & 0.94    &                  0.92                                &          0.92                 &                       0.918   & 0.94   & 0.98                       & 0.974    &        0.97                              &                            0.977 \\
 \hline
\end{tabular}
}
\end{center}
\caption{Comparison of simulated power using the setup (\ref{eq_testingcasesalternative}) with Legendre orthogonal polynomials. The results are reported based on 1,000 simulations.  
}
\label{table_cttpolypower}
\end{table}

\subsection{Numerical results using plug-in estimators and comparison discussions with the proposed approach}\label{sec_numericalplguin} In this subsection, we examine the performance of applying Proposition \ref{prop_normal} with plug-in estimators. Moreover, we compare these results with our proposed method, i.e., Algorithm \ref{alg:boostrapping}, and justify the arguments in Remark \ref{rem_collectionofremark} using some numerical simulations. 

We consider the same simulation settings as in Section \ref{sec_poer} using the plug-in estimators. That is to say, we estimate $\Omega$ using $\widehat{\Omega}$ that
\begin{equation}
\widehat{\Omega}=\frac{1}{(n-m-b_*)} \sum_{i=b_*+1}^{n-m} \widehat{\Upsilon}_{i,m} \widehat{\Upsilon}_{i,m}^*,
\end{equation} 
where $\widehat{\Upsilon}_{i,m}$ is defined in (\ref{eq_hatUp}). Moreover, the estimator of $\Gamma,$ denoted as $\widehat{\Gamma},$ is estimated in the same way as in (\ref{eq_defnmathcalt}). Consequently, we can estimate the unknowns $f_1$ and $f_2$ in Proposition \ref{prop_normal} using 
\begin{equation}\label{eq_festimatorsuppl}
\widehat{f}_k=\left( \operatorname{Tr} \left[ \widehat{\Omega}^{1/2} \widehat{\Gamma} \widehat{\Omega}^{1/2} \right] \right)^{1/k}, k=1,2. 
\end{equation}
Consequently, under $\mathbf{H}_0,$ we have the following result asymptotically
\begin{equation}\label{eq_asymptoticpluginestimator}
\frac{nT-\widehat{f}_1}{\widehat{f}_2} \Rightarrow \mathcal{N}(0,2). 
\end{equation}
Then we can use (\ref{eq_asymptoticpluginestimator}) to test the null hypothesis. The results are recorded in Tables \ref{table_typeiplug} and \ref{table_powerplug} receptively for the type I error and power. We compare these two tables with Tables \ref{table_typei} and \ref{table_power} where our Algorithm \ref{alg:boostrapping} is implemented. We find that overall,  our proposed method has better finite sample performance, especially when the sample size $n$ is smaller.

\begin{table}[ht]
\begin{center}
\setlength\arrayrulewidth{1pt}
\renewcommand{\arraystretch}{1.3}
{\fontsize{9}{9}\selectfont 
\begin{tabular}{|c|ccccc|ccccc|}
\hline
      & \multicolumn{5}{c|}{$\alpha=0.1$}                                                                                                                       & \multicolumn{5}{c|}{$\alpha=0.05$}                                                                                                                        \\ \hline
Basis/Model & \multicolumn{1}{c|}{1} & \multicolumn{1}{c|}{2} & \multicolumn{1}{c|}{3} & \multicolumn{1}{c|}{4} & \multicolumn{1}{c|}{5}  & \multicolumn{1}{c|}{1} & \multicolumn{1}{c|}{2} & \multicolumn{1}{c|}{3} & \multicolumn{1}{c|}{4} & \multicolumn{1}{c|}{5} \\ 
\hline
     & \multicolumn{10}{c|}{$n$=256}                                                                                                                                                                                                                                                                                          \\
   \hline
Fourier     &          0.148  & 0.123                          &                          0.132 &         0.129                  &                          0.125 &                    0.08   & 0.07    & 0.075   &                                     0.074 &       0.081                    \\
Legendre    &     0.13       & 0.134                          &                          0.129 &   0.13                        &           0.141                &              0.083    & 0.079         &  0.081  &                                     0.082 &                            0.08 \\
Daubechies-9    &  0.13 & 0.129   & 0.12                        &      0.14             &         0.132                 & 0.083                 &0.076 &  0.085  &          0.078                            &                 0.084   \\
\hline
      & \multicolumn{10}{c|}{$n$=512}                                                                                                                                                                                                                                                                                         \\
       \hline
Fourier     &            0.115      & 0.128                 &                          0.12 &      0.128                     &  0.125  &                         0.065 & 0.068 & 0.071   &    0.063                                  &              0.068                 \\
Legendre     &    0.088    & 0.095    &                  0.11                                &          0.118                 &                       0.12   & 0.061   & 0.042                       & 0.071    &        0.065                              &                            0.061 \\
Daubechies-9     &  0.087  & 0.085    &      0.11                   &                         0.11   & 0.084                          &          0.04               &  0.047 & 0.062 &                                 0.063  &                  0.052           \\
 \hline
\end{tabular}
}
\end{center}
\caption{Simulated type I errors using the plug-in estimators, i.e., (\ref{eq_asymptoticpluginestimator}). The simulation settings are the same as in the caption of Table \ref{table_typei}. }
\label{table_typeiplug}
\end{table}

\begin{table}[ht]
\begin{center}
\setlength\arrayrulewidth{1pt}
\renewcommand{\arraystretch}{1.3}
{\fontsize{9}{9}\selectfont 
\begin{tabular}{|c|ccccc|ccccc|}
\hline
      & \multicolumn{5}{c|}{$\delta=0.2/0.5$}                                                                                                                       & \multicolumn{5}{c|}{$\delta=0.35/0.7$}                                                                                                                        \\ \hline
Basis/Model & \multicolumn{1}{c|}{1} & \multicolumn{1}{c|}{2} &  \multicolumn{1}{c|}{3} & \multicolumn{1}{c|}{4} & \multicolumn{1}{c|}{5}  & \multicolumn{1}{c|}{1} & \multicolumn{1}{c|}{2} &  \multicolumn{1}{c|}{3} & \multicolumn{1}{c|}{4} & \multicolumn{1}{c|}{5} \\ 
\hline
      & \multicolumn{10}{c|}{$n$=256}                                                                                                                                                                                                                                                                                          \\
   \hline
Fourier     & 0.83     & 0.88                              &                          0.83 &                  0.82          &                      0.91 &                        0.965 &  0.97 & 0.97  & 0.98                                      &              0.985              \\
Legendre     &         0.81   & 0.78                          &                          0.79 &               0.81           & 0.83                          & 0.96    & 0.96                       &  0.956  &                                     0.964 &        0.9                     \\
Daubechies-9    &     0.79 & 0.8 & 0.88                        &                       0.8   & 0.82                          &           0.96       & 0.96 & 0.97  & 0.985                                    &        0.977           \\
\hline
      & \multicolumn{10}{c|}{$n$=512}                                                                                                                                                                                                                                                                                         \\
       \hline
Fourier     &  0.92         & 0.896                       &                           0.95&          0.91              &  0.94                      &           0.95               &  0.97  &  0.96 &    0.97                                &                  0.97             \\
Legendre    &     0.92         & 0.9                       &                          0.91&                        0.91   & 0.9                           &           0.938    & 0.95          &0.97  &        0.97                             &        0.95   \\
Daubechies-9     &             0.89  & 0.85                       &                      0.91     &  0.9                          &                       0.92    &  0.95                         & 0.97 & 0.98  &        0.98                              &  0.96                             \\
 \hline
\end{tabular}
}
\end{center}
\caption{ Simulated power using the plug-in estimators, i.e., (\ref{eq_asymptoticpluginestimator}). The simulation settings are the same as in the caption of Table \ref{table_power}. 
}
\label{table_powerplug}
\end{table}

\subsection{Additional simulation results for Section \ref{sec_sub_suppledalhaus}}\label{sec_additionalofadditionalbasis} In this subsection, we enclose more simulation results associated with Section \ref{sec_sub_suppledalhaus} using the Legendre orthogonal polynomials and orthogonal wavelet basis functions.

\begin{figure}[!ht]
\hspace*{-2.0cm}
\begin{subfigure}{0.55\textwidth}
\includegraphics[width=7.8cm,height=4.8cm]{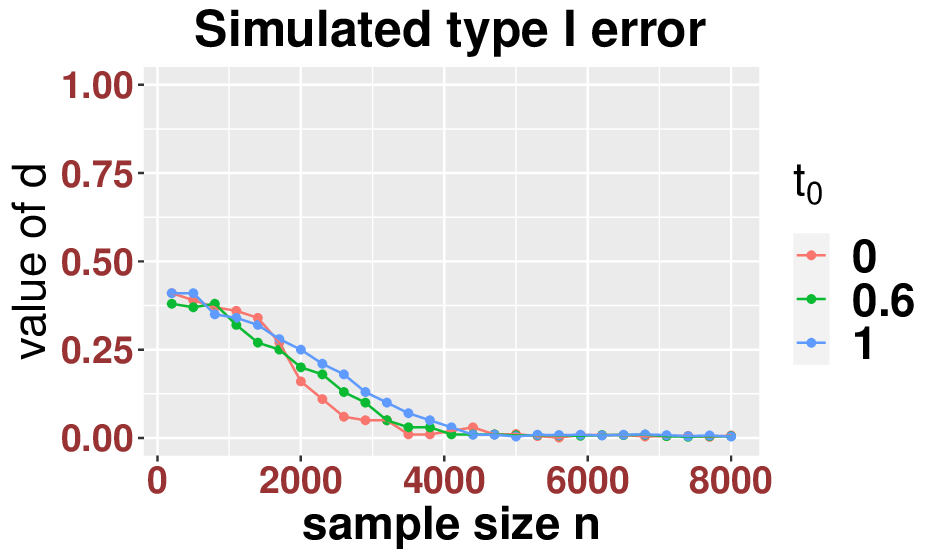}
\end{subfigure}
\begin{subfigure}{0.55\textwidth}
\includegraphics[width=7.8cm,height=4.8cm]{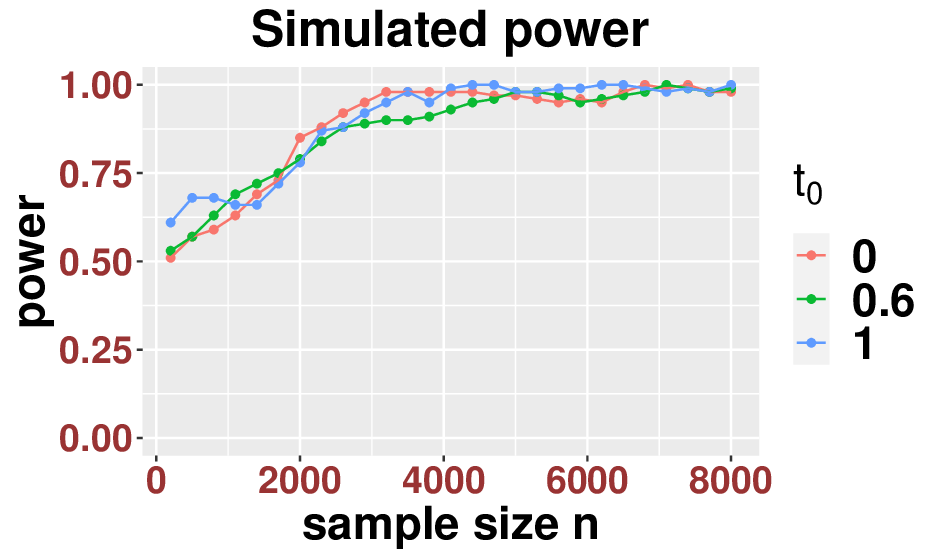}
\end{subfigure}
\caption{ {\footnotesize Simulated type I errors and power.  Here we used the Legendre orthogonal polynomials. The setup is the same as in the caption of Figure \ref{fig_additional1}.}  }
\end{figure}


\begin{figure}[!ht]
\hspace*{-2.0cm}
\begin{subfigure}{0.32\textwidth}
\includegraphics[width=6cm,height=5cm]{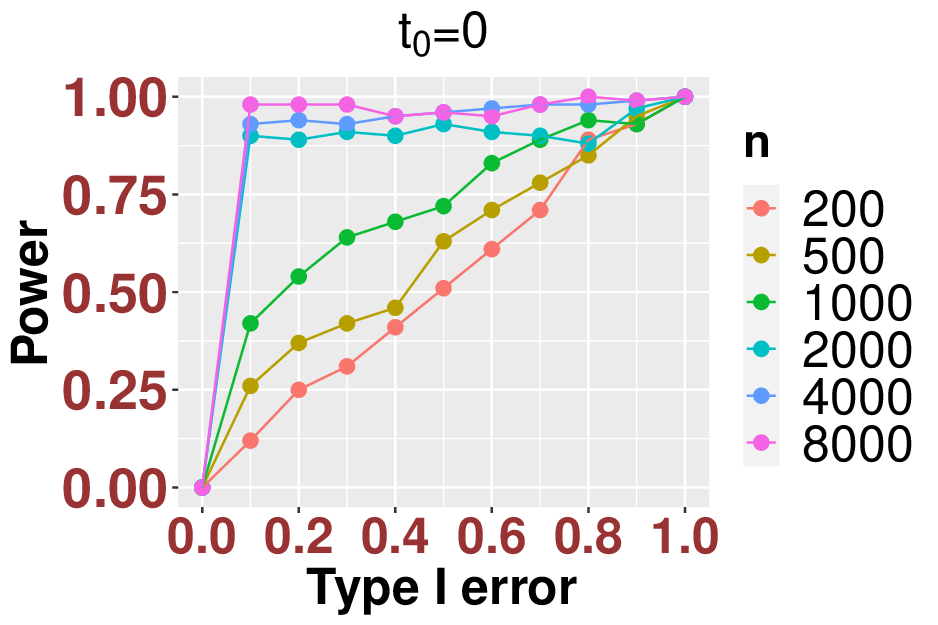}
\end{subfigure}
\begin{subfigure}{0.32\textwidth}
\includegraphics[width=6cm,height=5cm]{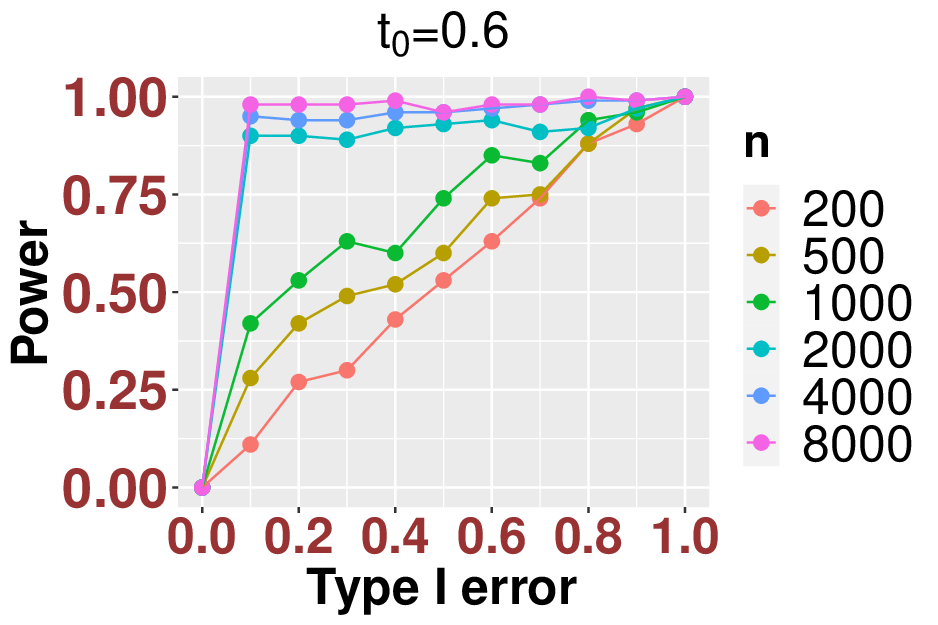}
\end{subfigure}
\begin{subfigure}{0.35\textwidth}
\includegraphics[width=6cm,height=5cm]{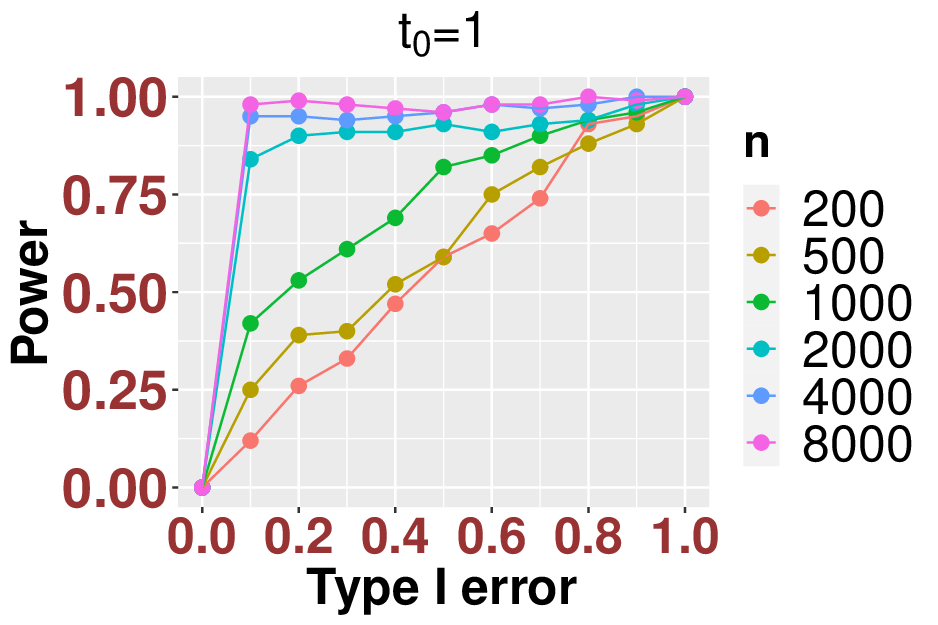}
\end{subfigure}
\caption{ {\footnotesize ROC curves for different values of $t_0.$ Here we used the Legendre orthogonal polynomials. The setup is the same as in the caption of Figure \ref{fig_additional1}. } }
\end{figure}


\begin{figure}[!ht]
\hspace*{-2.0cm}
\begin{subfigure}{0.55\textwidth}
\includegraphics[width=7.8cm,height=4.8cm]{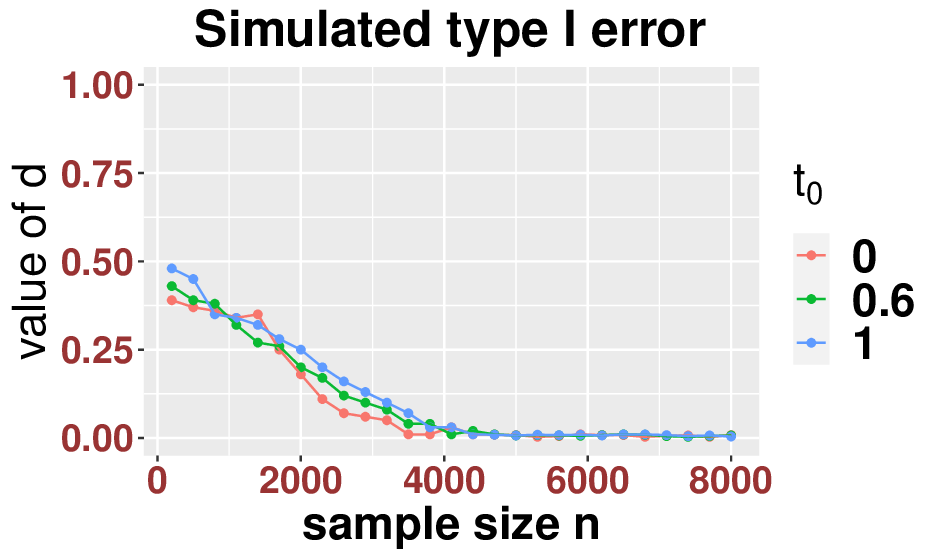}
\end{subfigure}
\begin{subfigure}{0.55\textwidth}
\includegraphics[width=7.8cm,height=4.8cm]{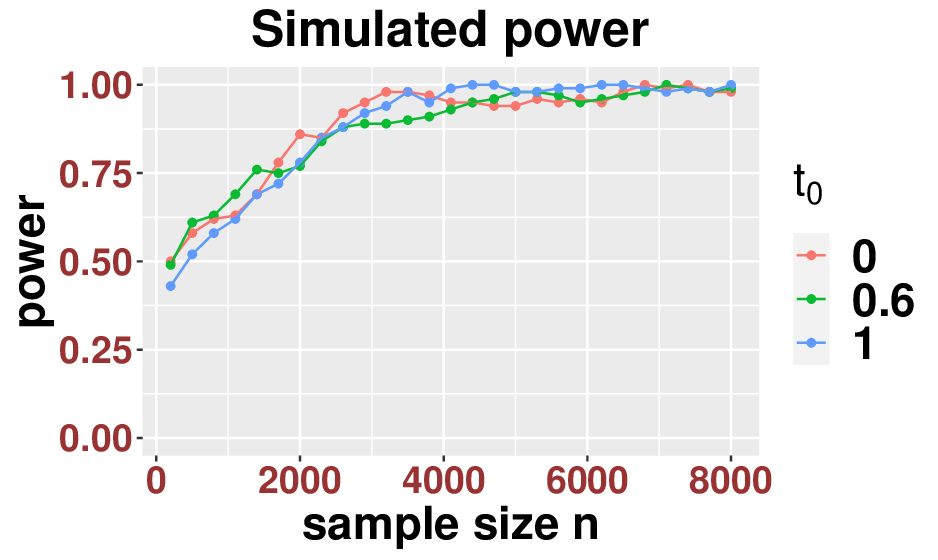}
\end{subfigure}
\caption{ {\footnotesize Simulated type I errors and power.  Here we used the Daubechies-9 orthogonal wavelets. The setup is the same as in the caption of Figure \ref{fig_additional1}.}  }
\end{figure}


\begin{figure}[!ht]
\hspace*{-2.0cm}
\begin{subfigure}{0.32\textwidth}
\includegraphics[width=6cm,height=5cm]{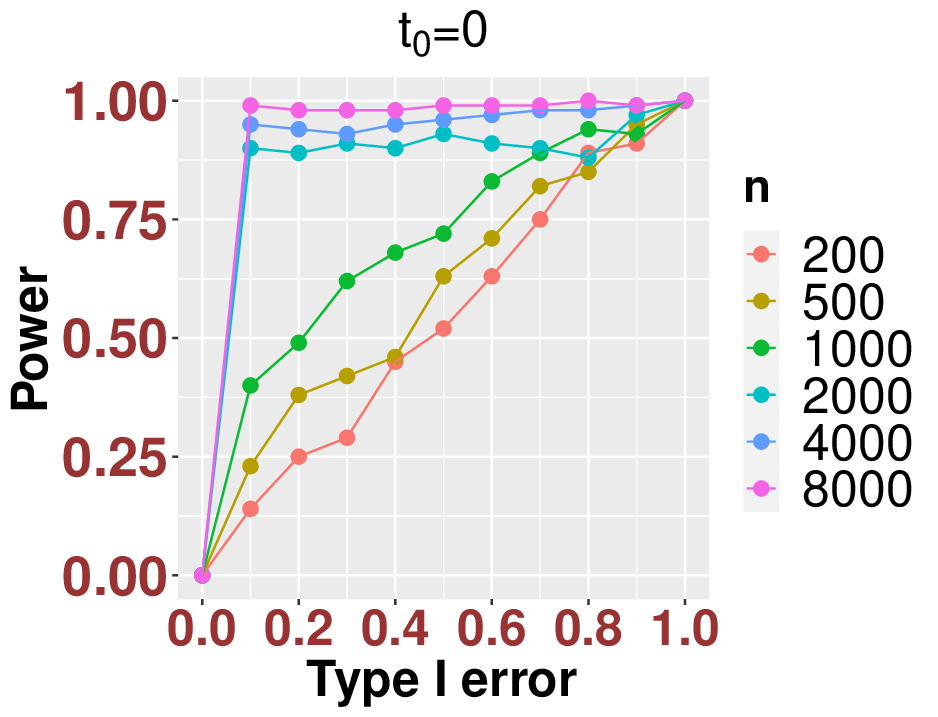}
\end{subfigure}
\begin{subfigure}{0.32\textwidth}
\includegraphics[width=6cm,height=5cm]{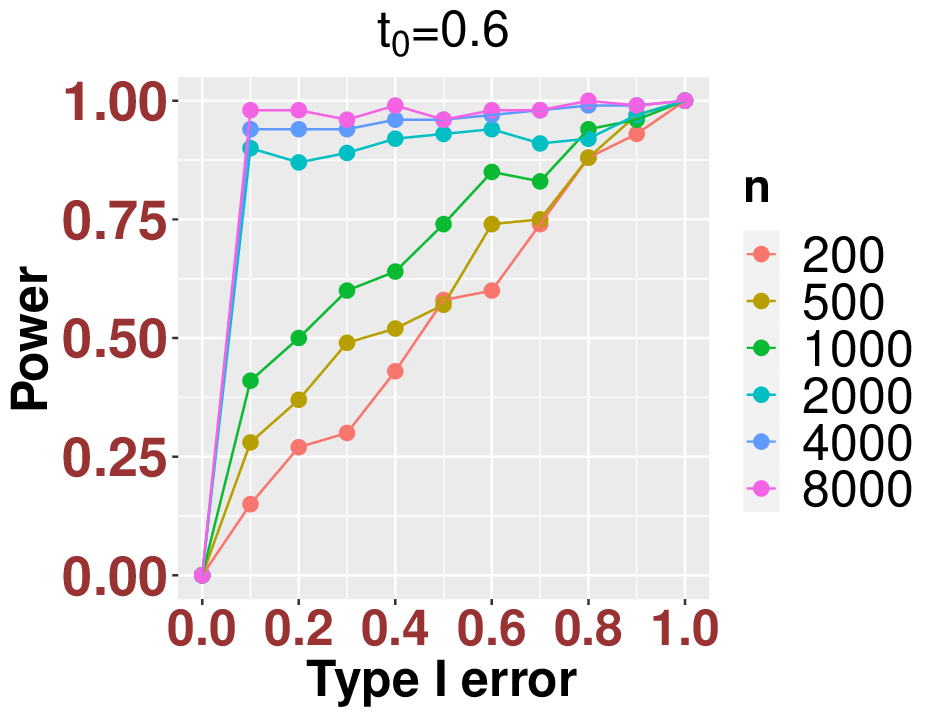}
\end{subfigure}
\begin{subfigure}{0.35\textwidth}
\includegraphics[width=6cm,height=5cm]{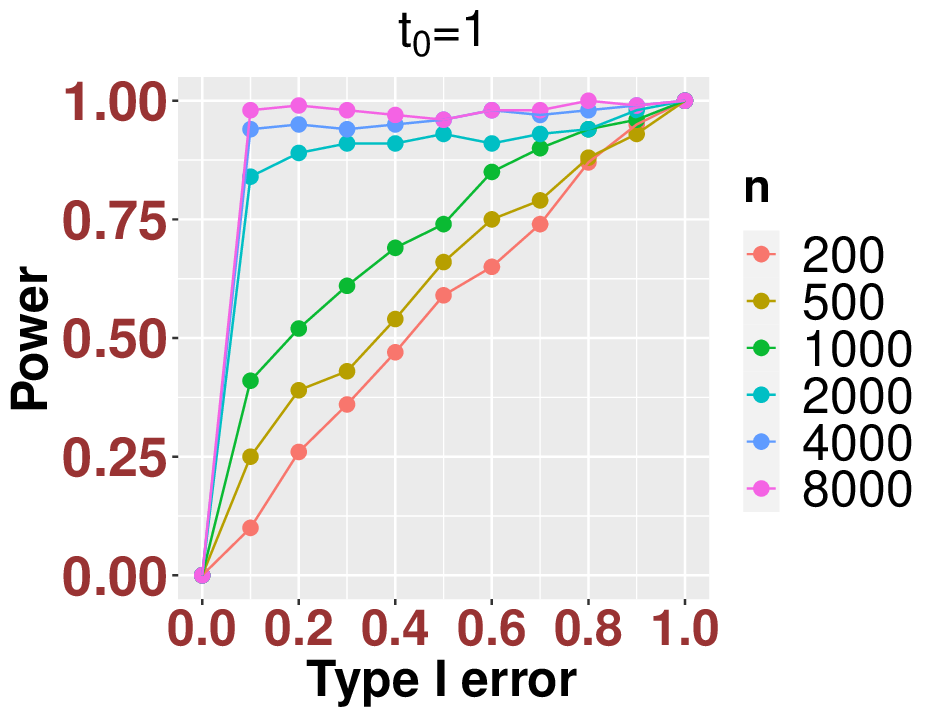}
\end{subfigure}
\caption{ {\footnotesize ROC curves for different values of $t_0.$ Here we used the Daubechies-9 orthogonal wavelets. The setup is the same as in the caption of Figure \ref{fig_additional1}. } }
\end{figure}


\subsection{Additional figures}\label{sec_figurestockappend}
In this subsection, we provide the plot of the time series for the stock return data studied in Section  \ref{sec:realdata}. The plot is provided in Figure \ref{0814_timeseries}.

\begin{figure}[ht]
\centering
\includegraphics[width=11cm,height=6cm]{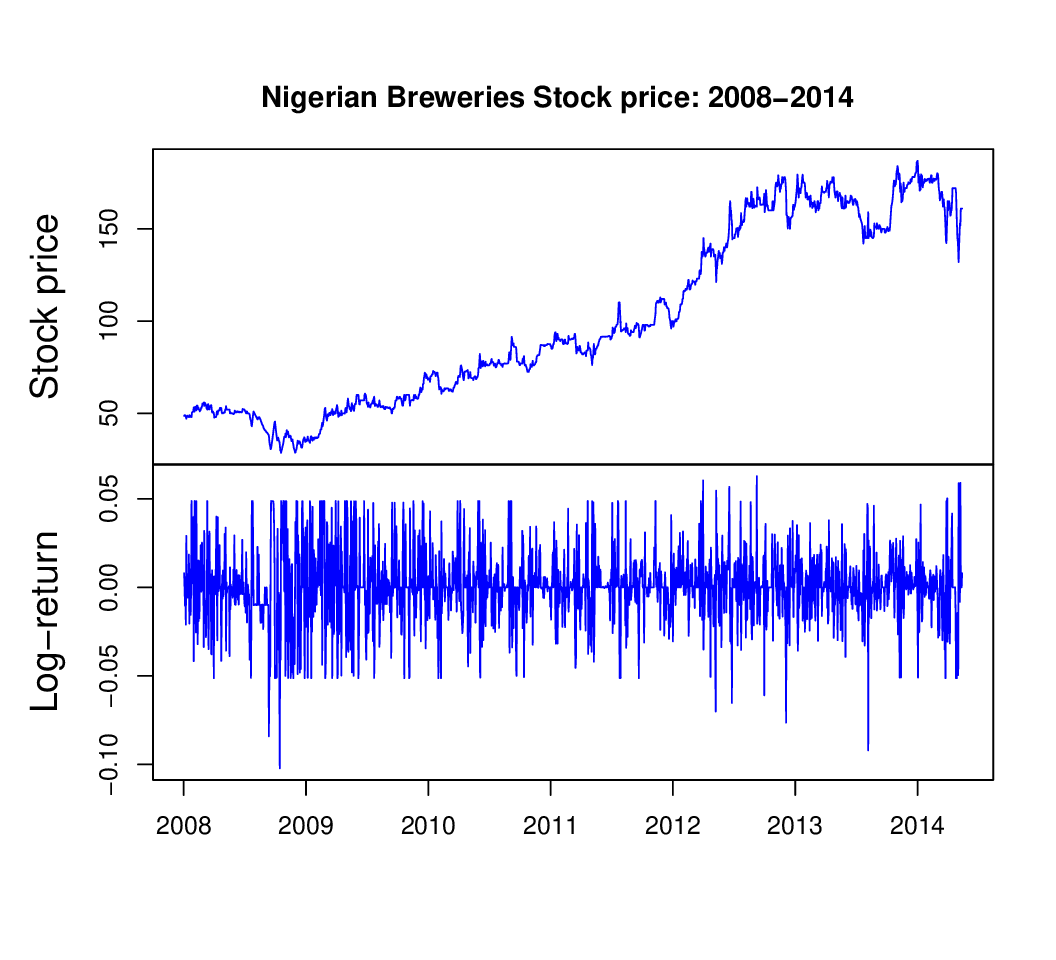}
\caption{Nigerian Breweries stock return from 2008 to 2014. The upper panel is the original stock price and the lower panel is the log-return.}
\label{0814_timeseries}
\end{figure}

}

\section{Some auxiliary lemmas}

In this section, we collect some preliminary lemmas which will be used for our technical proofs.  First of all, we collect a result which provides a deterministic bound for the spectrum of a  square matrix.  Let  $A=(a_{ij})$ be a complex $ n\times n$ matrix. For  $1 \leq i \leq n,$ let  $R_{i}=\sum _{{j\neq {i}}}\left|a_{{ij}}\right| $ be the sum of the absolute values of the non-diagonal entries in the  $i$-th row. Let  $ D(a_{ii},R_{i})\subseteq \mathbb {C} $ be a closed disc centered at $a_{ii}$ with radius  $R_{i}$. Such a disc is called a \emph{Gershgorin disc.}
\begin{lem}[Gershgorin circle theorem]\label{lem_disc} Every eigenvalue of $ A=(a_{ij})$ lies within at least one of the Gershgorin discs  $D(a_{ii},R_{i})$, where $R_i=\sum_{j\ne i}|a_{ij}|$.
\end{lem}

{ The next lemma provides a lower bound for the eigenvalues of a Toeplitz matrix in terms of its associated spectral density function.  Since the autocovariance matrix of any stationary time series is a Toeplitz matrix, we can use the following lemma to bound the smallest eigenvalue of the autocovariance matrix. It will be used in the proof of Proposition \ref{prop_pdc} and can be found in \cite[Lemma 1]{XW}. 
\begin{lem}\label{lem_spectralbound}
Let $h$ be a continuous function on $[-\pi, \pi].$ Denote by $\underline{h}$ and $\overline{h}$ its minimum and maximum, respectively. Define $a_k=\int_{-\pi}^{\pi} h(\theta) e^{- \mathrm{i} k \theta} d \theta$ and the $T \times T$ matrix $\Gamma_T=(a_{s-t})_{1 \leq s, t \leq T}.$ Then 
\begin{equation*}
 2 \pi \underline{h} \leq \lambda_{\min}(\Gamma_T) \leq \lambda_{\max}(\Gamma_T) \leq 2 \pi \overline{h}. 
\end{equation*}
\end{lem}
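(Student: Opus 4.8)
The plan is to prove the two-sided bound by the Rayleigh-quotient characterization of the extreme eigenvalues, after first observing that $\Gamma_T$ is Hermitian. Since $h$ is real-valued one has $a_{-k}=\overline{a_k}$, so $(\Gamma_T)_{ts}=a_{t-s}=\overline{a_{s-t}}=\overline{(\Gamma_T)_{st}}$, i.e.\ $\Gamma_T=\Gamma_T^*$; in particular its eigenvalues are real and $\lambda_{\min}(\Gamma_T)=\inf_{\|v\|=1}v^*\Gamma_Tv$ and $\lambda_{\max}(\Gamma_T)=\sup_{\|v\|=1}v^*\Gamma_Tv$. Hence it suffices to show that $2\pi\underline h\,\|v\|^2\le v^*\Gamma_Tv\le 2\pi\overline h\,\|v\|^2$ for every $v=(v_1,\dots,v_T)^*\in\mathbb{C}^T$.

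The key step is to rewrite the quadratic form as a weighted $L^2$ norm of a trigonometric polynomial. Substituting the definition of $a_k$ and interchanging the finite sum with the integral,
\[
v^*\Gamma_Tv=\sum_{s,t=1}^T\bar v_s\, a_{s-t}\, v_t=\int_{-\pi}^{\pi}h(\theta)\,\Big|\sum_{t=1}^T v_t\, e^{\mathrm{i}t\theta}\Big|^2\,d\theta,
\]
where I use that $\sum_{s}\bar v_s e^{-\mathrm{i}s\theta}=\overline{\sum_s v_s e^{\mathrm{i}s\theta}}$ to identify the integrand with $h(\theta)|P(\theta)|^2$ for $P(\theta):=\sum_t v_t e^{\mathrm{i}t\theta}$. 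Bounding $h$ pointwise between $\underline h$ and $\overline h$ and using the orthogonality relation $\int_{-\pi}^{\pi}e^{\mathrm{i}(t-s)\theta}\,d\theta=2\pi\,\mathbf{1}(s=t)$, which gives $\int_{-\pi}^{\pi}|P(\theta)|^2\,d\theta=2\pi\sum_{t=1}^T|v_t|^2=2\pi\|v\|^2$, yields the displayed inequalities, and hence the lemma via the Rayleigh characterization above.

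This argument is essentially routine, so I do not anticipate a genuine obstacle; the only points requiring care are the bookkeeping of complex conjugates when identifying the integrand with $|P(\theta)|^2$, the (trivial, since finite) interchange of summation and integration, and the remark that it is precisely the reality of $h$ that makes $\Gamma_T$ Hermitian and thus justifies the min/max-Rayleigh-quotient step.
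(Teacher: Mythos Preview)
Your proof is correct and is the standard argument for this classical fact about Toeplitz matrices. The paper does not actually prove this lemma; it merely states it as an auxiliary result and cites \cite[Lemma 1]{XW}, so there is no in-paper proof to compare against.
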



%

}

The following lemma indicates that, under suitable condition, the inverse of a banded matrix can also be approximated by another banded-like matrix. It will be used in the proof of Theorem \ref{lem_phibound} and can be found in \cite[Proposition 2.2]{DMS}.  We say that $A$ is $m$-banded if $$A_{ij}=0, \ \text{if} \ |i-j|>m/2.$$
\begin{lem}\label{lem_band} Let $A$ be a positive definite, $m$-banded, bounded and bounded invertible matrix.  Let $[a,b]$ be the smallest interval containing the spectrum of $A.$ Set $r=b/a, q=(\sqrt{r}-1)/(\sqrt{r}+1)$ and set $C_0=(1+r^{1/2})^2/(2ar)$ and $\lambda=q^{2/m}.$ Then we have 
\begin{equation*}
|(A^{-1})_{ij}| \leq C \lambda^{|i-j|},
\end{equation*}
where 
\begin{equation*}
C:=C(a,r)=\max\{a^{-1}, C_0\}. 
\end{equation*}

\end{lem}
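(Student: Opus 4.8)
The plan is to reconstruct the argument behind \cite[Proposition 2.2]{DMS}, which rests on three ingredients: the stability of bandedness under taking polynomials of $A$, the spectral theorem, and a quantitative Chebyshev approximation bound for $1/x$ on $[a,b]$. First I would note that if $A$ is $m$-banded (i.e.\ $A_{ij}=0$ for $|i-j|>m/2$), then $A^{k}$ is $mk$-banded, so $p(A)_{ij}=0$ whenever $|i-j|>mk/2$ for any polynomial $p$ of degree at most $k$. Hence, for such $p$ and any $i,j$ with $|i-j|>mk/2$, we have $(A^{-1})_{ij}=(A^{-1}-p(A))_{ij}$, so that $|(A^{-1})_{ij}|\le\|A^{-1}-p(A)\|_{\mathrm{op}}$. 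Since $A$ is symmetric positive definite with spectrum contained in $[a,b]$, the functional calculus yields $\|A^{-1}-p(A)\|_{\mathrm{op}}=\max_{x\in\operatorname{spec}(A)}|x^{-1}-p(x)|\le\max_{x\in[a,b]}|x^{-1}-p(x)|$. This reduces the matrix-decay estimate to controlling the scalar quantity $E_k:=\inf_{\deg p\le k}\max_{x\in[a,b]}|x^{-1}-p(x)|$.

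The heart of the proof---and the step I expect to be the main obstacle---is the sharp bound $E_k\le C_0 q^{k+1}$ with $q=(\sqrt r-1)/(\sqrt r+1)$ and $C_0=(1+\sqrt r)^2/(2ar)$. I would obtain this by the classical route: affinely map $[a,b]$ onto $[-1,1]$ via $x=\varphi(t)=\tfrac{a+b}{2}+\tfrac{b-a}{2}t$, so that $1/x$ becomes a function of $t$ with a single pole at $t=-t_0$, where $t_0=(a+b)/(b-a)>1$. A short computation with the Joukowski parametrization $t_0=\tfrac12(\rho+\rho^{-1})$, $\rho>1$, gives $\rho=(\sqrt r+1)/(\sqrt r-1)=1/q$, so $1/x$ is analytic inside the Bernstein ellipse $E_\rho$. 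One then either truncates the Chebyshev expansion of $t\mapsto 1/\varphi(t)$ at degree $k$ and estimates the geometric tail of its Chebyshev coefficients (ratio $\sim q$), or---more explicitly---takes the near-best approximant $p_k(x)=\bigl(1-T_{k+1}(t(x))/T_{k+1}(-t_0)\bigr)/x$, a degree-$k$ polynomial in $x$ satisfying $|x^{-1}-p_k(x)|\le a^{-1}/|T_{k+1}(t_0)|$, and then lower-bounds $|T_{k+1}(t_0)|$ by a multiple of $\rho^{\,k+1}$. The bookkeeping that produces exactly the constant $C_0$ (rather than a cruder multiple of $a^{-1}$) is the delicate part and is what one reads off from \cite{DMS}.

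Finally I would assemble the estimate. For $i\ne j$, write $d=|i-j|\ge 1$ and choose $k=\lceil 2d/m\rceil-1$, the largest nonnegative integer with $mk/2<d$; then $k+1\ge 2d/m$, so $|(A^{-1})_{ij}|\le E_k\le C_0 q^{k+1}\le C_0 q^{2d/m}=C_0\lambda^{d}\le C\lambda^{|i-j|}$. For $i=j$, simply $|(A^{-1})_{ii}|\le\|A^{-1}\|_{\mathrm{op}}\le a^{-1}=C\lambda^{0}$. In the borderline range $1\le d\le m/2$, where $k=0$ and $p_0$ is a constant (so $p_0(A)$ is diagonal), the same inequality holds because $\lambda^{d}\ge\lambda^{m/2}=q$ absorbs the bound $E_0\le C_0 q$. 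Everything outside the Chebyshev estimate---the banding bookkeeping, the spectral reduction, and the choice of $k$---is routine.
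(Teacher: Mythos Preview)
Your reconstruction is correct and is in fact the classical Demko--Moss--Smith argument. The paper, however, does not supply a proof of this lemma at all: it states the result as an auxiliary lemma and simply cites \cite[Proposition 2.2]{DMS}. So there is nothing to compare on the paper's side beyond noting that your write-up is a faithful recovery of the cited result's proof; the banding-of-powers observation, the spectral reduction $\|A^{-1}-p(A)\|\le\sup_{x\in[a,b]}|x^{-1}-p(x)|$, the Chebyshev/Bernstein-ellipse bound $E_k\le C_0 q^{k+1}$, and the choice $k=\lceil 2d/m\rceil-1$ are exactly the steps in \cite{DMS}.
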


The following lemma provides an upper bound for the error of solutions of perturbed linear system. It can be found in the standard numerical analysis literature, for instance see \cite{num_paper}. It will be used in the proof of Theorem \ref{lem_phibound}. Recall that the conditional number of a diagonalizable matrix $A$ is defined as 
\begin{equation*}
\kappa(A)=\frac{\lambda_{\max}(A)}{\lambda_{\min}(A)}.
\end{equation*} 
\begin{lem}\label{lem_nuem} Consider a matrix $A$ and vectors $x,v$ which satisfy the linear system
\begin{equation*}
Ax=v.
\end{equation*}
Suppose that we add perturbations on both $A$ and $v$ such that  
\begin{equation*}
(A+\Delta A)(x+\Delta x)=v+\Delta v.
\end{equation*}
Assuming that  there exists some constant $C>0,$ such that  
\begin{equation*}
\frac{\kappa(A)}{1-\kappa(A) \frac{|| \Delta A||}{A}} \leq C,
\end{equation*}
holds. Then we have that
\begin{equation*}
\frac{| \Delta x |}{| x |} \leq C \left( \frac{|| \Delta A||}{|| A||}+ \frac{|| \Delta v ||}{|| v||} \right).
\end{equation*}
\end{lem}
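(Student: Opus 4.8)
The plan is to follow the classical perturbation argument for linear systems, treating $\|\cdot\|$ as the operator norm on matrices and the $L^2$ norm on vectors, and exploiting that $A$ is positive definite so that $\|A\|=\lambda_{\max}(A)$ and $\|A^{-1}\|=1/\lambda_{\min}(A)$, hence $\|A^{-1}\|\,\|A\|=\kappa(A)$. First I would subtract the unperturbed equation $Ax=v$ from $(A+\Delta A)(x+\Delta x)=v+\Delta v$ and expand the product to obtain the identity $A\,\Delta x=\Delta v-\Delta A\,(x+\Delta x)$, equivalently $\Delta x=A^{-1}\bigl(\Delta v-\Delta A\,(x+\Delta x)\bigr)$. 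Taking norms and using submultiplicativity then gives
\[
\|\Delta x\|\le \|A^{-1}\|\,\|\Delta v\|+\|A^{-1}\|\,\|\Delta A\|\,\|x\|+\|A^{-1}\|\,\|\Delta A\|\,\|\Delta x\|.
\]

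Next I would isolate $\|\Delta x\|$. Writing $\|A^{-1}\|\,\|\Delta A\|=\kappa(A)\,\|\Delta A\|/\|A\|$, the standing hypothesis forces this quantity to be strictly less than $1$ (otherwise the assumed bound on $\kappa(A)/(1-\kappa(A)\|\Delta A\|/\|A\|)$ would be vacuous), so the factor $1-\kappa(A)\|\Delta A\|/\|A\|$ is positive and may be moved to the left, yielding
\[
\|\Delta x\|\Bigl(1-\kappa(A)\tfrac{\|\Delta A\|}{\|A\|}\Bigr)\le \|A^{-1}\|\bigl(\|\Delta v\|+\|\Delta A\|\,\|x\|\bigr).
\]
Finally I would divide by $\|x\|$ and use $\|v\|=\|Ax\|\le \|A\|\,\|x\|$, i.e. $1/\|x\|\le \|A\|/\|v\|$, together with $\|A^{-1}\|=\kappa(A)/\|A\|$, to rewrite everything in terms of relative errors:
\[
\frac{\|\Delta x\|}{\|x\|}\le \frac{\kappa(A)}{1-\kappa(A)\tfrac{\|\Delta A\|}{\|A\|}}\left(\frac{\|\Delta v\|}{\|v\|}+\frac{\|\Delta A\|}{\|A\|}\right),
\]
and the claimed inequality follows from the assumption that the prefactor is bounded by $C$.

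There is no deep obstacle here: the argument is essentially two lines of norm estimates plus a rearrangement. The only points requiring care are (i) checking that the denominator $1-\kappa(A)\|\Delta A\|/\|A\|$ is positive so that the division is legitimate, which is implicit in the hypothesis, and (ii) keeping track of the identities $\|A\|=\lambda_{\max}(A)$ and $\|A^{-1}\|=1/\lambda_{\min}(A)$ that are tied to the positive-definite convention for $\kappa(\cdot)$ used in the paper. I also note that the statement as displayed contains an evident typo, $\|\Delta A\|/A$ in place of $\|\Delta A\|/\|A\|$, which the computation above silently corrects.
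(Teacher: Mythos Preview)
Your argument is correct and is exactly the standard perturbation estimate for linear systems. The paper does not actually prove this lemma; it merely states it and refers to the numerical analysis literature, so there is no in-paper proof to compare against, and your derivation is precisely the textbook one that the citation points to.
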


The following lemma provides Gaussian approximation result on convex sets for the sum of an $m$-dependent sequence, which is \cite[Theorem 2.1]{FX}. It will be used in the proof of Theorem \ref{thm_gaussian}. 
\begin{lem}\label{lem_xf} Let $W=\sum_{i=1}^n X_i$ be a sum of $\mathsf{d}$-dimensional random vectors such that $\mathbb{E}(X_i)=0$ and $\operatorname{Cov} (W)=\Sigma.$ Suppose $W$ can be decomposed as follows: 
\begin{enumerate}
\item $\forall i \in [n], \ \exists i \in N_i \subset [n],$ such that $W-X_{N_i}$ is independent of $X_i$, where $[n]=\{1,\cdots,n\}.$
\item $\forall i \in [n], j \in N_i, \ \exists N_i \subset N_{ij} \subset [n],$ such that $W-X_{N_{ij}}$ is independent of $\{X_i, X_j\}.$
\item  $\forall i \in [n], j \in N_i, \ k \in N_{ij}, \ \exists N_{ij} \subset N_{ijk} \subset [n]$ such that $W-X_{N_{ijk}}$ is independent of $\{X_i, X_j, X_k\}.$
\end{enumerate}
Suppose further that for each $i \in [n], j \in N_i, k \in N_{ij},$
\begin{equation*}
|X_i| \leq \beta, |N_i| \leq n_1, |N_{ij}| \leq n_2, |N_{ijk}| \leq n_3,
\end{equation*}
where $|\cdot|$ is the Euclidean norm of a vector. Then there exists a universal constant $C$ such that 
\begin{equation*}
\sup_{A \in \mathcal{A}}\left| \mathbb{P}(W \in A)-\mathbb{P}(\Sigma^{1/2}Z \in A) \right| \leq C \mathsf{d}^{1/4} n || \Sigma^{-1/2}||^3 \beta^3 n_1(n_2+\frac{n_3}{\mathsf{d}}),
\end{equation*}
where $Z$ is a $\mathsf{d}$-dimensional Gaussian random vector preserving the covariance structure of $W$ and where $\mathcal{A}$ denotes the collection of all the convex sets in $\mathbb{R}^{\mathsf{d}}.$ . 
\end{lem}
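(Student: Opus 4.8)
\textbf{Proof proposal for Lemma \ref{lem_xf}.}
The plan is to prove the lemma by Stein's method for multivariate Gaussian approximation, combined with a smoothing argument adapted to convex sets. The first step is to pass from the indicator of a convex set $A$ to a smooth surrogate. Fix $A \in \mathcal{A}$ and $\epsilon>0$, and choose $h = h_{A,\epsilon} \in C^3(\mathbb{R}^{\mathsf{d}})$ with $\mathbf{1}_{A^{-\epsilon}} \le h \le \mathbf{1}_{A^{\epsilon}}$ and derivative bounds $\|\nabla^k h\|_\infty = O(\epsilon^{-k})$ for $k \le 3$, where $A^{\pm\epsilon}$ denote the $\epsilon$-enlargement and $\epsilon$-shrinkage of $A$. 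Then $|\mathbb{P}(W \in A) - \mathbb{P}(\Sigma^{1/2}Z \in A)|$ is controlled by $|\mathbb{E} h(W) - \mathbb{E} h(\Sigma^{1/2}Z)|$ plus the Gaussian measure of the corridor $A^{\epsilon}\setminus A^{-\epsilon}$ under $N(0,\Sigma)$. The crucial ingredient at this stage is a Gaussian anti-concentration bound for convex sets (a Nazarov/Ball-type estimate on the Gaussian perimeter of convex bodies), which gives $\mathbb{P}(\Sigma^{1/2}Z \in A^{\epsilon}\setminus A^{-\epsilon}) = O(\mathsf{d}^{1/4}\,\|\Sigma^{-1/2}\|\,\epsilon)$; this is exactly the source of the $\mathsf{d}^{1/4}$ factor in the final bound.

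Next I would handle the smooth comparison via the Stein equation. Let $f = f_h$ solve $\Delta f(w) - \langle w, \nabla f(w)\rangle = h(\Sigma^{1/2}w) - \mathbb{E} h(\Sigma^{1/2}Z)$; standard regularity estimates give $\|\nabla^k f\|_\infty = O(\|\nabla^{k-1}(h\circ\Sigma^{1/2})\|_\infty)$. After the change of variables $w \mapsto \Sigma^{-1/2}w$, writing $g = f\circ\Sigma^{-1/2}$, the discrepancy becomes $\mathbb{E}\big[\langle \Sigma^{-1/2}W, \nabla g(W)\rangle - \mathrm{Tr}(\Sigma^{-1}\nabla^2 g(W))\big]$, and the relevant third derivatives of $g$ are of order $\|\Sigma^{-1/2}\|^3 \epsilon^{-2}$.

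The dependency-neighbourhood structure then enters in bounding this Stein expression. For each $i$, decompose $W = (W - X_{N_i}) + X_{N_i}$; using that $X_i$ is independent of $W - X_{N_i}$, Taylor-expand $\nabla g$ around $W - X_{N_i}$ to second order. The first-order term matches the Laplacian contribution; the second-order term is cancelled by a further expansion using independence of $\{X_i,X_j\}$ from $W - X_{N_{ij}}$; and the third-order remainder involves $\nabla^3 g$ evaluated at points differing by $X_{N_{ijk}}$, times products of three coordinates. Summing over $i\in[n]$, $j\in N_i$, $k\in N_{ij}$ and invoking $|X_i|\le\beta$, $|N_i|\le n_1$, $|N_{ij}|\le n_2$, $|N_{ijk}|\le n_3$, together with $\|\nabla^3 g\| = O(\|\Sigma^{-1/2}\|^3 \epsilon^{-2})$, yields a bound of order $n\,\|\Sigma^{-1/2}\|^3 \beta^3\, n_1(n_2 + n_3/\mathsf{d})\,\epsilon^{-2}$ for the smooth-function discrepancy (the $1/\mathsf{d}$ saving on the $n_3$ term arising because only the trace part of the Hessian survives the Gaussian expectation, so the extra neighbourhood $N_{ijk}$ is contracted against a $\mathsf{d}$-dimensional average). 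Combining with the $O(\mathsf{d}^{1/4}\|\Sigma^{-1/2}\|\epsilon)$ smoothing error and optimizing over $\epsilon$ gives the stated estimate.

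The main obstacle is twofold: first, establishing the anti-concentration inequality for convex sets under a general law $N(0,\Sigma)$ with the sharp $\mathsf{d}^{1/4}$ (rather than $\mathsf{d}^{1/2}$) dependence, which requires a Nazarov-type bound on the Gaussian surface measure of convex bodies; and second, the careful third-order bookkeeping needed to produce the combination $n_2 + n_3/\mathsf{d}$ instead of the naive $n_2 n_3$, which hinges on exploiting that the Stein operator only sees the Laplacian of $g$ after taking expectations. The remaining pieces --- Stein-equation regularity, the $\Sigma^{-1/2}$ change of variables, and the telescoping expansion over the dependency graph --- are routine.
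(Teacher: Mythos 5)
The paper does not prove this lemma; it is imported verbatim as \cite[Theorem~2.1]{FX} (see the sentence immediately preceding the statement in the supplement), so there is no internal proof to compare against. That said, your sketch is a reasonable reconstruction of the strategy that underlies results of this type: Stein's method for the multivariate normal with an explicit three-level dependency-neighbourhood expansion, combined with smoothing of the indicator of a convex set and a sharp Gaussian anti-concentration estimate for convex bodies (Ball/Nazarov-type bound on the Gaussian perimeter, which is indeed the source of the $\mathsf{d}^{1/4}$). This is in line with the method used in the cited reference.

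Two caveats are worth flagging, since you label the corresponding steps ``routine'' when they are in fact where the real work lies. First, the claim that the $n_3/\mathsf{d}$ term arises ``because only the trace part of the Hessian survives the Gaussian expectation'' is only a heuristic; the precise cancellation requires tracking exactly which terms in the second-order Taylor remainder pair with $\mathrm{Tr}(\Sigma^{-1}\nabla^2 g)$ and which do not, and a careless bookkeeping would give $n_1 n_2 n_3$ rather than $n_1(n_2 + n_3/\mathsf{d})$. Second, the Stein-equation regularity bounds $\|\nabla^3 f\|_\infty = O(\epsilon^{-2})$ are not available uniformly over all $w$ for indicator-type test functions; one has to use the integrated form of the solution together with the anisotropic concentration of the Ornstein--Uhlenbeck semigroup, and this is where the $\|\Sigma^{-1/2}\|^3$ dependence has to be extracted with care rather than by a blunt chain rule. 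Neither point is a fatal gap in direction, but both are the substance of the proof rather than its periphery, so the sketch as written should not be regarded as a complete argument; for the purposes of the present paper, citing \cite{FX} is the intended treatment.
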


{
The following lemma offers a control for the summation of Chi-square random variables, which will be employed in the proof of Theorem \ref{thm_gaussian}. It can be found in \cite[Lemma S.2]{10.1093/biomet/asz020}.
\begin{lem} \label{lem_chisquare} Let $a_1 \geq a_2 \geq \cdots \geq a_p \geq 0$ such that $\sum_{i=1}^p a^2_i=1;$ let $\eta_i$ be i.i.d. $\chi_1^2$ random variables. Then for all $h>0,$ we have 
\begin{equation*}
\sup_t \mathbb{P}(t \leq \sum_{k=1}^p a_k \eta_k \leq t+h) \leq \sqrt{h} \sqrt{4/\pi}.
\end{equation*} 
\end{lem}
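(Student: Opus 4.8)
The plan is to reduce everything to elementary facts about a single scaled $\chi^2_1$ variable. Represent $\eta_k=Z_k^2$ with $Z_1,\dots,Z_p$ i.i.d.\ standard normal, so $S:=\sum_{k=1}^p a_k\eta_k=\sum_{k=1}^p a_kZ_k^2$. Since the $a_k$ are nonnegative and $\sum_k a_k^2=1$, the largest weight satisfies $a_1\le 1$, and if $a_1<1/2$ then necessarily $a_2>0$. Two facts will be used repeatedly: (i) the density $f_a(u)=(2\pi au)^{-1/2}e^{-u/(2a)}\mathbf 1_{\{u>0\}}$ of $aZ^2$ is decreasing on $(0,\infty)$, so that for every real $c$,
\[
\mathbb P\big(c\le aZ^2\le c+h\big)\ \le\ \int_0^h f_a\ =\ \mathbb P\big(|Z|\le\sqrt{h/a}\,\big)\ \le\ \sqrt{2/\pi}\,\sqrt{h/a},
\]
using $\phi\le(2\pi)^{-1/2}$; and (ii) the convolution of two scaled densities $f_{a_1},f_{a_2}$ with $a_2\le a_1$ has a bounded density, since bounding the two exponential factors by $e^{-y/(2a_1)}$ and invoking $\int_0^y\big(u(y-u)\big)^{-1/2}\,du=\pi$ gives $f_{a_1Z_1^2+a_2Z_2^2}(y)\le (2\sqrt{a_1a_2})^{-1}e^{-y/(2a_1)}$. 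Finally, one may assume $h<\pi/4$, since otherwise $\sqrt{4/\pi}\,\sqrt h\ge 1$ and the claim is vacuous.

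\textbf{Dominant weight.} If $a_1\ge 1/2$, condition on $(Z_2,\dots,Z_p)$ and apply the displayed bound with $a=a_1$ to the conditional law of $a_1Z_1^2$ (which is a scaled $\chi^2_1$ regardless of the conditioning value); this yields $\sup_t\mathbb P(t\le S\le t+h)\le\sqrt{2/\pi}\sqrt{h/a_1}\le\sqrt{2/\pi}\sqrt{2h}=\sqrt{4/\pi}\sqrt h$, as required.

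\textbf{No dominant weight.} If $a_1<1/2$, write $U=a_1Z_1^2+a_2Z_2^2$ and $V=\sum_{k\ge 3}a_kZ_k^2\ge 0$ (with $V\equiv 0$, $f_V=\delta_0$ when $p=2$, which cannot occur here). By fact (ii), $\|f_U\|_\infty\le(2\sqrt{a_1a_2})^{-1}$; since $S=U+V$ with $V$ independent of $U$, Young's inequality gives $\|f_S\|_\infty\le\|f_U\|_\infty\|f_V\|_1\le(2\sqrt{a_1a_2})^{-1}$, hence $\sup_t\mathbb P(t\le S\le t+h)\le h/(2\sqrt{a_1a_2})$. Combined with the previous step, the pair of estimates $\min\{\sqrt{2/\pi}\sqrt{h/a_1},\ h/(2\sqrt{a_1a_2})\}$ already delivers the conclusion whenever $a_1\ge 1/2$ or $a_1a_2\ge\pi^2/64$ (the latter, together with $h<\pi/4$, forces $h\le 16a_1a_2/\pi$, under which the second estimate is $\le\sqrt{4/\pi}\sqrt h$).

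The remaining — and main — obstacle is the intermediate regime $a_1<1/2$ together with $a_1a_2<\pi^2/64$: there neither estimate is by itself tight enough, yet $\sum_k a_k^2=1$ then forces at least a third nonnegligible weight, so $S$ is genuinely more dispersed than the crude bounds see. I would close this gap by a finer conditional decomposition: condition on the "bulk" variable $R:=\sum_{k\ge 2}a_kZ_k^2$ — which, having at least two summands, itself has a bounded density by fact (ii) — and split $\mathbb E\big[\mathbb P(t-R\le a_1Z_1^2\le t-R+h\mid R)\big]$ according to whether $t-R$ lies within a suitable multiple of $a_1$ of the origin, controlling the "near" part through the $\sqrt{h/a_1}$ estimate times the probability that $R$ falls in a short interval, and the "far" part through the decay $f_{a_1}(t-R)\le(2\pi a_1(t-R))^{-1/2}$; where this is still not enough, iterating fact (ii) over three or more leading weights sharpens the density bound for $S$. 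The only delicate point throughout is tracking the numerical constant so that the final bound is exactly $\sqrt{4/\pi}\,\sqrt h$; the rest is routine.
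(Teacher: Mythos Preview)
The paper does not prove this lemma at all; it simply quotes it from an external source (Lemma~S.2 of \cite{10.1093/biomet/asz020}). So there is no ``paper's proof'' to compare against, and your attempt stands on its own.

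Your argument is clean and correct in the two easy regimes: when $a_1\ge 1/2$ you get the exact constant by conditioning on the remaining coordinates, and when $a_1<1/2$ together with $a_1a_2\ge\pi^2/64$ and $h<\pi/4$ the sup-norm bound on the two-term density does the job. The difficulty, as you yourself flag, is the intermediate regime $a_1<1/2$ and $a_1a_2<\pi^2/64$, and here the sketch does not close the gap.

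Concretely, two of the proposed fixes do not work as stated. First, ``iterating fact~(ii) over three or more leading weights'' does not sharpen the density bound: convolving a bounded density with any probability density leaves the sup-norm unchanged, so the best you get from grouping is still $\|f_S\|_\infty\le(2\sqrt{a_1a_2})^{-1}$, which is exactly what you already had. (If instead you redo the Beta-integral computation of fact~(ii) for $m$ terms, the crude step of replacing the exponential by $e^{-y/(2a_1)}$ is too lossy: in the equal-weight case $a_k=p^{-1/2}$ it produces a bound growing like $\sqrt{p}$, whereas the true density of $p^{-1/2}\chi_p^2$ is uniformly bounded.) Second, the conditional near/far split on $R=\sum_{k\ge2}a_kZ_k^2$ yields, after optimizing the cutoff, a bound of order $h^{3/4}/(a_1^{1/2}a_2^{1/4})$; the exponent in $h$ is better, but the $a$-dependent prefactor again blows up in the equal-weight regime. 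So neither route, as written, gives a bound that is \emph{uniform} over all admissible weight sequences, let alone the specific constant $\sqrt{4/\pi}$.

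The missing idea is that when no single weight dominates you must exploit \emph{many} coordinates simultaneously rather than two or three; one standard way is via the characteristic function $|\varphi_S(t)|=\prod_k(1+4a_k^2t^2)^{-1/4}$ together with an Esseen-type smoothing inequality, using $\sum_k a_k^2=1$ to control the product uniformly. Alternatively, the result is a special case of the Carbery--Wright anti-concentration inequality for degree-two Gaussian polynomials. For the paper's purposes only the $\sqrt{h}$ rate with \emph{some} universal constant is needed, so either route suffices.
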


}

{
Next, we collect some preliminary results. The first part of  the following lemma shows that the covariance function (\ref{eq_defncov}) decays polynomially fast under suitable assumptions. It can be found in  \cite[Lemma 2.6]{DZ1}.  The second part shows that the sample covariance matrix and its inverse will converge to some deterministic limits. Its proof is similar to equation (B.6) in the supplementary file of \cite{DZ1} and we omit the details here. 

\begin{lem}\label{lem_coll} (1). Suppose (\ref{eq_physcialbounbounbound}) and Assumptions  \ref{assu_pdc}, \ref{assu_smoothtrend} and \ref{assum_local}  hold true. Then there exists some constant $C>0,$ such that 
\begin{equation*}
\sup_t |\gamma(t,j)| \leq Cj^{-\tau}, \ j \geq 1. 
\end{equation*}
(2). Recall (\ref{eq_overlinesigma}).  Suppose  (\ref{eq_physcialbounbounbound}) and Assumptions \ref{assu_pdc}, \ref{assu_smoothtrend}, \ref{assum_local} and \ref{assu_basis} hold true. Then we have that 
\begin{equation*}
|| \widehat{\Sigma}-\Sigma ||=O_{\mathbb{P}} \Big( \frac{\zeta_c \log n}{\sqrt{n}} \Big),
\end{equation*}   
where $\widehat{\Sigma}=n^{-1}Y^*Y.$

\end{lem}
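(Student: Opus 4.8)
\textbf{Proof proposal for Lemma \ref{lem_coll}.}

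The plan is to prove the two parts separately, as they use quite different arguments. For part (1), the goal is to show $\sup_t |\gamma(t,j)| \le Cj^{-\tau}$, where $\gamma(t,j) = \operatorname{Cov}(G(t,\mathcal{F}_0), G(t,\mathcal{F}_j))$ by \eqref{eq_defncov}. First I would fix $t$ and recall the standard covariance decomposition via physical dependence measures: writing $G(t,\mathcal{F}_j)$ as a sum of martingale differences with respect to the filtration $\mathcal{F}_j$, one gets $\operatorname{Cov}(G(t,\mathcal{F}_0), G(t,\mathcal{F}_j)) = \sum_{k \ge 0} \mathbb{E}[P_{-k} G(t,\mathcal{F}_0) \cdot P_{-k} G(t,\mathcal{F}_j)]$ where $P_\ell \cdot = \mathbb{E}(\cdot|\mathcal{F}_\ell) - \mathbb{E}(\cdot|\mathcal{F}_{\ell-1})$ is the projection operator. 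By Cauchy--Schwarz and the well-known bound $\|P_{-k} G(t,\mathcal{F}_j)\|_2 \le \delta(j+k,2)$ (e.g. as used in \cite{WW}), one obtains $|\gamma(t,j)| \le \sum_{k\ge 0}\|P_{-k}G(t,\mathcal{F}_0)\|_2 \, \delta(j+k,2)$. Since $\delta(j+k,2) \le \delta(j,2) \le C j^{-\tau}$ under \eqref{eq_physcialbounbounbound} (the physical dependence measure is non-increasing), and $\sum_{k\ge 0}\|P_{-k}G(t,\mathcal{F}_0)\|_2 < \infty$ uniformly in $t$ thanks to \eqref{eq_physcialbounbounbound} with $\tau > 1$ and the moment bound \eqref{assum_moment}, the claimed decay follows with a constant uniform in $t$. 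One needs Assumption \ref{assum_local} to control $\|G(t,\mathcal{F}_i)\|_q$ uniformly and Assumption \ref{assu_pdc} is actually not essential here but is carried along for consistency; Assumption \ref{assu_smoothtrend} enters only to ensure $\gamma(t,j)$ is well-defined and smooth.

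For part (2), the goal is $\|\widehat\Sigma - \Sigma\| = O_{\mathbb{P}}(\zeta_c \log n / \sqrt{n})$ where $\widehat\Sigma = n^{-1} Y^* Y$ with the $i$-th row of $Y$ being $\bm{x}_i \otimes \mathbf{B}(i/n)$ (with $\bm{x}_i = (1, x_{i-1}, \dots, x_{i-b_*})^*$) and $\Sigma = \overline{\Sigma}$ as in \eqref{eq_overlinesigma}. The approach is to write $\widehat\Sigma - \Sigma = (\widehat\Sigma - \mathbb{E}\widehat\Sigma) + (\mathbb{E}\widehat\Sigma - \Sigma)$ and bound each term. The deterministic bias term $\mathbb{E}\widehat\Sigma - \Sigma$ is controlled using the locally stationary approximation \eqref{eq_covdefn}: since $\mathbb{E}(x_{i-k}x_{i-l}) = \gamma(t_i, |k-l|) + O((|k-l|+1)/n)$ plus trend terms, replacing the Riemann-type average $n^{-1}\sum_i \Sigma^{b_*}(t_i)\otimes(\mathbf{B}(t_i)\mathbf{B}^*(t_i))$ by the integral $\int_0^1 \Sigma^{b_*}(t)\otimes(\mathbf{B}(t)\mathbf{B}^*(t))dt$ incurs an error controlled by the Lipschitz continuity of $\gamma(t,\cdot)$ and $\mathbf{B}(t)$ (Assumption \ref{assu_basis}(2) on $\nabla\mathbf{B}$), which is $O(\zeta_c^2/n)$ times dimension factors — dominated by the target rate. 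The stochastic fluctuation term $\widehat\Sigma - \mathbb{E}\widehat\Sigma$ is the main object: I would apply a concentration inequality for sums of weakly dependent products, entrywise, using the physical dependence measure bound \eqref{eq_physcialbounbounbound}. Each entry of $\widehat\Sigma$ is $n^{-1}\sum_i \alpha_k(t_i)\alpha_l(t_i) x_{i-p}x_{i-q}$, a weighted sum of a locally stationary sequence with finite $q$-th moment ($q > 2$); a Nagaev-type or Bernstein-type inequality (as in the Freedman/Rio bounds underlying \cite[B.6]{DZ1}) gives each entry is $O_{\mathbb{P}}(\zeta_c^2 \sqrt{\log n / n})$, and a union bound over the $p^2$ entries together with the conversion from max-entry norm to operator norm (losing at most a factor of dimension, absorbed into the $\log n$ and the hyperparameter rates via Assumption \ref{assu_basis}(3)) yields the claim.

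The main obstacle I anticipate is bookkeeping the dimension-dependent constants in part (2): $\widehat\Sigma$ is a $p\times p = (b_*+1)c \times (b_*+1)c$ matrix with $p$ diverging, so one must be careful that the operator-norm bound does not pick up extra powers of $p$ beyond what is claimed. The resolution is that the Gram matrix has a Kronecker/block structure — it is approximately block-Toeplitz in the lag index tensored with the $c\times c$ basis Gram matrix $\int \mathbf{B}\mathbf{B}^*$ — so one bounds the operator norm by controlling blocks rather than all $p^2$ scalar entries, and the polynomial covariance decay from part (1) makes the block structure effectively banded. This is exactly the mechanism in the proof of \cite[B.6]{DZ1}, so I would invoke that argument, noting only the modification that the basis factor $\mathbf{B}(i/n)$ contributes the $\zeta_c$ factor. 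Since the lemma statement explicitly says ``its proof is similar to equation (B.6) in the supplementary file of \cite{DZ1} and we omit the details here,'' a concise proof of part (1) together with a pointer to \cite{DZ1} for part (2), checking that the basis weights only inflate the bound by $\zeta_c$, suffices.
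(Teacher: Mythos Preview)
Your proposal is correct and aligns with the paper's own treatment, which simply defers both parts to \cite{DZ1} (Lemma~2.6 for part~(1) and equation~(B.6) of the supplement for part~(2)); your sketch of the martingale-projection bound for~(1) and the bias--fluctuation decomposition with block/Kronecker structure for~(2) is precisely what those references contain. One small fix in part~(1): the physical dependence measure $\delta(j,2)$ is not assumed to be non-increasing in $j$, so the claim $\delta(j+k,2)\le\delta(j,2)$ is unjustified --- but you do not need it, since $\delta(j+k,2)\le C(j+k)^{-\tau}\le Cj^{-\tau}$ follows directly from \eqref{eq_physcialbounbounbound} and gives the same conclusion.
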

}

Finally, we collect the concentration inequalities for  non-stationary process using the physical dependence measure.  It is the key ingredient for the proof of most of the theorems and lemmas. 
It can be found in  \cite[Lemma 6]{ZZ1}. 

\begin{lem}\label{lem_con} Let $x_i=G_i(\mathcal{F}_i),$ where $G_i(\cdot)$ is a measurable function and  $\mathcal{F}_i=(\cdots, \eta_{i-1}, \eta_i)$ and $\eta_i, \ i  \in \mathbb{Z}$ are i.i.d  random variables. Suppose that $\mathbb{E}x_i=0$ and $\max_i \mathbb{E}|x_i|^q<\infty$ for some $q>1.$ For some $k>0,$ let $\delta_x(k):=\max_{ 1 \leq i \leq n} \norm{G_i(\mathcal{F}_i)-G_i(\mathcal{F}_{i,i-k})}_q,$ where $\mathcal{F}_{i,i-k}:=(\mathcal{F}_{i-k-1}, \eta_{i-k}',\cdots, \eta_i)$ for an i.i.d copy $\{\eta_i'\}$ of $\{\eta_i\}.$ We further let $\delta_x(k)=0$ if $k<0.$ Write $\gamma_k=\sum_{i=0}^k \delta_x(i).$ Let $S_i=\sum_{j=1}^i x_j.$ \\
(i). For $q'=\min(2,q),$
\begin{equation*}
\norm{S_n}_q^{q'} \leq C_q \sum_{i=-n}^{\infty} (\gamma_{i+n}-\gamma_i)^{q'}.
\end{equation*}
(ii). If $\Delta:=\sum_{j=0}^{\infty} \delta_x(j) <\infty,$ we then have 
\begin{equation*}
\norm{\max_{1 \leq i \leq n}|S_i|}_q \leq C_q n^{1/q'} \Delta. 
\end{equation*}
In (i) and (ii), $C_q$ are generic finite constants which only depend on $q$ and can vary from place to place.  
\end{lem}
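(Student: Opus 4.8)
The plan is to prove both parts by the classical martingale approximation built from the projection operators $\mathcal{P}_k\cdot := \mathbb{E}(\cdot\mid\mathcal{F}_k)-\mathbb{E}(\cdot\mid\mathcal{F}_{k-1})$, $k\in\mathbb{Z}$. The one ingredient common to both parts is the pointwise projection bound $\norm{\mathcal{P}_k x_j}_q\le \delta_x(j-k)$. To get it, let $x_j^{(k)}:=G_j(\mathcal{F}_{j,k})$ be the coupled version of $x_j$ in which $\eta_k$ is replaced by its independent copy $\eta_k'$. Since $\eta_k$ is independent of $\mathcal{F}_{k-1}$ and equidistributed with $\eta_k'$, integrating out the innovations $\eta_k,\eta_{k+1},\dots,\eta_j$ yields the coupling identity $\mathbb{E}(x_j\mid\mathcal{F}_{k-1})=\mathbb{E}(x_j^{(k)}\mid\mathcal{F}_k)$, hence $\mathcal{P}_k x_j=\mathbb{E}(x_j-x_j^{(k)}\mid\mathcal{F}_k)$; because conditional expectation is an $L^q$-contraction, $\norm{\mathcal{P}_k x_j}_q\le\norm{x_j-x_j^{(k)}}_q\le\delta_x(j-k)$ by the definition of $\delta_x$ as a maximum over the time index (and $\mathcal{P}_k x_j=0$ when $j-k<0$, consistent with $\delta_x(\ell)=0$ for $\ell<0$).

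For part (i), decompose $S_n=\sum_{k=-\infty}^n M_{n,k}$ in $L^q$ with $M_{n,k}:=\sum_{j=\max(1,k)}^n\mathcal{P}_k x_j$; each $M_{n,k}$ is $\mathcal{F}_k$-measurable with $\mathbb{E}(M_{n,k}\mid\mathcal{F}_{k-1})=0$, so $\{M_{n,k}\}_k$ is a martingale difference sequence in $k$. Burkholder's inequality followed by Minkowski's inequality in $L^{q/2}$ when $q\ge2$, and the von Bahr--Esseen inequality for martingale differences when $1<q<2$, both give $\norm{S_n}_q^{q'}\le C_q\sum_k\norm{M_{n,k}}_q^{q'}$ with $q'=\min(2,q)$ and $C_q$ depending only on $q$. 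Minkowski's inequality and the projection bound give $\norm{M_{n,k}}_q\le\sum_{j=\max(1,k)}^n\delta_x(j-k)$, which equals $\gamma_{n-k}$ for $k\ge1$ and $\gamma_{n-k}-\gamma_{-k}$ for $k\le0$. Reindexing by $i:=-k$ in the two ranges matches the right-hand side $\sum_{i=-n}^\infty(\gamma_{i+n}-\gamma_i)^{q'}$ termwise (the tail $k>n$ contributing zero), which proves (i).

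For part (ii) I would regroup the same projections by lag rather than by level: $S_i=\sum_{\ell=0}^\infty D_{i,\ell}$ with $D_{i,\ell}:=\sum_{j=1}^i\mathcal{P}_{j-\ell}x_j$. For each fixed $\ell$, the summand $\mathcal{P}_{j-\ell}x_j$ is $\mathcal{F}_{j-\ell}$-measurable and orthogonal to $\mathcal{F}_{j-\ell-1}$, so $\{D_{i,\ell}\}_{i\ge1}$ is a genuine martingale with respect to the increasing filtration $\{\mathcal{F}_{i-\ell}\}_i$. Doob's maximal inequality then gives $\norm{\max_{1\le i\le n}|D_{i,\ell}|}_q\le C_q\norm{D_{n,\ell}}_q$, and the same Burkholder/von Bahr--Esseen step together with $\norm{\mathcal{P}_{j-\ell}x_j}_q\le\delta_x(\ell)$ gives $\norm{D_{n,\ell}}_q\le C_q n^{1/q'}\delta_x(\ell)$. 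Summing over $\ell$ by the triangle inequality, $\norm{\max_{1\le i\le n}|S_i|}_q\le\sum_{\ell=0}^\infty\norm{\max_{i\le n}|D_{i,\ell}|}_q\le C_q n^{1/q'}\sum_{\ell=0}^\infty\delta_x(\ell)=C_q n^{1/q'}\Delta$. (Alternatively one can deduce the increment bound $\norm{S_{m'}-S_m}_q\le C_q(m'-m)^{1/q'}\Delta$ from (i) using $(\gamma_{i+m'-m}-\gamma_i)^{q'}\le\Delta^{q'-1}(\gamma_{i+m'-m}-\gamma_i)$ and then apply a M\'oricz-type moment maximal inequality, but the lag decomposition avoids the borderline $q'=1$ subtlety.)

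The main obstacle is really the projection bound $\norm{\mathcal{P}_k x_j}_q\le\delta_x(j-k)$: one must justify the coupling identity $\mathbb{E}(x_j\mid\mathcal{F}_{k-1})=\mathbb{E}(x_j^{(k)}\mid\mathcal{F}_k)$ for the \emph{non-homogeneous} filters $G_j$ and, crucially, keep the bound uniform in the time index so that the maximum-over-$i$ quantity $\delta_x$ can legitimately be used. Everything after that is bookkeeping — the level-indexed martingale and the reindexing for (i), the lag-indexed martingales and the sum over lags for (ii) — together with standard scalar martingale moment inequalities, the only genuine case split being $1<q<2$, where Burkholder and the $L^{q/2}$-Minkowski step must be replaced by the von Bahr--Esseen inequality.
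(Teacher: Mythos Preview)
The paper does not prove this lemma; it merely cites it as Lemma~6 of \cite{ZZ1}. Your argument is correct and is precisely the standard martingale-approximation route (projection operators $\mathcal{P}_k$, the coupling identity yielding $\norm{\mathcal{P}_k x_j}_q\le\delta_x(j-k)$, then Burkholder for $q\ge2$ and von Bahr--Esseen for $1<q<2$) that underlies the cited result and its antecedents in Wu's work. The level-indexed decomposition with the reindexing $i=-k$ for part~(i), and the lag-indexed martingales $D_{i,\ell}$ combined with Doob's maximal inequality for part~(ii), are exactly the right bookkeeping; the only implicit convention you rely on---$\gamma_i=0$ for $i<0$---is consistent with the lemma's stipulation $\delta_x(k)=0$ for $k<0$.
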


\section{Examples of sieve basis functions}\label{sec_basisfunctions}

In this section, we provide a list of some commonly used basis functions.  We also refer to \cite[Section 2.3]{CXH} for a more detailed discussion.  


\noindent (1). Normalized Fourier basis.  For $x \in [0,1],$ consider the following trigonometric polynomials 
\begin{equation*}
\Big\{1, \sqrt{2} \cos(2 k \pi x), \ \sqrt{2} \sin (2 k \pi x), \cdots \Big\}, k \in \mathbb{N}.
\end{equation*}
{We note that the classical trigonometric basis function is well suited for approximating periodic functions on $[0, 1]$. }

\noindent (2). Normalized Legendre polynomials \cite{BWbook}. The Legendre polynomial of degree $n$ can be obtained using Rodrigue's formula
\begin{equation*}
P_n(x)=\frac{1}{2^n n!} \frac{d^n}{dx^n} (x^2-1)^n,\ -1 \leq x \leq 1.
\end{equation*}
In this paper, we use the normalized Legendre polynomial 
\begin{equation*}
P_n^*(x)=
\begin{cases}
1, & n=0; \\
\sqrt{\frac{2n+1}{2}}P_n(2x-1), &, n>0.
\end{cases}
\end{equation*}
The coefficients of  the Legendre  polynomials can be obtained using the R package \texttt{mpoly} {and hence  they are easy to implement in R. 

\noindent(3).  Daubechies orthogonal wavelet \cite{ ID98,ID92}. For $N \in \mathbb{N},$ a Daubechies (mother) wavelet of class $D-N$ is a function $\psi \in L^2(\mathbb{R})$ defined by 
\begin{equation*}
\psi(x):=\sqrt{2} \sum_{k=1}^{2N-1} (-1)^k h_{2N-1-k} \varphi(2x-k),
\end{equation*}
where $h_0,h_1,\cdots,h_{2N-1} \in \mathbb{R}$ are the constant (high pass) filter coefficients satisfying the conditions
$\sum_{k=0}^{N-1} h_{2k}=\frac{1}{\sqrt{2}}=\sum_{k=0}^{N-1} h_{2k+1},$
as well as, for $l=0,1,\cdots,N-1$
\begin{equation*}
\sum_{k=2l}^{2N-1+2l} h_k h_{k-2l}=
\begin{cases}
1, & l =0 ,\\
0, & l \neq 0.
\end{cases}
\end{equation*} 
And $\varphi(x)$ is the scaling (father) wavelet function is supported on $[0,2N-1)$ and satisfies the recursion equation
$\varphi(x)=\sqrt{2} \sum_{k=0}^{2N-1} h_k \varphi(2x-k),$ as well as the normalization $\int_{\mathbb{R}} \varphi(x) dx=1$ and
$ \int_{\mathbb{R}} \varphi(2x-k) \varphi(2x-l)dx=0, \ k \neq l.$
Note that the filter coefficients can be efficiently computed as listed in \cite{ID92}.  The order $N$, on the one hand, decides the support of our wavelet; on the other hand, provides the regularity condition in the sense that
\begin{equation*}
\int_{\mathbb{R}} x^j \psi(x)dx=0, \ j=0,\cdots,N, \  \text{where} \ N \geq d. 
\end{equation*}
We will employ Daubechies wavelet with a sufficiently high order when forecasting in our simulations and data analysis. The basis functions can be either generated using the library \texttt{PyWavelets} in Python \footnote{For visualization for the families of Daubechies wavelet functions, we refer to \url{http://wavelets.pybytes.com}, where the library \texttt{PyWavelets} is also introduced there.} or the \texttt{wavefun} in the \texttt{Wavelet Toolbox} of Matlab. In the present paper, to construct a sequence of orthogonal wavelet, we will follow the dyadic construction of \cite{ID98}. For a given $J_n$ and $J_0,$ we will consider the following periodized wavelets on $[0,1]$
\begin{equation}\label{eq_constructone}
\Big\{ \varphi_{J_0 k}(x), \ 0 \leq k \leq 2^{J_0}-1; 
\psi_{jk}(x), \ J_0 \leq j \leq J_n-1, 0 \leq k \leq 2^{j}-1  \Big\},\ \mbox{ where}
\end{equation} 
\begin{equation*}
\varphi_{J_0 k}(x)=2^{J_0/2} \sum_{l \in \mathbb{Z}} \varphi(2^{J_0}x+2^{J_0}l-k) , \
\psi_{j k}(x)=2^{j/2} \sum_{l \in \mathbb{Z}} \psi(2^{j}x+2^{j}l-k),
\end{equation*}
or, equivalently \cite{MR1085487}
\begin{equation}\label{eq_meyerorthogonal}
\Big\{ \varphi_{J_n k}(x), \  0 \leq k \leq  2^{J_n-1} \Big\}.
\end{equation}
}



\bibliographystyle{imsart-number} 
\bibliography{corrtest}       

%
%
%

\end{document}